\newtheorem{theorem}{Theorem}
\newtheorem{lemma}{Lemma}
\newtheorem{proposition}{Proposition}
\newtheorem{corollary}{Corollary}
\newtheorem{assumption}{Assumption}
\newtheorem{remark}{Remark}
\newcommand{\iid}{\textit{i}.\textit{i}.\textit{d}. }
\def\rank{\text{rank}}
\def\wt{\widetilde}
\def\Var{{\textrm{Var}}\,}
\newcommand\fro[1]{\| #1 \|_{\rm{F}}}
\newcommand\fror[1]{\| #1 \|_{\rm{F},r}}
\newcommand\frorr[1]{\| #1 \|_{\rm{F},\r}}
\newcommand\op[1]{\| #1 \|}
\newcommand\lone[1]{\| #1 \|_{1}}
\newcommand{\inp}[2]{\langle #1,#2\rangle}
\def\calN{{\mathcal N}}
\def\calP{{\mathcal P}}
\def\bSigma{{\boldsymbol{\Sigma}}}
\def\bcalE{{\boldsymbol{\mathcal E}}}
\def\BB{{\mathbb B}}
\def\EE{{\mathbb E}}
\def\FF{{\mathbb F}}
\def\MM{{\mathbb M}}
\def\OO{{\mathbb O}}
\def\PP{{\mathbb P}}
\def\RR{{\mathbb R}}
\def\SS{{\mathbb S}}
\def\TT{{\mathbb T}}
\def\r{{\mathbf r}}
\def\u{{\mathbf u}}
\def\v{{\mathbf v}}
\def\A{{\mathbf A}}
\def\B{{\mathbf B}}
\def\C{{\mathbf C}}
\def\G{{\mathbf G}}
\def\I{{\mathbf I}}
\def\L{{\mathbf L}}
\def\M{{\mathbf M}}
\def\N{{\mathbf N}}
\def\R{{\mathbf R}}
\def\U{{\mathbf U}}
\def\V{{\mathbf V}}
\def\X{{\mathbf X}}
\def\Z{{\mathbf Z}}
\def\lmax{{l_{\textsf{max}}}}
\def\tauc{\tau_{\textsf{\tiny comp}}}
\def\taus{\tau_{\textsf{\tiny stat}}}
\def\muc{\mu_{\textsf{\tiny comp}}}
\def\mus{\mu_{\textsf{\tiny stat}}}
\def\Lc{L_{\textsf{\tiny comp}}}
\def\Ls{L_{\textsf{\tiny stat}}}
\def\eps{\varepsilon}
\def\bd{\boldsymbol{\Delta}}
\def\tm{\tilde{\M}}
\newcommand\bpsi[1]{\big\|#1\big\|_{\psi_2}}
\def\mins{\underline{\lambda}}
\def\maxs{\bar{\lambda}}
\def\dmax{\bar{d}}
\def\rmax{\bar{r}}
\def\gl{\nabla L_n}
\def\gL{\nabla L}
\def\gcL{\nabla_{\C} L}
\def\gcl{\nabla_{\C} L_n}
\begin{document}
\pagenumbering{arabic}

\title{Computationally Efficient and Statistically Optimal Robust Low-rank Matrix and Tensor Estimation}

\author{Yinan Shen, Jingyang Li, Jian-Feng Cai\footnote{Jian-Feng Cai’s research was partially supported by Hong Kong RGC Grant GRF 16310620 and GRF 16309219.}  and Dong Xia\footnote{Dong Xia's research was partially supported by Hong Kong RGC Grant ECS 26302019, GRF 16303320 and GRF 16300121.}\\
{\small Hong Kong University of Science and Technology}}

\date{(\today)}

\maketitle

\begin{abstract}

Low-rank matrix estimation under heavy-tailed noise is challenging,  both computationally and statistically.  Convex approaches have been proven statistically optimal but suffer from high computational costs,  especially since robust loss functions are usually non-smooth.   More recently,  computationally fast non-convex approaches via sub-gradient descent are proposed,  which,  unfortunately,  fail to deliver a statistically consistent estimator even under sub-Gaussian noise.  In this paper,  we introduce a novel  Riemannian sub-gradient (RsGrad) algorithm which is not only computationally efficient with linear convergence but also is statistically optimal,  be the noise Gaussian or heavy-tailed with a finite $1+\eps$ moment.  Convergence theory is established for a general framework and specific applications to absolute loss,  Huber loss and quantile loss are investigated.  Compared with existing non-convex methods,  ours reveals a surprising phenomenon of {\it dual-phase convergence}.  In phase one,  RsGrad behaves  as in a typical non-smooth optimization that requires gradually decaying stepsizes.  However,  phase one only delivers a statistically sub-optimal estimator which is already observed in existing literature.  Interestingly,   during phase two,  RsGrad converges linearly {\it as if} minimizing a smooth and strongly convex objective function and thus a constant stepsize suffices.  Underlying the phase-two convergence is the {\it smoothing effect} of random noise to the non-smooth robust losses in an area {\it close but not too close} to the truth. Lastly, RsGrad is applicable for low-rank tensor estimation under heavy-tailed noise where statistically optimal rate is attainable with the same phenomenon of dual-phase convergence, and a novel shrinkage-based second-order moment method is guaranteed to deliver a warm initialization.   Numerical simulations confirm our theoretical discovery and showcase the superiority of RsGrad over prior methods.  

\end{abstract}

\section{Introduction}\label{sec:intro}
Let $\M^{\ast}$ be a $d_1\times d_2$ matrix of rank $r\ll d_2$ where, without loss of generality, we assume $d_1\geq d_2$. Denote $\{(\X_i, Y_i)\}_{i=1}^n$ the collection of i.i.d.  observations satisfying 
\begin{equation}\label{eq:tr_model}
Y_i=\langle \X_i, \M^{\ast}\rangle+\xi_i
\end{equation}
with $\X_i\in\RR^{d_1\times d_2}$ known as the {\it measurement matrix}. Here, $\langle \X_i, \M^{\ast}\rangle={\rm tr}(\X_i^{\top}\M^{\ast})$ and hence  (\ref{eq:tr_model}) is often called the {\it trace regression} model. See, e.g., \cite{koltchinskii2011neumann,koltchinskii2015optimal,rohde2011estimation} and references therein. The latent noise $\xi_i$ has mean zero and is usually assumed {\it random} for ease of analysis. 
{\it Noisy low-rank matrix estimation} or {\it matrix sensing} refers to the goal of recovering $\M^{\ast}$ from the observations $\{(\X_i, Y_i)\}_{i=1}^n$. This problem and its variants naturally arise in diverse fields --- quantum state tomography \citep{gross2010quantum,xia2016estimation},  multi-task learning \citep{chen2011integrating, negahban2011estimation},  structured model inference \citep{chiu2021low, siddiqi2010reduced},  to name but a few.  Oftentimes,  the dimensions $d_1, d_2$ are large.  It is of great interest to effectively,  both computationally and statistically,  recover $\M^{\ast}$ using as few measurements as possible.    

The past two decades have witnessed a vast literature on designing computationally efficient methods and investigating their statistical performances and limits for estimating the underlying low-rank matrix.   At the common ground of these methods is a carefully selected loss function that leverages the low-rank structure.  Estimating $\M^{\ast}$ then boils down to solving an optimization program.  Towards that end,  all these methods take an either {\it convex} or {\it non-convex} approach.  Typical convex approaches introduce an additional penalization by matrix nuclear norm into the loss function to promote low-rank solutions.  This convexity renders well-understood algorithms for convex programming immediately applicable.  Therefore statisticians  can just focus on studying the statistical performances without paying too much attention to the computational implementations.  For instance,  by assuming i.i.d.  {\it sub-Gaussian} noise with variance $\sigma^2$ and the so-called {\it restricted isometry property} (RIP) or {\it restricted strong convexity} (RSC),  it has been proved \citep{candes2011tight, rohde2011estimation,negahban2011estimation,cai2015rop,chen2015fast,davenport2016overview,chandrasekaran2012convex} that the nuclear-norm penalized least square estimator,  referred to as the convex-$\ell_2$ approach,  attains the error rate $\tilde{O}_p(\sigma^2rd_1n^{-1})$\footnote{Here $\tilde{O}_p(\cdot)$ stands for the typical big-O notation up to logarithmic factors and holds with high probability.} in squared Frobenius norm.  This rate is shown to be optimal in the minimax sense.  See,  e.g.,  \cite{xia2014optimal,ma2015volume} and references therein.   Despite the appealing theoretical performances,  convex approaches suffer two major drawbacks.  First,  convex methods operate directly on a matrix of size $d_1\times d_2$ making them run slowly and unscalable to ultra-high dimensional problems.   Secondly,  while the established error rate is theoretically minimax optimal,  the implicit constant factor seems large and the resultant error rate is,  {\it in practice},  often inferior to that by non-convex approaches.  Fortunately,  non-convex approaches can nicely address these two drawbacks.  The starting point of non-convex methods is to re-parametrize the optimization program by writing $\M^{\ast}=\U^{\ast}\V^{\ast\top}$ with $\U^{\ast}$ and $\V^{\ast}$ both having $r$ columns,  i.e.,  requiring the knowledge of rank.  It then suffices to minimize the loss with respect to the factors $\U$ and $\V$ by,  for instance,  the projected gradient descent \citep{chen2015fast},  Burer-Monteiro type gradient descent \citep{burer2003nonlinear,zheng2016convergent},  rotation-calibrated gradient descent \citep{zhao2015nonconvex,xia2021statistical},  Riemannian gradient descent \citep{wei2016guarantees} and etc.  Compared to the convex counterparts,  these non-convex algorithms operate on matrix of size $d_1\times r$ directly and easily scales to large-scale setting even when $d_1$ is about millions.  Under similar conditions and with a non-trivial initialization,  it has been demonstrated that these non-convex algorithms converge fast (more exactly,  linearly) and also deliver minimax optimal estimators under sub-Gaussian noise.

The recent boom of data technology poses new challenges to noisy low-rank matrix estimation,  among which heavy-tailed noise has appeared routinely in numerous applications such as diffusion-weighted imaging \citep{chang2005restore},  on-line advertising \citep{sun2017provable},  and gene-expression data analysis \citep{sun2020adaptive}.  Actually,  the aforementioned convex and non-convex approaches with a square loss become vulnerable or even completely useless when the noise has a heavy tail.  It is because that heavy-tailed noise often generates a non-negligible set of outliers,  to which the square loss is sensitive.   To mitigate the effect of heavy-tailed noise,   a natural solution is to replace the square loss by more robust but {\it non-smooth} ones.  Notable examples include the $\ell_1$-loss \citep{candes2011robust, cambier2016robust},  the renowned Huber loss \citep{huber1965robust} and quantile loss \citep{koenker2001quantile},  all of which are convex but non-smooth.   For instance,  \cite{elsener2018robust} proposes a convex approach based on nuclear-norm penalized $\ell_1$ or Huber loss,  and proves that their estimator, under a mild condition,  attains the error rate $\tilde{O}_p(rd_1n^{-1})$ in squared Frobenius norm as long as the noise has a non-zero density in a neighbourhood of origin.  The rate is minimax optimal in terms of the model complexity.  A more general framework requiring only Lipschitz and convex loss but imposing no assumption on the noise is investigated by \cite{alquier2019estimation},  though the scenario of heavy-tailed noise is not specifically discussed.   See also \cite{klopp2017robust}.  The application of foregoing approaches still rely on convex programming  and suffer from the computational issues explained above.  In fact,  the issue is severer here due to the non-smoothness of objective function \citep{boyd2004convex}.  In consideration of computational efficiency,  non-convex approaches have been proposed and investigated for minimizing non-smooth but convex loss functions.  The sub-Gradient descent algorithms based on matrix factorization were studied recently by  \cite{charisopoulos2019lowrank, li2020nonconvex,tong2021low},  and all of them converge linearly if equipped with a good initialization and with properly chosen stepsizes.  While being computationally fast,  the statistical performances of these non-convex approaches under heavy-tailed noise are either largely missing or,  generally,  sub-optimal.  To be more specific,    \cite{li2020nonconvex} only proves the exact recovery under sparse outliers {\it without noise}; the rate in squared Frobenius norm established in \cite{charisopoulos2019lowrank} and \cite{li2020nonconvex} turns into $\tilde{O}_p(\sigma^2)$ even if  the noise are Gaussian with variance $\sigma^2$.  This rate has none statistical significance since it implies that more data (i.e.,  $n$ increases) will not improve the estimate.  Put it differently,  these estimators are not even statistically consistent.  Besides the aforementioned optimization-oriented approaches,  \cite{minsker2018sub} and \cite{fan2021shrinkage} propose computationally fast estimators based on proper shrinkage and low-rank approximation.  Though the rate $\tilde{O}_p(rd_1n^{-1})$ is achievable,  it is not proportional to the noise size,  even for sub-Gaussian noise,  leaving a considerable gap as noise level varies.    
These prior works all  present unpleasant defects in treating low-rank matrix estimation under heavy-tailed noise.  

To bridge the aforementioned gap,  in this paper,  we propose a computationally efficient non-convex algorithm based on Riemannian sub-gradient (RsGrad) optimization and demonstrate its statistical optimality for several mainstream robust loss functions including absolute loss,  Huber loss and quantile loss.    At the core of RsGrad is to view the set of rank-$r$ matrices as a smooth manifold so that established manifold-based optimization methods are readily applicable \citep{vandereycken2013low,cambier2016robust},  among which Riemannian (sub-)gradient descent is perhaps the most popular.  See,  e.g. ,  \cite{wei2016guarantees,cai2021generalized} and references therein.  Similarly as \cite{charisopoulos2019lowrank,li2020nonconvex,tong2021low},  with a proper scheduling of stepsizes,  RsGrad algorithm admits fast computation and converges linearly.  Surprisingly,  through a more sophisticated analysis,  we discover that RsGrad exhibits an intriguing,  referred to as {\it dual-pahse convergence},  phenomenon.   In phase one,   RsGrad behaves like a typical {\it non-smooth} optimization \citep{tong2021low},  e.g.,  requiring gradually decaying stepsizes through iterations.  For example,  if the noise is Gaussian with variance $\sigma^2$ and an absolute loss is equipped,  the phase-one iterations reach an estimator whose squared Frobenius norm is $O_p(\sigma^2)$.  This rate has been achieved by \cite{tong2021low} whereas theirs is based on a scaled sub-gradient (ScaledSM) descent algorithm.  However,  we reveal a phase two convergence of RsGrad.  In phase two,  while the objective function is still non-smooth (but locally smooth in a small region),  the convergence of RsGrad behaves like a smooth optimization in that a constant stepsize guarantees a linear convergence.   As a result,  after phase-two iterations,  RsGrad can output an estimator which is statistically optimal under mild conditions,  e.g.  it achieves the rate $O_p(\sigma^2rd_1n^{-1})$ for the previous example.   Similar results are also established for other robust loss functions under general heavy-tailed noise.

The merits of RsGrad are further demonstrated on robust low-rank (Tucker) tensor estimation. A tensor is a multi-dimensional array, e.g., a matrix is a $2$nd-order tensor. The trace regression model (\ref{eq:tr_model}) is frequently studied for low-rank tensor estimation where $\M^{\ast}$ and all $\X_i$'s are tensors. By unfolding a tensor into matrices, \cite{tomioka2013convex,mu2014square} proposed convex methods using matrix nuclear norms which, unfortunately, only achieve statistically sub-optimal rates under Gaussian noise and demand high computational costs. A delicate tensor nuclear norm was introduced by \cite{raskutti2019convex} which, albeit statistically appealing, is computationally NP-hard in general. In comparison, non-convex methods via direct tensor decomposition often simultaneously enjoy the statistical optimality and computational efficiency. Representative works in this line include the projected gradient descent algorithm \citep{chen2019non}, penalized jointly gradient descent \citep{han2022optimal}, Grassmannian gradient descent \citep{lyu2021latent}, Riemannian gradient descent \citep{cai2021generalized} and etc. Statistical performances of the aforementioned works are guaranteed under sub-Gaussian noise and considerably deteriorate when the noise have heavy tails.  During the preparation of this work, we noticed that \cite{tongaccelerating} extends the ScaledSM algorithm to robust low-rank tensor estimation that naturally handles heavy-tailed noise. However, as discussed above, the statistical rate achieved by ScaledSM even under Gaussian noise is sub-optimal and stays still even if the sample size $n$ increases. Interestingly, our RsGrad algorithm equipped with robust loss functions is easily applicable to heavy-tailed low-rank tensor estimation. We establish a similar dual-phase convergence and derive statistically optimal rates under a minimal $1+\eps$ moment condition and a nearly optimal sample size requirement. This clearly fills the void in robust low-rank tensor estimation under heavy-tailed noise.

Our contributions are multi-fold.   First,  we propose a computationally efficient method for robust low-rank matrix estimation that is applicable to a wide class of non-smooth loss functions.  While Riemannian sub-gradient descent (RsGrad) has been introduced by \cite{cambier2016robust} for minimizing the absolute loss,  to our best knowledge,  there exists no theoretical guarantees for its convergence.  Under mild conditions,  we prove that RsGrad converges linearly regardless of the condition number of $\M^{\ast}$ making it the preferable algorithm for estimating an ill-conditioned matrix.  Secondly,  we demonstrate the statistical optimality of the final estimator delivered by RsGrad,  be the noise Gaussian or heavy-tailed.   Specific applications to the absolute loss,  Huber loss and quantile loss confirm that a rate $O_p(rd_1n^{-1})$ is attainable as long as  the noise has a fine $1+\eps$ moment and its density satisfies mild regularity conditions.  The same conditions have appeared in \cite{elsener2018robust}.  Unlike \cite{fan2021shrinkage} and \cite{minsker2018sub},  our rate is proportional of the noise size.  Thirdly,  our analysis reveals a new phenomenon of {\it dual-phase convergence} that enables us to achieve statistically optimal error rates,  which is a significant improvement over existing literature \citep{charisopoulos2019lowrank, li2020nonconvex, tong2021low}.  While the phase-one convergence is typical for a non-smooth optimization,  the phase-two convergence,  interestingly,  behaves like a smooth optimization.  It seems that the random noise has an effect of smoothing in the phase-two convergence.  Though our dual-phase convergence is only established for RsGrad,  we believe that this should also occur for the factor-based sub-gradient descent \citep{charisopoulos2019lowrank} and the scaled sub-gradient descent \citep{li2020nonconvex}. Finally, we derive the statistically optimal rate of low-rank tensor estimation under heavy-tailed noise. Under a slightly stronger $2+\eps$ moment condition, we propose a novel shrinkage-based second-order moment method that guarantees a warm initialization under a nearly optimal sample size and signal-to-noise condition. To our best knowledge, these are the first results of this kind in tensor-related literature. 

The rest of paper is organized as follows.  In Section~\ref{sec:RsGrad},  we introduce examples of non-smooth robust loss functions and our Riemannian sub-gradient descent (RsGrad) algorithm in a general framework.  Section~\ref{sec:theory} presents the general convergence theory of RsGrad,  i.e.,  dual-phase convergence,  under the dual-phase regularity condition of loss function.  Specific applications to absolute loss,  Huber loss and quantile loss are investigated for both Gaussian noise and heavy-tailed noise in Section~\ref{sec:app}.  Methods of initialization and discussions about stepsize selection are provided in Section~\ref{sec:discussion}. Section~\ref{sec:tensor} extends RsGrad to robust tensor estimation and derives statistically optimal rates under heavy-tailed noise. We showcase the results of numerical experiments and comparison with prior methods in Section~\ref{sec:simulation}.  All the proofs are relegated to the Appendix.

\section{Robust Loss  and Riemannian Sub-gradient Descent}\label{sec:RsGrad}
Without loss of generality, we first focus on the case of matrix estimation. Extension to robust tensor estimation can be found in Section~\ref{sec:tensor}. 
Denote $\MM_r:=\{\M\in\RR^{d_1\times d_2},  \textrm{rank}(\M)\leq r\}$ the set of matrices with rank bounded by $r$.  With a properly selected loss function $\rho(\cdot): \RR\mapsto \RR_+$,  we aim to solve
\begin{align}\label{eq:loss}
\hat\M:=\underset{\M\in\MM_r}{\arg\min}\ f(\M),  \quad \textrm{ where } f(\M) := \sum_{i=1}^n\rho\big(\langle \M,  \X_i\rangle-Y_i\big)
\end{align}
The statistical property of $\hat\M$ crucially relies on the loss function.  For instance,  $\hat\M$ attained by the $\ell_2$-loss ({\it square loss}),  i.e.,  $\rho(x):=x^2$,  has been proven effective \citep{chen2015fast,wei2016guarantees,xia2021statistical} in dealing with sub-Gaussian noise but is well-recognized sensitive to outliers and heavy-tailed noise \citep{huber1965robust}.  Fortunately,  there are the so-called {\it robust} loss functions which are relatively immune to outliers and heavy-tailed noise and capable to deliver a more reliable estimate $\hat\M$.  Examples of robust loss function include:
{\it
\begin{enumerate}[1.]
\item absolute loss ($\ell_1$-loss):  $\rho(x):=|x|$ for any $x\in\RR$;
\item Huber loss: $\rho_{H,\delta}(x):=x^2\mathbbm{1}(|x|\leq \delta)+(2\delta|x|-\delta^2)\mathbbm{1}(|x|>\delta)$ for any $x\in\RR$ where $\delta>0$ is a tuning parameter.
\item quantile loss: $\rho_{Q,\delta}(x):=\delta x\mathbbm{1}(x\geq 0)+(\delta-1)x\mathbbm{1}(x<0)$ for any $x\in \RR$ with $\delta:=\PP(\xi\leq 0)$.
\end{enumerate}
}
\noindent The absolute loss,  Huber loss and quantile loss are all convex.  They have appeared in the literature for noisy low-rank matrix estimation including the convex approach based on nuclear norm penalization \citep{elsener2018robust,klopp2017robust,candes2011robust} and non-convex approach based on gradient-style algorithms \citep{li2020nonconvex,tong2021accelerating}.  
While our theory developed in Section~\ref{sec:theory} applies to general (robust) loss functions,  the cases of absolute loss \citep{tong2021low, charisopoulos2019lowrank},  Huber loss \citep{elsener2018robust,sun2020adaptive} and quantile loss \citep{alquier2019estimation, chenrobust} will be specifically investigated in Section~\ref{sec:app}.    Compared to the square loss,  most robust loss functions are non-smooth,  i.e.,  their derivatives are dis-continuous,  which brings new challenge to its computation.  Indeed,  optimizing program (\ref{eq:loss}) usually exploits the sub-gradient of robust loss functions,  written as $\partial f(\M)$.  See,  for instance,  \cite{charisopoulos2019lowrank} and references therein. 

Due to the low-rank constraint,  program (\ref{eq:loss}) is non-convex and solvable only locally.  One popular tactic to enforce the low-rank constraint is to reparametrize the program (\ref{eq:loss}) by factorization $\M=\U\V^{\top}$ with $r$-columned matrices $\U$ and $\V$.  Then it suffices to update $(\U,  \V)$ sequentially to minimize (\ref{eq:loss}) by local algorithms,  e.g.,  by sub-Gradient descent.  These algorithms \citep{zheng2016convergent,zhao2015nonconvex} often run fast if $\M^{\ast}$ is {\it well-conditioned},  namely $\sigma_1(\M^{\ast})\sigma_r^{-1}(\M^{\ast})$ is small,  where $\sigma_j(\M)$ denotes the $j$-th singular value of  $\M$ so that $\sigma_1(\M)^{\ast}$ is the operator norm of $\M$.   However,  they suffer a great loss of computational efficiency if $\M^{\ast}$ is {\it ill-conditioned}.  Though the issue can be theoretically remedied by a proper inverse scaling \citep{tong2021accelerating},  it can cause a potential computational instability especially when $\M^{\ast}$ has small non-zero singular values.  Recently,  it is discovered that optimizing (\ref{eq:loss}) directly on the low-rank manifold $\MM_r$,   called {\it Riemannian},  enjoys the fast computational speed of factorization-based approaches and,  meanwhile,  converges linearly regardless of the condition number of $\M^{\ast}$.  See,  e.g.,  \cite{cai2021generalized} and \cite{cai2021provable},  for the convergence of Riemannian gradient descent (RGrad) algorithms in minimizing {\it strongly convex and smooth} functions with tensor-related applications.     We note that RGrad is similar to the projected gradient descent (PGD, \cite{chen2015fast}) except that RGrad utilizes the Riemannian gradient while PGD takes the vanilla one.  

We adapt Riemannian optimization to solve program (\ref{eq:loss}).  Since $f(\cdot)$ can be non-smooth so that the sub-gradient is employed,  we refer our algorithm to as the Riemannian sub-gradient (RsGrad) descent.  At the $l$-th iteration with a current low-rank estimate $\M_l$,  the algorithm consists of two major steps.  It begins with computing the Riemannian sub-gradient,  which is the projection of a {\it vanilla} sub-gradient $\G_l\in\partial f(\M_r)\subset \RR^{d_1\times d_2}$ onto the tangent space,  denoted by $\TT_l$,  of $\MM_r$ at the point $\M_r$.  The benefit of using Riemannian sub-gradient, written as $\calP_{\TT_l}(\G_l)$,  instead of the vanilla one is that the Riemannian sub-gradient is low-rank while the vanilla one is full-rank \citep{cambier2016robust},  which significantly boosts up the subsequent singular value decomposition (SVD).    Interested readers are suggested to refer to \cite{vandereycken2013low,wei2016guarantees,cai2021generalized} and references therein for more discussions.  The second step is to  update the low-rank estimate along the direction of negative Riemannian sub-gradient and then retract it back to the manifold $\MM_r$,  for which,  it suffices to take the SVD.  Here ${\rm SVD}_r(\cdot)$ returns the best rank-$r$ approximation by SVD. The details of RsGrad are presented in Algorithm~\ref{alg:RsGrad}.   Note that the Riemannian sub-gradient algorithm has been introduced by \cite{cambier2016robust} for minimizing the absolute loss without convergence and statistical analysis.  In contrast,  our framework covers general robust loss functions,  and  we prove its computational efficiency and statistical optimality for several important applications. 

\begin{algorithm}
\caption{Riemannian Sub-gradient Descent (RsGrad)}\label{alg:RsGrad}
\begin{algorithmic}
	\STATE{\textbf{Input}: observations $\{(\X_i, Y_i)\}_{i=1}^n$,  max iterations $\lmax$,  step sizes $\{\eta_l\}_{l=0}^{\lmax}$.}
	\STATE{Initialization: $\M_0\in\MM_r$}
	\FOR{$l = 0,\ldots,\lmax$}
	\STATE{Choose a vanilla subgradient:  $\G_l\in\partial f(\M_l)$}
	\STATE{Compute Riemannian sub-gradient: $\wt\G_l = \calP_{\TT_l}(\G_l)$}
	\STATE{Retraction to $\MM_r$: $\M_{l+1} = \text{SVD}_r(\M_l - \eta_{l}\wt\G_l)$}
	\ENDFOR
	\STATE{\textbf{Output}: $\hat\M=\M_{\lmax}$}
\end{algorithmic}
\end{algorithm}

\paragraph*{Computation.} The Riemannian sub-gradient is fast computable.  Let $\M_l=\U_l\bSigma_l\V_l^{\top}$ be the thin SVD of $\M_l$.  It is well-known \citep{absil2009optimization,  vandereycken2013low} that the tangent space $\TT_l$ can be characterized by $\TT_l:=\{\Z\in\RR^{d_1\times d_2}: \Z=\U_l\R^{\top}+\L\V_l^{\top},  \R\in\RR^{d_2\times r},  \L\in\RR^{d_1\times r}\}$.  Then,  for any $d_1\times d_2$ matrix $\G_l$,  the projection onto $\TT_l$ is
$$
\calP_{\TT_l}(\G_l)=\U_l\U_l^{\top}\G_l+\G_l\V_l\V_l^{\top}-\U_l\U_l^{\top}\G_l\V_l\V_l^{\top},
$$
 which is of rank at most $2r$.  Consequently,  the final step of retraction only requires the SVD of a $2r\times 2r$ matrix.  See,  e.g.,  \cite{vandereycken2013low} and \cite{mishra2014fixed} for more details.

\section{General Convergence Performance}\label{sec:theory}
We now present the general convergence performance of RsGrad Algorithm~\ref{alg:RsGrad},  which essentially relies on regularity conditions of the objective function.   These conditions,  in spirit,  largely inherit those from existing literature \citep{charisopoulos2019lowrank,elsener2018robust,alquier2019estimation,tong2021accelerating}.  However,  as explained in Section~\ref{sec:intro},  these prior works only delivered statistically sub-optimal estimates.  It turns out that more delicate characterizations of these conditions are necessary for our purpose.  More exactly,  we discover that the aforementioned robust functions exhibit strikingly different regularity conditions near and   far away from $\M^{\ast}$,  referred to as the {\it dual-phase} regularity conditions. 
\begin{assumption}\label{assump:dual-phase}(Dual-phase regularity conditions)
Let $\tauc>\taus>0$ and define two regions around the truth $\M^{\ast}$
$$
\BB_1:=\left\{\M\in\MM_r: \|\M-\M^{\ast}\|_{\rm F}\geq \tauc\right\}\quad {\rm and}\quad \BB_2:=\left\{\M\in\MM_r: \tauc>\|\M-\M^{\ast}\|_{\rm F}\geq \taus\right\}
$$
,  where $\|\cdot\|_{\rm F}$ represents the Frobenius norm of a matrix.  Suppose the following conditions hold.  
\begin{enumerate}[1.]
\item {\bf (Dual-phase sharpness)} The function $f(\cdot): \RR^{d_1\times d_2}\mapsto \RR_+$ is said to satisfy rank-$r$ restricted $(\tauc,\taus,\muc,\mus)$ dual-phase sharpness with respect to $\M^{\ast}$ if
$$
f(\M)-f(\M^{\ast})\geq 
\begin{cases}
\muc \|\M-\M^{\ast}\|_{\rm F}, & \textrm{ for }\ \  \M\in\BB_1; \\
\mus \|\M-\M^{\ast}\|_{\rm F}^2, & \textrm{ for }\ \  \M\in\BB_2.
\end{cases}
$$

\item {\bf (Dual-phase sub-gradient bound)} The function $f(\cdot): \RR^{d_1\times d_2}\mapsto \RR_+$ has rank-r restricted $(\tauc,\taus,\Lc,\Ls)$ dual-phase sub-gradient bound with respect to $\M^{\ast}$ meaning that for any sub-gradient $\G\in\partial f(\M)$,
$$
\|\G\|_{\rm F,  r}\leq 
\begin{cases}
\Lc, & \textrm{ for }\ \ \M\in\BB_1;\\
\Ls\|\M-\M^{\ast}\|_{\rm F},& \textrm{ for }\ \ \M\in\BB_2.
\end{cases}
$$
Here, the truncated Frobenius norm $\|\G\|_{\rm F, r}:=\|{\rm SVD}_r(\G)\|_{\rm F}$ where ${\rm SVD}_r(\cdot)$ returns the best rank-$r$ approximation of a matrix.  See \cite{tong2021low}.  
\end{enumerate}
\end{assumption}
Basically,  Assumption~\ref{assump:dual-phase} dictates the distinct behaviors of  $f(\cdot)$ in two neighbourhoods of $\M^{\ast}$.  The quantity $\taus$ reflects the statistical limit while $\tauc$ is usually the rate achieved by the computational analysis in existing literature \citep{charisopoulos2019lowrank,tong2021accelerating} without assuming noise distributions.  These prior works only reveal the first phase regularity conditions,  i.e.,  on $\BB_1$.  Interestingly,  $\tauc\gg \taus$ under mild condition on noise distribution,  in which case  these prior works only deliver statistically sub-optimal estimates.  
Deriving the second phase regularity condition is challenging,  for which more precise calculations are necessary.  We remark that,  in statistics literature,  the sub-gradient bound is related to the Lipschitz continuity \citep{alquier2019estimation} and the  sharpness condition is called the {\it one-point-margin condition} \citep{elsener2018robust}.  However,  their estimators are built upon convex program so that the analysis is only made in a small neighbour of $\M^{\ast}$ where the dual-phase regularity conditions are not pivotal.  

\begin{remark}
The second phase regularity conditions are similar to those required in smooth optimization,  e.g.,  the square loss \citep{cai2021generalized,zhao2015nonconvex} and logistic loss \citep{lyu2021latent}.  For many applications (see Section~\ref{sec:app}),  the second phase occurs when the individual random noise starts to (stochastically) dominate $\|\M_l-\M^{\ast}\|_{\rm F}$.  This suggests that, when close enough to $\M^{\ast}$,  the random noise has the effect of smoothing the objective function.  Randomized smoothing has been observed in optimization and statistics literature.   See,  e.g. ,  \cite{duchi2012randomized,zhang2020edgeworth} and references therein.   
\end{remark}

The following proposition establishes the general convergence performance of Algorithm~\ref{alg:RsGrad}.  Note that the results are deterministic under Assumption~\ref{assump:dual-phase}.  Recall that $\sigma_r:=\sigma_r(\M^{\ast})$,  the smallest non-zero singular value of $\M^{\ast}$.    

\begin{proposition}\label{prop:main}
Suppose Assumption~\ref{assump:dual-phase} holds,  the initialization $\M_0$ satisfies $\fro{\M_0-\M^*}\leq c_0\sigma_r\cdot\min\{\mus^2\Ls^{-2},  \muc^{2}\Lc^{-2}\}$ for a small but absolute constant $c_0>0$,  and the initial stepsize $\eta_0\in\left[0.2\fro{\M_0-\M^*}\muc\Lc^{-2}, \ 0.3\fro{\M_0-\M^*}\muc\Lc^{-2}\right] $.   At the $l$-th iteration of Algorithm~\ref{alg:RsGrad},  
 \begin{enumerate}[(1)]
 			\item When $\fro{\M_{l}-\M^*}\geq\tauc $,  namely in phase one,  take the stepsize $\eta_{l}=\big(1-0.04\muc^2\Lc^{-2}\big)^{l}\eta_{0}$,  then we have
 			\begin{align}
 				\Vert \M_{l}-\M^{*}\Vert_{\mathrm{F}}\leq \big(1-0.04\muc^2\Lc^{-2}\big)^{l}\cdot \| \M_{0}-\M^{*}\|_{\rm F}.
 			\end{align}
 		\item When $\tauc>\fro{\M_l-\M^*}\geq\taus$,  namely in phase two,  take stepsize $\eta_l\in \big[0.125\mus\Ls^{-2},\ 0.75\mus\Ls^{-2} \big]$, then we have \begin{align}
 			\fro{\M_{l+1} -\M^{*}} \leq \left(1-\frac{\mus^2}{32\Ls^2}\right)\cdot \fro{\M_{l}-\M^{*}}.
 		\end{align}
 \end{enumerate}
\end{proposition}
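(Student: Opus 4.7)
The plan is to combine a one-step descent identity on the tangent manifold with a sharp retraction estimate, invoking the dual-phase regularity separately in each regime. Let $\W_l := \M_l - \eta_l\wt\G_l$. Since $\M_l$ and $\wt\G_l$ both lie in $\TT_l$, we have $\W_l \in \TT_l$ as well, and the identity
\begin{equation*}
\fro{\W_l - \M^*}^2 = \fro{\M_l - \M^*}^2 - 2\eta_l\inp{\wt\G_l}{\M_l - \M^*} + \eta_l^2\fro{\wt\G_l}^2
\end{equation*}
is the starting point. Using $\wt\G_l = \calP_{\TT_l}\G_l$ together with $\M_l\in\TT_l$, I split the inner product as $\inp{\G_l}{\M_l-\M^*} + \inp{\G_l}{\calP_{\TT_l^\perp}\M^*}$. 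The first summand is at least $f(\M_l) - f(\M^*)$ by convexity of $f$, which by Assumption~\ref{assump:dual-phase} is $\geq \muc\fro{\M_l-\M^*}$ in Phase~1 and $\geq \mus\fro{\M_l-\M^*}^2$ in Phase~2.

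The cross term is controlled by noting that $\calP_{\TT_l^\perp}\M^* = (\I-\U_l\U_l^\top)\M^*(\I-\V_l\V_l^\top)$ has rank at most $r$, which gives $|\inp{\G_l}{\calP_{\TT_l^\perp}\M^*}| \leq \|\G_l\|_{\rm F,r}\cdot\fro{\calP_{\TT_l^\perp}\M^*}$, combined with the standard tangent-space perturbation inequality $\fro{\calP_{\TT_l^\perp}\M^*} \leq \fro{\M_l-\M^*}^2/\sigma_r$. A rank-two orthogonal decomposition of $\calP_{\TT_l}$ analogously yields $\fro{\wt\G_l}^2 \leq 2\|\G_l\|_{\rm F,r}^2$. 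Plugging in the phase-specific sub-gradient bounds, the initialization $\fro{\M_0-\M^*} \leq c_0\sigma_r\min\{\mus^2\Ls^{-2},\muc^2\Lc^{-2}\}$ makes the curvature correction an arbitrarily small fraction of the sharpness term, so it can be absorbed at the cost of a constant factor.

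For Phase~2, the descent then reads $\fro{\W_l-\M^*}^2 \leq (1-c_1\eta_l\mus + c_2\eta_l^2\Ls^2)\fro{\M_l-\M^*}^2$; optimizing over the prescribed constant stepsize interval $[0.125\mus\Ls^{-2},\ 0.75\mus\Ls^{-2}]$ gives the quadratic contraction claimed in part~(2). For Phase~1, I argue by induction that $\fro{\M_l-\M^*} \leq \beta^l\fro{\M_0-\M^*}$ with $\beta = 1-0.04\muc^2\Lc^{-2}$; the prescribed $\eta_l = \beta^l \eta_0$ is calibrated so that, as long as the iterate remains in Phase~1, $\eta_l$ stays proportional to $\fro{\M_l-\M^*}\muc\Lc^{-2}$ within the window determined by $\eta_0\in[0.2,0.3]\cdot\fro{\M_0-\M^*}\muc\Lc^{-2}$. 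Substituting this proportional scaling into the descent inequality reproduces the per-step contraction $\beta$. Verifying that iterates stay in the good basin where the perturbation and sharpness estimates remain valid is handled within the same induction.

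The chief technical obstacle is the retraction step $\M_{l+1} = \text{SVD}_r(\W_l)$: the naive bound $\fro{\M_{l+1}-\M^*} \leq 2\fro{\W_l-\M^*}$ is too lossy to preserve the contractions gained on $\TT_l$. I will establish a sharpened estimate of the form $\fro{\M_{l+1}-\M^*}^2 \leq \fro{\W_l-\M^*}^2 + O\big(\fro{\M_l-\M^*}^3/\sigma_r\big)$ by exploiting that $\W_l\in\TT_l$ has rank at most $2r$ and that its tail singular values $\sigma_{r+1}(\W_l),\ldots,\sigma_{2r}(\W_l)$ are second order in $\fro{\M_l-\M^*}$ (so the discarded part $\W_l - \M_{l+1}$ is small relative to $\W_l - \M^*$). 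The resulting third-order remainder is dominated by the descent gain thanks to the initialization hypothesis, and the two claimed linear rates then follow from the induction outlined above.
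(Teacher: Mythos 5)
Your proposal follows essentially the same route as the paper's proof: the same expansion of $\fro{\M_l-\eta_l\wt\G_l-\M^*}^2$, the same split of $\inp{\wt\G_l}{\M_l-\M^*}$ into the sub-gradient inequality term plus a cross term on $\TT_l^\perp$ controlled at second order via $\sigma_r$, the same bound $\fro{\wt\G_l}^2\le 2\fror{\G_l}^2$, the same phase-one induction with $\eta_l$ kept proportional to the decaying error bound, and the same target retraction inequality $\fro{\M_{l+1}-\M^*}^2\le\fro{\W_l-\M^*}^2+O(\fro{\W_l-\M^*}^3/\sigma_r)$, which the paper obtains from its matrix perturbation lemma (Lemma~\ref{teclem:perturbation}). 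The only divergence is that you propose to justify that retraction estimate through the second-order smallness of the tail singular values of the tangent-space element $\W_l$ rather than the paper's spectral-series perturbation expansion; both are standard and yield the same inequality, so this is a cosmetic rather than substantive difference.
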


By Proposition~\ref{prop:main},  the RsGrad Algorithm~\ref{alg:RsGrad} outputs an estimate $\hat\M$ within a $O(\taus)$ distance in Frobenius norm from the truth.  
As discussed above,  phase one is typical non-smooth optimization,  for which the stepsize needs to be adaptive.  For simplicity,  we apply the geometrically decay stepsize \citep{tong2021accelerating,  goffin1977convergence}.  Interestingly,  phase two is essentially smooth optimization so that a fixed stepsize suffices to yield linear convergence.  See, for instance,  \cite{cai2021generalized,wei2016guarantees} and references therein.  Note that the convergence dynamic is free of the matrix condition number,  a benefit of Riemannian-type algorithms.   Lastly,  we remark that the convergence dynamic is valid only when $\sigma_r=\Theta(\taus)$,  otherwise the initialization does not belong to either of the two phases.  

\paragraph*{Comparison with prior works.} The algorithmic dynamic of RsGrad in phase one is similar to that in \cite{charisopoulos2019lowrank} and \cite{tong2021low},  where the geometrically decay of stepsizes in phase one will eventually bring the stepsize into the level $O(\mus\Ls^{-2})$ desired by phase-two convergence.   However,  \cite{charisopoulos2019lowrank} and \cite{tong2021low} will continue to shrink the stepsize geometrically and fail to deliver a statistically optimal estimator.   In contrast,  our RsGrad sets a constant stepsize in phase two and the ultimate estimator is statistically optimal.  The difference is observed in numerical experiments.  See Section~\ref{sec:simulation} for more details.

\section{Applications}\label{sec:app}
In this section,  specific applications of Proposition~\ref{prop:main} are studied and their statistical performances are presented.  Since our major interest is on the loss functions and their effectiveness against heavy-tailed noise,  for simplicity,  we assume the measurement matrix $\X$ has i.i.d. $N(0,1)$ entries.  While this condition is relaxable to more general sub-Gaussian distributions,  they inevitably further complicate the subsequent calculations and is hence not pursued here.  

\subsection{Absolute loss with Gaussian noise}
We begin to demonstrate that the absolute loss,  though motivated for heavy-tailed noise,  can deliver a statistically optimal estimator even when the noise is Gaussian.  Assume $\xi_1,\cdots,\xi_n $ are i.i.d.  $N(0, \sigma^2)$ and take the absolute loss so that the objective function 
\begin{align}\label{eq:l1-loss}
f(\M)=\sum_{i=1}^n \big|Y_i-\langle \M,  \X_i\rangle \big|.
\end{align}
The following lemma affirms the dual-phase regularity properties for the loss function in (\ref{eq:l1-loss}). 
\begin{lemma}\label{lem:Gaussian-l1}
Assume $\xi_1,\cdots, \xi_n \stackrel{i.i.d.}{\sim} N(0,\sigma^2)$.  There exist absolute constants $C_1, C_2,  C_3, C_4,  c_1>0$ such that if $n\geq C_1 rd_1$,  then absolute loss (\ref{eq:l1-loss}) satisfies Assumption~\ref{assump:dual-phase} with probability at least $1-\exp(-c_1rd_1)$ where 
$$
\tauc=\sigma, \ \taus=C_2\sigma\Big(\frac{rd_1}{n}\Big)^{1/2},\ \muc=\frac{n}{12}, \ {\rm and}\ \mus=\frac{n}{12\sigma}
$$
, moreover,  the dual-phase sub-gradient bounds are $\Lc\leq 2 n$ and $\Ls\leq C_4n\sigma^{-1}$,  respectively.  This implies the second phase step size $\eta\asymp \sigma n^{-1}$ for Algorithm~\ref{alg:RsGrad}.  
\end{lemma}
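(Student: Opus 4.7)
The plan is to establish each of the four bounds (two sharpness, two sub-gradient) first in expectation for a fixed $\M$, then uplift to a uniform statement over $\MM_r$ via an $\epsilon$-net on the set of rank-$2r$ matrices (whose metric entropy is $O(rd_1\log(1/\epsilon))$) combined with dyadic peeling in the radius $t := \|\M - \M^{\ast}\|_{\rm F}$. Throughout, set $\bd := \M - \M^{\ast}$ (rank $\leq 2r$) and $z_i := \langle\bd,\X_i\rangle \sim N(0,t^2)$, which is independent of $\xi_i\sim N(0,\sigma^2)$.

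For sharpness, a one-dimensional Gaussian integral yields
\[
\EE\bigl[f(\M) - f(\M^{\ast})\bigr] = n\,\EE\bigl[|\xi_1 - z_1|-|\xi_1|\bigr] = n\sqrt{\tfrac{2}{\pi}}\bigl(\sqrt{\sigma^2+t^2}-\sigma\bigr),
\]
and the identity $\sqrt{\sigma^2+t^2}-\sigma = t^2/(\sqrt{\sigma^2+t^2}+\sigma)$ shows this expectation is $\geq c\,nt$ for $t\geq\sigma$ and $\geq c\,nt^2/\sigma$ for $t\leq\sigma$, already matching the target scaling of $\muc$ and $\mus$ up to absolute constants. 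Since $\bigl||\xi_i-z_i|-|\xi_i|\bigr|\leq|z_i|$ is $t$-sub-Gaussian, the centered sum has sub-Gaussian norm $\asymp\sqrt{n}\,t$; combining Hoeffding with the net/peeling construction and a local Lipschitz bound obtained from rank-$4r$ Gaussian RIP (valid for $n\geq C_1rd_1$) produces, with probability at least $1-e^{-c_1rd_1}$,
\[
\bigl|f(\M)-f(\M^{\ast})-\EE[\cdot]\bigr| \leq C\,t\sqrt{n\,rd_1} \quad\text{uniformly over}\quad \M\in\MM_r.
\]
This slack is $o(nt)$ once $t\geq\tauc=\sigma$ and $o(nt^2/\sigma)$ once $t\geq\taus\asymp\sigma\sqrt{rd_1/n}$, so the population lower bound survives and yields $\muc = n/12$ and $\mus = n/(12\sigma)$ after fixing the constants.

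For the sub-gradient bound, any $\G\in\partial f(\M)$ has the form $\G = -\sum_i s_i\X_i$ with $s_i = \mathrm{sign}(\xi_i-z_i)\in[-1,1]$, and I use the dual identity $\|\G\|_{\rm F,r} = \sup_{\P\in\MM_r,\,\|\P\|_{\rm F}=1}\langle\G,\P\rangle$. In phase one, $|\langle\G,\P\rangle|\leq\sum_i|\langle\P,\X_i\rangle|\leq\sqrt{n}\sqrt{\sum_i\langle\P,\X_i\rangle^2}\leq\sqrt{2}\,n$ uniformly in rank-$r$ unit-Frobenius $\P$, by Cauchy--Schwarz and rank-$r$ Gaussian RIP, giving $\Lc\leq 2n$. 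In phase two, decompose $\G = \G^{\ast} + (\G-\G^{\ast})$ with $\G^{\ast}:=-\sum_i\mathrm{sign}(\xi_i)\X_i$. Conditional on the signs, $\G^{\ast}$ is a sum of independent sign-flipped Gaussian matrices, so standard Gaussian matrix concentration yields $\|\G^{\ast}\|\leq C\sqrt{nd_1}$, hence $\|\G^{\ast}\|_{\rm F,r}\leq\sqrt{r}\|\G^{\ast}\|\leq C\sqrt{nrd_1}\leq(Cn/\sigma)\|\bd\|_{\rm F}$ as soon as $\|\bd\|_{\rm F}\geq\taus$. For the drift, $|s_i-\mathrm{sign}(\xi_i)|\leq 2\mathbbm{1}(|\xi_i|\leq|z_i|)$, and the Gaussian density bound $\phi(0)=1/\sqrt{2\pi}$ gives $\PP(|\xi_i|\leq|z_i|\mid\X_i)\leq\sqrt{2/\pi}\,|z_i|/\sigma$, so by Cauchy--Schwarz $\EE\bigl[\sum_i\mathbbm{1}(|\xi_i|\leq|z_i|)|\langle\P,\X_i\rangle|\bigr]\leq C n\|\bd\|_{\rm F}/\sigma$; a uniform empirical-process bound (net over rank-$2r$ $\bd$ and rank-$r$ $\P$, sub-exponential tails of $|z_i\langle\P,\X_i\rangle|$) upgrades this to an almost-sure bound of the same order, delivering $\Ls\lesssim n/\sigma$. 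Combining the two pieces gives the claimed dual-phase sub-gradient control.

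The delicate piece is the phase-two drift analysis: to recover the scaling $n\|\bd\|_{\rm F}/\sigma$ rather than the crude bound $n$ that $|s_i|\leq 1$ alone would yield, one must simultaneously exploit Gaussian anti-concentration of $\xi_i$ near zero (which produces the $1/\sigma$ factor), the independence of $\xi_i$ from $(\bd,\P,\X_i)$, and a uniform empirical-process deviation over a product of two rank-constrained sets. The phase-two sharpness bound is comparably tight: the fluctuation slack $t\sqrt{nrd_1}$ only becomes dominated by the expectation $nt^2/\sigma$ once $t\gtrsim\sigma\sqrt{rd_1/n}$, which is precisely why the statistical radius $\taus$ must be of that order.
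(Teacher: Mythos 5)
Your treatment of the sharpness conditions and of $\Lc$ is essentially the paper's own route: the exact Gaussian identity $\EE[f(\M)-f(\M^{\ast})]=n\sqrt{2/\pi}\,\fro{\M-\M^{\ast}}^2/(\sqrt{\sigma^2+\fro{\M-\M^{\ast}}^2}+\sigma)$ plus a uniform deviation bound of order $\sqrt{nrd_1}\,\fro{\M-\M^{\ast}}$ (the paper gets the latter by symmetrization, contraction and Adamczak's inequality rather than an explicit net-and-peeling construction, but the output is the same), and then the observation that the slack is absorbed by the expectation precisely when $\fro{\M-\M^{\ast}}\geq\tauc$ resp. $\geq\taus$. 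Your direct dual bound $\fror{\G}=\sup_{\P}\langle\G,\P\rangle\leq\sum_i|\langle\P,\X_i\rangle|\leq\sqrt{2}n$ for $\Lc$ is if anything slightly cleaner than the paper's, which compares upper and lower bounds on $f(\M+{\rm SVD}_r(\G))-f(\M)$.

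Where you genuinely diverge is $\Ls$. The paper never opens up the sub-gradient: it probes with $\M_1=\M+\frac{\sigma}{2n}{\rm SVD}_r(\G)$, lower-bounds $f(\M_1)-f(\M)\geq\frac{\sigma}{2n}\fror{\G}^2$ by convexity, upper-bounds the same increment using the closed-form expectation and the \emph{same} uniform deviation theorem already proved for the sharpness step, and solves the resulting quadratic in $\fror{\G}$. This recycles one empirical-process estimate over function values (which are Lipschitz in $\M$, so contraction applies) and requires nothing new. Your sign decomposition $\G=\G^{\ast}+(\G-\G^{\ast})$ is more explicit about the mechanism — the $1/\sigma$ comes visibly from anti-concentration of $\xi_i$ near zero — but it forces you to control $\sup_{\bd,\P}\sum_i\mathbbm{1}(|\xi_i|\leq|\langle\bd,\X_i\rangle|)\,|\langle\P,\X_i\rangle|$ uniformly over a product of two rank-constrained sets. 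That is the one step you assert rather than prove, and it is the hardest one: the indicator is not a Lipschitz function of $\bd$, so neither contraction nor a naive net-transfer applies directly; near the boundary $|\xi_i|=|z_i|$ the summand can jump by $|\langle\P,\X_i\rangle|$ under an arbitrarily small perturbation of $\bd$. The expectation and variance calculations you sketch do give the right order ($nt/\sigma$ and $n\min(1,t/\sigma)$ respectively, so Bernstein plus a union bound over $e^{Crd_1}$ net points closes once $t\gtrsim\sigma rd_1/n$, which holds for $t\geq\taus$), but making the net argument rigorous requires replacing the indicator by a Lipschitz surrogate or a VC-type argument, which you should supply. If you want to avoid that entirely, the paper's probe-matrix trick is the shortcut.
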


By putting together Lemma~\ref{lem:Gaussian-l1} and  Proposition~\ref{prop:main}, we  immediately obtain the convergence and statistical performance of RsGrad Algorithm~\ref{alg:RsGrad}.  The proof is a combination of Proposition~\ref{prop:main} and Lemma~\ref{lem:Gaussian-l1},  and is hence omitted.  

\begin{theorem}\label{thm:Gaussian-l1}
Suppose the conditions of Lemma~\ref{lem:Gaussian-l1} hold.  There exist absolute constants $c_0,  c_1,  c_2,c_3\in(0,1 ), C_2>0$ such that if the initialization $\fro{\M_0-\M^*}\leq c_0\sigma_r$,  the stepsizes are chosen as in Proposition~\ref{prop:main},  then with probability at least $1-\exp(-c_1rd_1)$,  Algorithm~\ref{alg:RsGrad} has the following dynamics: 
	\begin{enumerate}[(1)]
		\item during phase one when $\|\M_{l-1}-\M^{\ast}\|_{\rm F}\geq \sigma$,   the updated estimate satisfies $\fro{\M_{l}-\M^{*}}\leq (1-c_{2})^{l}\fro{\M_{0}-\M^{*}}$;  after $l_{1} = \Theta\big(\log(\sigma_{r}/\sigma)\big)$ iterations in phase one,  it achieves the statistically sub-optimal rate $\fro{\M_{l_{1}} - \M^*} \leq \sigma $; 
		\item during phase two when $\|\M_{l_1+l}-\M^{\ast}\|_{\rm F}\leq \sigma$,  the updated estimate satisfies $\fro{\M_{l_{1}+l+1}-\M^{*}}\leq(1-c_{3})\fro{\M_{l_{1}+l}-\M^{*}}$;  after $l_{2}=\Theta\big(\log\big(n/(d_1r)\big)\big)$ iterations in phase two, it achieves the statistically optimal rate $\fro{\M_{l_{2}+l_{1}} - \M^*} \leq C_2\sigma(rd_1/n)^{1/2}$.
	\end{enumerate}
\end{theorem}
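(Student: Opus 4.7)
}
The plan is to directly combine Lemma~\ref{lem:Gaussian-l1} and Proposition~\ref{prop:main}, then carefully track the number of iterations required to traverse each phase. First, I would condition on the high-probability event (of probability at least $1-\exp(-c_1 rd_1)$) on which the dual-phase regularity conditions of Lemma~\ref{lem:Gaussian-l1} hold with the quantities $\tauc=\sigma$, $\taus=C_2\sigma(rd_1/n)^{1/2}$, $\muc=n/12$, $\mus=n/(12\sigma)$, $\Lc\leq 2n$, $\Ls\leq C_4 n/\sigma$. A direct calculation gives
\[
\frac{\muc^2}{\Lc^2}\asymp 1\quad\text{and}\quad \frac{\mus^2}{\Ls^2}\asymp 1,
\]
so the initialization assumption $\|\M_0-\M^*\|_{\rm F}\leq c_0\sigma_r$ of Proposition~\ref{prop:main} reduces exactly to the hypothesis of the theorem (after possibly shrinking $c_0$ by an absolute constant factor). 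The initial stepsize prescription $\eta_0\asymp \|\M_0-\M^*\|_{\rm F}\muc\Lc^{-2}\asymp\|\M_0-\M^*\|_{\rm F}/n$ is well-defined.

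Next I would invoke Proposition~\ref{prop:main}(1). As long as $\|\M_l-\M^*\|_{\rm F}\geq\tauc=\sigma$, one has the geometric contraction
\[
\|\M_l-\M^*\|_{\rm F}\leq (1-c_2)^l\|\M_0-\M^*\|_{\rm F},\qquad c_2:=0.04\,\muc^2/\Lc^2=\Theta(1).
\]
Solving $(1-c_2)^{l_1}\|\M_0-\M^*\|_{\rm F}\leq \sigma$ and using $\|\M_0-\M^*\|_{\rm F}\leq c_0\sigma_r$ yields $l_1=\Theta(\log(\sigma_r/\sigma))$ phase-one iterations to reach the sub-optimal rate $\|\M_{l_1}-\M^*\|_{\rm F}\leq\sigma$. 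I should verify here that the geometrically decaying stepsize schedule stipulated in Proposition~\ref{prop:main}(1) remains in the admissible range throughout phase one; this is automatic because after $l$ iterations, $\eta_l$ has shrunk by a factor comparable to the shrinkage of $\|\M_l-\M^*\|_{\rm F}$, keeping the ratio $\eta_l/\|\M_l-\M^*\|_{\rm F}$ pinned at $\Theta(\muc\Lc^{-2})$.

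For phase two I would switch to the constant stepsize $\eta\asymp \mus/\Ls^2\asymp \sigma/n$, which matches the prescription displayed at the end of Lemma~\ref{lem:Gaussian-l1}. Proposition~\ref{prop:main}(2) then gives the one-step contraction
\[
\|\M_{l_1+l+1}-\M^*\|_{\rm F}\leq (1-c_3)\,\|\M_{l_1+l}-\M^*\|_{\rm F},\qquad c_3:=\mus^2/(32\Ls^2)=\Theta(1),
\]
valid as long as $\|\M_{l_1+l}-\M^*\|_{\rm F}\geq\taus$. Starting from $\|\M_{l_1}-\M^*\|_{\rm F}\leq\sigma$ and iterating until the error drops to $\taus=C_2\sigma(rd_1/n)^{1/2}$ requires $l_2$ satisfying $(1-c_3)^{l_2}\sigma\leq C_2\sigma(rd_1/n)^{1/2}$, i.e.\ $l_2=\Theta(\log(n/(rd_1)))$. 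Combining the two phases yields the stated rates.

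The only mild subtlety, and what I would view as the main thing to check rather than a real obstacle, is the handling of the \emph{boundary} between the two phases: once $\|\M_l-\M^*\|_{\rm F}$ drops below $\tauc=\sigma$ one must switch from the decaying schedule to the constant stepsize, and one must ensure that the first phase-two iterate does not accidentally escape back into $\BB_1$ because of a single stepsize mismatch. This is handled by noting that the last phase-one stepsize is itself $\Theta(\sigma\muc\Lc^{-2})=\Theta(\sigma/n)$, which is already of the same order as the constant phase-two stepsize $\Theta(\mus\Ls^{-2})=\Theta(\sigma/n)$; so the transition is continuous up to absolute constants, and after absorbing these into $c_0,c_2,c_3$ the argument goes through. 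Finally, the probability bound is inherited unchanged from Lemma~\ref{lem:Gaussian-l1} since Proposition~\ref{prop:main} is deterministic conditional on the dual-phase regularity.
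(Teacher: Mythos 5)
Your proposal is correct and is exactly the argument the paper intends (the paper explicitly omits the proof of Theorem~\ref{thm:Gaussian-l1}, stating it is simply a combination of Proposition~\ref{prop:main} and Lemma~\ref{lem:Gaussian-l1}). You correctly verify that $\muc^2/\Lc^2$ and $\mus^2/\Ls^2$ are $\Theta(1)$, that the initialization and stepsize prescriptions of Proposition~\ref{prop:main} specialize as claimed, and you carry out the iteration-count computations for $l_1$ and $l_2$ accurately; the observation that the last phase-one stepsize and the phase-two stepsize are both of order $\sigma/n$, so the hand-off between schedules is benign, is a sound (if unnecessary, since Proposition~\ref{prop:main}(2) already gives a one-step contraction in $\BB_2$) sanity check.
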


Note that,  by Proposition~\ref{prop:main},  the stepsizes are chosen as $\eta_l=(1-0.04\muc^2\Lc^{-2})^l\eta_0$ during phase one and set to $\eta_l\asymp \sigma n^{-1}$ during phase.  The initial stepsize $\eta_0$ can be as large as $O(\sigma_rn^{-1})$ depending on the initialization.  
By Theorem~\ref{thm:Gaussian-l1},  the final estimate $\hat\M$,  output by Algorithm~\ref{alg:RsGrad} with a warm initialization,  properly chosen stepsizes and after $\Theta\big(\log(\sigma_r/\sigma)+\log(n/(rd_1))\big)$ iterations,  achieves the rate $\|\hat \M -\M^{\ast}\|_{\rm F}^2=O_p(\sigma^2rd_1n^{-1})$ that is minimax optimal.  See \cite{ma2015volume,xia2014optimal} for a matching minimax lower bound.  In comparison,  prior works \citep{charisopoulos2019lowrank, tong2021low} only achieve the rate $O_p(\sigma^2)$.  The two phase dynamics are observed in numerical experiments.  See Section~\ref{sec:simulation} for more details.  

\subsection{Absolute loss with heavy-tailed noise}
In this section,  we demonstrate the effectiveness of absolute loss in handling heavy-tailed noise.  More specifically,  the following assumption of noise is required.  Denote $h_{\xi}(\cdot)$ and $H_{\xi}(\cdot)$ the density and  distribution function of the noise,   respectively.  
\begin{assumption}(Heavy-tailed noise \Romannum{1}) \label{assump:heavy-tailed}
There exists an $\varepsilon>0$ such that $\EE |\xi|^{1+\varepsilon}<+\infty$.  The noise has median zero,  i.e.,  $H_{\xi}(0)=1/2$.  Denote $\gamma=\EE|\xi|$.  There exist constants $b_0, b_1>0$ (may be dependent on $\gamma$) such that 
\begin{align*}
h_{\xi}(x)\geq b_0^{-1}, &\ \ \  \textrm{ for all } |x|\leq 30\gamma;\\
h_{\xi}(x)\leq b_1^{-1}, &\ \ \ \forall x\in \RR.
\end{align*}
\end{assumption}
By Assumption~\ref{assump:heavy-tailed},  a simple fact is $b_0\geq 30 \gamma$.  The noise is required to have a finite $1+\eps$ moment,  which is to bound $\sum_{i=1}^n |\xi_i|$.  This is fairly weak compared with existing literature.  For instance,  \cite{minsker2018sub} requires a finite second-order moment condition on noise;  \cite{fan2021shrinkage} imposes a $2+\eps$ moment condition.  The lower bound on density function is similar to that required by \cite{elsener2018robust}.  The upper bound condition on density function is also mild.  For example,  a Lipschitz distribution function ensures such a uniform upper bound.  

The dual-phase regularity condition of the absolute loss with heavy-tailed noise is guaranteed as follows. Notice if a constant factor depends on $\EE|\xi|$ or/and $\EE|\xi|^{1+\epsilon}$, a star sign is placed on top left of it.

\begin{lemma}\label{lem:heavytail-l1}
Suppose Assumption~\ref{assump:heavy-tailed} holds.  There exist absolute constants $C_1, C_2, C_3, {}^*\!c_1,c_2>0$ such that if $n\geq C_1rd_1$,  then  the absolute loss (\ref{eq:l1-loss}) satisfies Assumption~\ref{assump:dual-phase} with probability at least $1-{}^*\!c_1n^{-\eps}-\exp(-c_2rd_1)$ where 
$$
\tauc=30\gamma, \ \taus=C_2b_0\Big(\frac{rd_1}{n}\Big)^{1/2},\ \muc=\frac{n}{6}, \ {\rm and}\ \mus=\frac{n}{12b_0}
$$
,  moreover,  the dual-phase sub-gradient bounds are $\Lc\leq 2n$ and $\Ls\leq C_3nb_1^{-1}$,  respectively.  This implies  the second phase step size $\eta\asymp b_1^2(nb_0)^{-1}$ for Algorithm~\ref{alg:RsGrad}.
\end{lemma}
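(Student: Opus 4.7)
The plan is to parallel the Gaussian-noise proof of Lemma~\ref{lem:Gaussian-l1} while replacing every place where the Gaussian tail was used by tools valid under only the $1+\eps$ moment and density conditions of Assumption~\ref{assump:heavy-tailed}. Write $\bd:=\M-\M^{\ast}$ with $\rank(\bd)\leq 2r$ and $z_i:=\langle\bd,\X_i\rangle\sim\mathcal{N}(0,\fro{\bd}^2)$ (since each $\X_i$ still has i.i.d.\ $N(0,1)$ entries), and decompose the excess loss conditionally on $\{\X_i\}$ as
\begin{align*}
f(\M)-f(\M^{\ast}) \;=\; \sum_{i=1}^n \phi(z_i) \;+\; R_n,\qquad \phi(z):=\EE_{\xi}\bigl[|\xi-z|-|\xi|\bigr],
\end{align*}
with $\EE[R_n\mid\{\X_i\}]=0$. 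A direct computation using the median-zero assumption yields the closed form $\phi(z)=2\int_0^{|z|}(|z|-s)\,h_{\xi}(s\cdot\operatorname{sign}(z))\,ds$, from which the lower-density condition $h_{\xi}\geq b_0^{-1}$ on $[-30\gamma,30\gamma]$ gives the pointwise bounds $\phi(z)\ge z^2/b_0$ for $|z|\le 30\gamma$ and $\phi(z)\ge 30\gamma|z|/b_0$ for $|z|\ge 30\gamma$.

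Phase-two sharpness is the central step. For $\taus\le\fro{\bd}<30\gamma$ I would use $\phi(z_i)\ge (|z_i|\wedge 30\gamma)^2/b_0$ and establish the clipped lower isometry $\sum_{i=1}^n(|z_i|\wedge 30\gamma)^2\ge c\,n\,\fro{\bd}^2$ uniformly over rank-$2r$ $\bd$, via Gaussian Lipschitz concentration combined with an $\eps$-net on the rank-$2r$ unit Frobenius sphere (covering number $e^{Crd_1}$). The residual $R_n$ is then controlled uniformly by chaining: because $|\xi-z|-|\xi|$ is $1$-Lipschitz in $z$, a standard entropy bound yields $\sup_{\bd}|R_n|\le C\sqrt{nrd_1}\,\fro{\bd}$ with probability at least $1-e^{-c_2rd_1}$. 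Choosing $\taus=C_2 b_0\sqrt{rd_1/n}$ makes this residual subdominant, producing $\mus=n/(12b_0)$.

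Phase-one sharpness is simpler and sidesteps $R_n$ entirely. The triangle inequality gives $f(\M)-f(\M^{\ast})\ge \sum_i|z_i|-2\sum_i|\xi_i|$. The same net-plus-Lipschitz argument yields a rank-$2r$ restricted $\ell_1$-RSC, $\sum_i|z_i|\ge c_1 n\,\fro{\bd}$ with probability at least $1-e^{-c_2rd_1}$. For the noise sum, a Nemirovski-type inequality applied to $|\xi_i|$ truncated at level $\sim(n\gamma)^{1/(1+\eps)}$ (this is where the $1+\eps$ moment enters) gives $\sum_i|\xi_i|\le 2n\gamma$ with probability at least $1-{}^*\!c_1 n^{-\eps}$, which is the sole source of the polynomial-tail factor in the final probability. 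Since $\fro{\bd}\ge 30\gamma$ in phase one, the noise contribution is at most $(2/15)n\fro{\bd}$, and combining yields $\muc=n/6$.

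For the sub-gradient bounds, any $\G=-\sum_i s_i\X_i$ with $s_i\in\partial|\cdot|(\xi_i-z_i)\subseteq[-1,1]$ obeys $\|\G\|_{\rm F,r}\le \sup_{\W:\,\rank(\W)\le r,\,\fro{\W}\le 1}\sum_i|\langle\X_i,\W\rangle|\le 2n$ by the same $\ell_1$-RSC, giving $\Lc=2n$. For $\Ls$ I refine via Stein's identity: since $\xi_i\perp\X_i$ and $H_{\xi}(0)=1/2$,
\begin{align*}
\EE[\G] \;=\; -2\sum_i\EE\!\bigl[(H_{\xi}(z_i)-1/2)\,\X_i\bigr] \;=\; 2n\,\EE[h_{\xi}(z)]\cdot\bd,
\end{align*}
so the upper bound $h_{\xi}\le b_1^{-1}$ produces $\|\EE\G\|_{\rm F,r}\le 2nb_1^{-1}\fro{\bd}$. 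The centered part has sub-Gaussian entries of variance $O(n)$, and its truncated-Frobenius norm is $O(\sqrt{nrd_1})$ uniformly in $\M$ by matrix concentration combined with a net over $\MM_r$; for $\fro{\bd}\ge \taus$ the bias dominates, yielding $\Ls=C_3 n/b_1$. The principal technical obstacle is the uniform chaining control of $R_n$ in phase two, tightly tied to the Lipschitz structure of $|\cdot|$ on the rank-$2r$ manifold; the heavy-tailed Nemirovski step is soft but forces the $1-{}^*\!c_1 n^{-\eps}$ confidence that appears in the final statement.
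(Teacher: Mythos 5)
Your phase-one argument, your bound $\Lc=2n$, and your treatment of the noise sum $\sum_i|\xi_i|$ via truncation are essentially the paper's proof (the paper's Lemma~\ref{teclem:Contraction of Heavy Tailed Random Variables} truncates at level $n$ rather than $(n\gamma)^{1/(1+\eps)}$, but the mechanism and the resulting $1-{}^*\!c_1n^{-\eps}$ confidence are the same). For phase-two sharpness you reorganize the argument: the paper first computes the full expectation $\EE[f(\M)-f(\M^*)]\geq \frac{n}{6b_0}\fro{\M-\M^*}^2$ (integrating the conditional expectation $g_{\X_i}$ against the Gaussian law of $\langle\X_i,\M-\M^*\rangle$) and then controls the entire deviation with one symmetrization--contraction--Adamczak bound (Theorem~\ref{thm:empirical process}), whereas you lower-bound $\sum_i\phi(z_i)$ pathwise via the clipped isometry $\sum_i(|z_i|\wedge 30\gamma)^2\gtrsim n\fro{\bd}^2$ and separately chain the conditionally centered residual $R_n$. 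Both routes work; yours needs a peeling argument over the scale of $\fro{\bd}$ because the clipping at the fixed level $30\gamma$ destroys homogeneity, a wrinkle the paper's expectation-plus-deviation route avoids.

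The genuine gap is in your bound on $\Ls$. Your Stein-identity computation of the mean, $\EE\G=2n\,\EE[h_\xi(z)]\,\bd$, is correct and clean, but the fluctuation step --- ``the centered part has sub-Gaussian entries of variance $O(n)$, and its truncated-Frobenius norm is $O(\sqrt{nrd_1})$ uniformly in $\M$ by matrix concentration combined with a net over $\MM_r$'' --- does not go through as stated. The multipliers $s_i=\mathrm{sign}(\xi_i-\langle\bd,\X_i\rangle)$ are \emph{discontinuous} in $\bd$, so a net over $\MM_r$ does not control the discretization error: moving from $\bd$ to a nearby net point can flip the sign of every $s_i$ with $|\xi_i-\langle\bd,\X_i\rangle|$ small, and bounding the number and contribution of these flips requires a dedicated argument (e.g.\ a VC/halfspace count combined with the density upper bound $h_\xi\leq b_1^{-1}$), not generic matrix concentration. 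This is precisely the difficulty the paper's Lemma~\ref{lemma:upperboundsubgradient:heavytail} is built to avoid: it never concentrates the sub-gradient itself, but instead sets $\M_1=\M+\frac{b_1}{2n}\mathrm{SVD}_r(\G)$, lower-bounds $f(\M_1)-f(\M)\geq\frac{b_1}{2n}\fror{\G}^2$ by convexity, upper-bounds the same increment using the expected smoothness from $h_\xi\leq b_1^{-1}$ plus the Lipschitz empirical-process bound on \emph{function values} (Corollary~\ref{cor:thm:empirical process}, where contraction applies because the loss, unlike its sub-gradient, is Lipschitz), and solves the resulting quadratic inequality for $\fror{\G}$. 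You should either supply the sign-flip control for your net argument or switch to the paper's function-value route for this step.
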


Viewing $b_0/\gamma$ as a constant,  then $\taus\ll \tauc$ if the sample size is large.  Equipped with Lemma~\ref{lem:heavytail-l1},  the convergence and statistical performance of RsGrad Algorithm~\ref{alg:RsGrad} under heavy-tailed noise is guaranteed by the following theorem.  Note that if a constant factor depends on $b_0$ or/and $b_1$,    a star sign is placed on top right of it.

\begin{theorem}\label{thm:heavytail-l1}
Suppose Assumption~\ref{assump:heavy-tailed} and the conditions of Lemma~\ref{lem:heavytail-l1} hold.  There exist constants $c_0^{\ast},  {}^*\!c_1,  c_2,c_3, c_4^{\ast}\in(0,1 ), C_1^{\ast},  C_2>0$ such that if the initialization $\fro{\M_0-\M^*}\leq c_0^{\ast}\sigma_{r}$,  the stepsizes are chosen as in Proposition~\ref{prop:main},  then with probability at least $1-{}^*\!c_1n^{-\eps}-\exp(-c_2rd_1)$,  Algorithm~\ref{alg:RsGrad} has the following dynamics: 
	\begin{enumerate}[(1)]
		\item during phase one when $\|\M_{l-1}-\M^{\ast}\|_{\rm F}\geq 30\gamma$,   the updated estimate satisfies $\fro{\M_{l}-\M^{*}}\leq (1-c_{3})^{l}\fro{\M_{0}-\M^{*}}$;  after $l_{1} = \Theta\big(\log(\sigma_r/\gamma)\big)$ iterations in phase one,  it achieves the rate $\fro{\M_{l_{1}} - \M^*} \leq 30\gamma $; 
		\item during phase two when $\|\M_{l_1+l}-\M^{\ast}\|_{\rm F} \leq 30\gamma$,  the updated estimate satisfies $\fro{\M_{l_{1}+l+1}-\M^{*}}\leq(1-c_{4}^{\ast})\fro{\M_{l_{1}+l}-\M^{*}}$;  after $l_{2}=\Theta\big(C_1^{\ast}\log\big(\gamma n/(d_1r)\big)\big)$ iterations in phase two, it achieves the rate $\fro{\M_{l_{2}+l_{1}} - \M^*} \leq C_2b_0(rd_1/n)^{1/2}$.
	\end{enumerate}
\end{theorem}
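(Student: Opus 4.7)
The plan is to derive Theorem~\ref{thm:heavytail-l1} by combining Lemma~\ref{lem:heavytail-l1} with the general dynamics of Proposition~\ref{prop:main}, exactly in parallel with how Theorem~\ref{thm:Gaussian-l1} follows from Lemma~\ref{lem:Gaussian-l1}. First I would condition on the event of probability at least $1-{}^*\!c_1 n^{-\eps}-\exp(-c_2 rd_1)$ on which Lemma~\ref{lem:heavytail-l1} certifies Assumption~\ref{assump:dual-phase} for the $\ell_1$ objective (\ref{eq:l1-loss}) with the explicit parameters $(\tauc,\taus,\muc,\mus,\Lc,\Ls)=(30\gamma,\,C_2 b_0(rd_1/n)^{1/2},\,n/6,\,n/(12b_0),\,2n,\,C_3 n b_1^{-1})$, so that $\tauc\gg\taus$ under $n\geq C_1 rd_1$ and both phases of Proposition~\ref{prop:main} are well defined.

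Next I would verify that the preconditions of Proposition~\ref{prop:main} transform into the initialization and stepsize hypotheses of the theorem. A short computation gives the two controlling ratios $\muc^2\Lc^{-2}=1/144$, absolute, and $\mus^2\Ls^{-2}=\Theta(b_1^2/b_0^2)$, so the Proposition's initialization requirement $\fro{\M_0-\M^*}\leq c_0\sigma_r\min\{\mus^2\Ls^{-2},\muc^2\Lc^{-2}\}$ sharpens to $\fro{\M_0-\M^*}\leq c_0^{\ast}\sigma_r$ with $c_0^{\ast}\asymp b_1^2/b_0^2$. Similarly the phase-one and phase-two stepsize windows specialize to $\eta_0\asymp\sigma_r/n$ and $\eta\asymp b_1^2/(nb_0)$, matching the values noted after Lemma~\ref{lem:heavytail-l1}.

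Then the contraction inequalities of Proposition~\ref{prop:main} yield both dynamics. Phase one contracts by the absolute factor $1-c_3$ with $c_3:=0.04/144$, so reducing $\fro{\M_0-\M^*}=O(\sigma_r)$ below $\tauc=30\gamma$ takes $l_1=\Theta(\log(\sigma_r/\gamma))$ iterations. Phase two contracts by $1-c_4^{\ast}$ with $c_4^{\ast}\asymp b_1^2/b_0^2$, so driving the error from $30\gamma$ down to $\taus=C_2 b_0(rd_1/n)^{1/2}$ takes $l_2=\Theta(C_1^{\ast}\log(\gamma n/(rd_1)))$ iterations, where $C_1^{\ast}\asymp b_0^2/b_1^2$ absorbs $1/c_4^{\ast}$ together with the logarithmic dependence on $\gamma/b_0$.

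The derivation is essentially bookkeeping once Lemma~\ref{lem:heavytail-l1} and Proposition~\ref{prop:main} are in hand, so there is no serious analytical obstacle in the theorem itself; all of the real work is hidden in the heavy-tailed regularity Lemma. The one point where care is needed is to cleanly separate constants that are truly absolute (the probability constant $c_2$, the phase-one contraction $c_3$, and the final-error constant $C_2$) from those that inherit dependence on the density bounds $b_0,b_1$ (the star-marked $c_0^{\ast}, c_4^{\ast}, C_1^{\ast}$). This tracking is what makes precise the picture that phase-one convergence is insensitive to the noise distribution whereas phase-two convergence, driven by the randomized smoothing effect of $\xi_i$, inherits the shape constants of the density $h_\xi$.
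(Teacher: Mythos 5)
Your proposal is correct and takes precisely the approach the paper intends: the paper omits the proof of this theorem (as it does for Theorem~\ref{thm:Gaussian-l1}) on the grounds that it is a direct combination of Lemma~\ref{lem:heavytail-l1} with Proposition~\ref{prop:main}. Your bookkeeping of the constants is sound: $\muc^2\Lc^{-2}=1/144$ is absolute and controls phase one, while $\mus^2\Ls^{-2}\asymp b_1^2/b_0^2\leq 1$ controls phase two, so the initialization radius, phase-two contraction factor, and phase-two iteration count correctly acquire the starred ($b_0,b_1$-dependent) constants $c_0^\ast$, $c_4^\ast$, $C_1^\ast$ exactly as stated.
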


By Theorem~\ref{thm:heavytail-l1},  if $b_0/\gamma$ is a constant,  Algorithm~\ref{alg:RsGrad} outputs a final estimate achieving the rate $\|\hat \M-\M^{\ast}\|_{\rm F}=O_p\big(\EE |\xi|\cdot (rd_1/n)^{1/2}\big)$.  Note that Gaussian noise also satisfy Assumption~\ref{assump:heavy-tailed},  in which case the aforesaid rate has a matching minimax lower bound.  Therefore,  we claim this rate to be minimax optimal.  Moreover,  our estimator computes fast and requires only a logarithmic factor of iterations.  

\subsection{Huber loss with heavy-tailed noise}
Huber loss is prevalent in robust statistics \citep{huber1965robust,sun2020adaptive,elsener2018robust} and defined by $\rho_{H,\delta}(x):=x^2\mathbbm{1}(|x|\leq \delta)+(2\delta|x|-\delta^2)\mathbbm{1}(|x|>\delta)$ where $\delta>0$ is referred to as the robustification parameter.   Clearly,  the function $\rho_{H,\delta}$ is Lipschitz with a constant $2\delta$.  Then the loss function is given by 
\begin{align}\label{eq:loss-huber}
f(\M):=\sum_{i=1}^n \rho_{H,\delta}(Y_i-\langle \M,  \X_i\rangle)
\end{align}
Huber loss is robust to heavy-tailed noise but it turns out that a slightly different assumption is needed for our purpose.
\begin{assumption}\label{assump:heavytail-huber}(Heavy-tailed noise \Romannum{2}) 
There exists an $\eps>0$ such that $\EE |\xi|^{1+\eps}<+\infty$.  The noise has a symmetric distribution,  i.e.,  $H_{\xi}(x)=1-H_{\xi}(-x)$. Denote $\gamma=\EE|\xi|$. There exist constants $b_0, b_1$ (may be dependent on $\gamma$ and $\delta$) such that 
\begin{align*}
H_{\xi}(x+\delta)-H_{\xi}(x-\delta)\geq 2\delta b_0^{-1} &\ \ \  \textrm{ for all } |x|\leq 24\gamma+2\delta;\\
H_{\xi}(x+\delta)-H_{\xi}(x-\delta)\leq 2\delta b_1^{-1}, &\ \ \ \forall x\in \RR
\end{align*}
,  where $\delta$ is the Huber loss parameter.   
\end{assumption}   
Compared with Assumption~\ref{assump:heavy-tailed},  here the noise is required to be symmetric but the condition on density function is relaxed.  The dual-phase regularity condition of Huber loss is ensured by the following lemma. If a constant factor depends on $\EE|\xi|$ or/and $\EE|\xi|^{1+\epsilon}$, a star sign is placed on top left of it.

\begin{lemma}\label{lem:heavytail-huber}
Suppose Assumption~\ref{assump:heavytail-huber} holds.  There exist absolute constants $C_1,C_2, C_3, {}^*\!c_1,c_2>0$ such that if $n\geq C_1rd_1$,  then  the Huber loss (\ref{eq:loss-huber}) satisfies Assumption~\ref{assump:dual-phase} with probability at least $1-{}^*\!c_1n^{-\eps}-\exp(-c_2rd_1)$ where 
$$
\tauc=24\gamma+2\delta, \ \taus=C_{\delta,1}b_0\Big(\frac{rd_1}{n}\Big)^{1/2},\ \muc=\frac{\delta n}{4}, \ {\rm and}\ \mus=\frac{\delta n}{3b_0}
$$
,  moreover,  the dual-phase sub-gradient bounds are $\Lc\leq 2\delta n$ and $\Ls\leq C_{\delta,2}nb_1^{-1}$,  respectively,  where $C_{\delta,1}=C_3\cdot \max\{1, \delta^{-1}\}$ and $C_{\delta,2}=C_3\cdot \max\{1,\delta\}$.    This implies  the second phase step size $\eta\asymp b_1^2(nb_0)^{-1}$ for Algorithm~\ref{alg:RsGrad}.
\end{lemma}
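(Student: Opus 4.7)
The plan is to establish the dual-phase sharpness and sub-gradient bounds by combining a population-level analysis (via averaging over $\xi$ and the Gaussian projection $\langle \bd, \X\rangle$) with uniform concentration over rank-$r$ matrices. Throughout, let $\bd = \M - \M^{\ast}$ and $\tau = \fro{\bd}$, so that $Z_i := \langle \bd, \X_i\rangle \sim N(0,\tau^2)$. Define the population-level loss shift $g(t) := \EE_{\xi}\rho_{H,\delta}(\xi - t)$. Because $\xi$ is symmetric and $\rho_{H,\delta}$ is even, $g$ is symmetric about zero, convex, and twice differentiable with $g''(t) = 2[H_\xi(t+\delta) - H_\xi(t-\delta)]$. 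Assumption~\ref{assump:heavytail-huber} then yields $4\delta/b_0 \le g''(t) \le 4\delta/b_1$ for $|t|\le 24\gamma + 2\delta$ and the upper bound globally, so in particular $|g'(t)| \le (4\delta/b_1)|t|$ everywhere.

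For sharpness, the expectation of $f(\M) - f(\M^{\ast}) = \sum_i[\rho_{H,\delta}(\xi_i - Z_i) - \rho_{H,\delta}(\xi_i)]$ reduces to $n\cdot F(\bd)$ with $F(\bd) = \EE_{Z\sim N(0,\tau^2)}[g(Z) - g(0)]$. In phase two ($\taus \le \tau < \tauc$), a Taylor expansion combined with the lower bound on $g''$ restricted to $|Z|\le\tauc$ yields $g(Z) - g(0) \ge 2\delta b_0^{-1} Z^2\mathbbm{1}(|Z|\le\tauc)$, and a Gaussian-tail estimate gives $F(\bd)\gtrsim \delta b_0^{-1}\tau^2$. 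In phase one ($\tau \ge \tauc$), the uniform sandwich $2\delta|x|-\delta^2 \le \rho_{H,\delta}(x) \le 2\delta|x|$ together with $\EE|\xi - Z| \ge \EE|Z| - \EE|\xi|$ produces $F(\bd) \ge 2\delta\sqrt{2/\pi}\,\tau - 4\delta\gamma - \delta^2 \gtrsim \delta\tau$, after the $\gamma$ and $\delta^2$ corrections are absorbed using $\tau \ge 24\gamma+2\delta$. Passing to the high-probability uniform bound uses symmetrization plus the Ledoux--Talagrand contraction: since $\rho_{H,\delta}$ is $2\delta$-Lipschitz with $\rho_{H,\delta}(0)=0$, the Rademacher complexity satisfies $\EE\sup_{\bd}|\sum_i\epsilon_i[\rho_{H,\delta}(\xi_i-Z_i)-\rho_{H,\delta}(\xi_i)]| \le 4\delta\,\EE\sup_{\bd}|\sum_i\epsilon_i Z_i| \lesssim \delta\tau\sqrt{nrd_1}$, and Talagrand's inequality upgrades this to a uniform deviation of the same order. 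For $n\gtrsim rd_1$ and $\tau\ge\taus$ the population lower bound dominates the deviation, yielding $\muc = \delta n/4$ in phase one and $\mus = \delta n/(3b_0)$ in phase two.

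For the sub-gradient bound I use the variational characterization $\|\G\|_{\mathrm{F},r} = \sup_{\M':\mathrm{rank}(\M')\le r,\,\fro{\M'}\le 1}\langle \G,\M'\rangle$ together with the low-rank restricted isometry $\|\calA(\M')\| \le (1+\delta_r)\sqrt{n}\fro{\M'}$, which holds with high probability once $n\gtrsim rd_1$. Writing $\G = \sum_i s_i\X_i$ with $|s_i|\le 2\delta$ and applying Cauchy--Schwarz, the universal estimate $\|\G\|_{\mathrm{F},r} \le \|(s_i)\|\cdot(1+\delta_r)\sqrt{n} \le 2\delta n(1+\delta_r)$ gives $\Lc\le 2\delta n$. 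In phase two I split $\G = \G_0 + \G_1$ with $\G_0 = \sum_i\rho'_{H,\delta}(\xi_i)\X_i$ and $\G_1 = \sum_i[\rho'_{H,\delta}(\xi_i - Z_i) - \rho'_{H,\delta}(\xi_i)]\X_i$. The symmetry of $\xi$ kills the mean of $\G_0$, and matrix Bernstein (feasible since $|\rho'_{H,\delta}(\xi_i)|\le 2\delta$ deterministically) yields $\|\G_0\|_{\mathrm{F},r} \lesssim \delta\sqrt{nrd_1}$, which is absorbed into $\Ls\tau$ once $\tau\ge\taus$. For $\G_1$, the identity $\EE[\G_1\mid\{\X_i\}] = -\sum_i g'(Z_i)\X_i$ together with $|g'(t)|\le(4\delta/b_1)|t|$ gives $\|\EE[\G_1\mid\X]\|_{\mathrm{F},r} \lesssim (\delta/b_1)\,n\,\tau$ by Cauchy--Schwarz and RIP; the conditional deviation $\G_1 - \EE[\G_1\mid\X]$ is controlled by matrix concentration using the refined bound $|\rho'_{H,\delta}(\xi_i - Z_i) - \rho'_{H,\delta}(\xi_i)| \le \min(2|Z_i|,4\delta)$. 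Combining these two estimates yields $\Ls\lesssim C_{\delta,2}\,n/b_1$.

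The main technical obstacle is the phase-two sub-gradient deviation $\G_1 - \EE[\G_1\mid\X]$ uniformly over the low-rank manifold. This demands a peeling argument in $\tau$ combined with an $\varepsilon$-net over rank-$r$ matrices, while the coupling between $\X_i$ (which enters both $Z_i$ and the matrix summand) and $\xi_i$ (which enters nonlinearly through $\rho'_{H,\delta}$) complicates decoupling. The $n^{-\eps}$ probability term originates from precisely those steps where empirical sums such as $\sum_i|\xi_i|$ must be compared to their means under only the $1+\eps$-moment assumption; crucially, the Huber derivative cap $|\rho'_{H,\delta}|\le 2\delta$ confines this polynomial tail to a small number of scalar sums so that all matrix-valued concentration retains exponential Bernstein tails, which is what permits the $\exp(-c_2 rd_1)$ component in the stated probability bound.
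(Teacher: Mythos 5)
Your treatment of the dual-phase sharpness is correct and follows essentially the paper's route: the same second-derivative computation $g''(t)=2[H_{\xi}(t+\delta)-H_{\xi}(t-\delta)]$ with $g'(0)=0$ by symmetry, the same Taylor/Gaussian-truncation argument for phase two, the same sandwich $2\delta|x|-\delta^2\le\rho_{H,\delta}(x)\le 2\delta|x|$ for phase one, and a symmetrization-plus-contraction uniform deviation bound of order $\delta\sqrt{nrd_1}\,\fro{\M-\M^{\ast}}$ (the paper's Theorem~\ref{thm:empirical process}). The only cosmetic difference is that you run phase one at the population level, which lets you avoid the empirical sum $\sum_i|\xi_i|$ there, whereas the paper bounds it empirically via Lemma~\ref{teclem:Contraction of Heavy Tailed Random Variables}; both work. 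Your $\Lc$ bound via $\G=\sum_i s_i\X_i$, $|s_i|\le 2\delta$, Cauchy--Schwarz and restricted isometry is also a valid (arguably cleaner) alternative to the paper's argument.

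The gap is in your $\Ls$ bound. You decompose $\G=\G_0+\G_1$ and then yourself identify, without resolving, the "main technical obstacle": uniform control of $\G_1-\EE[\G_1\mid\X]$ in $\|\cdot\|_{\mathrm{F},r}$ over the rank-$r$ manifold. This is not a routine application of matrix Bernstein: once you take the supremum over $\M$, the quantity $Z_i=\langle\bd,\X_i\rangle$ inside $\rho'_{H,\delta}$ varies with $\M$, the summands' envelope $\min(2|Z_i|,4\delta)\|\X_i\|$ couples the sensing matrix to itself, and the variance proxy scales with $\tau=\fro{\bd}$, so the peeling-plus-net argument you gesture at would have to be executed in full (including a Lipschitz-in-$\M$ discretization step) before the claimed $\Ls\lesssim n b_1^{-1}$ is established. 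As written, the hardest part of the lemma is asserted rather than proved. The paper sidesteps this entirely with a quadratic-inequality trick (Lemma~\ref{lemma:upperboundsubgradient:huber}): set $\M_1=\M+\tfrac{b_1}{4n\delta}{\rm SVD}_r(\G)$, upper-bound $f(\M_1)-f(\M)$ using only the population smoothness $g''\le 4\delta b_1^{-1}$ together with the \emph{same} uniform deviation theorem already proved for the sharpness step, lower-bound it by $\tfrac{b_1}{4n\delta}\fror{\G}^2$ from the definition of the sub-gradient, and solve for $\fror{\G}$. That route requires no new concentration input beyond what you already have, so I would recommend replacing your direct decomposition of $\G$ with it; otherwise you must supply the uniform matrix concentration argument in detail.
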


Proposition~\ref{prop:main} and Lemma~\ref{lem:heavytail-huber}  lead to the following convergence and statistical performance of RsGrad Algorithm~\ref{alg:RsGrad} for Huber loss.  Similarly,  the constants dependent on $b_0$ or/and $b_1$ are marked with star.

\begin{theorem}\label{thm:heavytail-huber}
Suppose Assumption~\ref{assump:heavytail-huber} and the conditions of Lemma~\ref{lem:heavytail-huber} hold.  There exist constants $c_0^{\ast},  {}^*\!c_1,  c_2,c_3, c_4^{\ast}\in(0,1 ), C_{1,\delta}^{\ast},  C_{2}>0$ such that if the initialization $\fro{\M_0-\M^*}\leq c_0^{\ast}\sigma_r$,  the stepsizes are chosen as in Proposition~\ref{prop:main},  then with probability at least $1-{}^*\!c_1n^{-\eps}-\exp(-c_2rd_1)$,  Algorithm~\ref{alg:RsGrad} has the following dynamics: 
	\begin{enumerate}[(1)]
		\item during phase one when $\|\M_{l-1}-\M^{\ast}\|_{\rm F}\geq 24\gamma+2\delta$,   the updated estimate satisfies $\fro{\M_{l}-\M^{*}}\leq (1-c_{3})^{l}\fro{\M_{0}-\M^{*}}$;  after $l_{1} = \Theta\big(\log(\sigma_{r}/\gamma)\big)$ iterations in phase one,  it achieves the rate $\fro{\M_{l_{1}} - \M^*} \leq 24\gamma +2\delta$; 
		\item during phase two when $\|\M_{l_1+l}-\M^{\ast}\|_{\rm F}\leq 24\gamma+2\delta$,  the updated estimate satisfies $\fro{\M_{l_{1}+l+1}-\M^{*}}\leq(1-c_{4}^{\ast})\fro{\M_{l_{1}+l}-\M^{*}}$;  after $l_{2}=\Theta\big(C_1^{\ast}\log\big(\gamma n/(d_1r)\big)\big)$ iterations in phase two, it achieves the rate $\fro{\M_{l_{2}+l_{1}} - \M^*} \leq C_2\max\{1,\delta^{-1}\}\cdot b_0(rd_1/n)^{1/2}$.
	\end{enumerate}
\end{theorem}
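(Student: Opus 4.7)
The plan is to combine the dual-phase regularity bounds from Lemma~\ref{lem:heavytail-huber} with the deterministic convergence statement of Proposition~\ref{prop:main}, exactly in parallel with the derivation sketched for Theorem~\ref{thm:Gaussian-l1} from Proposition~\ref{prop:main} and Lemma~\ref{lem:Gaussian-l1}. First, I would work on the intersection event (of probability at least $1-{}^*\!c_1n^{-\eps}-\exp(-c_2 rd_1)$) on which Assumption~\ref{assump:dual-phase} holds with the explicit constants $\tauc=24\gamma+2\delta$, $\taus=C_{\delta,1}b_0(rd_1/n)^{1/2}$, $\muc=\delta n/4$, $\mus=\delta n/(3b_0)$, $\Lc\leq 2\delta n$, $\Ls\leq C_{\delta,2}nb_1^{-1}$ supplied by Lemma~\ref{lem:heavytail-huber}. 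Then I would compute the ratios driving Proposition~\ref{prop:main}: $\muc^2/\Lc^2\geq 1/64$ is an absolute constant, while $\mus^2/\Ls^2\gtrsim \delta^2 b_1^2/(C_{\delta,2}^2 b_0^2)$ is a quantity depending only on $\delta,b_0,b_1$. In particular, the Proposition's initialization requirement $\fro{\M_0-\M^{\ast}}\leq c_0\sigma_r\min\{\mus^2/\Ls^2,\muc^2/\Lc^2\}$ reduces to $\fro{\M_0-\M^{\ast}}\leq c_0^{\ast}\sigma_r$ for some $c_0^{\ast}$ that absorbs $b_0,b_1,\delta$.

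Next, for phase one I would invoke Proposition~\ref{prop:main}(1). Since $0.04\muc^2/\Lc^2\geq 0.04/64$, the geometrically decaying stepsize $\eta_l=(1-0.04\muc^2/\Lc^2)^l\eta_0$ produces $\fro{\M_l-\M^{\ast}}\leq(1-c_3)^l\fro{\M_0-\M^{\ast}}$ for an absolute $c_3\in(0,1)$. Phase one terminates when the iterate crosses into $\BB_2$, i.e., when the left hand side first drops below $\tauc=24\gamma+2\delta$. Solving $(1-c_3)^{l_1}c_0^{\ast}\sigma_r\leq 24\gamma+2\delta$ delivers $l_1=\Theta(\log(\sigma_r/\gamma))$, absorbing the $\delta$ contribution into the hidden constant since $\delta$ is fixed.

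For phase two I would invoke Proposition~\ref{prop:main}(2) with the constant stepsize $\eta\in[0.125\mus/\Ls^2,\,0.75\mus/\Ls^2]$, which using the computed bounds scales as $\eta\asymp b_1^2/(nb_0)$. The per-iteration contraction is $1-\mus^2/(32\Ls^2)=:1-c_4^{\ast}$, where $c_4^{\ast}$ inherits the dependence $\delta^2 b_1^2/b_0^2$ and is therefore star-marked. Iterating from $\tauc$ down to $\taus$ demands $(1-c_4^{\ast})^{l_2}\tauc\leq\taus$, which yields $l_2=\Theta(C_1^{\ast}\log(\gamma n/(d_1 r)))$ for $C_1^{\ast}\asymp(c_4^{\ast})^{-1}$, matching the statement. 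The total iteration count is $l_1+l_2$, and taking $l=l_1+l_2$ gives the advertised rate $\fro{\M_{l_1+l_2}-\M^{\ast}}\lesssim\taus= C_{\delta,1}b_0(rd_1/n)^{1/2}=C_2\max\{1,\delta^{-1}\}\cdot b_0(rd_1/n)^{1/2}$.

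The proof is essentially bookkeeping and should pose no conceptual obstacle once Lemma~\ref{lem:heavytail-huber} is accepted; the genuine difficulties are already encapsulated in that lemma (verifying the dual-phase regularity for Huber loss under only a $1+\eps$ moment) and in Proposition~\ref{prop:main} (the dual stepsize scheduling). The only care point I anticipate is tracking which constants depend on $(b_0,b_1,\delta)$ versus which are absolute, so that the star-marked constants in the statement are honestly attributed; in particular $c_4^{\ast}$ and $C_{1,\delta}^{\ast}$ must reflect the $\delta b_1/b_0$ dependence arising from $\mus/\Ls$, while $c_3$ coming from $\muc/\Lc$ remains universal.
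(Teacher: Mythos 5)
Your proposal is correct and follows exactly the route the paper takes: the paper omits the proof of this theorem because, as stated for Theorem~\ref{thm:Gaussian-l1}, it is a direct combination of Proposition~\ref{prop:main} with the corresponding dual-phase lemma (here Lemma~\ref{lem:heavytail-huber}) plus bookkeeping of constants, which you carry out accurately, including the observation that $\muc^2/\Lc^2\geq 1/64$ is absolute (giving a universal $c_3$) while $\mus^2/\Ls^2\asymp\min\{1,\delta^2\}b_1^2/b_0^2$ is parameter-dependent (so $c_4^{\ast}$, $c_0^{\ast}$ and $C_1^{\ast}$ carry the star).
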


 By Theorem~\ref{thm:heavytail-huber},  RsGrad algorithm outputs a final estimate $\hat \M$ with Frobenius-norm error $O\big(\max\{1,\delta^{-1}\}\cdot b_0(rd_1/n)^{1/2}\big)$.   Assumption~\ref{assump:heavytail-huber} implies that $b_0$ is at least lower bounded by $2(\gamma+\delta)$ and thus the error rate is lower bounded by $\max\{\delta+\gamma,1+\gamma\delta^{-1}\} (rd_1/n)^{1/2}\big)$.   This suggests an interesting transition with respect to the robustification parameter: when $\delta\leq \gamma$,  increasing $\delta$ does not affect the error rate; when $\delta\geq \max\{\gamma, 1\}$,  the error rate appreciates if $\delta$ becomes larger.    Therefore,  an appropriate choice can be $\delta\asymp \gamma$.  In contrast,  by Theorem~\ref{thm:heavytail-l1},  the absolute loss achieves an ultimate estimator with a comparable error rate but is free of the tuning parameter $\delta$.  Note that,  though absolute loss is a special case of Huber loss (i.e.,  $\delta=0$),  our Theorem~\ref{thm:heavytail-huber} is not directly applicable to the absolute loss due to proof technicality.   
Finally,  \cite{sun2020adaptive} derived the statistical performance of Huber loss for linear regression without imposing regularity conditions on noise density,  resulting into 
 an error rate that is possibly slower than the traditional $O(n^{-1/2})$ rate.

\subsection{Quantile loss with heavy-tailed noise}
Quantile loss was initially proposed by \cite{koenker1978regression} and has been a popular loss function for robust statistics  \citep{welsh1989m,koenker2001quantile,wang2012quantile}.  Denote the quantile loss by $\rho_{Q,\delta}:=\delta x\mathbbm{1}(x\geq 0)+(\delta-1)x\mathbbm{1}(x<0)$ for any $x\in \RR$ with $\delta:=\PP(\xi\leq 0)$. Notice that the function $\rho_{Q,\delta}$ is Lipschitz with a constant $\max\{\delta,1-\delta\}$. The loss function is given by \begin{align}\label{eq:loss-quantile}
	f(\M):=\sum_{i=1}^n \rho_{Q,\delta}(Y_i-\langle \M,  \X_i\rangle).
\end{align}
Absolute loss  could be viewed as a special case of quantile loss when $\PP(\xi\leq0)=1/2$,  i.e.,  noise has median zero.   Similarly,  a slightly different assumption on the heavy-tailed noise is necessary for quantile loss.  
\begin{assumption}(Heavy-tailed noise \Romannum{3}) \label{assump:heavy-tailed quantile}
	There exists an $\varepsilon>0$ such that $\EE |\xi|^{1+\varepsilon}<+\infty$.  Suppose $\delta:=H_{\xi}(0)$ lies in $(0,1)$. Denote $\gamma=\EE|\xi|$.  There exist constants $b_0, b_1>0$ (may be dependent on $\gamma$) such that
	\begin{align*}
		h_{\xi}(x)\geq b_0^{-1}, &\ \ \  \textrm{ for all } |x|\leq 15\gamma\max\{\delta,1-\delta\};\\
		h_{\xi}(x)\leq b_1^{-1}, &\ \ \ \forall x\in \RR.
	\end{align*}
\end{assumption}
When the noise has median zero,  namely $\delta=1/2$, Assumption~\ref{assump:heavy-tailed quantile} becomes identical to Assumption~\ref{assump:heavy-tailed}. The dual-phase regularity of the quantile loss is provided by the following lemma. Notice if a constant factor depends on $\EE|\xi|$ or/and $\EE|\xi|^{1+\epsilon}$, a star sign is placed on top left of it.

\begin{lemma}\label{lem:heavytail-quantile}
	Suppose Assumption~\ref{assump:heavy-tailed quantile} holds.  There exist absolute constants $C_1, C_2, C_3, {}^*\!c_1,c_2>0$ such that if $n\geq C_1rd_1$,  then  the absolute loss (\ref{eq:loss-quantile}) satisfies Assumption~\ref{assump:dual-phase} with probability at least $1-{}^*\!c_1n^{-\eps}-\exp(-c_2rd_1)$ where 
	$$
	\tauc=15\gamma\max\{\frac{1}{\delta},\frac{1}{1-\delta}\}, \ \taus=C_2b_0\Big(\frac{rd_1}{n}\Big)^{1/2},\ \muc=\frac{n}{6}\min\{\delta,1-\delta\}, \ {\rm and}\ \mus=\frac{n}{12b_0}
	$$
	,  moreover,  the dual-phase sub-gradient bounds are $\Lc\leq 2n\max\{\delta,1-\delta\}$ and $\Ls\leq C_3nb_1^{-1}$,  respectively.  This implies  the second phase step size $\eta\asymp b_1^2(nb_0)^{-1}$ for Algorithm~\ref{alg:RsGrad}.
\end{lemma}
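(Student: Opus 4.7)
The plan is to adapt the proof of Lemma~\ref{lem:heavytail-l1} to the asymmetric setting, exploiting that quantile loss is a weighted generalization of absolute loss satisfying the pointwise envelope $\min(\delta,1-\delta)|x|\leq \rho_{Q,\delta}(x)\leq \max(\delta,1-\delta)|x|$ and reducing to $\tfrac12|\cdot|$ when $\delta=\tfrac12$. The central analytic tool is Knight's identity $\rho_{Q,\delta}(u-\xi)-\rho_{Q,\delta}(-\xi)=u\,\psi_\delta(-\xi)+\int_0^u\bigl(\mathbbm{1}(\xi<s)-\mathbbm{1}(\xi<0)\bigr)ds$ with $\psi_\delta(r):=\delta-\mathbbm{1}(r<0)$, which decomposes the excess loss into a linear stochastic term and a deterministic curvature integral. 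Combined with (i) truncation of $\sum_i|\xi_i|$ via Markov and the $1+\eps$-moment assumption, and (ii) uniform concentration over the rank-$r$ manifold via an $\eps$-net on $\MM_r\cap\{\fro{\cdot}=1\}$ and Mendelson's small-ball method, the argument mirrors the scaffolding of Lemma~\ref{lem:heavytail-l1} with the asymmetry factors $\delta$ versus $1-\delta$ propagated throughout.

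For the two sharpness conditions, set $u_i:=\langle\M-\M^{\ast},\X_i\rangle\sim N(0,\fro{\M-\M^{\ast}}^2)$. In phase one, applying the lower envelope and the reverse triangle inequality gives $f(\M)-f(\M^{\ast})\geq \min(\delta,1-\delta)\sum_i|u_i|-2\max(\delta,1-\delta)\sum_i|\xi_i|$; the first sum concentrates uniformly around $n\sqrt{2/\pi}\,\fro{\M-\M^{\ast}}$ on the rank-$r$ sphere, while the second is at most $2n\gamma$ on a high-probability event (via Markov and the $1+\eps$ moment), so taking $\tauc=15\gamma\max(1/\delta,1/(1-\delta))$---whose constant accounts precisely for the worst-case envelope ratio---yields $\muc\asymp n\min(\delta,1-\delta)$. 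In phase two, Knight's identity combined with $\EE u_i=0$ gives the pointwise identity $\EE[f(\M)-f(\M^{\ast})]=n\,\EE_u\!\int_0^u(H_\xi(s)-\delta)\,ds$, and the density lower bound $h_\xi\geq b_0^{-1}$ on $|s|\leq 15\gamma\max(1/\delta,1/(1-\delta))$ together with truncation of $u_i$ at this range (losing only a constant factor by sub-Gaussian tails) yields $\EE[f(\M)-f(\M^{\ast})]\geq n\fro{\M-\M^{\ast}}^2/(2b_0)$. A Mendelson small-ball / peeling argument then promotes this pointwise bound to a uniform quadratic lower bound over rank-$r$ $\M$ with $\mus\asymp n/b_0$, valid above the threshold $\taus\asymp b_0(rd_1/n)^{1/2}$ below which stochastic fluctuation of the empirical process dominates its mean.

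For the sub-gradient bounds, any $\G\in\partial f(\M)$ has the form $\G=\sum_i g_i\X_i$ with $g_i\in[\delta-1,\delta]$, so $|g_i|\leq\max(\delta,1-\delta)$. Using the dual characterization $\|\G\|_{\rm F,r}=\sup_{\|\B\|_{\rm F}=1,\,{\rm rank}(\B)\leq r}\langle\G,\B\rangle$ and uniform concentration of $\sum_i|\langle \X_i,\B\rangle|$ around $n\sqrt{2/\pi}$ over low-rank unit-Frobenius $\B$ immediately yields $\Lc\leq 2n\max(\delta,1-\delta)$. For phase two I would decompose $g_i=g_i^{\ast}+(g_i-g_i^{\ast})$ with $g_i^{\ast}\in\partial\rho_{Q,\delta}(-\xi_i)$; since $|g_i-g_i^{\ast}|\leq \mathbbm{1}\bigl(\xi_i\in[\min(0,u_i),\max(0,u_i)]\bigr)$, the density upper bound $h_\xi\leq b_1^{-1}$ yields $\EE|g_i-g_i^{\ast}|\leq |u_i|/b_1$, and matrix Bernstein uniformly over rank-$r$ dual variables gives $\|\sum_i(g_i-g_i^{\ast})\X_i\|_{\rm F,r}\lesssim n\fro{\M-\M^{\ast}}/b_1$, while the fixed matrix $\sum_i g_i^{\ast}\X_i$ has truncated Frobenius norm of order $\sqrt{nrd_1}$ and is absorbed into $\Ls\fro{\M-\M^{\ast}}$ whenever $\fro{\M-\M^{\ast}}\geq\taus$, delivering $\Ls\asymp n/b_1$; the advertised stepsize $\eta\asymp b_1^2/(nb_0)$ then follows from $\mus\Ls^{-2}$.

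The hardest step is the uniform phase-two sharpness: a tight quadratic lower bound on the thin-shell empirical process $\M\mapsto f(\M)-f(\M^{\ast})$ over the rank-$r$ manifold, where the pointwise curvature $\asymp 1/b_0$ has to survive uniformly despite heavy-tailed noise and the potentially severe asymmetry between $\delta$ and $1-\delta$. I would attack this via a peeling argument across dyadic shells $\{2^j\taus\leq \fro{\M-\M^{\ast}}\leq 2^{j+1}\taus\}$ combined with Mendelson's small-ball inequality, truncating each $\xi_i$ at level $T\asymp n^{1/(1+\eps)}$ and absorbing the residual via Markov; beyond careful bookkeeping of the $\delta$-asymmetry, no fundamentally new ingredient beyond Lemma~\ref{lem:heavytail-l1} is expected.
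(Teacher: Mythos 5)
Your overall plan is sound and the central analytic step matches the paper: the conditional-expectation identity $\EE_\xi[\rho_{Q,\delta}(\xi - z)-\rho_{Q,\delta}(\xi)]=\int_0^z(H_\xi(s)-\delta)\,ds$ (which the paper derives directly rather than citing Knight) combined with the density lower bound on the window $|s|\leq\tauc$ gives the quadratic mean lower bound $\tfrac{n}{6b_0}\fro{\M-\M^*}^2$, and your phase-one bound via the envelope of $\rho_{Q,\delta}$ mirrors the paper's use of subadditivity $\rho_{Q,\delta}(x_1+x_2)\leq\rho_{Q,\delta}(x_1)+\rho_{Q,\delta}(x_2)$ plus $\rho_{Q,\delta}(\xi)+\rho_{Q,\delta}(-\xi)=|\xi|$ (your reverse-triangle bound loses a factor $2\max(\delta,1-\delta)$ versus the paper's tight $1$, but this only perturbs the constant $15$ in $\tauc$). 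Where you diverge: for the uniform promotion of the pointwise curvature bound, you propose a Mendelson small-ball plus peeling argument, whereas the paper directly controls $\sup_{\M\in\MM_r}|f(\M)-f(\M^*)-\EE[\cdot]|/\fro{\M-\M^*}\lesssim\sqrt{nrd_1}$ via symmetrization, Ledoux--Talagrand contraction, and Adamczak's tail inequality for suprema of empirical processes (Theorem~\ref{thm:empirical process}); the latter is cleaner given that the quantile loss is globally Lipschitz, whereas small-ball would have to be adapted to the non-quadratic curvature integral. For $\Ls$, you take a genuinely different route: you decompose the sub-gradient coefficients $g_i=g_i^*+(g_i-g_i^*)$ and control the two pieces via density-upper-bound expectation control and matrix Bernstein, whereas the paper instead sets $\M_1:=\M+\tfrac{b_1}{2n}{\rm SVD}_r(\G)$, sandwiches $f(\M_1)-f(\M)$ between the sub-gradient lower bound $\tfrac{b_1}{2n}\fror{\G}^2$ and an upper bound obtained from the density upper bound plus the empirical process theorem, and solves the resulting quadratic inequality for $\fror{\G}$. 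Your decomposition is workable but carries some hidden work you should not understate: the centered term $\sum_i(g_i-g_i^*)\X_i$ depends on $\M$ through the indicators, so the claimed bound $\lesssim n\fro{\M-\M^*}/b_1$ must hold \emph{uniformly} over rank-$r$ $\M$, which needs a chaining or covering argument on top of Bernstein rather than a single fixed-$\M$ application; by contrast the paper's quadratic-inequality argument inherits uniformity for free from the already-established empirical process bound. Both routes deliver the stated constants.
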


By combining Proposition~\ref{prop:main} with Lemma~\ref{lem:heavytail-quantile},  we get the convergence dynamics and statistical accuracy of RsGrad Algorithm~\ref{alg:RsGrad} for quantile loss.  Same as previous,  those constants dependent on $b_0$ or/and $b_1$ are marked with a star at top.

\begin{theorem}\label{thm:heavytail-quantile}
	Suppose Assumption~\ref{assump:heavy-tailed quantile} and the conditions of Lemma~\ref{lem:heavytail-quantile} hold.  There exist constants $c_0^{\ast},  {}^*\!c_1,  c_2,c_3, c_4^{\ast}\in(0,1 ), C_1^{\ast},  C_2>0$ such that if the initialization $\fro{\M_0-\M^*}\leq c_0^{\ast}\sigma_r$,  the stepsizes are chosen as in Proposition~\ref{prop:main},  then with probability at least $1-{}^*\!c_1n^{-\eps}-\exp(-c_2rd_1)$,  Algorithm~\ref{alg:RsGrad} has the following dynamics: 
	\begin{enumerate}[(1)]
		\item during phase one when $\|\M_{l-1}-\M^{\ast}\|_{\rm F}\geq 15\gamma\max\{\delta^{-1}, (1-\delta)^{-1}\}$,   the updated estimate satisfies $\fro{\M_{l}-\M^{*}}\leq (1-c_{3}\min\{\delta(1-\delta)^{-1}, (1-\delta)\delta^{-1}\})^{l}\fro{\M_{0}-\M^{*}}$;  after $l_{1} = \Theta\big(\log(\min\{\delta,1-\delta\}\cdot\sigma_{r}/\gamma)\big)$ iterations in phase one,  it achieves the rate $\fro{\M_{l_{1}} - \M^*} \leq 15\gamma\max\{\delta^{-1}, (1-\delta)^{-1}\} $; 
		\item during phase two when $\|\M_{l_1+l}-\M^{\ast}\|_{\rm F}\leq 15\gamma\max\{\delta^{-1}, (1-\delta)^{-1}\}$,  the updated estimate satisfies $\fro{\M_{l_{1}+l+1}-\M^{*}}\leq(1-c_{4}^{\ast})\fro{\M_{l_{1}+l}-\M^{*}}$;  after $l_{2}=\Theta\big(C_1^{\ast}\log\big(\max\{\delta^{-1}, (1-\delta)^{-1}\}\cdot\gamma n/(d_1r)\big)\big)$ iterations in phase two, it achieves the rate $\fro{\M_{l_{2}+l_{1}} - \M^*} \leq C_2b_0(rd_1/n)^{1/2}$.
	\end{enumerate}
\end{theorem}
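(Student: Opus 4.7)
The plan is to derive Theorem~\ref{thm:heavytail-quantile} as a direct composition of Proposition~\ref{prop:main} (the deterministic dual-phase convergence theorem for RsGrad) with Lemma~\ref{lem:heavytail-quantile} (the dual-phase regularity of the quantile loss), in complete parallel to the way Theorem~\ref{thm:heavytail-l1} follows from Lemma~\ref{lem:heavytail-l1} and Theorem~\ref{thm:heavytail-huber} follows from Lemma~\ref{lem:heavytail-huber}. First I would invoke Lemma~\ref{lem:heavytail-quantile} to secure, on an event of probability at least $1-{}^*\!c_1 n^{-\eps}-\exp(-c_2 rd_1)$, that the objective in \eqref{eq:loss-quantile} satisfies Assumption~\ref{assump:dual-phase} with the listed constants $\tauc,\taus,\muc,\mus,\Lc,\Ls$.

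Next I would plug these values into Proposition~\ref{prop:main}. Direct computation gives $\muc^{2}/\Lc^{2}=(1/144)\bigl(\min\{\delta,1-\delta\}/\max\{\delta,1-\delta\}\bigr)^{2}$ and $\mus^{2}/\Ls^{2}\asymp b_1^{2}/b_0^{2}$. Hence the warm-start requirement of Proposition~\ref{prop:main}, namely $\fro{\M_0-\M^*}\leq c_0\sigma_r\min\{\mus^{2}/\Ls^{2},\muc^{2}/\Lc^{2}\}$, is implied by $\fro{\M_0-\M^*}\leq c_0^*\sigma_r$ once $c_0^*$ is chosen small enough as a function of $b_0,b_1$, and of $\delta$ through the ratio above. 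The initial stepsize $\eta_0\in[0.2\fro{\M_0-\M^*}\muc\Lc^{-2},\ 0.3\fro{\M_0-\M^*}\muc\Lc^{-2}]$ and the phase-two stepsize range $[0.125\mus\Ls^{-2},\ 0.75\mus\Ls^{-2}]\asymp b_1^2/(nb_0)$ are then fixed accordingly. Proposition~\ref{prop:main}(1) yields geometric contraction in phase one with factor $1-0.04\muc^2\Lc^{-2}$, which I then translate into an iteration count by solving $(1-0.04\muc^2\Lc^{-2})^{l_1}c_0^*\sigma_r\leq\tauc=15\gamma\max\{\delta^{-1},(1-\delta)^{-1}\}$, giving $l_1=\Theta(\log(\min\{\delta,1-\delta\}\sigma_r/\gamma))$. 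Proposition~\ref{prop:main}(2) then supplies a constant contraction factor $1-\mus^2(32\Ls^2)^{-1}=1-c_4^*$ depending only on $b_0,b_1$; solving $(1-c_4^*)^{l_2}\tauc\leq\taus=C_2 b_0(rd_1/n)^{1/2}$ yields $l_2=\Theta(C_1^*\log(\gamma n\max\{\delta^{-1},(1-\delta)^{-1}\}/(d_1r)))$, producing the final rate $\fro{\hat\M-\M^*}\leq C_2 b_0(rd_1/n)^{1/2}$.

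The analysis is really bookkeeping rather than new ideas. The main obstacles are three-fold. First, one must track how the asymmetry parameter $\delta$ propagates through $\muc/\Lc$ so that the phase-one contraction factor in the statement faithfully reflects the $\min\{\delta(1-\delta)^{-1},(1-\delta)\delta^{-1}\}$ dependence, in particular checking that the warm-start constant $c_0^*$ does not implicitly degrade when $\delta$ is close to $0$ or $1$. Second, one must ensure the condition $\sigma_r=\Theta(\taus)$ remarked after Proposition~\ref{prop:main}, so that $c_0^*\sigma_r$ exceeds $\tauc$ in the regime of interest and both phases actually occur. Third, one must confirm that the high-probability event of Lemma~\ref{lem:heavytail-quantile} is a single event concerning $f$ alone and therefore controls all iterates simultaneously, so no union bound over $l$ is needed. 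Once these points are verified, the theorem follows line by line in exact parallel to the proofs of Theorems~\ref{thm:heavytail-l1} and~\ref{thm:heavytail-huber}.
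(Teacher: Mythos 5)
Your plan is exactly the one the paper implies: read off $\tauc,\taus,\muc,\mus,\Lc,\Ls$ from Lemma~\ref{lem:heavytail-quantile} on the stated high-probability event, feed them into Proposition~\ref{prop:main}, and translate the two contraction rates into iteration counts $l_1,l_2$, precisely as is done (and left implicit) for the $\ell_1$ and Huber cases. One bookkeeping caveat you acknowledge but do not resolve: plugging $\muc = \tfrac{n}{6}\min\{\delta,1-\delta\}$ and $\Lc = 2n\max\{\delta,1-\delta\}$ into Proposition~\ref{prop:main}(1) gives a phase-one factor $1 - 0.04\,\muc^2/\Lc^2 = 1 - \Theta\bigl((\min\{\delta,1-\delta\}/\max\{\delta,1-\delta\})^{2}\bigr)$, i.e.\ the ratio appears squared, whereas the theorem's display has it to the first power; so a literal application of the proposition establishes a slightly weaker (larger) contraction factor than stated, and the discrepancy lies in the theorem's display rather than in your argument.
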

Parameter $\delta$ is kept in all results to show the explicit dependence on $\delta$. Theorem~\ref{thm:heavytail-quantile} shows the final estimator $\hat{\M}$ output by RsGrad Algorithm~\ref{alg:RsGrad} for quantile loss achieves Frobenius-norm error $O(C_2b_0(d_1r/n)^{1/2})$.

\section{Initialization and Algorithmic Parameters Selection}\label{sec:discussion}

\paragraph*{Initialization.} Convergence of Algorithm~\ref{alg:RsGrad} crucially relies on the warm initialization $\M_0$.  Towards that end,  one can simply apply the shrinkage low-rank approximation as in \cite{fan2021shrinkage} where a finite $2+\eps$ moment condition on noise is required.  Here,  for simplicity,  we investigate the performance of vanilla low-rank approximation.  

\begin{theorem}(Initialization Guarantees)\label{thm:initialization}
	Suppose there exists some constant $\varepsilon\in(0,1]$ such that $\gamma_1:=\EE|\xi|^{1+\varepsilon}<+\infty$. Initialize $\M_0:=\operatorname{SVD}_r(n^{-1}\sum_{i=1}^{n}\X_{i}Y_i)$. 
For any small $c_0>0$,  there exist constants $C, c_1,c_2,c_3>0$ depending only on $c_0$ such that if the sample size  
$$n>C\max\big\{\kappa^2d_1r^2\log d_1, (d_1r)^{\frac{1+\epsilon}{2\epsilon}}\sigma_r^{-\frac{1+\epsilon}{\epsilon}}(\gamma_1\log d_1)^{\frac{1}{\epsilon}}\big\},$$
 with probability over $1-c_1d_1^{-1}-c_2\log^{-1} d_1-c_3e^{-d_1}$, the initialization satisfies $\fro{\M_0-\M^*}\leq c_0\sigma_{r}$,  where $\kappa:=\sigma_1(\M^{\ast})\sigma_r^{-1}(\M^{\ast})$ denotes the condition number.  
\end{theorem}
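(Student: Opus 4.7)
The plan is to bound $\|\M_0-\M^*\|_{\rm F}$ by passing through the operator norm of $\hat\M - \M^*$, where $\hat\M := n^{-1}\sum_{i=1}^n \X_i Y_i$, and then to control the signal-deviation piece and the heavy-tailed noise piece separately. Because $\M_0 = \operatorname{SVD}_r(\hat\M)$ is the best rank-$r$ approximation of $\hat\M$ and $\M^*$ itself has rank at most $r$, a standard Eckart--Young plus rank-doubling argument yields $\|\M_0-\M^*\|_{\rm F} \leq (2+\sqrt{2})\sqrt{r}\,\|\hat\M-\M^*\|$, where $\|\cdot\|$ is the operator norm. Since $\X_i$ has i.i.d. $N(0,1)$ entries, $\EE[\X_i\langle \X_i,\M^*\rangle]=\M^*$, and I decompose
\begin{equation*}
\hat\M-\M^* \;=\; \underbrace{n^{-1}\sum_{i=1}^n \bigl(\X_i\langle \X_i,\M^*\rangle-\M^*\bigr)}_{=:\ \A_n}\;+\;\underbrace{n^{-1}\sum_{i=1}^n \xi_i \X_i}_{=:\ \B_n}.
\end{equation*}

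For the signal term $\A_n$, the summands are i.i.d., zero-mean, and (being a product of two Gaussian objects) sub-exponential in a suitable sense. A matrix Bernstein / $\eps$-net argument controls the matrix variance by $O(d_1\|\M^*\|_{\rm F}^2)$ and yields $\|\A_n\| \leq C\|\M^*\|_{\rm F}\sqrt{d_1\log d_1/n}$ with probability at least $1-c_1 d_1^{-1}$. Using $\|\M^*\|_{\rm F}\leq \sqrt{r}\,\sigma_1(\M^*)=\sqrt{r}\,\kappa\sigma_r$ and demanding $\sqrt{r}\|\A_n\|\leq (c_0/2)\sigma_r$ gives the first sample-size clause $n\gtrsim \kappa^2 r^2 d_1\log d_1$.

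For the noise term $\B_n$ under only a $1+\eps$ moment, I employ a truncation device. Set $\tau := (C n\gamma_1\log d_1)^{1/(1+\eps)}$ and split $\xi_i=\tilde\xi_i+\bar\xi_i$ with $\tilde\xi_i:=\xi_i\mathbbm{1}(|\xi_i|\leq \tau)$. By a union bound with Markov's inequality, $\PP(\exists i:|\xi_i|>\tau)\leq n\gamma_1/\tau^{1+\eps}\leq c_2\log^{-1}d_1$, so outside this rare event $\B_n$ coincides with $n^{-1}\sum_i \tilde\xi_i \X_i$. The truncated summands obey $|\tilde\xi_i|\leq \tau$ and $\EE[\tilde\xi_i^2]\leq \gamma_1\tau^{1-\eps}$, while the truncation-induced mean $|\EE\tilde\xi_i|\leq \gamma_1\tau^{-\eps}$ is absorbed into lower-order terms. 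Intersecting with the high-probability event $\{\max_i\|\X_i\|\leq C\sqrt{d_1}\}$ (holding except on probability $c_3 e^{-d_1}$), matrix Bernstein applied to $\tilde\xi_i\X_i$ produces
\begin{equation*}
\|\B_n\| \;\leq\; C\sqrt{d_1\log d_1}\cdot \gamma_1^{1/(1+\eps)}\, n^{-\eps/(1+\eps)}.
\end{equation*}
Requiring $\sqrt{r}\|\B_n\|\leq (c_0/2)\sigma_r$ is equivalent to $n\gtrsim (d_1 r)^{(1+\eps)/(2\eps)}\sigma_r^{-(1+\eps)/\eps}(\gamma_1\log d_1)^{1/\eps}$, the second clause in the sample-size hypothesis. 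Combining the two operator-norm bounds via the triangle inequality and the rank reduction yields $\|\M_0-\M^*\|_{\rm F}\leq c_0\sigma_r$ with the advertised probability.

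The hardest step is the bound on $\B_n$: standard matrix Bernstein demands a uniform bound and a sharp variance proxy, but only $\EE|\xi|^{1+\eps}$ is finite. The choice of truncation level $\tau\asymp (n\gamma_1\log d_1)^{1/(1+\eps)}$ is dictated by simultaneously (i) making the excluded tail event have probability $O(\log^{-1}d_1)$, and (ii) optimally balancing the two Bernstein contributions $\sqrt{\tau^{1-\eps}\gamma_1 d_1\log d_1/n}$ and $\tau\sqrt{d_1}\log d_1/n$, both of which turn out to scale as $\gamma_1^{1/(1+\eps)}\sqrt{d_1\log d_1}\,n^{-\eps/(1+\eps)}$. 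The remaining steps for $\A_n$ are essentially routine Gaussian concentration.
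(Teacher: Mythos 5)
Your decomposition of $\hat\M-\M^*$ into the signal-deviation piece $\A_n$ and the noise piece $\B_n$, and your treatment of $\A_n$ via matrix Bernstein with variance proxy $O(d_1\|\M^*\|_{\rm F}^2)$, match the paper's argument essentially line for line, and the resulting first sample-size clause $n\gtrsim\kappa^2 r^2 d_1\log d_1$ is correct.

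For $\B_n$, however, you take a genuinely different route from the paper, and the difference matters quantitatively. The paper does not apply matrix Bernstein to $\sum_i\xi_i\X_i$ at all. Instead it conditions on the realization of $\boldsymbol\xi$, observes that conditionally $\sum_i\xi_i\X_i$ is a $d_1\times d_2$ matrix with i.i.d.\ $N(0,\sum_i\xi_i^2)$ entries whose operator norm is $\lesssim\sqrt{d_1}\bigl(\sum_i\xi_i^2\bigr)^{1/2}$ with probability $1-ce^{-d_1}$ and \emph{no} $\log d_1$ factor, and then bounds $\sum_i\xi_i^2$ directly via a Markov-type tail inequality for $\sum_i\xi_i^2$ (Lemma~\ref{teclem:bound of second moment heavy tailed}), landing on $\bigl(\sum_i\xi_i^2\bigr)^{1/2}\leq (2n\gamma_1\log d_1)^{1/(1+\eps)}$ with probability $1-(\log d_1)^{-1}$. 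This yields $\|\B_n\|\lesssim\sqrt{d_1}\,n^{-\eps/(1+\eps)}(\gamma_1\log d_1)^{1/(1+\eps)}$, which is exactly what the stated sample-size clause $n\gtrsim (d_1r)^{(1+\eps)/(2\eps)}\sigma_r^{-(1+\eps)/\eps}(\gamma_1\log d_1)^{1/\eps}$ requires. Your truncation-plus-Bernstein route is more generic (it would work for sub-Gaussian rather than Gaussian $\X_i$), but it is not sharp here, and your assertion that the two Bernstein terms ``both turn out to scale as $\gamma_1^{1/(1+\eps)}\sqrt{d_1\log d_1}\,n^{-\eps/(1+\eps)}$'' is incorrect. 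With $\tau=(Cn\gamma_1\log d_1)^{1/(1+\eps)}$ the variance term is $\asymp\gamma_1^{1/(1+\eps)}(\log d_1)^{1/(1+\eps)}\sqrt{d_1}\,n^{-\eps/(1+\eps)}$, but the sup-norm term is $\tau\sqrt{d_1}\log d_1/n\asymp\gamma_1^{1/(1+\eps)}(\log d_1)^{1+1/(1+\eps)}\sqrt{d_1}\,n^{-\eps/(1+\eps)}$, i.e.\ larger by a full factor of $\log d_1$. You cannot lower $\tau$ to rebalance, since the tail-exceedance probability constraint already pins $\tau$ at this value. Carrying your bound through therefore inflates the second clause of the sample-size condition by an extra $(\log d_1)^{(1+\eps)/\eps}$ relative to the theorem as stated. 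To recover the theorem's rate you need the paper's conditional-Gaussian step (or, equivalently, a version of matrix Bernstein specialized enough to Gaussian measurement matrices to shed the log on the sup-norm term), not a vanilla truncation-plus-Bernstein argument.

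Two smaller notes: the truncation-induced mean you mention is automatically zero here because $\EE\X_i=0$ and $\tilde\xi_i\perp\X_i$, so no absorbing is needed; and the Eckart--Young-plus-rank-doubling constant is $2\sqrt{2}$ rather than $2+\sqrt{2}$, though this is immaterial.
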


Compared with \cite{fan2021shrinkage},  Theorem~\ref{thm:initialization} only requires a finite $1+\eps$ moment condition and a similar sample size condition.  However,  the vanilla low-rank approximation demands a much larger signal-to-noise ratio condition.    Since the shrinkage-based approaches \citep{fan2021shrinkage,minsker2018sub} are already effective to provide warm initializations,  we spare no additional efforts to improve Theorem~\ref{thm:initialization}. 

\paragraph*{Selection of initial stepsize.} According to Proposition~\ref{prop:main},  the initial stepsize depends on $\|\M_0-\M^{\ast}\|_{\rm F}$,  $\muc$ and $\Lc$.  Fortunately,  as shown in Lemma~\ref{lem:Gaussian-l1} - \ref{lem:heavytail-quantile} for specific applications,  the quantities $\muc$ and $\Lc$ are often of order $O(n)$ up to a factor of loss-related parameters.  It suffices to obtain an estimate of $\|\M_0-\M^{\ast}\|_{\rm F}$.  Towards that end,  one appropriate method is to take the average $n^{-1}\sum_{i=1}^n |Y_i-\langle \M_0, \X_i\rangle|$,  which,  under mild conditions,  is of the same order of $\|\M_0-\M^{\ast}\|_{\rm F}$ with high probability.   Besides,  a simpler way is to directly use the operator norm $c\|\M_0\|$ to replace $\|\M_0-\M^{\ast}\|_{\rm F}$ where $c>0$ is a tuning parameter,  if we believe that $\|\M_0-\M^{\ast}\|_{\rm F}$ is indeed of order $O(\sigma_r)$.

\paragraph*{Determine the phase.} Due to the geometrically decay of stepsizes during phase one,  after some iterations,  the stepsize will enter the level $O(\mus\Ls^{-2})$ desired by phase two convergence.  On the other hand,  if phase one iterations continue,  the stepsize will diminish fast and the value of objective function becomes stable.  This is indeed observed in numerical experiments.  See Figure~\ref{fig:conv-Gaussian} and Figure~\ref{fig:conv-t2} in Section~\ref{sec:simulation}.  Therefore,  once the stepsize falls below a pre-chosen small threshold,  the phase two iterations can be initiated and the stepsizes are fixed afterwards until convergence.

\section{Extension to Robust Low-rank Tensor Estimation}\label{sec:tensor}
We now extend RsGrad to low-rank tensor estimation equipped with a robust loss function and investigate its performance under Gaussian or heavy-tailed noise. Without loss of generality, only the absolute loss is considered here.


	\paragraph*{Preliminaries in Tucker tensors.}
	An $m$-th order tensor is an $m$-dimensional array. 
    For an $m$-th order tensor $\M\in \RR^{d_1\times d_2\times \cdots \times d_m}$, denote its mode-$j$ matricization as $\M_{(j)}\in \RR^{d_j\times d_j^{-}}$, where $d_j^{-}:=\prod_{l\neq j} d_l$. 
    The mode-$j$ marginal multiplication between $\M$ and a matrix $\U^{\top}\in\RR^{r_j\times d_j}$ results into an $m$-th order tensor of size $d_1\times\cdots d_{j-1}\times r_j\times d_{j+1}\cdots d_m$, whose elements are $(\M\times_j \U^{\top})_{i_1\cdots i_{j-1}l i_{j+1}\cdots i_m}:=\sum_{i_j=1}^{d_j} \M_{i_1\cdots i_{j-1} i_{j} i_{j+1}\cdots i_m}\U_{i_jl}.$  A simple fact is $\N_{(j)}=\U^{\top} \M_{(j)}$. 
     There exist multiple definitions of tensor ranks. We focus on Tucker ranks and the associated Tucker decomposition \citep{tucker1966some}.  $\M$ is said to have Tucker rank $\r:=(r_1,r_2,\cdots, r_m)$ if its mode-$j$ matricization has rank $r_j$, i.e., $r_j=\text{rank}(\M_{(j)})$. 
     Then $\M$ admits the Tucker decomposition $\M=\C\cdot \llbracket \U_1,\cdots,\U_m\rrbracket:=\C\times_1\U_1\times_2\cdots\times_m\U_m$ where the core tensor $\C$ is of size $r_1\times\cdots\times r_m$ and $\U_j\in\RR^{d_j\times r_j}$ has orthonormal columns. 
Denote $\MM_{\r}$ be set of all tensors with Tucker rank at most $\r$, namely, 
     $\MM_{\r}=\{\M\in\RR^{d_1\times\dots\times d_m}: \rank(\M_{(j)})\leq r_j,\, \text{ for all }j =1,\cdots,m\}.$ Interested readers are suggested to refer to \cite{kolda2009tensor,de2008tensor,de2000multilinear} for more details about Tucker ranks and Tucker decomposition. 
     
     \hspace{0.1cm}
     
Let $\{(\X_i, Y_i)\}_{i=1}^n$ be i.i.d. observations satisfying the trace regression model (\ref{eq:tr_model}) with $\X_i\in\RR^{d_1\times\cdots\times d_m}$ that, for simplicity, is assumed to have i.i.d. $N(0, 1)$ entries. Suppose the underlying tensor $\M^{\ast}\in\MM_{\r}$. Our goal is to reliably estimate $\M^{\ast}$ under possibly heavy-tailed noise and using as few observations as possible. 
Towards that end, we aim to solve 
\begin{align}\label{eq:tensor:loss}
	\hat{\M}:=\underset{\M\in\MM_\r}{\arg\min}\ f(\M),  \quad \textrm{ where } f(\M) := \sum_{i=1}^n\rho\big(\langle \M,  \X_i\rangle-Y_i\big)
\end{align}
with $\rho(\cdot):\RR\mapsto\RR_+$ a proper chosen robust loss function. RsGrad is readily applicable to solve program~(\ref{eq:tensor:loss}) except that now it operates on the tensor Riemannian and its sub-gradient needs to be taken carefully. 

The tensor-version RsGrad is presented in Algorithm~\ref{alg:tensor:RsGrad}.  Note that the Riemannian sub-gradient is explicitly computed as follows. Let $\M_l=\C_l \cdot \llbracket \U_1^{(l)}, \cdots, \U_m^{(l)} \rrbracket$ be its Tucker decomposition. Given the vanilla sub-gradient $\G_l$, we have 
$$
\calP_{\TT_l}(\G_l)=\G_{l} \cdot \llbracket \U_1^{(l)}\U_1^{(l)\top},\cdots, \U_m^{(l)}\U_m^{(l)\top}\rrbracket+\sum_{i=1}^{m}\C_l\times_{j\in[m]\backslash i}\U_{j}^{(l)}\times_{i}\dot{\U}_i^{(l)}
$$
where $\dot{\U}_i^{(l)}$ is defined by $(\I-\U_i^{(l)}\U_i^{(l)\top})(\G_l)_{(i)}(\otimes_{j\neq i} \U_j^{l} )(\C_l)_{(i)}^{\dagger}$. The Tucker rank of $\calP_{\TT_l}(\G_l)$ is at most $2\r$. The retraction step in Algorithm~\ref{alg:tensor:RsGrad} relies on high-order SVD (HOSVD, \cite{de2000multilinear}, \cite{xia2019sup}),  which is obtained by $\text{HOSVD}_{\r}(\M):=\M\cdot \llbracket \U_1\U_1^{\top},\cdots, \U_m\U_m^{\top}\rrbracket$ with $\U_j$ being the top-$r_j$ left singular vectors of $\M_{(j)}$ for all $j\in[m]$.

\begin{algorithm}
	\caption{Riemannian Sub-gradient Descent for Tensor (RsGrad)}\label{alg:tensor:RsGrad}
	\begin{algorithmic}
		\STATE{\textbf{Input}: observations $\{(\X_i, Y_i)\}_{i=1}^n$,  max iterations $\lmax$,  step sizes $\{\eta_l\}_{l=0}^{\lmax}$.}
		\STATE{Initialization: $\M_0\in\MM_\r$}
		\FOR{$l = 0,\ldots,\lmax$}
		\STATE{Choose a vanilla subgradient:  $\G_l\in\partial f(\M_l)$}
		\STATE{Compute Riemannian sub-gradient: $\wt\G_l = \calP_{\TT_l}(\G_l)$}
		\STATE{Retraction to $\MM_\r$: $\M_{l+1} = \text{HOSVD}_\r(\M_l - \eta_{l}\wt\G_l)$}
		\ENDFOR
		\STATE{\textbf{Output}: $\hat\M=\M_{\lmax}$}
	\end{algorithmic}
\end{algorithm}


\subsection{General Convergence Performance}

Similarly, we establish the general convergence performance of Algorithm~\ref{alg:tensor:RsGrad} under the dual-phase regularity conditions. The following assumption is almost identical to Assumption~\ref{assump:dual-phase} except that the matrix SVD is replaced by tensor low-rank approximation. 
\begin{assumption}\label{assump:tensor:dual-phase}(Dual-phase regularity conditions for tensor)
	Let $\tauc>\taus>0$ and define two regions around the truth $\M^{\ast}$
	$$
	\BB_1:=\left\{\M\in\MM_\r: \|\M-\M^{\ast}\|_{\rm F}\geq \tauc\right\}\quad {\rm and}\quad \BB_2:=\left\{\M\in\MM_\r: \tauc>\|\M-\M^{\ast}\|_{\rm F}\geq \taus\right\}
	$$
	,  where $\|\cdot\|_{\rm F}$ represents the Frobenius norm of a tensor.  Suppose the following conditions hold.  
	\begin{enumerate}[1.]
		\item {\bf (Dual-phase sharpness)} The function $f(\cdot): \RR^{d_1\times \cdots\times d_m}\mapsto \RR_+$ is said to satisfy rank-$\r$ restricted $(\tauc,\taus,\muc,\mus)$ dual-phase sharpness with respect to $\M^{\ast}$ if
		$$
		f(\M)-f(\M^{\ast})\geq 
		\begin{cases}
			\muc \|\M-\M^{\ast}\|_{\rm F}, & \textrm{ for }\ \  \M\in\BB_1; \\
			\mus \|\M-\M^{\ast}\|_{\rm F}^2, & \textrm{ for }\ \  \M\in\BB_2.
		\end{cases}
		$$
		
		\item {\bf (Dual-phase sub-gradient bound)} The function $f(\cdot): \RR^{d_1\times \cdots\times d_m}\mapsto \RR_+$ has rank-r restricted $(\tauc,\taus,\Lc,\Ls)$ dual-phase sub-gradient bound with respect to $\M^{\ast}$ meaning that for any sub-gradient $\G\in\partial f(\M)$,
		$$
		\|\G\|_{\rm F,  2\r}\leq 
		\begin{cases}
			\Lc, & \textrm{ for }\ \ \M\in\BB_1;\\
			\Ls\|\M-\M^{\ast}\|_{\rm F},& \textrm{ for }\ \ \M\in\BB_2.
		\end{cases}
		$$
		The truncated Frobenius norm $\|\G\|_{\rm F, 2\r}:=\sup_{\U_j} \|\G\cdot \llbracket \U_1^{\top},\cdots, \U_m^{\top}\rrbracket\|_{\rm F}$ where $\U_j\in\RR^{d_1\times 2r_j}$ has orthonormal columns.  
	\end{enumerate}
\end{assumption}

Under Assumption~\ref{assump:tensor:dual-phase} and with a warm initialization, the following Proposition~\ref{prop:tensor:main} shows that RsGrad converges linearly and attains the statistical error rate $O(\taus)$. The signal strength of $\M^{\ast}$ is defined by $\underline{\lambda}:=\min_{j=1,\cdots,m}\{\sigma_{r_j}(\M_{(j)}^*)\}$ that is the smallest nonzero singular value among all matricizations of $\M^*$.

\begin{proposition}\label{prop:tensor:main}
Suppose Assumption~\ref{assump:tensor:dual-phase} holds,  the initialization $\M_0$ satisfies 
	$$\fro{\M_0-\M^*}\leq c_0\underline{\lambda}\cdot\min\left\{\frac{1}{m+1}\frac{\mus^2}{\Ls^2},\ \frac{1}{m+1}\frac{\muc^2}{\Lc^2},\ \frac{1}{m(m+3)}\frac{\mus}{\Ls},\ \frac{1}{m(m+3)}\frac{\muc}{\Lc}\right\}
	$$
for a small but absolute constant $c_0>0$,  and the initial stepsize 
$$
\eta_0\in\left[0.25(m+1)^{-1}\fro{\M_0-\M^*}\muc\Lc^{-2}, \ 0.75(m+1)^{-1}\fro{\M_0-\M^*}\muc\Lc^{-2}\right]. 
$$  
At the $l$-th iteration of Algorithm~\ref{alg:tensor:RsGrad},  
	\begin{enumerate}[(1)]
		\item When $\fro{\M_{l}-\M^*}\geq\tauc $,  namely in phase one,  take the stepsize $\eta_{l}=\big(1-\frac{1}{16(m+1)}\frac{\muc^2}{\Lc^2}\big)^{l}\eta_{0}$,  then we have
		\begin{align}
			\Vert \M_{l}-\M^{*}\Vert_{\mathrm{F}}\leq \left(1-\frac{1}{16(m+1)}\cdot\frac{\muc^2}{\Lc^2}\right)^{l}\cdot \| \M_{0}-\M^{*}\|_{\rm F}.
		\end{align}
		\item When $\tauc>\fro{\M_l-\M^*}\geq\taus$,  namely in phase two,  take stepsize $\eta_l\in \big[0.25(m+1)^{-1}\mus\Ls^{-2},\ 0.75(m+1)^{-1}\mus\Ls^{-2} \big]$, then we have \begin{align}
			\fro{\M_{l+1} -\M^{*}} \leq \left(1-\frac{1}{16(m+1)}\cdot\frac{\mus^2}{\Ls^2}\right)\cdot \fro{\M_{l}-\M^{*}}.
		\end{align}
	\end{enumerate}
\end{proposition}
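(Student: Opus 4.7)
The plan is to extend the matrix-case proof of Proposition~\ref{prop:main} to the tensor setting. Two tensor-specific modifications appear: (a) HOSVD$_\r$ is only a \emph{quasi-best} rank-$\r$ approximation, with $\|\mathrm{HOSVD}_\r(\T)-\T\|_{\rm F}\leq \sqrt{m}\,\|\T^{\rm best}_\r-\T\|_{\rm F}$, and (b) the tangent-space curvature on the Tucker manifold costs an $m$-dependent factor. Together these account for the $(m+1)$ in the stepsize and contraction rate.

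Let $\T_l:=\M_l-\eta_l\wt\G_l$, so $\M_{l+1}=\mathrm{HOSVD}_\r(\T_l)$. Since $\M^\ast\in\MM_\r$, quasi-optimality of HOSVD and the triangle inequality give $\|\M_{l+1}-\M^\ast\|_{\rm F}\leq (\sqrt{m}+1)\,\|\T_l-\M^\ast\|_{\rm F}$, and hence $\|\M_{l+1}-\M^\ast\|_{\rm F}^2\leq 2(m+1)\|\T_l-\M^\ast\|_{\rm F}^2$. This reduces the analysis to controlling a single descent step on the embedding space. Expanding,
\begin{align*}
\|\T_l-\M^\ast\|_{\rm F}^2=\|\M_l-\M^\ast\|_{\rm F}^2 -2\eta_l \langle\wt\G_l,\M_l-\M^\ast\rangle +\eta_l^2\|\wt\G_l\|_{\rm F}^2,
\end{align*}
and using self-adjointness of $\calP_{\TT_l}$ together with $\M_l\in\TT_l$, we rewrite
\begin{align*}
\langle\wt\G_l,\M_l-\M^\ast\rangle =\langle\G_l,\M_l-\M^\ast\rangle +\langle\G_l,\calP_{\TT_l^\perp}(\M^\ast)\rangle.
\end{align*}
Convexity of $f$ and the dual-phase sharpness of Assumption~\ref{assump:tensor:dual-phase} lower-bound the first term by $\mus\|\M_l-\M^\ast\|_{\rm F}^2$ in phase two and by $\muc\|\M_l-\M^\ast\|_{\rm F}$ in phase one. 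The error term is handled by the dual-phase sub-gradient bound in the $\|\cdot\|_{{\rm F},2\r}$ norm, because $\calP_{\TT_l^\perp}(\M^\ast)$ together with any rank-$\r$ tensor fits inside a Tucker-rank-$2\r$ subspace, combined with the standard Tucker-manifold curvature estimate $\|\calP_{\TT_l^\perp}(\M^\ast)\|_{\rm F}\leq m\|\M_l-\M^\ast\|_{\rm F}^2/(2\underline{\lambda})$ used in \cite{cai2021generalized}.

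In phase two, substituting these bounds and using $\|\wt\G_l\|_{\rm F}\leq \Ls\|\M_l-\M^\ast\|_{\rm F}$ yields a one-step descent
\begin{align*}
\|\T_l-\M^\ast\|_{\rm F}^2\leq \Bigl(1-2\eta_l\mus+\eta_l^2\Ls^2 + \tfrac{m\eta_l \Ls^2 \|\M_l-\M^\ast\|_{\rm F}}{\underline{\lambda}}\Bigr)\|\M_l-\M^\ast\|_{\rm F}^2.
\end{align*}
The warm-start condition $\|\M_0-\M^\ast\|_{\rm F}\leq c_0\underline{\lambda}\cdot\mus/[m(m{+}3)\Ls]$ absorbs the curvature term into a fraction of $\eta_l\mus$, and choosing $\eta_l\in[0.25(m+1)^{-1}\mus\Ls^{-2},0.75(m+1)^{-1}\mus\Ls^{-2}]$ delivers a per-step factor $1-\mus^2/[8(m+1)\Ls^2]$ on $\|\T_l-\M^\ast\|_{\rm F}^2$. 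Multiplying by the $2(m+1)$ from Step~1 gives the stated rate $1-\mus^2/[16(m+1)\Ls^2]$. Phase one follows the same expansion with the sharpness inequality switched to its sharp (linear) form; the geometrically decaying $\eta_l$ is standard for non-smooth descent, and one verifies inductively that $\M_l$ remains in the warm-start region so that both Step~1 and Step~2 remain applicable throughout.

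The main obstacle is the simultaneous handling of the two tensor-specific losses: the $\sqrt{m}$ HOSVD quasi-optimality and the curvature factor $m/\underline{\lambda}$. They conspire to require (i) a warm-start ball shrunk by $1/[m(m{+}3)]$ to keep the curvature term strictly subordinate to $\eta_l\mus$, and (ii) a stepsize deflated by $1/(m{+}1)$ to absorb the HOSVD blow-up while still delivering strict contraction. Carrying through the algebra so that the cross-term is dominated with precisely these constants---without over-tightening any of the four regularity parameters---is the delicate bookkeeping step that distinguishes the tensor analysis from the matrix analysis underlying Proposition~\ref{prop:main}.
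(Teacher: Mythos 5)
Your argument fails at the retraction step. You invoke the HOSVD quasi-optimality bound $\|\mathrm{HOSVD}_\r(\T)-\T\|_{\rm F}\leq \sqrt{m}\,\|\T^{\rm best}_\r-\T\|_{\rm F}$, which yields $\fro{\M_{l+1}-\M^{*}}\leq (\sqrt{m}+1)\fro{\T_l-\M^{*}}$ and hence a \emph{multiplicative} blow-up of $2(m+1)$ on the squared error. But a multiplicative constant strictly greater than $1$ can never be overcome by a one-step descent factor of the form $1-\Theta(\mus^2\Ls^{-2})$: your own algebra claims that multiplying a per-step factor $1-\tfrac{\mus^2}{8(m+1)\Ls^2}$ by $2(m+1)$ yields $1-\tfrac{\mus^2}{16(m+1)\Ls^2}$, but in fact $2(m+1)\bigl(1-\tfrac{\mus^2}{8(m+1)\Ls^2}\bigr)=2(m+1)-\tfrac{\mus^2}{4\Ls^2}$, which is bounded below by roughly $2(m+1)-1>1$ whenever $\mus\leq\Ls$. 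So the displayed contraction does not follow, and no choice of stepsize inside your warm-start ball repairs this.

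The paper sidesteps this by using an entirely different bound on the retraction error: Lemma~B.2 of \cite{cai2021generalized}, which gives $\fro{\mathrm{HOSVD}_\r(\T)-\M^{*}}^2\leq \fro{\T-\M^{*}}^2 + (c_1/\underline{\lambda})\fro{\T-\M^{*}}^3$ once $\fro{\T-\M^{*}}<\underline{\lambda}/4$. The distinction is essential: this is a \emph{cubic perturbation} estimate, so the retraction error is of lower order and is absorbed by the warm-start radius, rather than a multiplicative constant that must be beaten each iteration. The factor $(m+1)$ in the stepsize and contraction rate does not originate from HOSVD quasi-optimality; it comes from $\fro{\calP_{\TT_l}(\G_l)}\leq\sqrt{m+1}\,\Vert\G_l\Vert_{\mathrm{F},2\r}$ (Lemma~\ref{teclem:tensor:projected subgrad}), and the $m(m+3)$ in the initialization radius comes from the tangent-space curvature bound $\fro{\calP_{\TT}^{\perp}(\M-\M^{*})}\leq 8m(m+3)\underline{\lambda}^{-1}\fro{\M-\M^{*}}^2$ (Lemma~\ref{teclem:tensor:perp projected}), which you also state with the wrong constants and the wrong argument (it is applied to $\M_l-\M^{*}$, not $\M^{*}$). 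Your high-level decomposition of the gradient step and the use of the $\|\cdot\|_{\mathrm{F},2\r}$ norm on the subgradient are both sound and match the paper; the missing ingredient is the sharp retraction lemma, without which the argument collapses.
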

Note that $m$ is kept explicitly in Proposition~\ref{prop:tensor:main} to underscore its role in the convergence of RsGrad algorithm.  The convergence dynamic of RsGrad during phase one is similar to that of ScaledSM \citep{tongaccelerating}, both of which achieve the computational accuracy $O(\tauc)$. However, under the dual-phase regularity condition, RsGrad presents a two-phase dynamic which converges linearly with a constant step size and eventually achieves a statistical error rate $O(\taus)$, which is usually significantly smaller than $O(\tauc)$.

\subsection{Applications}
We now specifically investigate the statistical performances of RsGrad for absolute loss under Gaussian noise and heavy-tailed noise. The results of Huber loss and quantile loss can be similarly established and skipped here. 

\subsubsection{Absolute loss with Gaussian noise}
Suppose that the noise $\xi_{1},\cdots,\xi_{n}$ are i.i.d. $N(0,\sigma^2)$. Take the absolute loss and get the objective function $
	f(\M)=\sum_{i=1}^n |Y_i-\inp{\M}{\X_{i}}|.
$ Denote $\textsf{DoF}_m:=2^m\cdot r_1r_2\cdots r_m+2\sum_{j=1}^{m}d_jr_j$, reflecting the model complexity. If $m$ is a constant, then $\textsf{DoF}_m$ can be treated as the degree of freedom. The dual-phase regularity condition is guaranteed by Lemma~\ref{lem:tensor:Gaussian-l1}. 

\begin{lemma}\label{lem:tensor:Gaussian-l1}
	Assume $\xi_1,\cdots, \xi_n \stackrel{i.i.d.}{\sim} N(0,\sigma^2)$.  There exist absolute constants $C_1, C_2,  C_3, C_4,  c_1>0$ such that if $n\geq C_1 \cdot \textsf{DoF}_m$,  then absolute loss satisfies Assumption~\ref{assump:tensor:dual-phase} with probability at least $1-\exp(-c_1\cdot\textsf{DoF}_m)$ where 
	$$
	\tauc=\sigma, \ \taus=C_2\sigma\Big(\frac{\textsf{DoF}_m}{n}\Big)^{1/2},\ \muc=\frac{n}{12}, \ {\rm and}\ \mus=\frac{n}{12\sigma}
	$$
	, moreover,  the dual-phase sub-gradient bounds are $\Lc\leq 2 n$ and $\Ls\leq C_4n\sigma^{-1}$,  respectively.  This implies the second phase step size $\eta\asymp \sigma n^{-1}$ for Algorithm~\ref{alg:tensor:RsGrad}.
\end{lemma}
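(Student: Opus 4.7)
The lemma is the tensor analogue of Lemma~\ref{lem:Gaussian-l1}, with identical dual-phase constants $\muc,\mus,\Lc,\Ls$ up to absolute factors; only the matrix-case complexity $rd_1$ is replaced by the tensor-case complexity $\textsf{DoF}_m$. My plan is to mirror the matrix-case proof, with two essential structural substitutions. First, since $\M,\M^{\ast}\in\MM_{\r}$, the difference $\Delta:=\M-\M^{\ast}$ has Tucker rank at most $2\r$, so every uniform statement need only hold on $\MM_{2\r}$ intersected with a Frobenius ball. Second, this intersection admits an $\epsilon$-net of cardinality $(C/\epsilon)^{O(\textsf{DoF}_m)}$, constructed through Tucker decomposition: discretize the $2^m\cdot r_1\cdots r_m$ entries of the core tensor and cover each orthonormal factor $\U_j\in\RR^{d_j\times 2r_j}$ on the Stiefel manifold at cost $O(d_j r_j)$. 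The sum of these free parameters is precisely $\textsf{DoF}_m$, which is why $rd_1$ gets replaced. I would use this net wherever the matrix-case proof invokes its rank-$2r$ matrix analogue.

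For the sharpness, I rewrite $f(\M)-f(\M^{\ast})=\sum_{i=1}^{n}\bigl(|\xi_i-\langle \Delta, \X_i\rangle|-|\xi_i|\bigr)$. Because $\xi_i\sim N(0,\sigma^2)$ is independent of $\langle \Delta,\X_i\rangle\sim N(0,\|\Delta\|_{\rm F}^2)$, their difference is $N(0,\sigma^2+\|\Delta\|_{\rm F}^2)$, so
\[
\EE\bigl[f(\M)-f(\M^{\ast})\bigr]=n\sqrt{2/\pi}\Bigl(\sqrt{\sigma^2+\|\Delta\|_{\rm F}^2}-\sigma\Bigr).
\]
In Phase 1 ($\|\Delta\|_{\rm F}\geq\sigma$) this is at least $c\cdot n\|\Delta\|_{\rm F}$, and in Phase 2 ($\|\Delta\|_{\rm F}<\sigma$) at least $c\cdot n\|\Delta\|_{\rm F}^2/\sigma$, for a suitable absolute $c>0$. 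Each summand is $1$-Lipschitz in $\langle \Delta,\X_i\rangle$, so sub-Gaussian chaining over the $\textsf{DoF}_m$-net yields a uniform deviation of size $O(\|\Delta\|_{\rm F}\sqrt{n\cdot \textsf{DoF}_m})$. Demanding this be at most half the Phase-2 expectation pins down the statistical threshold $\taus=C_2\sigma(\textsf{DoF}_m/n)^{1/2}$ and delivers the claimed $\muc=n/12$, $\mus=n/(12\sigma)$.

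For the sub-gradient bound, any vanilla choice takes the form $\G=-\sum_i s_i \X_i$ with $s_i=\operatorname{sgn}(\xi_i-\langle \Delta,\X_i\rangle)\in[-1,1]$, and the truncated Frobenius norm is
\[
\|\G\|_{\rm F,2\r}=\sup_{\T\in\MM_{2\r},\,\|\T\|_{\rm F}\leq 1}\Bigl|\sum_{i=1}^{n}s_i\langle \X_i,\T\rangle\Bigr|.
\]
For Phase 1, crude bounding by $\sum_i|\langle \X_i,\T\rangle|$ yields expectation $n\sqrt{2/\pi}$; uniformity over the $\textsf{DoF}_m$-net adds a fluctuation $O(\sqrt{n\cdot \textsf{DoF}_m})$ that is absorbed when $n\gtrsim \textsf{DoF}_m$, giving $\Lc\leq 2n$. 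For Phase 2, Stein's lemma applied to the jointly Gaussian $(\xi_i,\X_i)$ yields
\[
\EE[s_i\X_i]=-\sqrt{2/\pi}\cdot \Delta/\sqrt{\sigma^2+\|\Delta\|_{\rm F}^2},
\]
so $\|\EE\G\|_{\rm F,2\r}\leq n\sqrt{2/\pi}\cdot\|\Delta\|_{\rm F}/\sigma$. A chaining bound of order $O(\sqrt{n\cdot \textsf{DoF}_m})$ on the centered part $\|\G-\EE\G\|_{\rm F,2\r}$ is absorbed once $\|\Delta\|_{\rm F}\geq\taus$, producing $\Ls\leq C_4 n/\sigma$.

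The main obstacle is the Phase-2 sub-gradient concentration, because the signs $s_i$ couple $\xi_i$ with $\X_i$, and so $s_i\langle \X_i,\T\rangle$ is not a clean Gaussian process in $\T$. I would decouple by conditioning on $\{\xi_i\}$: given the noise, $s_i\langle \X_i,\T\rangle-\EE[s_i\langle \X_i,\T\rangle\mid \xi_i]$ becomes a sub-Gaussian process indexed by $\T$ with intrinsic metric controlled by $\|\T\|_{\rm F}$, so Dudley's entropy integral against the $\textsf{DoF}_m$-net of $\MM_{2\r}\cap\{\|\cdot\|_{\rm F}\leq 1\}$ produces the sup bound. The remaining ingredients, namely the HOSVD-based covering of $\MM_{2\r}$, the Stein expectation identity, and the sharpness concentration, are direct tensor adaptations of the matrix-case computations in Lemma~\ref{lem:Gaussian-l1}.
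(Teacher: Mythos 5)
Your sharpness argument—compute $\EE[f(\M)-f(\M^*)]$ exactly, then concentrate via symmetrization and Lipschitz contraction over a Tucker-parametrized net of $\MM_{2\r}$—is precisely the paper's route (Lemma~\ref{teclem:gaussian-l1} plus Theorem~\ref{thm:tensor:empirical process}), and your $\Lc$ bound reaches the same estimate as the paper's (the paper derives $\Lc\leq 2n$ by taking a short step in the direction of the rank-$2\r$ truncated sub-gradient and invoking the sub-gradient inequality on both sides, while you bound directly via $\sum_i|\langle\X_i,\T\rangle|$; both rest on the same $\ell_1$-concentration from Corollary~\ref{cor:tensor:empirical process}). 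The genuine divergence is in $\Ls$: you compute $\EE\G$ via Stein's identity and propose to concentrate $\G-\EE\G$ directly, while the paper (Lemma~\ref{lemma:tensor:upperboundsubgradient:gaussian}) bounds $\|\G\|_{\rm F,2\r}$ indirectly through a quadratic inequality: set $\M_1=\M+\tfrac{\sigma}{2n}\G\cdot\llbracket\U_1\U_1^\top,\ldots,\U_m\U_m^\top\rrbracket$ with $\U_j$ attaining the truncated norm, lower-bound $f(\M_1)-f(\M)\geq\tfrac{\sigma}{2n}\|\G\|_{\rm F,2\r}^2$ from the sub-gradient definition, and upper-bound it via the same expectation formula and the single empirical-process event already used for sharpness; solving the quadratic yields $\|\G\|_{\rm F,2\r}\lesssim n\sigma^{-1}\fro{\M-\M^*}$.

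The gap in your $\Ls$ route is uniformity over $\M\in\BB_2$, which the lemma demands. Write $\Delta:=\M-\M^*$ and $s_i(\Delta)=\mathrm{sgn}(\xi_i-\langle\Delta,\X_i\rangle)$. Your Dudley integral handles the supremum over the dual variable $\T$ for fixed $\Delta$, but the map $\Delta\mapsto s_i(\Delta)$ is a sign function, so $\G(\M)=-\sum_i s_i(\Delta)\X_i$ is not Lipschitz in $\M$: moving $\Delta$ to the nearest net point can flip a macroscopic fraction of the signs, and the chaining increments are not controlled by $\fro{\Delta-\Delta'}$, so a naive $\varepsilon$-net in $\Delta$ does not close the uniform bound. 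Conditioning on $\xi$ does not rescue the argument because $s_i$ still depends on $\X_i$ through $\langle\Delta,\X_i\rangle$, and the conditional expectation $\EE[\G\mid\xi]$ is not the marginal $\EE\G$ you computed from Stein, so you would additionally have to concentrate $\EE[\G\mid\xi]-\EE\G$ over $\xi$. You would also need the bound to hold for every element of $\partial f(\M)$, not just the particular sign-choice sub-gradient. The paper's quadratic-inequality trick sidesteps all of this: it never treats $\G$ as a random object, only $f$, and $f$ is jointly Lipschitz in its argument, so the contraction-based concentration event over $(\M,\M_1)\in\MM_\r\times\MM_{2\r}$ gives uniform control for free and the resulting bound applies to any sub-gradient. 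If you want to retain the Stein route, you would need a sign-pattern (VC-type) complexity argument to control $\sup_{\Delta}\|\G(\Delta)-\EE\G(\Delta)\|_{\rm F,2\r}$, which is doable but considerably heavier than the paper's two-line convexity argument.
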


The following theorem presents the convergence dynamic and statistical accuracy of Algorithm~\ref{alg:tensor:RsGrad} for absolute loss under Gaussian noise.

\begin{theorem}\label{thm:tensor:Gaussian-l1}
	Suppose the conditions of Lemma~\ref{lem:tensor:Gaussian-l1} hold.  There exist absolute constants $c_0,  c_1,  c_2,c_3\in(0,1 ), C_2>0$ such that if the initialization $\fro{\M_0-\M^*}\leq c_0m^{-1}(m+3)^{-1}\underline{\lambda}$,  the stepsizes are chosen as in Proposition~\ref{prop:tensor:main},  then with probability at least $1-\exp(-c_1\cdot \textsf{DoF}_m)$,  Algorithm~\ref{alg:tensor:RsGrad} exhibits the following dynamics: 
	\begin{enumerate}[(1)]
		\item during phase one when $\|\M_{l-1}-\M^{\ast}\|_{\rm F}\geq \sigma$,   the updated estimate satisfies $\fro{\M_{l}-\M^{*}}\leq (1-c_{2})^{l}\fro{\M_{0}-\M^{*}}$;  after $l_{1} = \Theta\big(\log(\underline{\lambda}/\sigma)\big)$ iterations in phase one,  it achieves the statistically sub-optimal rate $\fro{\M_{l_{1}} - \M^*} \leq \sigma $; 
		\item during phase two when $\|\M_{l_1+l}-\M^{\ast}\|_{\rm F}\leq \sigma$,  the updated estimate satisfies $\fro{\M_{l_{1}+l+1}-\M^{*}}\leq(1-c_{3})\fro{\M_{l_{1}+l}-\M^{*}}$;  after $l_{2}=\Theta\big(\log\big(n/\textsf{DoF}_m\big)\big)$ iterations in phase two, it achieves the statistically optimal rate $\fro{\M_{l_{2}+l_{1}} - \M^*} \leq C_2\sigma(\textsf{DoF}_m\cdot n^{-1})^{1/2}$.
	\end{enumerate}
\end{theorem}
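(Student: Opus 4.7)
The plan is to obtain \refthm{thm:tensor:Gaussian-l1} as a direct composition of the deterministic convergence result \refpro{prop:tensor:main} with the probabilistic regularity result \reflm{lem:tensor:Gaussian-l1}. That is, I would first use \reflm{lem:tensor:Gaussian-l1} to assert that on a high-probability event $\calE$ of probability at least $1-\exp(-c_1\cdot\textsf{DoF}_m)$, \assumption{assump:tensor:dual-phase} holds with the explicit parameters $\tauc=\sigma$, $\taus=C_2\sigma(\textsf{DoF}_m/n)^{1/2}$, $\muc=n/12$, $\mus=n/(12\sigma)$, $\Lc\leq 2n$, $\Ls\leq C_4 n/\sigma$. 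Then, working on $\calE$, I would apply \refpro{prop:tensor:main} and translate each quantity $\muc/\Lc$, $\mus/\Ls$, etc., into the explicit constants relevant here.

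The key observation driving the rest of the proof is that both ratios $\muc/\Lc$ and $\mus/\Ls$ are absolute constants, independent of $\sigma$, $n$, and the dimensions. Consequently, in the initialization requirement from \refpro{prop:tensor:main}, the four candidate terms $\mus^2/\Ls^2$, $\muc^2/\Lc^2$, $\mus/(m(m+3)\Ls)$, and $\muc/(m(m+3)\Lc)$ all reduce to absolute constants divided by either $1$, $1$, $m(m+3)$, or $m(m+3)$; the binding constraints are the last two, so the stated initialization condition $\fro{\M_0-\M^*}\leq c_0 m^{-1}(m+3)^{-1}\underline{\lambda}$ matches exactly (absorbing the absolute constants into $c_0$). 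Similarly, the phase-two stepsize prescribed by \refpro{prop:tensor:main} is of order $\mus/\Ls^2\asymp\sigma/n$, consistent with the claim of \reflm{lem:tensor:Gaussian-l1}.

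For the iteration-count statements, I would proceed phase by phase. In phase one, \refpro{prop:tensor:main}(1) gives a constant linear contraction factor $1-c_2$, where $c_2:=\frac{1}{16(m+1)}(\muc/\Lc)^2$ is an absolute constant since $m$ is treated as constant. Starting from $\fro{\M_0-\M^*}=O(\underline{\lambda})$ and iterating until the error first drops below $\tauc=\sigma$ needs $l_1=\Theta\big(\log(\underline{\lambda}/\sigma)\big)$ iterations; the final bound $\fro{\M_{l_1}-\M^*}\leq\sigma$ is then immediate. In phase two, \refpro{prop:tensor:main}(2) yields another constant linear contraction $1-c_3$ with $c_3:=\frac{1}{16(m+1)}(\mus/\Ls)^2$. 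Starting from $\sigma$ and contracting down to $\taus=C_2\sigma(\textsf{DoF}_m/n)^{1/2}$ requires $l_2=\Theta\big(\log(n/\textsf{DoF}_m)\big)$ iterations, giving the stated statistical rate. The probability bound is inherited entirely from \reflm{lem:tensor:Gaussian-l1} since \refpro{prop:tensor:main} itself is deterministic.

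The proof is therefore essentially bookkeeping; no new estimates are needed. The only mildly subtle points are (i) checking that each of the four initialization terms in \refpro{prop:tensor:main} reduces as described so that the simpler condition stated in the theorem suffices, and (ii) verifying that one can successfully transition from phase one to phase two, i.e., the moment the iterate first enters $\BB_2$, the hypotheses of \refpro{prop:tensor:main}(2) are met for the pre-chosen constant stepsize; this follows because on $\calE$ the regularity constants do not change between phases, only the effective bound that governs them. I do not anticipate any genuinely hard step, since the heavy lifting --- establishing sharpness and sub-gradient control for the absolute loss on tensor manifolds --- is encapsulated in \reflm{lem:tensor:Gaussian-l1}, and the manifold-level convergence analysis is encapsulated in \refpro{prop:tensor:main}.
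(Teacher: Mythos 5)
Your proposal is correct and follows the paper's intended route: the theorem is treated in the paper as an immediate composition of Proposition~\ref{prop:tensor:main} with Lemma~\ref{lem:tensor:Gaussian-l1} (mirroring how Theorem~\ref{thm:Gaussian-l1} is stated to follow from Proposition~\ref{prop:main} and Lemma~\ref{lem:Gaussian-l1}), and your derivation of the iteration counts $l_1 = \Theta(\log(\underline{\lambda}/\sigma))$ and $l_2 = \Theta(\log(n/\textsf{DoF}_m))$ from the constant linear contraction factors is exactly the bookkeeping needed. One small slip: you drop the $1/(m+1)$ prefactors from the first two terms in the initialization minimum of Proposition~\ref{prop:tensor:main}, so the binding term is not necessarily one of the $1/(m(m+3))$ terms as you claim; however, since $m(m+3)\geq m+1$ for $m\geq 1$, the theorem's stated condition $\fro{\M_0-\M^*}\leq c_0 m^{-1}(m+3)^{-1}\underline{\lambda}$ is still strong enough to imply the full requirement of Proposition~\ref{prop:tensor:main} once $c_0$ is chosen small enough, so your conclusion stands.
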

By Theorem~\ref{thm:tensor:Gaussian-l1}, Algorithm~\ref{alg:tensor:RsGrad} outputs an estimator achieving statistical rate $\fro{\hat{\M}-\M^*}^2=O_p(\sigma^2\cdot \textsf{DoF}_m\cdot n^{-1})$. The rate matches the existing ones attained by the square loss \citep{cai2021generalized,han2022optimal, zhang2020islet}. Theorem~\ref{thm:tensor:Gaussian-l1} only requires a sample complexity $O(\textsf{DoF}_m)$ to ensure the algorithmic convergence. Oftentimes, the sample size requirement is more stringent to guarantee a warm initialization. See Section~\ref{sec:tensor_init}. 

\subsubsection{Absolute loss with heavy-tailed noise}
Now consider the performance of absolute loss under heavy-tailed noise. 
The following lemma certifies the dual-phase regularity under a finite $1+\eps$ moment condition on noise. Hereafter, a star sign on the top left of a constant indicates that it depends on the $\EE|\xi|$ or/and $\EE |\xi|^{1+\eps}$. 

\begin{lemma}\label{lem:tensor:heavytail-l1}
	Suppose Assumption~\ref{assump:heavy-tailed} holds.  There exist absolute constants $C_1, C_2, C_3, {}^*\!c_1,c_2>0$ such that if $n\geq C_1\cdot \textsf{DoF}_m$,  then  the absolute loss satisfies Assumption~\ref{assump:tensor:dual-phase} with probability at least $1-{}^*\!c_1n^{-\eps}-\exp(-c_2\cdot \textsf{DoF}_m)$ where 
	$$
	\tauc=30\gamma, \ \taus=C_2b_0\Big(\frac{\textsf{DoF}_m }{n}\Big)^{1/2},\ \muc=\frac{n}{6}, \ {\rm and}\ \mus=\frac{n}{12b_0}
	$$
	,  moreover,  the dual-phase sub-gradient bounds are $\Lc\leq 2n$ and $\Ls\leq C_3nb_1^{-1}$,  respectively.  This implies  the second phase step size $\eta\asymp b_1^2(nb_0)^{-1}$ for Algorithm~\ref{alg:tensor:RsGrad}.
\end{lemma}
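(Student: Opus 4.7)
The plan is to adapt the proof of Lemma~\ref{lem:heavytail-l1} to the Tucker tensor setting, replacing every matrix covering/concentration argument with its tensor counterpart while keeping the noise analysis essentially intact. Write $\bd := \M - \M^{\ast}$ for any $\M \in \MM_{\r}$, and note that, because $\M, \M^{\ast} \in \MM_{\r}$, $\bd$ has Tucker rank at most $2\r$. Observe also that $\langle \bd, \X_i\rangle \sim N(0, \|\bd\|_{\rm F}^2)$ since each $\X_i$ has i.i.d.\ $N(0,1)$ entries, so the vectorized inner products behave exactly as in the matrix case.

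First I would establish sharpness pointwise for a fixed $\M$. The identity
\begin{equation*}
f(\M) - f(\M^{\ast}) = \sum_{i=1}^n \big(|\langle \bd, \X_i\rangle - \xi_i| - |\xi_i|\big)
\end{equation*}
reduces everything, after conditioning on $\xi_i$, to the one-dimensional calculation already carried out in the proof of Lemma~\ref{lem:heavytail-l1}. Assumption~\ref{assump:heavy-tailed} then yields $\EE [f(\M) - f(\M^{\ast})] \geq (n/4)\|\bd\|_{\rm F}$ once $\|\bd\|_{\rm F} \geq 30\gamma$, and $\EE [f(\M) - f(\M^{\ast})] \geq (n/(6b_0)) \|\bd\|_{\rm F}^2$ once $\|\bd\|_{\rm F} \leq 30\gamma$, using the density lower bound $h_{\xi}(x) \geq b_0^{-1}$ on $|x| \leq 30\gamma$. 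These are exactly the matrix-case population bounds; no tensor-specific input is needed here.

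Second I would upgrade the pointwise bound to a uniform bound over $\MM_{\r}$ and simultaneously handle the truncated sub-gradient bound. For the sub-gradient part, a vanilla choice is $\G = \sum_{i=1}^n s_i \X_i$ with $s_i \in \partial |\cdot|(\langle \M,\X_i\rangle - Y_i) \subseteq [-1,1]$; the trivial bound $\|\G\|_{\rm F, 2\r} \leq \sum_i |s_i| \cdot \|\X_i\|_{\rm F, 2\r}$ plus a tensor-net argument for the supremum defining $\|\cdot\|_{\rm F,2\r}$ gives $\Lc \leq 2n$ in $\BB_1$, while in $\BB_2$ the correlation between $s_i$ and $\bd$ (through the density upper bound $b_1^{-1}$) tightens the bound to $\Ls \|\bd\|_{\rm F}$ with $\Ls \leq C n b_1^{-1}$. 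The central technical device is an $\eps$-net for the set of rank-$2\r$ unit-Frobenius tensors: parametrizing each such tensor by a core of size $(2r_1)\cdots(2r_m)$ and $m$ factor matrices of size $d_j\times 2r_j$ with orthonormal columns yields a net of cardinality at most $(C/\eps)^{\textsf{DoF}_m}$, where the leading constant $2^m\, r_1 \cdots r_m$ in $\textsf{DoF}_m$ is precisely the count of core parameters under the rank-$2\r$ truncation, and $2\sum_j d_j r_j$ counts the Stiefel parameters. A standard peeling/net discretization over these rank-$2\r$ tensors, combined with the sub-Gaussian concentration of Gaussian linear forms, promotes the pointwise sharpness and sub-gradient bounds to uniform ones with failure probability at most $\exp(-c_2 \textsf{DoF}_m)$.

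Third I would handle the heavy-tailed deviations from $\xi_i$. The only place the finite $1+\eps$ moment is used is to bound $\sum_i |\xi_i|$ and $|\sum_i \xi_i \langle \bd, \X_i\rangle|$-type terms arising after truncating the absolute-value differences; Markov's inequality applied to $\sum_i (|\xi_i|^{1+\eps} - \EE|\xi_i|^{1+\eps})$ gives a deviation of order $n^{1/(1+\eps)}$ with failure probability ${}^*c_1 n^{-\eps}$, exactly as in Lemma~\ref{lem:heavytail-l1}. Matching the deviation term against $\mus \|\bd\|_{\rm F}^2/2$ in phase two forces the statistical floor $\taus \asymp b_0 (\textsf{DoF}_m/n)^{1/2}$, while the deterministic sharpness carries the phase-one regime $\|\bd\|_{\rm F} \geq 30\gamma$ without any tuning.

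The main obstacle is the second step: getting the right tensor covering number for $\{\M\in\MM_{2\r}: \|\M\|_{\rm F}\leq 1\}$ and showing that the truncated norm $\|\cdot\|_{\rm F, 2\r}$ admits a net argument with a clean $\textsf{DoF}_m$-exponent. Once this bookkeeping is done --- in particular, correctly accounting for the factor $2^m$ on the core term from the rank-$2\r$ truncation --- the matrix-case arithmetic of Lemma~\ref{lem:heavytail-l1} carries over verbatim to produce the stated constants $\muc = n/6$, $\mus = n/(12 b_0)$, $\Lc \leq 2n$, $\Ls \leq C_3 n b_1^{-1}$ and the stated probability.
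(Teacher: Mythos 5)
Your high-level plan --- keep the one-dimensional conditional-on-$\xi$ computation from the matrix proof, and replace the matrix covering arguments with a Tucker-tensor $\eps$-net of cardinality $(C/\eps)^{\textsf{DoF}_m}$ --- is the right skeleton, and the phase-two sharpness calculation and the treatment of $\sum_i|\xi_i|$ are essentially what the paper does (the paper's phase-one bound is obtained by the triangle inequality $f(\M)-f(\M^*)\geq\sum_i|\langle\bd,\X_i\rangle|-2\|\boldsymbol{\xi}\|_1$ rather than your expectation route, but your route should also go through with slightly different constants).

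There is, however, a genuine gap in your sub-gradient bounds, and it is the technically hard part of the lemma. Your ``trivial bound'' $\|\G\|_{\rm F,2\r}\leq\sum_i|s_i|\cdot\|\X_i\|_{\rm F,2\r}$ pushes the supremum inside the sum over $i$: since $\|\X_i\|_{\rm F,2\r}\asymp\sqrt{\textsf{DoF}_m}$ for a Gaussian tensor, this yields only $\Lc\lesssim n\sqrt{\textsf{DoF}_m}$, not $\Lc\leq 2n$, and no net argument layered on top of it can recover the lost factor. The correct route keeps the supremum outside the sum --- for instance $\|\G\|_{\rm F,2\r}=\sup_{\M'}\sum_i s_i\langle\X_i,\M'\rangle\leq\sup_{\M'}\sum_i|\langle\X_i,\M'\rangle|$, which the empirical process bounds by $2n\|\M'\|_{\rm F}$ --- or, as the paper does, applies the sub-gradient inequality $f(\M+\tilde\G)-f(\M)\geq\langle\G,\tilde\G\rangle=\|\G\|_{\rm F,2\r}^2$ against a triangle-inequality upper bound, giving the self-improving quadratic $\|\G\|_{\rm F,2\r}^2\leq 2n\|\G\|_{\rm F,2\r}$. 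More seriously, for $\Ls$ your ``correlation between $s_i$ and $\bd$ tightens the bound'' gestures at the right intuition but does not supply a mechanism: nothing in your proposal shows how the density upper bound $h_\xi\leq b_1^{-1}$ enters. The paper's proof of Lemma~\ref{lemma:tensor:upperboundsubgradient:heavytail} is not a refinement of a net bound; it perturbs in the direction $\M_1=\M+\tfrac{b_1}{2n}\tilde\G$ (with $\tilde\G$ the rank-$2\r$ truncation of $\G$ realizing the supremum in $\|\G\|_{\rm F,2\r}$), upper-bounds $\EE f(\M_1)-\EE f(\M)$ via the density upper bound and the concentration Corollary~\ref{cor:thm:tensor:empirical process}, lower-bounds the same quantity by $\tfrac{b_1}{2n}\|\G\|_{\rm F,2\r}^2$ via the sub-gradient definition, and then solves the resulting quadratic in $\|\G\|_{\rm F,2\r}$. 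Without this device --- or an explicit substitute --- the claim $\Ls\leq C_3nb_1^{-1}$ is not justified, and the lemma's dual-phase sub-gradient conclusion would not follow.
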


 Proposition~\ref{prop:tensor:main} and Lemma~\ref{lem:tensor:heavytail-l1} immediately imply the convergence dynamic and statistical error as presented in  the following theorem. Similarly, constant factors with a star sign on its top right corner depends on $b_0$ or/and $b_1$.
 
\begin{theorem}\label{thm:tensor:heavytail-l1}
	Suppose Assumption~\ref{assump:heavy-tailed} and the conditions of Lemma~\ref{lem:tensor:heavytail-l1} hold.  There exist constants $c_0^{\ast},  {}^*\!c_1,  c_2,c_3, c_4^{\ast}\in(0,1 ), C_1^{\ast},  C_2>0$ such that if the initialization $\fro{\M_0-\M^*}\leq c_0^{\ast}m^{-1}(m+3)^{-1}\underline{\lambda}$,  the stepsizes are chosen as in Proposition~\ref{prop:tensor:main},  then with probability at least $1-{}^*\!c_1n^{-\eps}-\exp(-c_2\cdot \textsf{DoF}_m)$,  Algorithm~\ref{alg:tensor:RsGrad} has the following dynamics: 
	\begin{enumerate}[(1)]
		\item during phase one when $\|\M_{l-1}-\M^{\ast}\|_{\rm F}\geq 30\gamma$,   the updated estimate satisfies $\fro{\M_{l}-\M^{*}}\leq (1-c_{3})^{l}\fro{\M_{0}-\M^{*}}$;  after $l_{1} = \Theta\big(\log(\underline{\lambda}/\gamma)\big)$ iterations in phase one,  it achieves the rate $\fro{\M_{l_{1}} - \M^*} \leq 30\gamma $; 
		\item during phase two when $\|\M_{l_1+l}-\M^{\ast}\|_{\rm F}\leq 30\gamma$,  the updated estimate satisfies $\fro{\M_{l_{1}+l+1}-\M^{*}}\leq(1-c_{4}^{\ast})\fro{\M_{l_{1}+l}-\M^{*}}$;  after $l_{2}=\Theta\big(C_1^{\ast}\log\big(\gamma n/\textsf{DoF}_m\big)\big)$ iterations in phase two, it achieves the rate $\fro{\M_{l_{2}+l_{1}} - \M^*} \leq C_2b_0(\textsf{DoF}_m\cdot n^{-1})^{1/2}$.
	\end{enumerate}
\end{theorem}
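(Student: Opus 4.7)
The plan is to reduce Theorem~\ref{thm:tensor:heavytail-l1} to a direct combination of Lemma~\ref{lem:tensor:heavytail-l1} and Proposition~\ref{prop:tensor:main}, since the lemma already verifies the dual-phase regularity conditions required by the proposition. First, I would condition on the high-probability event of Lemma~\ref{lem:tensor:heavytail-l1}, which occurs with probability at least $1-{}^*\!c_1 n^{-\eps}-\exp(-c_2\cdot\textsf{DoF}_m)$ and supplies concrete values $\tauc=30\gamma$, $\taus=C_2 b_0(\textsf{DoF}_m/n)^{1/2}$, $\muc=n/6$, $\mus=n/(12b_0)$, $\Lc\leq 2n$, $\Ls\leq C_3 n b_1^{-1}$. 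All subsequent steps are deterministic on this event.

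Second, I would verify that the assumed initialization $\fro{\M_0-\M^*}\leq c_0^{\ast}m^{-1}(m+3)^{-1}\underline{\lambda}$ implies the four-term minimum condition of Proposition~\ref{prop:tensor:main}. Substituting the lemma's parameter values yields
\begin{equation*}
\min\!\left\{\frac{b_1^2}{144\,C_3^2(m+1)b_0^2},\ \frac{1}{144(m+1)},\ \frac{b_1}{12\,C_3 m(m+3)b_0},\ \frac{1}{12m(m+3)}\right\},
\end{equation*}
each term of which is bounded below by a $(b_0,b_1)$-dependent multiple of $m^{-1}(m+3)^{-1}$; absorbing this dependence into the starred constant $c_0^{\ast}$ gives exactly the initialization hypothesis used by the proposition. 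Next, I would invoke Proposition~\ref{prop:tensor:main}(1): with $\muc/\Lc\geq 1/12$, the contraction factor $1-\frac{1}{16(m+1)}\muc^2\Lc^{-2}$ is bounded by $1-c_3$ for an absolute $c_3\in(0,1)$, so while $\fro{\M_{l-1}-\M^*}\geq\tauc=30\gamma$ one has the claimed geometric decay. Iterating from $\fro{\M_0-\M^*}\leq c_0^{\ast}m^{-1}(m+3)^{-1}\underline{\lambda}$ down to $30\gamma$ requires $l_1=\Theta(\log(\underline{\lambda}/\gamma))$ steps.

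Third, I would apply Proposition~\ref{prop:tensor:main}(2) in phase two. The prescribed stepsize window $[0.25(m+1)^{-1}\mus\Ls^{-2},\,0.75(m+1)^{-1}\mus\Ls^{-2}]$ evaluates to order $b_1^2/(nb_0)$, matching the statement. The phase-two contraction is $1-\frac{1}{16(m+1)}\mus^2\Ls^{-2}=1-c_4^{\ast}$, where $c_4^{\ast}$ inherits $b_0,b_1$ dependence through $\mus/\Ls\asymp b_1/b_0$. Starting at $30\gamma$ and contracting to $\taus=C_2 b_0(\textsf{DoF}_m/n)^{1/2}$ costs $l_2=\Theta\!\bigl(C_1^{\ast}\log(\gamma n/\textsf{DoF}_m)\bigr)$ iterations, with the $\sqrt{\,\cdot\,}$ inside the logarithm absorbed into the $\Theta(\cdot)$. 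The main technical obstacle is the bookkeeping in the initialization step: the four heterogeneous terms in Proposition~\ref{prop:tensor:main}'s minimum mix the dimensional factor $m$ with the noise-density quantities $b_0,b_1$, and one must track which term is binding to justify collapsing the hypothesis to the simpler form $c_0^{\ast}m^{-1}(m+3)^{-1}\underline{\lambda}$; the rest is a transparent substitution-and-iteration argument.
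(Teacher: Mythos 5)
Your proposal is correct and follows exactly the route the paper itself indicates: the paper states that Theorem~\ref{thm:tensor:heavytail-l1} follows immediately from Proposition~\ref{prop:tensor:main} combined with Lemma~\ref{lem:tensor:heavytail-l1}, and you have simply carried out the substitution of the lemma's constants ($\tauc$, $\taus$, $\muc$, $\mus$, $\Lc$, $\Ls$) into the proposition's initialization requirement, contraction factors, and iteration counts, verifying each along the way. The bookkeeping you flag as the main obstacle --- checking that the four-term minimum in the proposition's initialization hypothesis reduces to a $(b_0,b_1,m)$-dependent multiple of $m^{-1}(m+3)^{-1}$ --- is handled correctly by noting $m+1 \leq m(m+3)$ and absorbing the $b$-ratios into the starred constant $c_0^{\ast}$.
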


By Theorem~\ref{thm:tensor:heavytail-l1}, as long as the noise has a finite $1+\eps$ moment, RsGrad outputs an estimator achieving the statistical rate $\|\hat\M-\M^{\ast}\|_{\rm F}^2=O_p(b_0^2 \textsf{DoF}_m\cdot n^{-1})$. If $m$ is a constant, this rate is proportional to the degree of freedom and optimal in terms of tensor dimensions. To our best knowledge, this is the first result of this kind for robust low-rank tensor regression under heavy-tailed noise.

\subsection{Initialization by Shrinkage-based  Second Order Moment}\label{sec:tensor_init}
The convergence Theorems~\ref{thm:tensor:Gaussian-l1} and ~\ref{thm:tensor:heavytail-l1} relies crucially on the warm initialization. With heavy-tailed noise, a simple spectral initialization, say, by HOSVD, has limited performances. Adapting from the ideas in  \cite{cai2021provable,xia2017polynomial,xia2021statistically}, we propose a new initialization by shrinkage-based second order moment method. For each response $Y_i$, define its shrinkage by $\tilde Y_i := \text{sign}(Y_i)(|Y_i|\vee \tau)$ with a threshold $\tau > 0$ to be chosen later.

With the truncated responses, construct the second-order U-statistic :
$$
\tilde\N_j = \frac{1}{n(n-1)}\sum_{1\leq i\neq i'\leq n}\tilde Y_i\tilde Y_{i'}(\X_{i(j)}\X_{i'(j)}^\top + \X_{i'(j)}\X_{i(j)}^\top),\quad j=1,\ldots,m
$$
We will prove that $\tilde{\N}_j$ is a good estimate of $\M_{(j)}^*\M_{(j)}^{*\top}$ with a properly chosen $\tau$ and a reasonably large sample size. The top-$r_j$ left singular vectors of $\tilde\N_j$, denoted by $\U_j^{(0)}$, serve as a warm initial estimate of the mode-$j$ singular vectors of $\M^{\ast}$. Then the initial core tensor is obtained by minimizing the sum of squares that admits an explicit form by
$$
\C^{(0)}=\left(\sum_{i=1}^{n}\text{vec}(\X_{i}\times_{j\in[m]}\U_{j}^{(0)\top})\cdot \text{vec}(\X_{i}\times_{j\in[m]}\U_{j}^{(0)\top})^{\top}\right)^{\dagger}\cdot \text{vec}\left(\sum_{i=1}^{n}Y_i\X_{i}\times_{j\in[m]}\U_{j}^{(0)\top}\right).
$$
Finally, the initialization for Algorithm~\ref{alg:tensor:RsGrad} is constructed by $\M_0=\C^{(0)}\cdot \llbracket \U_1^{(0)},\cdots, \U_m^{(0)}\rrbracket$.  

By assuming a finite $2+\eps$ moment condition, the following theorem confirms the closeness between $\M_0$ and $\M^{\ast}$. 
For ease of exposition, denote $\dmax = \max_{i=1}^md_i$, $\rmax = \max_{i=1}^m r_i$, $d^* = d_1\cdots d_m$ and $r^{\ast}=r_1\cdots r_m$. Recall that $\textsf{DoF}_m=2^mr^{\ast}+2\sum_{j}r_jd_j$ and the signal strength $\mins$. Denote $\maxs:=\max_{1\leq j\leq m} \|\M^{\ast}_{(j)}\|$ and the tensor condition number $\kappa:=\maxs \mins^{-1}$. 
\begin{theorem}\label{thm:init:tensor} 
Let $c_0\in (0,1)$ be a constant. 
Suppose $\op{\xi}_{2+\eps} := \left(\EE|\xi|^{2+\eps}\right)^{1/(2+\eps)}<+\infty$ for some $\eps>0$ and the following conditions hold:
	\begin{itemize}
		\item[(1)] sample size requirement: $n\geq  C_1 m^2\kappa^6\rmax^2(d^{\ast})^{1/2}\log\dmax $;
		\item[(2)] signal to noise ratio: $\mins\geq  C_2m^{1/2}\kappa^{1/2}(\rmax\log \dmax)^{1/4} \frac{(d^{\ast})^{1/4}}{n^{1/2}} \op{\xi}_{2+\eps}$,
	\end{itemize}
where $C_1, C_2>0$ are some constants depending only on $c_0$. There exist absolute constants $c_1,c_2,c_3>0$ depending only on $c_0$, such that, if $\tau = n^{1/2}{(d^*)^{-1/4}}(\sqrt{\rmax}\maxs + \op{\xi}_{2+\eps})$, then with probability exceeding $1-c_1\exp(-\textsf{DoF}_m)-c_2n^{-\min\{\eps/2,1\}}-c_3m\dmax^{-10}$, the initialization $\M_0$ satisfies
	$$
	\fro{\M_0-\M^*}\leq c_0\mins.
	$$
\end{theorem}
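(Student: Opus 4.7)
The plan is to decompose the initialization error $\fro{\M_0-\M^*}$ into a projection-error piece arising from the factor estimates $\U_j^{(0)}$ and a core-tensor piece, and bound each. I would first analyze $\tilde\N_j$ as an estimator of $\M^*_{(j)}(\M^*_{(j)})^\top$. Because $\X_i$ has i.i.d.\ $N(0,1)$ entries and $\xi_i$ is independent of $\X_i$, a direct computation (essentially Stein's identity) gives $\EE[Y_i\X_{i(j)}]=\M^*_{(j)}$. By the U-statistic structure (excluding $i=i'$), the expectation factorizes as $\EE[\tilde\N_j]=2\mu_j\mu_j^\top$ with $\mu_j:=\EE[\tilde Y\X_{(j)}]$; crucially, the diagonal terms $\tilde Y_i^2 \X_{i(j)}\X_{i(j)}^\top$ that would otherwise contribute an enormous bias of order $\EE|Y|^2\cdot d^*/d_j$ are cancelled. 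Writing the shrinkage remainder as $\tilde Y - Y = -\,\text{sign}(Y)(|Y|-\tau)_+$, the truncation bias $\|\mu_j-\M^*_{(j)}\|$ is controlled by $\tau^{-(1+\eps)}\op{\xi}_{2+\eps}^{2+\eps}$-type tail terms combined with $\sqrt{r_j}\maxs$, using the $2+\eps$ moment assumption together with Cauchy--Schwarz on Gaussian marginals.

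Next, I would establish concentration of the matrix-valued U-statistic $\tilde\N_j$ around $\EE[\tilde\N_j]$. After a preliminary high-probability truncation of $\|\X_{i(j)}\|_{\rm op}$ at $C(\sqrt{d_j}+\sqrt{d^*/d_j})$ (so that each kernel is operator-norm bounded by $C\tau^2\cdot d^*/d_j$), a decoupling and Hoeffding decomposition reduces $\tilde\N_j-\EE[\tilde\N_j]$ to a sum of independent matrix-valued summands, to which matrix Bernstein (or a Rosenthal-type inequality for U-statistics) applies. The resulting stochastic deviation is of order $\tau^2\sqrt{d^*/(n d_j)}$ up to logarithmic factors. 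The prescribed choice $\tau\asymp n^{1/2}(d^*)^{-1/4}(\sqrt{\rmax}\maxs+\op{\xi}_{2+\eps})$ balances the truncation bias (decreasing in $\tau$) against this stochastic error (increasing in $\tau$), and under the sample-size and SNR conditions yields, uniformly in $j$, $\|\tilde\N_j-2\M^*_{(j)}(\M^*_{(j)})^\top\|_{\rm op}\leq c\,\mins^2/(m\kappa^2)$ with the stated probability. The eigengap $\sigma_{r_j}^2(\M^*_{(j)})\geq \mins^2$ then lets me invoke Wedin's $\sin\Theta$ theorem to conclude that $\|\U_j^{(0)}\U_j^{(0)\top}-\U_j^*\U_j^{*\top}\|\leq c/(m\kappa^2)$ for each mode.

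For the core tensor, conditional on the accurate factor estimates $\U_j^{(0)}$, Gaussian rotational invariance implies $\X_i\cdot\llbracket \U_1^{(0)\top},\ldots,\U_m^{(0)\top}\rrbracket$ still has i.i.d.\ $N(0,1)$ entries, so the normal matrix $n^{-1}\sum_i \text{vec}(\cdot)\text{vec}(\cdot)^\top$ concentrates to the identity once $n\gtrsim r^*$, making the least-squares step well-conditioned. The noise is then propagated through the pseudo-inverse under the $2+\eps$ moment, yielding a core-tensor error of the same order as the projection error. Combining this with the standard identity $\fro{\M^*-\M^*\cdot\llbracket \U_j^{(0)}\U_j^{(0)\top}\rrbracket}\leq \sqrt{r^*}\,\maxs\cdot m\max_j \|\U_j^{(0)}\U_j^{(0)\top}-\U_j^*\U_j^{*\top}\|$ (applied mode by mode and telescoped) gives $\fro{\M_0-\M^*}\leq c_0\mins$ under the stated conditions.

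The main obstacle is the matrix-Bernstein step: the kernel $\tilde Y_i\tilde Y_{i'}\X_{i(j)}\X_{i'(j)}^\top$ is unbounded in both factors, and the $2+\eps$ moment barely admits a meaningful truncation, so one must carefully layer truncations on both $\tilde Y$ and $\|\X_{i(j)}\|_{\rm op}$, then handle the remainder via a Fuk--Nagaev / Rosenthal-type argument to produce the $n^{-\min\{\eps/2,1\}}$ failure probability. Balancing the bias $O(\tau^{-(1+\eps)})$ against the stochastic deviation $O(\tau^2\sqrt{d^*/n})$ produces the $\tau\asymp n^{1/2}(d^*)^{-1/4}$ scaling, and forcing this combined error below $\mins^2/(m\kappa^2)$ is precisely what generates the $(d^*)^{1/2}$ dependence in the sample-size requirement and the $(d^*)^{1/4}/n^{1/2}$ dependence in the SNR condition on $\mins$.
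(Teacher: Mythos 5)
Your proposal follows the paper's strategy closely: analyze the decoupled U-statistic $\tilde\N_j$ after shrinkage of the responses, apply Bernstein, invoke Wedin's $\sin\Theta$ theorem for the factor errors, then control the core tensor via the least-squares step, and finally combine. The choice $\tau\asymp n^{1/2}(d^*)^{-1/4}(\sqrt{\rmax}\maxs + \op{\xi}_{2+\eps})$, the use of the eigengap $\mins^2$, and the decomposition $\fro{\M_0-\M^*}\leq\fro{\C^{(0)}-\C^*}+m\maxs\max_j d(\U_j^{(0)},\U_j^*)$ are all in the paper, so the high-level plan is sound.

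There is, however, one genuine gap. For the core-tensor step you claim that ``conditional on the accurate factor estimates $\U_j^{(0)}$, Gaussian rotational invariance implies $\X_i\cdot\llbracket\U_1^{(0)\top},\ldots,\U_m^{(0)\top}\rrbracket$ still has i.i.d.\ $N(0,1)$ entries.'' This is false here: the $\U_j^{(0)}$ are built from $\tilde\N_j$, which uses the same $\{(\X_i,Y_i)\}_{i=1}^n$ that feeds the least-squares step, so conditioning on $\U_j^{(0)}$ changes the law of the $\X_i$'s and destroys rotational invariance. If one tries to push the argument literally, the normal-matrix concentration and the noise bound no longer follow from i.i.d.\ Gaussianity. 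The paper avoids this by bounding $\fro{\C^{(0)}-\C^*}$ through the first-order condition $\gcl(\C^{(0)}\times_j\U_j^{(0)})=0$ and then applying \emph{uniform} empirical-process bounds that hold simultaneously over all low-Tucker-rank pairs $(\A,\B)$ (and a separate uniform bound for the noise contraction $\langle n^{-1}\sum_i\xi_i\X_i,\cdot\rangle$, combined with the heavy-tail bound on $\sum_i\xi_i^2$). The dependence on $\U_j^{(0)}$ is then absorbed by the supremum. You could alternatively fix your argument via sample splitting, but as written the rotational-invariance step does not go through.

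Two smaller inaccuracies worth flagging. First, the failure probability $n^{-\min\{\eps/2,1\}}$ does not come from a Fuk--Nagaev/Rosenthal remainder in the $\tilde\N_j$ concentration; in the paper that step only incurs the polynomial $\dmax^{-10}$ failure, and the $n^{-\min\{\eps/2,1\}}$ arises entirely from bounding $\sum_i\xi_i^2$ in the core-tensor analysis. Second, the paper does not hard-truncate $\|\X_{i(j)}\|$: it uses a matrix Bernstein inequality stated in terms of $\psi_1$-norms of the summands, which handles the unbounded Gaussian factor directly; only $Y$ is shrunk. Also, the truncation bias is bounded as $O(\tau^{-1}\op{Y}_{2+\eps}^2\maxs)$ using just the second moment, not $O(\tau^{-(1+\eps)})$, and the final rate comes from tracking several deviation terms rather than a single bias-versus-variance balance; your stated exponent for $\tau$ is nevertheless correct.
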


If $m, \bar r$ and $\kappa$ are all $O(1)$, Theorem~\ref{thm:init:tensor} requires a sample size $(d^{\ast})^{1/2}\log\bar d$,  which matches the best known result in existing literature \citep{han2022optimal, tongaccelerating,xia2021statistically}, be it noiseless or under Gaussian noise. The SNR condition becomes $\mins/\|\xi\|_{2+\xi}\gtrsim n^{-1/2}(d^{\ast}\log \bar d)^{1/4}$ that reduces to the typical SNR condition under Gaussian noise in the aforementioned prior works. These conditions are thus likely minimal but here only a finite $2+\eps$ moment is necessary.

\section{Numerical Experiments}\label{sec:simulation}
\subsection{Low-rank Matrix Estimation}
In this section,  numerical simulation results are presented which showcase the dual-phase convergence of Algorithm~\ref{alg:RsGrad} and demonstrate its merit over competing methods \citep{tong2021low,elsener2018robust,candes2011tight}.  Both RsGrad and ScaledSM \citep{tong2021low} require a careful design of stepsize schedule.  
Practically,  for ScaledSM and phase one of RsGrad,  we initialize and update the stepsizes by  $\eta_0 = c_1 \|\M_0\|n^{-1}$ and $\eta_l=q\cdot \eta_{l-1},$ where $c_1>0$, $0<q<1$ are parameters to be specified and $\|\cdot\|$ denotes the matrix operator norm.   As \cite{tong2021low} suggests,  a great choice is $q=0.91$.  For a fair comparison,  all the following  simulations are implemented with $q=0.91$.  As suggested by Proposition~\ref{prop:main},  RsGrad has a second phase convergence,  for which we set the stepsize at $\eta_l=c_2\EE|\xi| n^{-1}$ where $c_2>0$ is the parameter to be tuned.  Note that we assume $\EE|\xi|$ is known for simplicity.  In practice,  $\EE|\xi|$ is unknown and can be estimated by $n^{-1}\sum_{i=1}^n|Y_i-\langle \M_{l_1},\X_i\rangle|$ where $\M_{l_1}$ is the output after phase one iterations.  
Following \cite{tong2021low},  we define signal-to-noise ratio by $\text{SNR}:=20\log_{10}\big(\fro{\M^*}/ \EE\vert\xi\vert\big)$.

\paragraph*{Convergence dynamics}
We compare the convergence dynamics between RsGrad and ScaledSM \citep{tong2021low}.  Without loss of generality,  we only experiment RsGrad with absolute loss  (RsGrad-$\ell_1$) and Huber loss (RsGrad-Huber).  ScaledSM is proposed only for absolute loss.  The relative error is calculated by $\|\M_l-\M^{\ast}\|_{\rm F}\|\M^{\ast}\|_{\rm F}^{-1}$. 

We fix dimension $d_1=d_2=80$ and rank $r=5$.  The first set of simulations is to showcase the convergence performance of RsGrad and ScaledSM under Gaussian noise.  The SNR is set at $\{40,  80\}$ and the sample sizes are varied among $\{2.5,  5\}\times rd_1$.  The convergence dynamics are displayed in Figure~\ref{fig:conv-Gaussian},  which clearly show a dual-phase convergence of RsGrad.  We note that ScaledSM performs poorly when sample size is small ($n=2.5rd_1$).  It is possibly due to the instability of inverse scaling under small sample size.  The bottom two plots in Figure~\ref{fig:conv-Gaussian}  show that,  after the phase one iterations,  RsGrad achieves a similar perfomrance as ScaledSM.  However,  with a second phase convergence,  RsGrad (using either Huber or absolute loss) eventually delivers a more accurate estimate than ScaledSM.

\begin{figure}
\centering
	\begin{subfigure}[b]{0.45\textwidth}
		\centering
		\includegraphics[width=\textwidth]{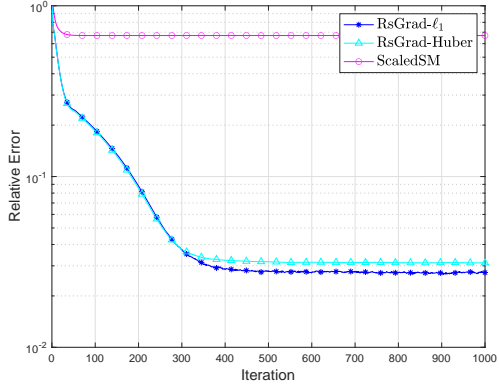}
		\caption{SNR=40}
		\label{fig:11}
	\end{subfigure}
	\hfill
	\begin{subfigure}[b]{0.45\textwidth}
		\centering
		\includegraphics[width=\textwidth]{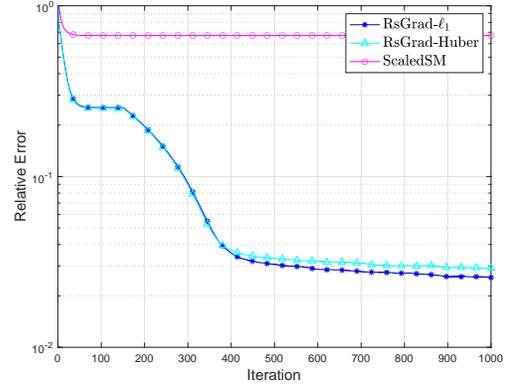}
		\caption{SNR=80}
		\label{fig:12}
	\end{subfigure}

	\begin{subfigure}[b]{0.45\textwidth}
		\centering
		\includegraphics[width=\textwidth]{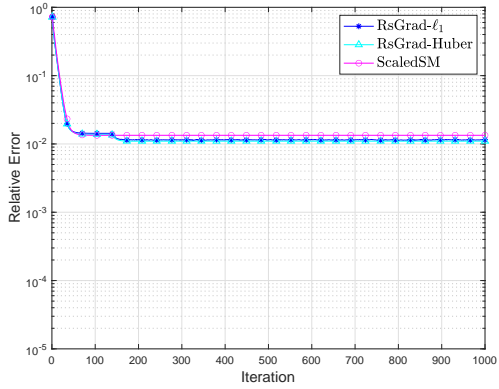}
		\caption{SNR=40}
		\label{fig:31}
	\end{subfigure}
	\hfill
	\begin{subfigure}[b]{0.45\textwidth}
		\centering
		\includegraphics[width=\textwidth]{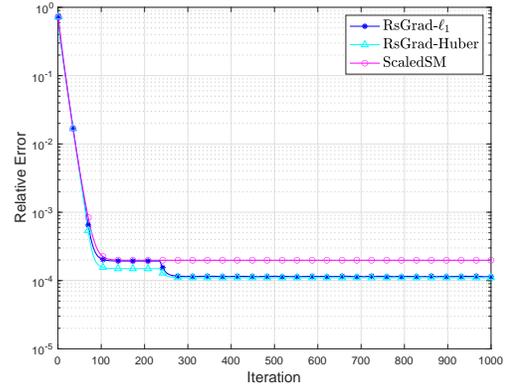}
		\caption{SNR=80}
		\label{fig:32}
	\end{subfigure}
	
	\caption{Convergence dynamics of RsGrad and ScaledSM \citep{tong2021low} under Gaussian noise.  Top two figures:  $n=2.5rd_1$;  bottom two figures: $n=5rd_1$.}
	\label{fig:conv-Gaussian}
\end{figure}

The second set of simulation is to test the convergence performance of RsGrad and ScaledSM under heavy-tailed noise.  For simplicity,  the noise is sampled independently from a Student's $t$-distribution with ${\rm d.f.}$ $\nu=2$,  which has a finite first (absolute) moment and an infinite second moment.  Other parameters are selected the same as in the Gaussian case.   Figure~\ref{fig:conv-t2} presents the convergence performance showing that both RsGrad (either absolute loss or Huber loss)  and ScaledSM are robust to heavy-tailed noise.  
Similarly,  we can observe dual-phase convergence of RsGrad whose second phase iterations deliver a more accurate estimate than ScaledSM,  especially when SNR is large.  

\begin{figure}
\centering
	\begin{subfigure}[b]{0.45\textwidth}
		\centering
		\includegraphics[width=\textwidth]{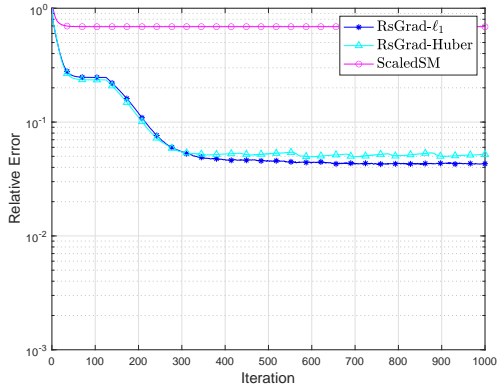}
		\caption{SNR=40}
		\label{fig:21}
	\end{subfigure}
	\hfill
	\begin{subfigure}[b]{0.45\textwidth}
		\centering
		\includegraphics[width=\textwidth]{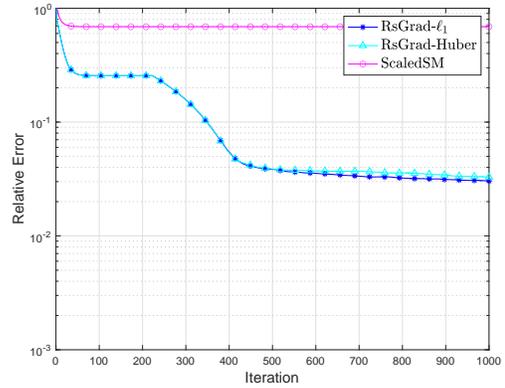}
		\caption{SNR=80}
		\label{fig:22}
	\end{subfigure}
	
	\begin{subfigure}[b]{0.45\textwidth}
		\centering
		\includegraphics[width=\textwidth]{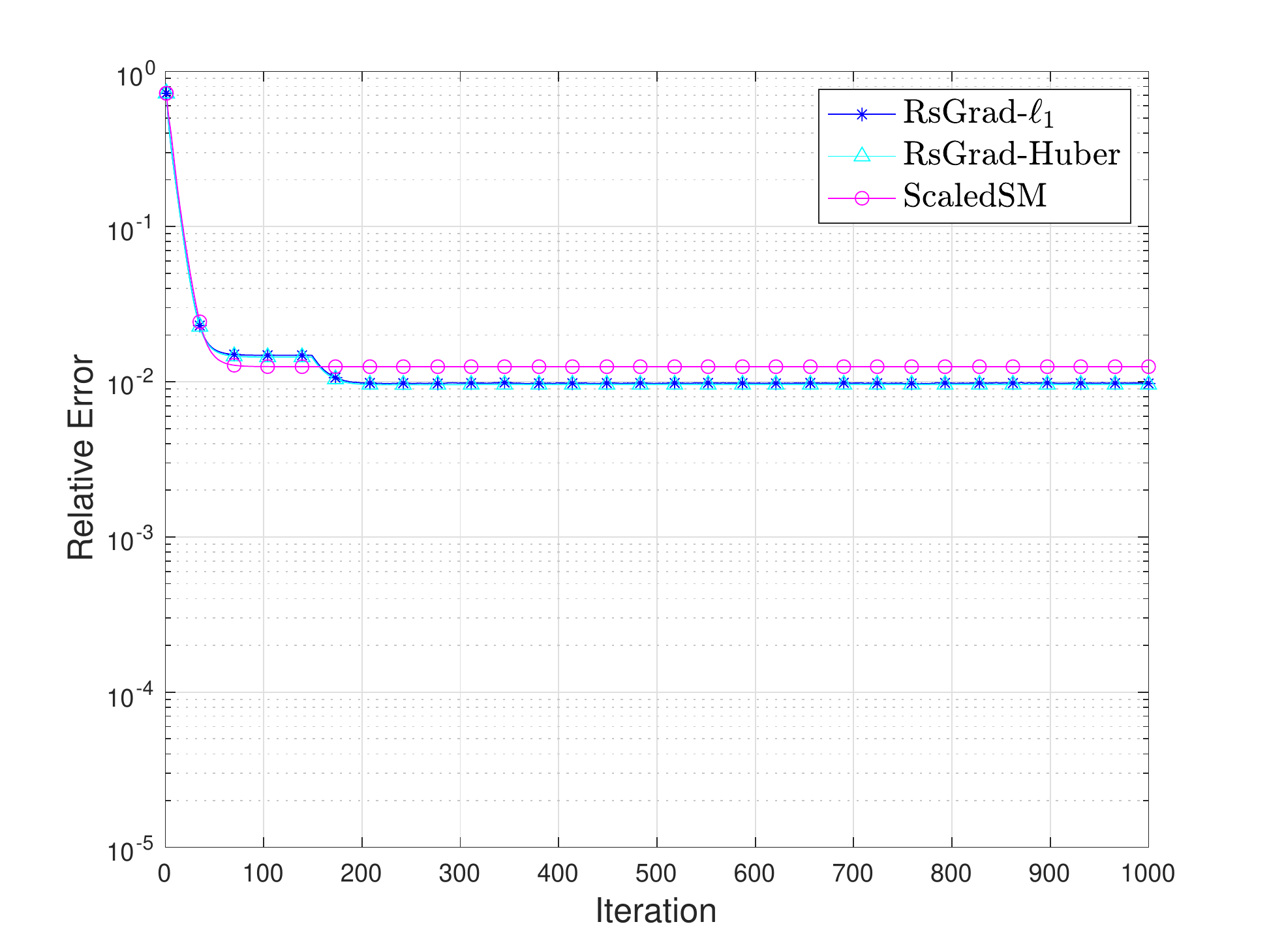}
		\caption{SNR=40}
		\label{fig:41}
	\end{subfigure}
	\hfill
	\begin{subfigure}[b]{0.45\textwidth}
		\centering
		\includegraphics[width=\textwidth]{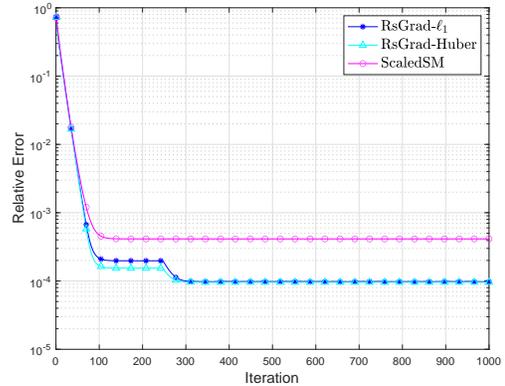}
		\caption{SNR=80}
		\label{fig:42}
	\end{subfigure}

	\caption{Convergence dynamics of RsGrad and ScaledSM \citep{tong2021low} under Student's $t$-distribution with d.f.  $\nu=2$.  Top two figures:  $n=2.5rd_1$;  bottom two figures: $n=5rd_1$.}
	\label{fig:conv-t2}
\end{figure}

\paragraph*{Statistical accuracy} We now compare the statistical accuracy of the final estimator output by RsGrad,  ScaledSM and those by convex approaches including the  nuclear norm penalized \citep{elsener2018robust} absolute loss (convex $\ell_1$),  Huber loss (convex Huber) and the square loss ($\ell_2$-loss) by seminal work \cite{candes2011tight}.   Functions from Matlab library cvx \citep{grant2014cvx} are borrowed to implement the convex methods for convex $\ell_1$,  convex Huber and $\ell_2$-loss.  For each setting,  every method is repeated for $10$ times and the box-plots of error rates are presented.

The parameter settings are similar as above where both the Gaussian noise and Student's $t$-distribution are experimented.  The comparison of statistical accuracy under Gaussian noise is displayed in Figure~\ref{fig:accu-Gaussian}. The top two plots suggest that all the convex methods and ScaledSM performs poorly when sample size is small ($n=2.5rd_1$).  On the other hand,  ScaledSM becomes much better when sample size is large but still under-performs RsGrad,  as predicted by our theory.   Similar results are also observed when noise has a Students' $t$-distribution with d.f.  $\nu=2$.  See Figure~\ref{fig:accu-t2}.  

\begin{figure}
\centering
	\begin{subfigure}[b]{0.45\textwidth}
		\centering
		\includegraphics[width=\textwidth]{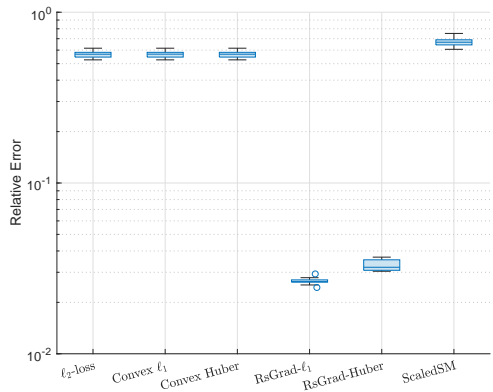}
		\caption{SNR=40}
		\label{fig:71}
	\end{subfigure}
	\hfill
	\begin{subfigure}[b]{0.45\textwidth}
		\centering
		\includegraphics[width=\textwidth]{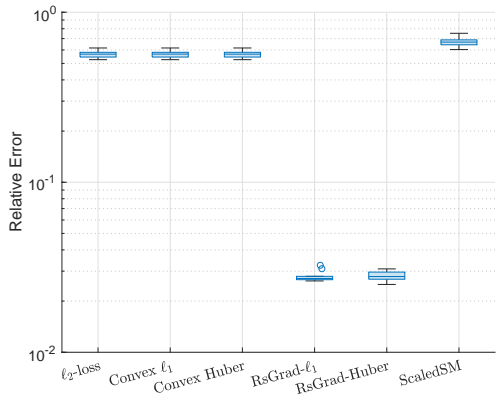}
		\caption{SNR=80}
		\label{fig:72}
	\end{subfigure}
	
	\begin{subfigure}[b]{0.45\textwidth}
		\centering
		\includegraphics[width=\textwidth]{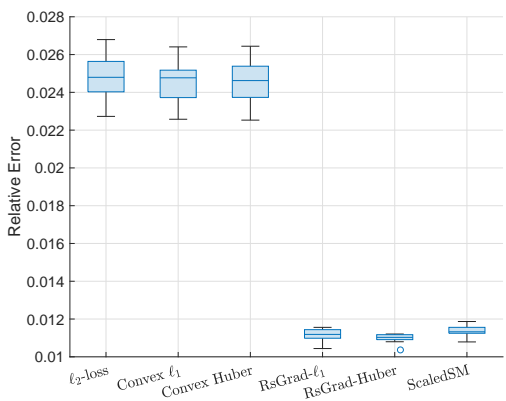}
		\caption{SNR=40}
		\label{fig:91}
	\end{subfigure}
	\hfill
	\begin{subfigure}[b]{0.45\textwidth}
		\centering
		\includegraphics[width=\textwidth]{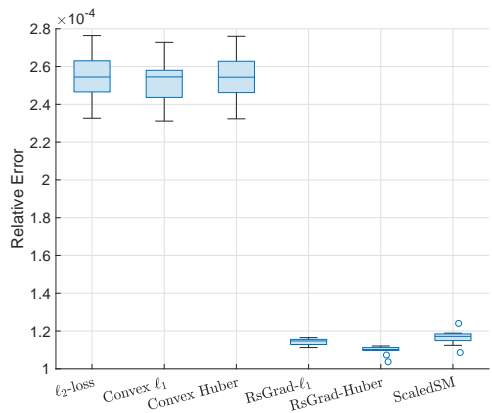}
		\caption{SNR=80}
		\label{fig:92}
	\end{subfigure}
	
	\caption{Accuracy comparisons of RsGrad, ScaledSM, robust convex models \citep{elsener2018robust} and convex $\ell_2$ loss \citep{candes2011tight} under Gaussian noise.  Top two figures:  $n=2.5rd_1$;  bottom two figures: $n=5rd_1$.}
	\label{fig:accu-Gaussian}
\end{figure}

\begin{figure}
\centering
	\begin{subfigure}[b]{0.45\textwidth}
		\centering
		\includegraphics[width=\textwidth]{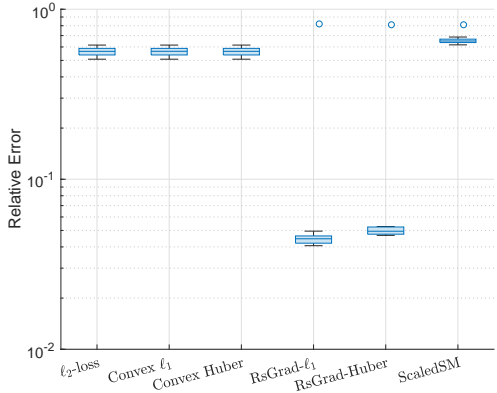}
		\caption{SNR=40}
		\label{fig:81}
	\end{subfigure}
	\hfill
	\begin{subfigure}[b]{0.45\textwidth}
		\centering
		\includegraphics[width=\textwidth]{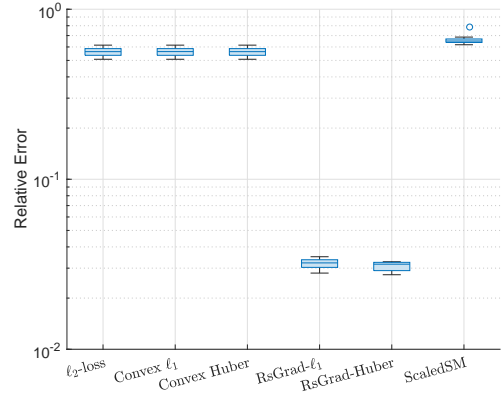}
		\caption{SNR=80}
		\label{fig:82}
	\end{subfigure}
	
	\begin{subfigure}[b]{0.45\textwidth}
		\centering
		\includegraphics[width=\textwidth]{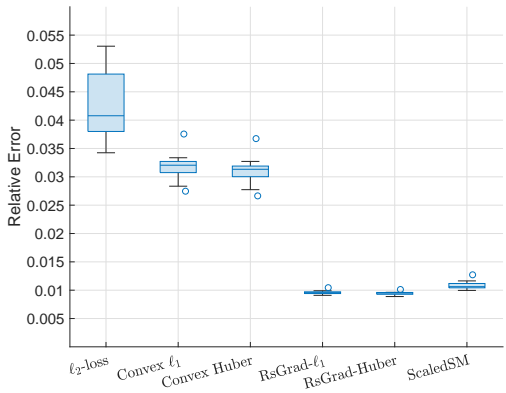}
		\caption{SNR=40}
		\label{fig:101}
	\end{subfigure}
	\hfill
	\begin{subfigure}[b]{0.45\textwidth}
		\centering
		\includegraphics[width=\textwidth]{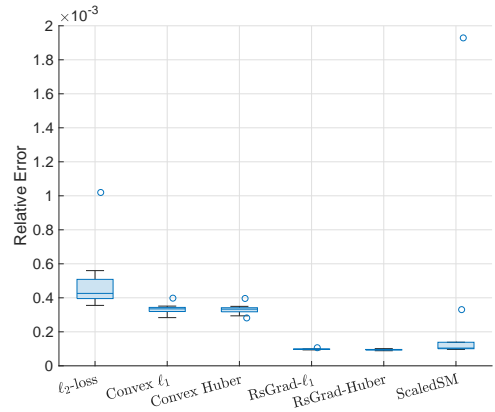}
		\caption{SNR=80}
		\label{fig:102}
	\end{subfigure}
	
	\caption{Accuracy comparisons of RsGrad, ScaledSM, robust convex models \citep{elsener2018robust} and convex $\ell_2$ loss \citep{candes2011tight} under Students' $t$-distribution noise with d.f.  $\nu=2$.   Top two figures:  $n=2.5rd_1$;  bottom two figures: $n=5rd_1$.}
	\label{fig:accu-t2}
\end{figure}

From Figure~\ref{fig:accu-Gaussian} and Figure~\ref{fig:accu-t2},  we may conclude that nonconvex models are statistically more accurate in practice for both Gaussian noise and heavy-tailed noise,  though convex approaches are proven statistically optimal in theory.  Moreover,  convex approaches are computationally much more demanding.  For instance,  in the case $n=5rd_1$,  convex $\ell_1$, convex Huber, convex $\ell_2$ need more than 1.5h, 1.5h, 3h to complete the ten repetitions,  respectively.  In sharp contrast,  RsGrad and ScaledSM only take around ten minutes.  When it comes to an even larger sample size like $n=10d_1r$,  the Matlab package on our computing platform needs 3h and 6h to complete the simulations (10 repetitions) for convex $\ell_1$ and convex Huber,  respectively,  whereas RsGrad and ScaledSM each takes about 20 minutes to do the job.

\subsection{Low-rank Tensor Estimation}
This section presents the results of numerical experiments on RsGrad for low-rank tensor estimation. The stepsizes schedule, relative error and SNR are similar to the matrix simulations.  For simplicity, the tensor size is fixed at $30\times 30\times 30$ with Tucker rank $(2,2,2)$.

\paragraph*{Convergence dynamics}
RsGrad is compared with the $\ell_2$-loss Riemannian gradient algorithm, RGrad \citep{cai2020provable} under both Gaussian and heavy-tailed noise. The results are displayed in Figure~\ref{fig:conv-tensor}. Under Gaussian noise, RsGrad slightly under-performs RGrad. However, when the noise has a Student's t-distribution with d.f. $\nu=2.1$, RsGrad significantly outperforms RGrad. Clearly, the dual-phase convergence of RsGrad is observed confirming our theory.

\begin{figure}
	\centering
	\begin{subfigure}[b]{0.45\textwidth}
		\centering
		\includegraphics[width=\textwidth]{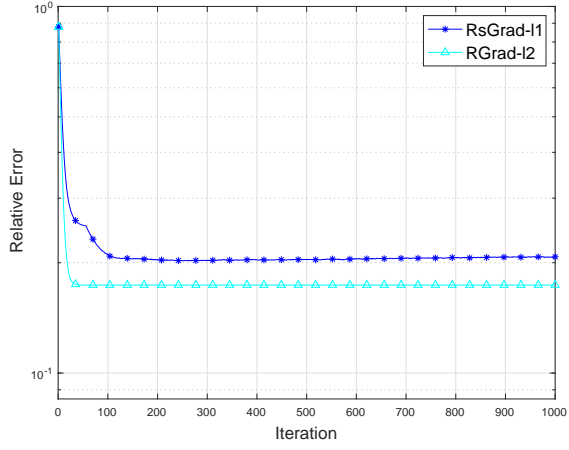}
		\caption{ $\frac{\frorr{\M^*}}{\EE|\xi|}=5$}
		\label{fig:111}
	\end{subfigure}
	\hfill
	\begin{subfigure}[b]{0.45\textwidth}
		\centering
		\includegraphics[width=\textwidth]{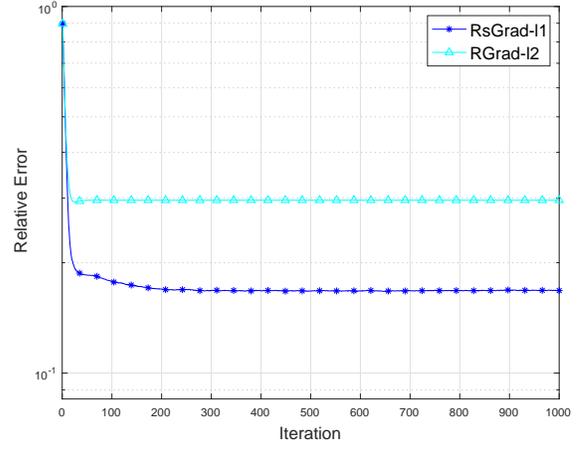}
		\caption{ $\frac{\frorr{\M^*}}{\EE|\xi|}=5$}
		\label{fig:112}
	\end{subfigure}

	\begin{subfigure}[b]{0.45\textwidth}
		\centering
		\includegraphics[width=\textwidth]{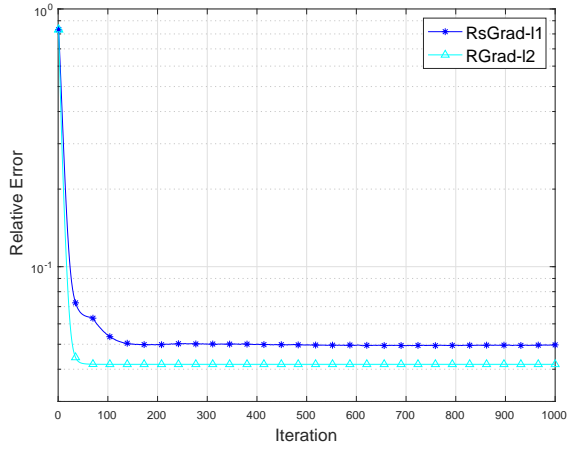}
		\caption{$\frac{\frorr{\M^*}}{\EE|\xi|}=20$}
		\label{fig:113}
	\end{subfigure}
	\hfill
	\begin{subfigure}[b]{0.45\textwidth}
		\centering
		\includegraphics[width=\textwidth]{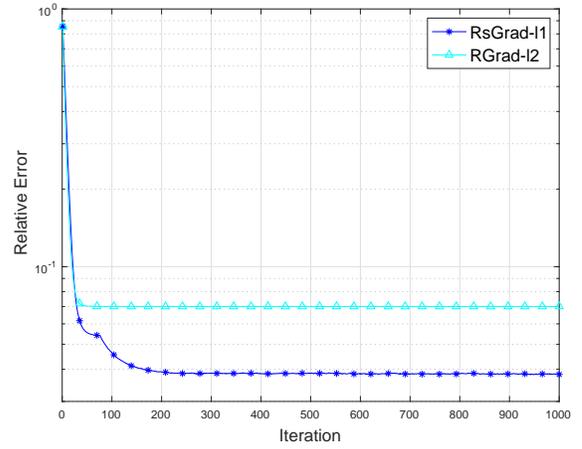}
		\caption{$\frac{\frorr{\M^*}}{\EE|\xi|}=20$}
		\label{fig:114}
	\end{subfigure}

	\caption{Conergence dynamics of RsGrad and RGrad \citep{cai2020provable} under $d_1=d_2=d_3=30$, $r_1=r_2=r_3=2$, $n = 600$. The left two figures: Gaussian noise; The right two figures; Students' t-distribution noise with d.f. $\nu=2.1$.}
	\label{fig:conv-tensor}.
\end{figure}

\paragraph*{Initialization Comparison}
We now compare the shrinkage-based second order moment initialization with the naive spectral initialization by HOSVD: $\M_0^{\textsf{\tiny HOSVD}}=\text{HOSVD}_{\r}(n^{-1}\sum_{i=1}^{n} Y_i\X_i)$. The simulation results under small and large SNRs are presented in Figure~\ref{fig:tensor-init}. It shows that, under heavy-tailed noise, our initialization method attains a smaller relative error.

\begin{figure}
	\centering
	\begin{subfigure}[b]{0.45\textwidth}
		\centering
		\includegraphics[width=\textwidth]{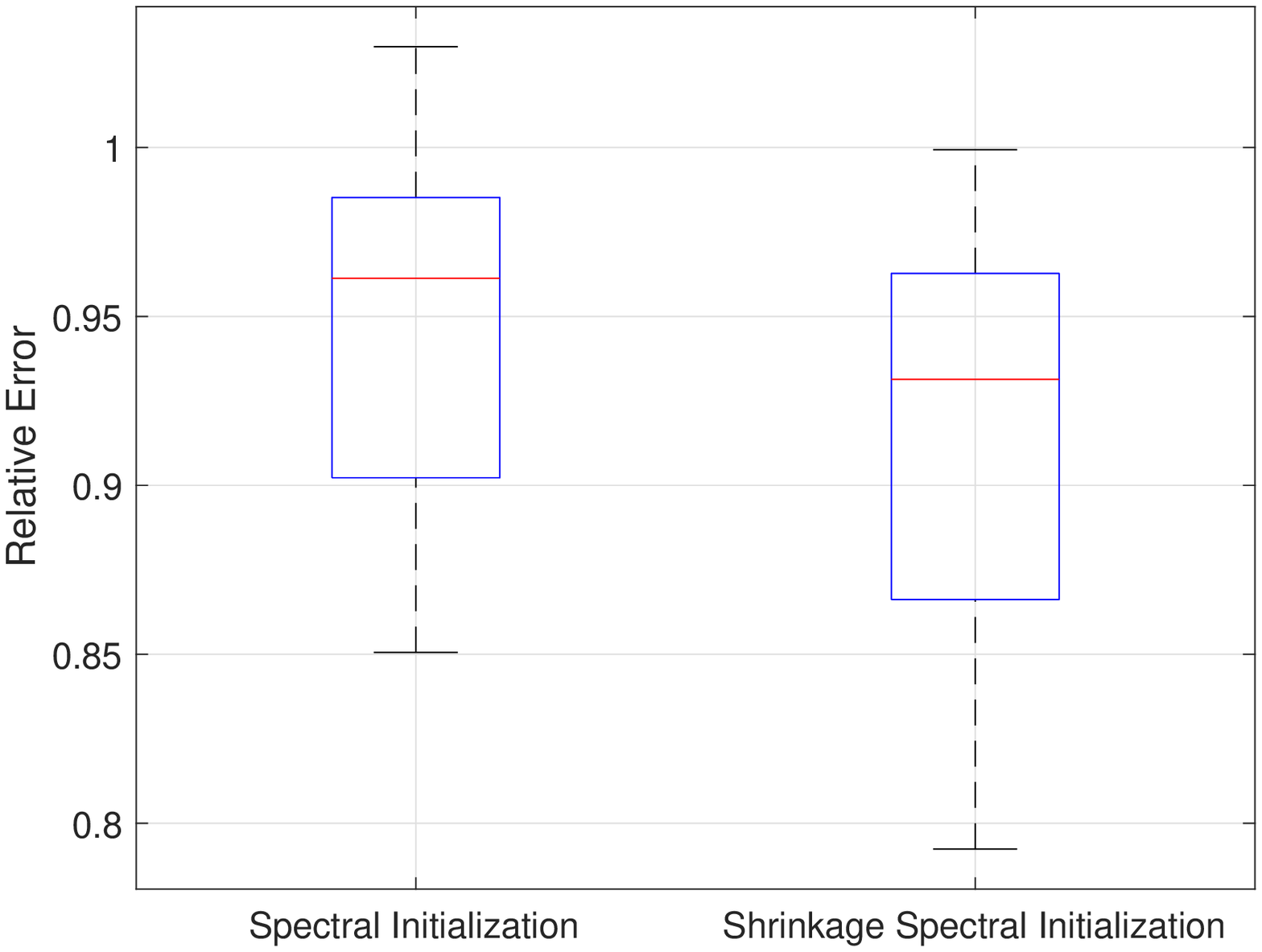}
		\caption{$\frac{\frorr{\M^*}}{\EE|\xi|}=5$}
		\label{fig:121}
	\end{subfigure}
	\hfill
	\begin{subfigure}[b]{0.45\textwidth}
		\centering
		\includegraphics[width=\textwidth]{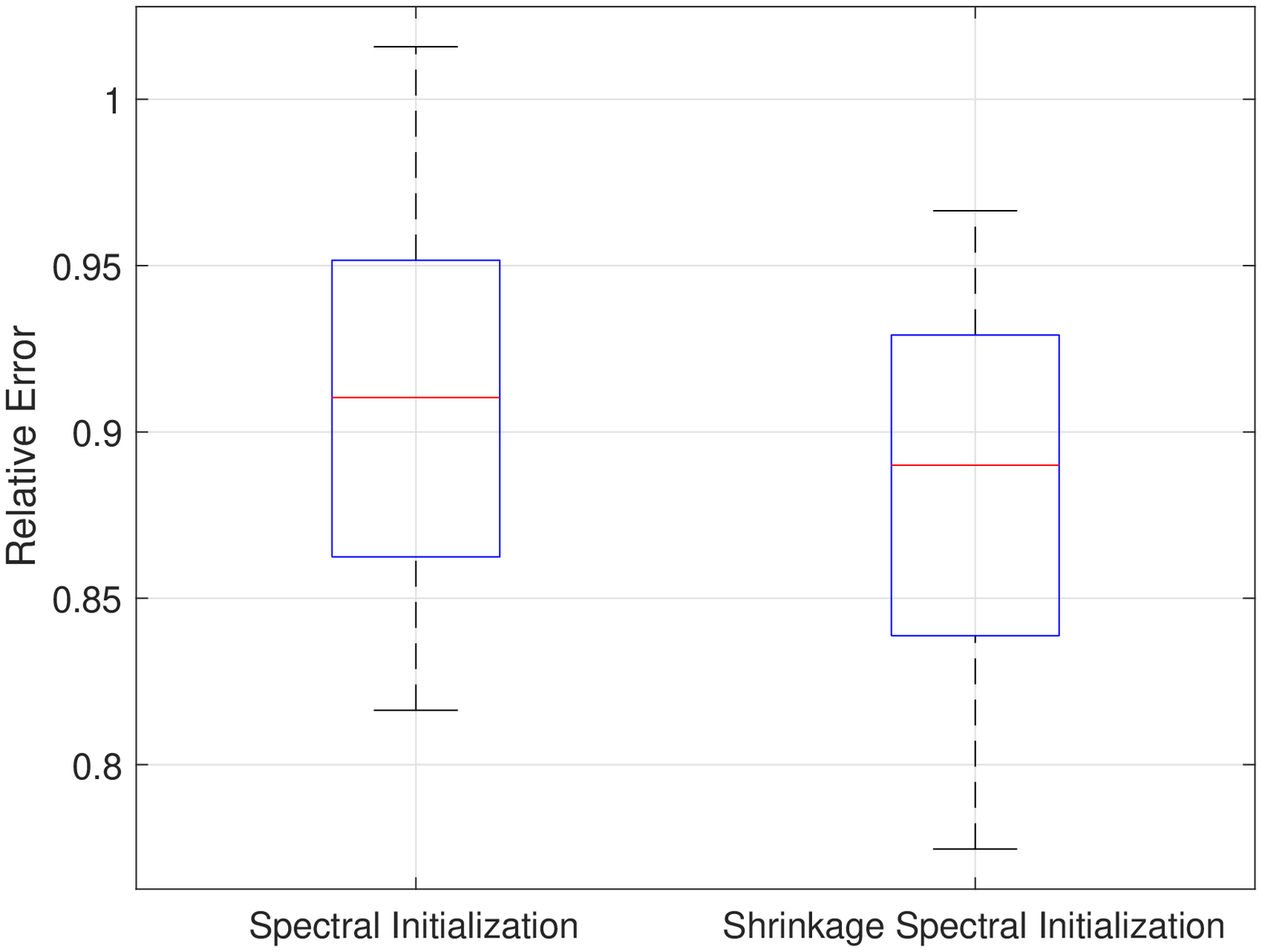}
		\caption{$\frac{\frorr{\M^*}}{\EE|\xi|}=20$}
		\label{fig:122}
		
	\end{subfigure}
	\caption{Initialization accuracy comparisons of shrinkage spectral inialization and spectral initialization under Students' t-distribution noise with d.f. $\nu=2.1$, $d_1=d_2=d_3=30$, $r_1=r_2=r_3=2$, $n=420$ }
	\label{fig:tensor-init}
\end{figure}

\newpage

\bibliographystyle{plainnat}
\bibliography{reference} 

\newpage

\appendix

\section{Proofs for main results}
\subsection{Proof of Proposition~\ref{prop:main}}
\label{proof:prop:main}
For convenience, denote $d_{l}:=(1-0.04\muc^2\Lc^{-2})^{l}\cdot\fro{\M_0-\M^*}$ and proof of phase one is equivalent to verification of $\fro{\M_{l+1}-\M^*}\leq d_{l+1}$ when $\fro{\M_{l}-\M^*}\geq \tau_{\textsf{\tiny comp}}$. 

Prove by induction and when $l=0$, $\fro{\M_{0}-\M^*}\leq d_{0} $ is obvious. Suppose we already have $\fro{\M_l-\M^*}\leq d_{l}$ and we are going to prove $\fro{\M_{l+1}-\M^*}\leq d_{l+1}$. 

Notice that $\M_{l+1}=\text{SVD}_r(\M_{l}-\eta_{l}\mathcal{P}_{\TT_l}(\G_l))$ and we use Lemma~\ref{teclem:perturbation} to bound the distance between $\M_{l+1}$ and the ground truth matrix $\M^*$. First consider $\Vert \M_{l}-\eta_{l}\mathcal{P}_{\mathbb{T}_{l}}(\G_{l})-\M^{*}\Vert_{\mathrm{F}}^{2}$,
\begin{equation}\label{eq1}
	\begin{split}
		\Vert \M_{l}-\eta_{l}\mathcal{P}_{\mathbb{T}_{l}}(\G_{l})-\M^{*}\Vert_{\mathrm{F}}^{2}&= \fro{\M_{l}-\M^{*}}^{2} - 2\eta_{l}\langle \M_{l}-\M^{*}, \mathcal{P}_{\mathbb{T}_{l}}(\G_{l})\rangle + \eta_{l}^{2}\fro{ \mathcal{P}_{\mathbb{T}_{l}}(\G_{l})}^{2}.
	\end{split}
\end{equation} The second term can be bounded in the way of
\begin{equation*}
	\begin{split}
		\inp{\M_l-\M^*}{\mathcal{P}_{\mathbb{T}_{l}}(\G_{l}) }&=\inp{\M_l-\M^*}{\G_{l}}-\inp{\M_l-\M^*}{\mathcal{P}_{\mathbb{T}_{l}}^{\perp}(\G_l)}\\
		&\geq (f(\M_{l}) - f(\M^{*})) - \langle \M_{l}-\M^{*}, \mathcal{P}_{\mathbb{T}_{l}}^{\perp}(\mathbf{G}_{l})\rangle,\\
	\end{split}
\end{equation*}
where the last inequality is from definition of sub-gradient. Note that 
\begin{align*}
	\inp{\M_l-\M^\ast}{\mathcal{P}_{\mathbb{T}_{l}}^{\perp}(\G_{l})}&\leq\fro{(\U\U^{\top}-\U_l\U_l^{\top})(\M_l-\M^*)(\I-\V_l\V_l^{\top})}\fror{\G_{l}}\\
	&\leq 4\frac{\Lc}{\sigma_r}\fro{\M_l-\M^\ast}^2,
\end{align*}
which uses $\op{\U\U^{\top}-\U_l\U_l^\top}\leq \frac{4\fro{\M_l-\M^*}}{\sigma_{r}}$ (see Lemma~\ref{teclem:perturbation}) and $\fror{\G_{l}}\leq \Lc$. The above two equations lead to \begin{align*}
	\inp{\M_l-\M^*}{\mathcal{P}_{\mathbb{T}_{l}}(\G_{l}) }\geq(f(\M_{l}) - f(\M^{*}))-4\frac{\Lc}{\sigma_r}\fro{\M_l-\M^\ast}^2.
\end{align*}
Together with $\Vert \mathcal{P}_{\TT_l}(\G_l)\Vert_{\mathrm{F}}\leq\sqrt{2}\fror{\G_l}\leq\sqrt{2}\Lc$ (see Lemma~\ref{teclem:projected subgradient norm}), $f(\M_{l}) - f(\M^{*})\geq \muc \fro{\M_l-\M^*}$ and $\Vert \M_{l} - \M^{*}\Vert_{\mathrm{F}}\leq \fro{\M_0-\M^*}\leq\frac{1}{8}\frac{\muc}{\Lc}\sigma_{r}$, Equation~\ref{eq1} could be  bounded in the way of
\begin{align*}
	\fro{\M_l-\eta_l\mathcal{P}_{\TT_l}(\G_l)-\M^*}^2\leq \fro{\M_{l}-\M^*}^2-\eta_l\muc\fro{\M_l-\M^*}+2\eta_l^2\Lc^{2}.
\end{align*}
By induction, one already has $\fro{\M_{l}-\M^{*}}\leq  (1-0.04\muc^2/\Lc^2)^{l}\cdot\fro{\M_0-\M^*}=:d_{l}$ and $\eta_l$ satisfies $ \frac{1}{5}d_{l}\muc\Lc^{-2}\leq\eta_{l}=(1-0.04\muc^2/\Lc^2)^{l}\cdot\eta_{0}\leq \frac{3}{10}d_{l}\muc\Lc^{-2}$, which implies
\begin{equation}
	\fro{\M_l-\eta_l\mathcal{P}_{\TT_l}(\G_l)-\M^*}^2\leq (1-\frac{3}{25}\frac{\muc^2}{\Lc^2})d_{l}^2.
	\label{eq2}
\end{equation}
It shows $\fro{\M_l-\eta_l\mathcal{P}_{\TT_l}(\G_l)-\M^*}<\sigma_{r}/4$ and then by Lemma~\ref{teclem:perturbation}, one could bound $\Vert  \operatorname{SVD}_{r}(\mathbf{M}_{l} - \eta_{l}\mathcal{P}_{\mathbb{T}_{l}}(\mathbf{G}_{l}))- \mathbf{M}^{*}\Vert_{\mathrm{F}}$:
\begin{equation}
	\begin{split}
		\fro{\M_{l+1}-\M^*}^2
		&= \Vert  {\rm SVD}_{r}(\M_{l} - \eta_{l}\mathcal{P}_{\mathbb{T}_{l}}(\mathbf{G}_{l}))- \M^{*}\Vert_{\mathrm{F}}^{2}\\
		&\leq \fro{\M_l-\eta_l\mathcal{P}_{\TT_l}(\G_l)-\M^*}^2+\frac{50}{\sigma_{r}}\fro{\M_l-\eta_l\mathcal{P}_{\TT_l}(\G_l)-\M^*}^{3},
	\end{split}
	\label{eq3}
\end{equation}
Insert Equation~\ref{eq2} into Equation~\ref{eq3} :
\begin{align*}
	\fro{\M_{l+1}-\M^*}^2&\leq (1-\frac{3}{25}\frac{\muc^{2}}{\Lc^{2}})d_{l}^2  + \frac{50}{\sigma_{r}}(1-\frac{3}{25}\frac{\muc^{2}}{\Lc^{2}})^{\frac{3}{2}}d_{l}^{3}\\
	&\leq (1-\frac{2}{25}\frac{\muc^{2}}{\Lc^{2}})d_{l}^2,
\end{align*}
which uses $\frac{50}{\sigma_{r}}d_{l}\leq\frac{50}{\sigma_{r}}d_{0} \leq\frac{1}{25}\frac{\muc^{2}}{\Lc^{2}}$. It shows $$ \fro{\M_{l+1}-\M^*}\leq d_{l+1}= (1-0.04\muc^2/\Lc^2)^{l+1}\fro{\M_0-\M^*}. $$

Hence, analysis of phase one convergence has been finished. Then consider the second phase, $\taus\leq\fro{\M_l-\M^*}<\tauc$. The derivation is similar:
\begin{align*}
	&{~~~~}\fro{\M_l-\eta_l\mathcal{P}_{\TT_l}(\G_l)-\M^*}^2\\
	&=\fro{\M_l-\M^*}^{2} - 2\eta_l\langle \M_{l}-\M^{*}, \mathcal{P}_{\mathbb{T}_{l}}(\mathbf{G}_{l})\rangle + \eta_l^{2}\Vert \mathcal{P}_{\mathbb{T}_{l}}(\mathbf{G}_{l})\Vert_{\mathrm{F}}^{2}\\
	&\leq \fro{\M_l-\M^*}^{2} - 2\eta_l(f(\M_{l}) - f(\M^{*})) + 2\eta_l\frac{\Ls}{\sigma_{r}}\Vert \M_{l} - \M^{*}\Vert_{\mathrm{F}}^{3} + 2\eta_l^{2}\fror{\G_l}^2\\
	&\leq \fro{\M_l-\M^*}^{2} - 2\eta_l\mus\fro{\M_l-\M^*}^{2}+ 2\eta_l\frac{\Ls}{\sigma_{r}}\fro{\M_l-\M^*}^{3} + 2\eta_l^{2}\Ls^2\fro{\M_l-\M^*}^2\\
	&\leq(1-\frac{7}{4}\eta_l\mus)\fro{\M_l-\M^*}^{2}+2\eta_l^{2}\Ls^2\fro{\M_l-\M^*}^2,
\end{align*}
where the second inequality follows from the condition $f(\M_l)-f(\M^*)\geq\mus\fro{\M_l-\M^*}^2$, $\fror{\G_l}\leq \Ls\fro{\M_l-\M^*}$ and the third inequality uses $\fro{\M_l-\M^*}\leq\fro{\M_0-\M^*}\leq \frac{1}{8}\frac{\mus}{\Ls}\sigma_r$.
Take stepsize $ \frac{1}{8}\cdot\frac{\mus}{\Ls^2}\leq\eta_l \leq \frac{3}{4}\cdot\frac{\mus}{\Ls^2}$ and one has
\begin{align*}
	\fro{\M_l-\eta\calP_{\TT_l}(\G_l)-\M^*}^2 \leq (1-\frac{1}{8}\frac{\mus^2}{\Ls^2}) \fro{\M_{l} -\M^{*}}^2.
\end{align*}
This implies $\fro{\M_l-\eta\calP_{\TT_l}(\G_l)-\M^*}^2<\sigma_{r}/4$ and then similarly, use Lemma~\ref{teclem:perturbation} to cope with the matrix perturbation,
\begin{align*}
	\Vert \M_{l+1}-\M^{*}\Vert_{\mathrm{F}}^{2} &= \Vert  {\rm SVD}_{r}(\M_{l} - \eta_{l}\mathcal{P}_{\mathbb{T}_{l}}(\mathbf{G}_{l}))- \M^{*}\Vert_{\mathrm{F}}^{2}\\
	&\leq \fro{\M_l-\eta\calP_{\TT_l}(\G_l)-\M^*}^2+\frac{50}{\sigma_{r}}\fro{\M_l-\eta\calP_{\TT_l}(\G_l)-\M^*}^3\\
	&\leq 	(1-\frac{1}{8}\frac{\mus^2}{\Ls^2}) \fro{\M_{l} -\M^{*}}^2+\frac{50}{\sigma_{r}}(1-\frac{1}{8}\frac{\mus^2}{\Ls^2})^{3/2} \fro{\M_{l} -\M^{*}}^3\\
	&\leq (1-\frac{1}{16}\frac{\mus^2}{\Ls^2}) \fro{\M_{l} -\M^{*}}^2,
\end{align*}
which uses the condition $\fro{\M_{l} -\M^{*}}\leq\fro{\M_{0} -\M^{*}} \leq\frac{1}{800}\cdot\frac{\mus^2}{\Ls^2}\sigma_{r}$. It verifies $$\fro{\M_{l+1}-\M^*}\leq(1-\frac{1}{32}\cdot\frac{\mus^2}{\Ls^2})\fro{\mathbf{M}_{l}-\mathbf{M}^{*}},$$
which finishes the proof.

\subsection{Proof of tensor case Proposition~\ref{prop:tensor:main}}
For convenience, denote $d_{l}:=(1-\frac{1}{16(m+1)}\frac{\muc^2}{\Lc^2})^{l}\cdot\fro{\M_0-\M^*}$ and proof of phase one is equivalent to verification of $\fro{\M_{l+1}-\M^*}\leq d_{l+1}$ when $\fro{\M_{l}-\M^*}\geq \tau_{\textsf{\tiny comp}}$. 

Prove by induction and when $l=0$, $\fro{\M_{0}-\M^*}\leq d_{0} $ is obvious. Suppose we already have $\fro{\M_l-\M^*}\leq d_{l}$ and we are going to prove $\fro{\M_{l+1}-\M^*}\leq d_{l+1}$. 

Notice that $\M_{l+1}=\text{HOSVD}_\r(\M_{l}-\eta_{l}\mathcal{P}_{\TT_l}(\G_l))$ and we use Lemma~\ref{teclem:perturbation} to bound the distance between singular value truncated matrix and the ground truth matrix $\M^*$. First consider $\Vert \M_{l}-\eta_{l}\mathcal{P}_{\mathbb{T}_{l}}(\G_{l})-\M^{*}\Vert_{\mathrm{F}}^{2}$,
\begin{equation}\label{eq30}
	\begin{split}
		\Vert \M_{l}-\eta_{l}\mathcal{P}_{\mathbb{T}_{l}}(\G_{l})-\M^{*}\Vert_{\mathrm{F}}^{2}&= \fro{\M_{l}-\M^{*}}^{2} - 2\eta_{l}\langle \M_{l}-\M^{*}, \mathcal{P}_{\mathbb{T}_{l}}(\G_{l})\rangle + \eta_{l}^{2}\fro{ \mathcal{P}_{\mathbb{T}_{l}}(\G_{l})}^{2}.
	\end{split}
\end{equation} The second term can be bounded in the way of
\begin{equation*}
	\begin{split}
		\inp{\M_l-\M^*}{\mathcal{P}_{\mathbb{T}_{l}}(\G_{l}) }&=\inp{\M_l-\M^*}{\G_{l}}-\inp{\M_l-\M^*}{\mathcal{P}_{\mathbb{T}_{l}}^{\perp}(\G_l)}\\
		&\geq (f(\M_{l}) - f(\M^{*})) - \langle \M_{l}-\M^{*}, \mathcal{P}_{\mathbb{T}_{l}}^{\perp}(\mathbf{G}_{l})\rangle,\\
	\end{split}
\end{equation*}
where the last inequality is from definition of sub-gradient. Note that the second term could be bounded with
\begin{align*}
	\inp{\M_l-\M^\ast}{\mathcal{P}_{\mathbb{T}_{l}}^{\perp}(\G_{l})}\leq \fro{\calP_{\TT_l}^{\perp}(\M_l-\M^*)}\Vert\G_l\Vert_{\mathrm{F,2\r}}\leq \frac{8m(m+3)\Lc}{\underline\lambda}\fro{\M_l-\M^\ast}^2,
\end{align*}
which uses Lemma~\ref{teclem:tensor:perp projected} and $\Vert\G_{l}\Vert_{\mathrm{F,2\r}}\leq \Lc$. The above two equations lead to \begin{align*}
	\inp{\M_l-\M^*}{\mathcal{P}_{\mathbb{T}_{l}}(\G_{l}) }\geq(f(\M_{l}) - f(\M^{*}))-8m(m+3)\frac{\Lc}{\underline{\lambda}}\fro{\M_l-\M^\ast}^2.
\end{align*}
Together with $\Vert \mathcal{P}_{\TT_l}(\G_l)\Vert_{\mathrm{F}}\leq\sqrt{m+1}\frorr{\G_l}\leq\sqrt{m+1}\Lc$ (see Lemma~\ref{teclem:tensor:projected subgrad}), $f(\M_{l}) - f(\M^{*})\geq \muc \fro{\M_l-\M^*}$ and $\Vert \M_{l} - \M^{*}\Vert_{\mathrm{F}}\leq \fro{\M_0-\M^*}\leq\frac{1}{16}\frac{\muc}{m(m+3)\Lc}\underline{\lambda}$, Equation~\ref{eq30} becomes
\begin{align*}
	\fro{\M_l-\eta_l\mathcal{P}_{\TT_l}(\G_l)-\M^*}^2\leq \fro{\M_{l}-\M^*}^2-\eta_l\muc\fro{\M_l-\M^*}+(m+1)\eta_l^2\Lc^{2}.
\end{align*}
By induction, one already has $\fro{\M_{l}-\M^{*}}\leq  (1-\frac{1}{16(m+1)}\frac{\muc^2}{\Lc^2})^{l}\cdot\fro{\M_0-\M^*}=:d_{l}$. Also, stepsize $\eta_l$ satisfies $ \frac{1}{4(m+1)}d_{l}\muc\Lc^{-2}\leq\eta_{l}=(1-\frac{1}{16(m+1)}\frac{\muc^2}{\Lc^2})^{l}\cdot\eta_{0}\leq \frac{3}{4(m+1)}d_{l}\muc\Lc^{-2}$ and it implies
\begin{equation}
	\fro{\M_l-\eta_l\mathcal{P}_{\TT_l}(\G_l)-\M^*}^2\leq (1-\frac{3}{16(m+1)}\frac{\muc^2}{\Lc^2})d_{l}^2.
	\label{eq31}
\end{equation}
It shows $\fro{\M_l-\eta_l\mathcal{P}_{\TT_l}(\G_l)-\M^*}<\underline\lambda/4$ and then by Lemma~B.2 of \cite{cai2021generalized}, one could bound $\Vert  \operatorname{HOSVD}_{\r}(\mathbf{M}_{l} - \eta_{l}\mathcal{P}_{\mathbb{T}_{l}}(\mathbf{G}_{l}))- \mathbf{M}^{*}\Vert_{\mathrm{F}}$:
\begin{equation}
	\begin{split}
		\fro{\M_{l+1}-\M^*}^2
		&= \Vert  {\rm HOSVD}_{\r}(\M_{l} - \eta_{l}\mathcal{P}_{\mathbb{T}_{l}}(\mathbf{G}_{l}))- \M^{*}\Vert_{\mathrm{F}}^{2}\\
		&\leq \fro{\M_l-\eta_l\mathcal{P}_{\TT_l}(\G_l)-\M^*}^2+\frac{c_1}{\underline{\lambda}}\fro{\M_l-\eta_l\mathcal{P}_{\TT_l}(\G_l)-\M^*}^{3},
	\end{split}
	\label{eq32}
\end{equation}
Insert Equation~\ref{eq31} into Equation~\ref{eq32} :
\begin{align*}
	\fro{\M_{l+1}-\M^*}^2&\leq (1-\frac{3}{16(m+1)}\frac{\muc^{2}}{\Lc^{2}})d_{l}^2  + \frac{c_1}{\underline{\lambda}}(1-\frac{3}{16(m+1)}\frac{\muc^{2}}{\Lc^{2}})^{\frac{3}{2}}d_{l}^{3}\\
	&\leq (1-\frac{1}{8(m+1)}\frac{\muc^{2}}{\Lc^{2}})d_{l}^2,
\end{align*}
which uses $\frac{c_1}{\underline{\lambda}}d_{l}\leq\frac{c_1}{\underline{\lambda}}d_{0} \leq\frac{1}{16(m+1)}\frac{\muc^{2}}{\Lc^{2}}\mins$. Take the square root and then we have $$ \fro{\M_{l+1}-\M^*}\leq d_{l+1}= (1-\frac{1}{16(m+1)}\frac{\muc^2}{\Lc^2})^{l+1}\fro{\M_0-\M^*}. $$

Hence, analysis of phase one convergence has been finished. Then consider the second phase, $\taus\leq\fro{\M_l-\M^*}<\tauc$. The derivation is similar:
\begin{align*}
	&{~~~~}\fro{\M_l-\eta_l\mathcal{P}_{\TT_l}(\G_l)-\M^*}^2\\
	&=\fro{\M_l-\M^*}^{2} - 2\eta_l\langle \M_{l}-\M^{*}, \mathcal{P}_{\mathbb{T}_{l}}(\mathbf{G}_{l})\rangle + \eta_l^{2}\Vert \mathcal{P}_{\mathbb{T}_{l}}(\mathbf{G}_{l})\Vert_{\mathrm{F}}^{2}\\
	&\leq \fro{\M_l-\M^*}^{2} - 2\eta_l(f(\M_{l}) - f(\M^{*})) + 16\eta_l\frac{m(m+3)\Ls}{\underline{\lambda}}\Vert \M_{l} - \M^{*}\Vert_{\mathrm{F}}^{3} + (m+1)\eta_l^{2}\frorr{\G_l}^2\\
	&\leq \fro{\M_l-\M^*}^{2} - 2\eta_l\mus\fro{\M_l-\M^*}^{2}+ 16\eta_l\frac{m(m+3)\Ls}{\underline{\lambda}}\fro{\M_l-\M^*}^{3} + (m+1)\eta_l^{2}\Ls^2\fro{\M_l-\M^*}^2\\
	&\leq(1-\eta_l\mus)\fro{\M_l-\M^*}^{2}+(m+1)\eta_l^{2}\Ls^2\fro{\M_l-\M^*}^2,
\end{align*}
where the second inequality follows from the condition $f(\M_l)-f(\M^*)\geq\mus\fro{\M_l-\M^*}^2$, $\Vert\G_l\Vert_{\mathrm{F,2\r}}\leq \Ls\fro{\M_l-\M^*}$ and the third inequality uses $\fro{\M_l-\M^*}\leq\fro{\M_0-\M^*}\leq \frac{1}{16m(m+3)}\frac{\mus}{\Ls}\underline{\lambda}$.
Take stepsize $ \frac{1}{4(m+1)}\cdot\frac{\mus}{\Ls^2}\leq\eta_l \leq \frac{3}{4(m+1)}\cdot\frac{\mus}{\Ls^2}$ and one has
\begin{align*}
	\fro{\M_l-\eta\calP_{\TT_l}(\G_l)-\M^*}^2 \leq (1-\frac{3}{16(m+1)}\frac{\mus^2}{\Ls^2}) \fro{\M_{l} -\M^{*}}^2.
\end{align*}
This implies $\fro{\M_l-\eta\calP_{\TT_l}(\G_l)-\M^*}^2<\underline{\lambda}/4$ and then similarly, use Lemma~B.2 \cite{cai2021generalized} to cope with the tensor perturbation,
\begin{align*}
	\Vert \M_{l+1}-\M^{*}\Vert_{\mathrm{F}}^{2} &= \Vert  {\rm HOSVD}_{\r}(\M_{l} - \eta_{l}\mathcal{P}_{\mathbb{T}_{l}}(\mathbf{G}_{l}))- \M^{*}\Vert_{\mathrm{F}}^{2}\\
	&\leq \fro{\M_l-\eta\calP_{\TT_l}(\G_l)-\M^*}^2+\frac{c_1}{\underline{\lambda}}\fro{\M_l-\eta\calP_{\TT_l}(\G_l)-\M^*}^3\\
	&\leq 	(1-\frac{3}{16(m+1)}\frac{\mus^2}{\Ls^2}) \fro{\M_{l} -\M^{*}}^2+\frac{c_1}{\underline{\lambda}}(1-\frac{3}{16(m+1)}\frac{\mus^2}{\Ls^2})^{3/2} \fro{\M_{l} -\M^{*}}^3\\
	&\leq (1-\frac{1}{8(m+1)}\frac{\mus^2}{\Ls^2}) \fro{\M_{l} -\M^{*}}^2,
\end{align*}
which uses the condition $\fro{\M_{l} -\M^{*}}\leq\fro{\M_{0} -\M^{*}} \leq\frac{1}{16c_1(m+1)}\cdot\frac{\mus^2}{\Ls^2}\underline{\lambda}$. It verifies $$\fro{\M_{l+1}-\M^*}\leq(1-\frac{1}{16(m+1)}\cdot\frac{\mus^2}{\Ls^2})\fro{\mathbf{M}_{l}-\mathbf{M}^{*}} .$$
We finish the proof.

\section{Proofs for Applications}
\subsection{Proof of absolute loss with Gaussian noise Lemma~\ref{lem:Gaussian-l1}}
\label{proof:lem:Gaussian-l1}
\begin{proof}
	First consider upper bound of sug-gradient $\G\in f(\M)$, where matrix $\M$ has rank at most $r$. Note that we have,
	\begin{align*}
		f(\M+\text{SVD}_r(\G))-f(\M)&= \big\vert \sum_{i=1}^{n}\vert Y_i-\inp{\M +\text{SVD}_r(\G)}{\X_i}\vert - \sum_{i=1}^{n}\vert Y_i-\inp{\M}{\X_i}\vert\big\vert\\
		&\leq \sum_{i=1}^{n}\vert\inp{\text{SVD}_r(\G)}{\X_i}\vert\leq 2n\fror{\G},
	\end{align*}
	where the last inequality holds with probability exceeding $1-4\exp(-cd_1r)$ (see Corollary~\ref{cor: empirical process}). On the other hand, by definition of sub-gradient, we have $f(\M+\text{SVD}_r(\G))-f(\M)\geq \fror{\G}^2$ and together with the above equation, we have $\fror{\G_l}\leq 2n$. It implies $\Lc=2n$.
	
	Next consider the lower bound for $f(\M) - f(\M^*)$. Its expectation $\EE[f(\M)-f(\M^{*})]$ is calculated in Lemma~\ref{teclem:gaussian-l1}. We shall proceed assuming the event $\bcalE = \{\sup_{\M\in\MM_r}|f(\M)-f(\M^*) - \EE(f(\M)-f(\M^*))|\cdot\fro{\M-\M^*}^{-1}\leq C_2\sqrt{nd_1r} \}$ holds and specifically Theorem \ref{thm:empirical process} proves $\bcalE$ holds with probability exceeding $1-\exp(-cd_1r)$.
	
	We discuss the two phases respectively, namely, phase one when $\fro{\M-\M^*}\geq \sigma$ and phase two when $C\sqrt{d_1r/n}\sigma\leq\fro{\M-\M^*}\leq \sigma$.
	
	\noindent\textit{Case 1:}
	When $\Vert \mathbf{M}-\mathbf{M}^{*}\Vert_{\mathrm{F}}\geq\sigma$, we have 
	\begin{align*}
		f(\mathbf{M})-f(\mathbf{M}^*)&\geq\mathbb{E}\left[f(\mathbf{M})-f(\mathbf{M}^*) \right] - C_2\sqrt{nd_1r}\Vert \mathbf{M}-\mathbf{M}^{*}\Vert_{\mathrm{F}}\\
		&= n\sqrt{2/\pi}\frac{1}{\sqrt{\Vert \mathbf{M}-\mathbf{M}^{*}\Vert_{\mathrm{F}}^{2}+\sigma^{2}}+\sigma}\Vert \mathbf{M}-\mathbf{M}^{*}\Vert_{\mathrm{F}}^{2} - C_2\sqrt{nd_1r}\Vert \mathbf{M}-\mathbf{M}^{*}\Vert_{\mathrm{F}}\\
		&\geq n\sqrt{2/\pi}\frac{1}{\sqrt{2}+1}\Vert \mathbf{M}-\mathbf{M}^{*}\Vert_{\mathrm{F}} -C_2\sqrt{nd_1r}\Vert \mathbf{M}-\mathbf{M}^{*}\Vert_{\mathrm{F}}\\
		&\geq \frac{1}{12}n\Vert \mathbf{M}-\mathbf{M}^{*}\Vert_{\mathrm{F}},
	\end{align*}
	where the penultimate line is from $\Vert \mathbf{M}-\mathbf{M}^{*}\Vert_{\mathrm{F}}\geq\sigma$ and the last line uses $n\geq Cd_1r$ for some large absolute constant $C>0$. It verifies $\muc=\frac{1}{12}n$.

	\noindent\textit{Case 2:}
	When $C\sqrt{d_1r/n}\sigma\leq\Vert \mathbf{M}-\mathbf{M}^{*}\Vert_{\mathrm{F}}<\sigma$, we have \begin{equation*}
		\begin{split}
			f(\mathbf{M})-f(\mathbf{M}^*)&\geq\mathbb{E}\left[f(\mathbf{M})-f(\mathbf{M}^*) \right] - C_2\sqrt{nd_1r}\Vert \mathbf{M}-\mathbf{M}^{*}\Vert_{\mathrm{F}}\\
			&= n\sqrt{2/\pi}\frac{1}{\sqrt{\Vert \mathbf{M}-\mathbf{M}^{*}\Vert_{\mathrm{F}}^{2}+\sigma^{2}}+\sigma}\Vert \mathbf{M}-\mathbf{M}^{*}\Vert_{\mathrm{F}}^{2} - C_2\sqrt{nd_1r}\Vert \mathbf{M}-\mathbf{M}^{*}\Vert_{\mathrm{F}}\\
			&\geq\frac{1}{6}\frac{n}{\sigma}\Vert \mathbf{M}-\mathbf{M}^{*}\Vert_{\mathrm{F}}^{2} -C_2\sqrt{nd_1r}\Vert \mathbf{M}-\mathbf{M}^{*}\Vert_{\mathrm{F}}\\
			&\geq \frac{1}{12}\frac{n}{\sigma}\Vert \mathbf{M}-\mathbf{M}^{*}\Vert_{\mathrm{F}}^{2},
		\end{split}
	\end{equation*}
	where the penultimate line uses $\Vert \mathbf{M}-\mathbf{M}^{*}\Vert_{\mathrm{F}}< \sigma$ and the last line is from $\fro{\M-\M^*}\geq C\sqrt{d_1r/n}\sigma$. It shows $\mus=\frac{1}{12}\frac{n}{\sigma}$.
	
	Finally, from the following Lemma \ref{lemma:upperboundsubgradient:gaussian}, we see that $\Ls \leq C_4n\sigma^{-1}$. And this finishes the proof of the lemma.
\end{proof}

\begin{lemma}[Upper bound for sub-gradient]\label{lemma:upperboundsubgradient:gaussian}
	Let $\M\in\RR^{d_1\times d_2}$ have rank at most $r$ and satisfy $\fro{\M-\M^*}\geq \sqrt{\frac{d_1r}{n}}\sigma$. Let $\G\in\partial f(\M)$ be the sub-gradient. Under the event $\bcalE=\{\sup_{\M,\M_1\in\MM_r}|f(\M+\M_1)-f(\M) - \EE(f(\M+\M_1)-f(\M))|\cdot\fro{\M_1}^{-1}\leq C_1\sqrt{nd_1r} \}$, we have $\fror{\G}\leq Cn\sigma^{-1}\fro{\M-\M^*}$ for some absolute constant $C>0$.
\end{lemma}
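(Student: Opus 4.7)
The plan is to combine the variational characterization of the truncated Frobenius norm, $\fror{\G} = \sup\{\langle \G, \U\rangle : \U\in\MM_r,\ \fro{\U}=1\}$, with the convex subgradient inequality. For any admissible $\U$ and any $t > 0$, since $\G \in \partial f(\M)$,
$$
\langle \G, t\U\rangle \leq f(\M + t\U) - f(\M).
$$
Setting $\M_1 = t\U$ (which lies in $\MM_r$) and invoking the event $\bcalE$ gives
$$
\langle \G, \U\rangle \leq t^{-1}\,\EE\bigl[f(\M + t\U) - f(\M)\bigr] + C_1\sqrt{nd_1 r},
$$
uniformly in $t > 0$ and $\U$. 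This reduces the stochastic bound to a deterministic directional-derivative computation of the expectation.

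Next I would evaluate the expectation in closed form. Because $\X_i$ has i.i.d.\ $N(0,1)$ entries and $\xi_i \sim N(0,\sigma^2)$ is independent of $\X_i$, the residual satisfies $Y_i - \langle \M + t\U, \X_i\rangle \sim N\bigl(0, \fro{\M + t\U - \M^*}^2 + \sigma^2\bigr)$. Hence, by $\EE|N(0,\tau^2)| = \tau\sqrt{2/\pi}$ (the same identity used in Lemma~\ref{teclem:gaussian-l1}),
$$
\EE f(\M + t\U) = n\sqrt{2/\pi}\,\sqrt{\fro{\M + t\U - \M^*}^2 + \sigma^2}.
$$
Differentiating at $t = 0^+$ and sending $t \downarrow 0$ in the previous display produces, for each rank-$r$ unit-norm $\U$,
$$
\langle \G, \U\rangle \leq n\sqrt{2/\pi}\cdot\frac{\langle \M - \M^*,\, \U\rangle}{\sqrt{\fro{\M - \M^*}^2 + \sigma^2}} + C_1\sqrt{nd_1 r}.
$$

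Finally, taking the supremum over $\U$ and using $\sup_{\fro{\U}=1,\,\rank(\U)\leq r}\langle \M - \M^*, \U\rangle \leq \fro{\M - \M^*}$ together with $\sqrt{\fro{\M-\M^*}^2 + \sigma^2} \geq \sigma$ gives
$$
\fror{\G} \leq \sqrt{2/\pi}\cdot\frac{n\,\fro{\M - \M^*}}{\sigma} + C_1\sqrt{nd_1 r}.
$$
The hypothesis $\fro{\M - \M^*} \geq \sigma\sqrt{d_1 r/n}$ rewrites as $\sqrt{nd_1 r} \leq n\sigma^{-1}\fro{\M - \M^*}$, so the fluctuation term is absorbed into the main term and one concludes $\fror{\G} \leq Cn\sigma^{-1}\fro{\M - \M^*}$ for a universal constant $C > 0$.

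The main obstacle is purely bookkeeping: one must verify that the subgradient inequality combined with the uniform bound $\bcalE$ passes cleanly to the $t \downarrow 0^+$ limit. Because $\bcalE$ is genuinely uniform over all $\M_1 \in \MM_r$ with the $\fro{\M_1}^{-1}$ normalization built in, the inequality for $\langle \G,\U\rangle$ holds for every $t > 0$ simultaneously; since $t\mapsto\EE f(\M+t\U)$ is smooth, the one-sided derivative exists and the limit is legitimate, so no additional concentration step beyond $\bcalE$ is required.
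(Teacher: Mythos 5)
Your proof is correct, and it takes a genuinely different (and arguably cleaner) route than the paper's. The paper fixes a single perturbation $\M_1 = \M + \tfrac{\sigma}{2n}\operatorname{SVD}_r(\G)$, bounds $f(\M_1)-f(\M)$ from above via the expectation formula plus the empirical-process event and from below via the subgradient inequality $f(\M_1)-f(\M)\geq \tfrac{\sigma}{2n}\fror{\G}^2$, and then solves the resulting quadratic inequality in $\fror{\G}$. You instead use the variational characterization $\fror{\G}=\sup\{\langle\G,\U\rangle:\U\in\MM_r,\ \fro{\U}=1\}$ (which holds since equality in $\langle\G,\U\rangle\leq\fror{\G}\fro{\U}$ is attained at the normalized rank-$r$ truncation of $\G$), test each direction with a small step $t\U\in\MM_r$, divide by $t$, and send $t\downarrow 0$ to land directly on a linear bound. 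Both arguments use the same three ingredients — the convex subgradient inequality, the closed form $\EE f(\cdot)=n\sqrt{2/\pi}\sqrt{\fro{\cdot-\M^*}^2+\sigma^2}$, and the event $\bcalE$ — but the limiting argument avoids the quadratic-inequality bookkeeping entirely and even produces a slightly sharper constant, namely $\sqrt{2/\pi}+C_1$. One point worth making explicit in a final writeup: the map $t\mapsto\EE f(\M+t\U)=n\sqrt{2/\pi}\sqrt{t^2+2t\langle\M-\M^*,\U\rangle+\fro{\M-\M^*}^2+\sigma^2}$ is not just smooth but convex in $t$ (the discriminant check gives $ac-b^2/4\geq\sigma^2>0$), so the difference quotient is monotone and the limit $t\downarrow 0$ is in fact the infimum over $t>0$, making the passage to the derivative clean.
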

\begin{proof}
	Take $\M_1 = \M + \frac{\sigma}{2n}\text{SVD}_r(\G)$, where $\text{SVD}_r(\G)$ is the best rank $r$ approximation of $\G$. So $\rank(\M-\M_1) \leq r$. Then we have
	\begin{align*}
		\EE f(\M_{1})-\EE f(\M)=&\sqrt{2/\pi}n\frac{\Vert \M_{1}-\M^{*}\Vert_{\mathrm{F}}^{2} - \Vert \M-\M^{*}\Vert_{\mathrm{F}}^{2} }{\sqrt{\sigma^{2} + \Vert \M_{1}-\M^{*}\Vert_{\mathrm{F}}^{2} } + \sqrt{\sigma^2 + \Vert \M-\M^{*}\Vert_{\mathrm{F}}^{2} } }\\
		=&\sqrt{2/\pi}n\frac{\Vert \M_{1}-\M\Vert_{\mathrm{F}}^{2}+2\langle \M-\M^{*}, \M_{1}-\M\rangle }{\sqrt{\sigma^{2} + \Vert \M_{1}-\M^{*}\Vert_{\mathrm{F}}^{2} } + \sqrt{\sigma^2 + \Vert \M-\M^{*}\Vert_{\mathrm{F}}^{2} } }\\
		\leq&\sqrt{2/\pi}\frac{n}{\sigma}\left( \Vert \M_{1}-\M\Vert_{\mathrm{F}}^{2}+2\Vert \M-\M^{*}\Vert_{\mathrm{F}} \Vert\M_{1}-\M\Vert_{\mathrm{F}}\right),
	\end{align*}
	where the first equality comes from Lemma~\ref{teclem:gaussian-l1}. And from Theorem~\ref{thm:empirical process}, we have 
	\begin{align*}
		f(\M_{1}) - f(\M)\leq&\EE f(\M_{1})-\EE f(\M)+C_1\sqrt{ nd_1r}(\Vert \M_{1}-\M^{*}\Vert_{\mathrm{F}} + \Vert \M-\M^{*}\Vert_{\mathrm{F}})\\
		\leq&\EE f(\M_{1})-\EE f(\M)+C_1\frac{n}{\sigma}\Vert \M-\M^{*}\Vert_{\mathrm{F}}(2\Vert \M-\M^{*}\Vert_{\mathrm{F}} + \Vert\M_{1}-\M\Vert_{\mathrm{F}} )\\
		\leq& \frac{n}{\sigma} \Vert \M_{1}-\M\Vert_{\mathrm{F}}^{2} + (2+C_1) \frac{n}{\sigma}\Vert \M-\M^{*}\Vert_{\mathrm{F}} \Vert\M_{1}-\M\Vert_{\mathrm{F}} +2C_1\frac{n}{\sigma}\Vert \M-\M^{*}\Vert_{\mathrm{F}}^{2},
	\end{align*}
	where the second inequality is from the condition $\fro{\M-\M^*}\geq \sqrt{\frac{d_1r}{n}}\sigma$. Since $\M_1 = \M + \frac{\sigma}{2n}\text{SVD}_r(\G)$, we have 
	\begin{equation}\label{eq11}
		\begin{split}
			f(\M+\frac{\sigma}{2n}\text{SVD}_{r}(\G))&-f(\M)\\&\leq \frac{\sigma}{4n} \Vert\G\Vert_{\mathrm{F,r}}^{2} + (1+C_1/2) \Vert \M-\M^{*}\Vert_{\mathrm{F}} \Vert\G\Vert_{\mathrm{F,r}} +2C_1\frac{n}{\sigma}\Vert \M-\M^{*}\Vert_{\mathrm{F}}^{2}.
		\end{split}
	\end{equation}
	On the other hand, by the definition of sub-gradient, we have
	\begin{align}\label{eq12}
		f(\M+\frac{\sigma}{2n}\text{SVD}_{r}(\G))-f(\M)\geq \frac{\sigma}{2n}\fror{\G}^2.
	\end{align}
	Combine Equation~\ref{eq11} with Equation~\ref{eq12} and by solving the quadratic inequality we get $$\fror{\G}\leq Cn\sigma^{-1}\fro{\M-\M^*}.$$
\end{proof}

\subsection{Proof of absolute value loss with heavy-tailed noise Lemma~\ref{lem:heavytail-l1}}
\label{proof:lem:heavytail-l1}
The $\Lc=2n$ proof is the same as the one given in Section~\ref{proof:lem:Gaussian-l1}. Now we focus on lower bound of $f(\M)-f(\M^*)$.

First consider phase one when $\fro{\X-\X^*}\geq 30\EE\vert\xi\vert=30\gamma$. Use triangle inequality and then get
\begin{align}
	f(\mathbf{M})-f(\mathbf{M}^{*}) &= \sum_{i=1}^{n}|\xi_i-\inp{\M-\M^*}{\X_i}|-\sum_{i=1}^{n}|\xi_i| \geq\sum_{i=1}^{n}|\inp{\M-\M^*}{\X_i}|-2\Vert \boldsymbol{\xi}\Vert_{1}.
	\label{eq13}
\end{align}
Corollary~\ref{cor: empirical process} proves $ \sum_{i=1}^{n}|\inp{\M-\M^*}{\X_i}| \geq \sqrt{1/2\pi}n\fro{\M-\M^*}$ and Lemma~\ref{teclem:Contraction of Heavy Tailed Random Variables} proves with probability over $1- {}^*\!c_1n^{-\min\{1,\varepsilon\}}$, $\frac{1}{n}\lone{\boldsymbol{\xi}}\leq 3\EE\vert\xi\vert=3\gamma$. Then combined with $\fro{\M-\M^*}\geq 30\gamma$, Equation~\ref{eq13} could be further bounded with 
\begin{align*}
	f(\mathbf{M})-f(\mathbf{M}^{*})\geq \sqrt{1/2\pi}n\Vert \mathbf{M}-\mathbf{M}^{*}\Vert_{\mathrm{F}}-6n\gamma\geq \frac{n}{6}\fro{\M-\M^*},
\end{align*}
which verifies $\muc = \frac{n}{6}$.

Then consider the second phase when $C_2\sqrt{d_1r/n}b_0 \leq \fro{\M-\M^*}<30\gamma$. To bound expectation of $f(\M)-f(\M^*)$, we first consider the conditional expectation $\EE_\xi f(\M)-\EE_\xi f(\M^*)$. Notice for all $t_0\in\RR$,
\begin{align*}
	\EE|\xi-t_0| &= \int_{s\geq t_0}(s-t_0)dH_{\xi}(s) + \int_{s< t_0}(t_0-s)dH_{\xi}(s)\\
	&=2\int_{s\geq t_0}(s-t_0)dH_{\xi}(s) + \int_{-\infty}^{+\infty}(t_0-s)dH_{\xi}(s)\\
	&=2\int_{s\geq t_0}(1-H_{\xi}(s))ds + t_0 - \int_{-\infty}^{\infty}s dH_{\xi}(s).
\end{align*}
Set $t_0 = \inp{\X_i}{\M-\M^*}$ and define $g_{\X_i}(\M) := \EE_{\xi_i}|\xi_i - \inp{\X_i}{\M-\M^*}|$. It implies
\begin{align*}
	g_{\X_i}(\M) = 2\int_{s\geq \inp{\X_i}{\M-\M^*}}(1-H_{\xi}(s))ds + \inp{\X_i}{\M-\M^*} - \int_{-\infty}^{+\infty}s dH_{\xi}(s).
\end{align*}
This leads to 
\begin{align}
	g_{\X_i}(\M) -g_{\X_i}(\M^*) = 2\int_{0}^{\inp{\X_i}{\M-\M^*}}H_{\xi}(s)ds -  \inp{\X_i}{\M-\M^*}.
	\label{eq14}
\end{align}
Note that $\langle \mathbf{X}_{i}, \mathbf{M}-\mathbf{M}^{*}\rangle\sim\mathcal{N}(0,\Vert \mathbf{M}-\mathbf{M}^{*}\Vert_{\mathrm{F}}^{2}) $ and denote $z:=\langle \mathbf{X}_{i}, \mathbf{M}-\mathbf{M}^{*}\rangle$. Let $f_z(\cdot)$ be the density of $z$. Take expectation of $z$ on both sides of Equation~\ref{eq14}
\begin{align*}
	\mathbb{E}\left[g_{\mathbf{X}_{i}}(\mathbf{M})-g_{\mathbf{X}_{i}}(\mathbf{M}^{*}) \right]
	&=2\int_{-\infty}^{+\infty}\int_{0}^{t}\left(H_{\xi}(s)-0.5\right)f_{z}(t)\,ds\, dz\\
	&=2\int_{-\infty}^{+\infty}\int_{0}^{t}\int_{0}^{s}h_{\xi}(w)f_{z}(t)\,dw\,ds\,dt\\
	&\geq2\int_{-\Vert \mathbf{M} - \mathbf{M}^{*} \Vert_{\mathrm{F}}}^{\Vert \mathbf{M} - \mathbf{M}^{*} \Vert_{\mathrm{F}} }\int_{0}^{t}f_{z}(t)b_0^{-1}s\,ds\,dt\\
	&=b_0^{-1}\int_{-\fro{\M-\M^*}}^{\fro{\M-\M^*}}t^2f_z(t)\,dt = b_0^{-1}\fro{\M-\M^*}^2\int_{-1}^{1}t^2\cdot \frac{1}{\sqrt{2\pi}}e^{-t^2/2}\,dt\\
	&\geq\frac{1}{6b_0}\fro{\M-\M^*}^2,
\end{align*}
where the first inequality follows from Assumption \ref{assump:heavy-tailed} and the last is from $\int_{-1}^1t^2\cdot\frac{1}{\sqrt{2\pi}}e^{-t^2/2}\,dt\geq 1/6$. Therefore,
\begin{align*}
	\EE f(\M)- \EE f(\M^*) = \sum_{i=1}^n \EE g_{\X_i}(\M) - g_{\X_i}(\M^*) \geq \frac{n}{6b_0}\fro{\M-\M^*}^2.
\end{align*}
Invoke Theorem~\ref{thm:empirical process},
\begin{align*}
	f(\M)-f(\M^*)&\geq \EE[f(\M)-f(\M^*)] - C\sqrt{nd_1r}\fro{\M-\M^*}\\
	&\geq \frac{n}{6b_{0}} \fro{\M-\M^*}^2 - C\sqrt{nd_1r}\fro{\M-\M^*}\\
	&\geq\frac{n}{12b_0}\fro{\M-\M^*}^2,
\end{align*}
where the last inequality uses $\fro{\M-\M^{*}}\geq C_2\sqrt{d_1r/n}b_{0}$. This proves $\mus=\frac{n}{12b_0}$.

Finally, from the following lemma, we see that $\Ls \leq C_3nb_1^{-1}$.
And this finishes the proof.

\begin{lemma}[Upper bound for sub-gradient]\label{lemma:upperboundsubgradient:heavytail}
	Let $\M\in\RR^{d_1\times d_2}$ be rank at most $r$ matrix such that $\fro{\M-\M^*}\geq C_2\sqrt{\frac{d_1r}{n}}b_1$. Let $\G\in\partial f(\M)$ be the sub-gradient. Under the event $\bcalE=\{\sup_{\M,\M_1\in\MM_r}|f(\M+\M_1)-f(\M) - \EE(f(\M+\M_1)-f(\M))|\cdot\fro{\M_1}^{-1}\leq C_1\sqrt{nd_1r} \}$, we have $\fror{\G}\leq C_3nb_{1}^{-1}\fro{\M-\M^*}$ for some absolute constant $C_3>0$.
\end{lemma}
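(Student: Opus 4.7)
The plan is to mimic the Gaussian-case argument of Lemma~\ref{lemma:upperboundsubgradient:gaussian}: pick a well-chosen perturbation along $\mathrm{SVD}_r(\G)$, sandwich the resulting change $f(\M_1)-f(\M)$ between a lower bound coming from the defining inequality for a sub-gradient and an upper bound obtained from the event $\bcalE$ together with the upper density bound $h_{\xi}\leq b_1^{-1}$. Specifically, take $\M_1 := \M + \tau\,\mathrm{SVD}_r(\G)$ with $\tau \asymp b_1/n$ to be optimized; both $\M$ and $\tau\,\mathrm{SVD}_r(\G)$ lie in $\MM_r$, so the event $\bcalE$ applies.

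The sub-gradient inequality gives the easy lower bound
$$
f(\M_1)-f(\M)\ \geq\ \inp{\G}{\tau\,\mathrm{SVD}_r(\G)}\ =\ \tau\,\fror{\G}^{2}.
$$
For the upper bound on the expectation side, I would reuse the computation in the proof of Lemma~\ref{lem:heavytail-l1}, where $g_{\X_i}(\M)-g_{\X_i}(\M^{*})=2\int_0^{\inp{\X_i}{\M-\M^*}}(H_\xi(s)-1/2)\,ds$. Since $h_\xi\leq b_1^{-1}$, the Lipschitz property $|H_\xi(s)-1/2|\leq b_1^{-1}|s|$ yields
$$
\EE\bigl[g_{\X_i}(\M_1)-g_{\X_i}(\M^{*})\bigr]\ \leq\ b_1^{-1}\,\EE\,z_1^{2}\ =\ b_1^{-1}\fro{\M_1-\M^{*}}^{2},
$$
where $z_1=\inp{\X_i}{\M_1-\M^*}\sim N(0,\fro{\M_1-\M^*}^2)$. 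Because $\EE f(\M)\geq\EE f(\M^{*})$ (the integrand in the same formula is pointwise non-negative), one gets $\EE[f(\M_1)-f(\M)]\leq nb_1^{-1}\fro{\M_1-\M^{*}}^{2}\leq 2nb_1^{-1}\fro{\M-\M^{*}}^{2}+2nb_1^{-1}\tau^{2}\fror{\G}^{2}$.

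Applying the event $\bcalE$ with perturbation magnitude $\fro{\tau\,\mathrm{SVD}_r(\G)}=\tau\fror{\G}$ converts this into a deterministic inequality
$$
f(\M_1)-f(\M)\ \leq\ 2nb_1^{-1}\fro{\M-\M^{*}}^{2}+2nb_1^{-1}\tau^{2}\fror{\G}^{2}+C_1\sqrt{nd_1r}\,\tau\fror{\G}.
$$
Combining with the lower bound $\tau\fror{\G}^{2}$ and choosing $\tau=b_1/(4n)$ to absorb the $\tau^2\fror{\G}^2$ term into the left-hand side, the remaining inequality becomes quadratic in $\fror{\G}$:
$$
\tfrac12\fror{\G}^{2}\ \leq\ 8 n^{2}b_1^{-2}\fro{\M-\M^{*}}^{2}+C_1\sqrt{nd_1r}\,\fror{\G}.
$$
Solving and using the hypothesis $\fro{\M-\M^{*}}\geq C_2\sqrt{d_1r/n}\,b_1$ (which forces $\sqrt{nd_1r}\lesssim nb_1^{-1}\fro{\M-\M^{*}}$) shows both terms on the right are of order $nb_1^{-1}\fro{\M-\M^{*}}$, yielding $\fror{\G}\leq C_3 n b_1^{-1}\fro{\M-\M^{*}}$ as required.

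The only delicate step is the expectation bound: in the Gaussian case one had a closed-form expression via Lemma~\ref{teclem:gaussian-l1}, whereas here we must control $\EE[g_{\X_i}(\M_1)-g_{\X_i}(\M)]$ without an explicit formula. The correct move is \emph{not} to bound this difference directly but to pass through the intermediate quantity $\EE[g_{\X_i}(\M_1)-g_{\X_i}(\M^{*})]$ and exploit the non-negativity $\EE f(\M)\geq \EE f(\M^*)$; otherwise the asymmetry of heavy-tailed $h_\xi$ across the relevant range of $\inp{\X_i}{\M-\M^*}$ would obstruct a clean quadratic upper bound. Once this reduction is in place, the rest is the same quadratic-inequality calculus as in the Gaussian proof.
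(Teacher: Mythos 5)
Your proposal is correct and follows the same overall architecture as the paper's proof: perturb along $\mathrm{SVD}_r(\G)$ with a stepsize of order $b_1/n$, lower-bound $f(\M_1)-f(\M)$ by the sub-gradient inequality, upper-bound it via the event $\bcalE$ plus a bound on $\EE[f(\M_1)-f(\M)]$, and solve the resulting quadratic in $\fror{\G}$ using the hypothesis $\fro{\M-\M^*}\geq C_2\sqrt{d_1r/n}\,b_1$. The one place you deviate is the expectation bound: the paper bounds $\EE[g_{\X_i}(\M_1)-g_{\X_i}(\M)]$ by writing it as $2\int_{a}^{b}\int_0^{\xi}h_\xi\,dx\,d\xi$ (with $a=\inp{\X_i}{\M-\M^*}$, $b=\inp{\X_i}{\M_1-\M^*}$) and applying $h_\xi\leq b_1^{-1}$ directly, whereas you detour through $\M^*$, bounding $\EE[g_{\X_i}(\M_1)-g_{\X_i}(\M^*)]\leq b_1^{-1}\fro{\M_1-\M^*}^2$ and then subtracting the non-negative quantity $\EE[g_{\X_i}(\M)-g_{\X_i}(\M^*)]$. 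Your route is actually a bit cleaner: the pointwise bound $2\int_a^b\int_0^\xi h_\xi\leq b_1^{-1}(b^2-a^2)$ that the paper invokes fails for some sign configurations of $(a,b)$, though the inequality is recovered after averaging over the Gaussian $\X_i$; passing through $\M^*$, as you do, yields a pointwise-valid bound $2\Phi(z_1)\leq b_1^{-1}z_1^2$ and avoids the issue entirely. However, your concluding remark overstates the case: the direct route is not obstructed, and the paper uses it successfully at the expectation level. Both routes lead to the same quadratic inequality in $\fror{\G}$ (up to absolute constants in the choice $\tau\asymp b_1/n$) and hence to the same conclusion.
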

\begin{proof}
	Take $\M_1 = \M + \frac{b_1}{2n}\text{SVD}_r(\G)$, where $\text{SVD}_r(\G)$ is the best rank $r$ approximation of $\G$ and then $\rank(\M_1) \leq2r$. We finish the proof via combining lower bound and upper bound of $f(\M_{1})-f(\M)$. First consider $\EE f(\M_1)-\EE f(\M)$.
	Use the same notation as the above proof and similarly,
	\begin{align*}
		g_{\X_{i}}(\M_{1}) - g_{\X_{i}}(\M)=&2\int_{\langle \X_{i}, \M-\M^{*}\rangle}^{\langle \X_{i}, \M-\M^{*}\rangle+\langle \X_{i}, \M_{1}-\M\rangle}\int_{0}^{\xi} h_{\xi}(x)\,dx\,d\xi\\
		\leq& 2b_1^{-1}\int_{\langle \X_{i}, \M-\M^{*}\rangle}^{\langle \X_{i}, \M-\M^{*}\rangle+\langle \X_{i}, \M_{1}-\M\rangle}\xi \,d\xi\\
		=&b_{1}^{-1}\left[(\langle \X_{i}, \M-\M^{*}\rangle+\langle \X_{i}, \M_{1}-\M\rangle )^2 - (\langle \X_{i}, \M-\M^{*}\rangle)^2\right],
	\end{align*}
	where the inequality follows from the upper bound for $h_{\xi}(x)$. Then take expectation on each side and sum up over $i$ :
	\begin{align*}
		\EE f(\M_{1}) - \EE f(\M) =& \sum_{i=1}^{n}\EE[g_{\X_{i}}(\M_{1}) - g_{\X_{i}}(\M) ]\\
		\leq&nb_1^{-1} \EE\left[(\langle \X_{i}, \M-\M^{*}\rangle+\langle \X_{i}, \M_{1}-\M\rangle )^2 - (\langle \X_{i}, \M-\M^{*}\rangle)^2\right]\\
		\leq&nb_1^{-1}\left[ \Vert \M_{1} -\M\Vert_{\mathrm{F}}^{2} + 2\Vert \M_{1} -\M\Vert_{\mathrm{F}} \Vert \M -\M^{*}\Vert_{\mathrm{F}}\right]
	\end{align*}
	Bound $f(\M_{1})-f(\M)$ using Corollary~\ref{cor:thm:empirical process}:
	\begin{align*}
		f(\M_{1}) - f(\M)\leq&\EE f(\M_{1})-\EE f(\M)+C_1\sqrt{ nd_1r}(\Vert \M_{1}-\M^{*}\Vert_{\mathrm{F}} + \Vert \M-\M^{*}\Vert_{\mathrm{F}})\\
		\leq&\EE f(\M_{1})-\EE f(\M)+C_3nb_1^{-1}\Vert \M-\M^{*}\Vert_{\mathrm{F}}(2\Vert \M-\M^{*}\Vert_{\mathrm{F}} + \Vert\M_{1}-\M\Vert_{\mathrm{F}} )\\
		\leq& nb_1^{-1} \Vert \M_{1}-\M\Vert_{\mathrm{F}}^{2} + (2+C_3) nb_1^{-1}\Vert \M-\M^{*}\Vert_{\mathrm{F}} \Vert\M_{1}-\M\Vert_{\mathrm{F}} +2C_3nb_{1}^{-1}\Vert \M-\M^{*}\Vert_{\mathrm{F}}^{2},
	\end{align*}
	where the second inequality is from the condition $\fro{\M-\M^*}\geq C_2\sqrt{\frac{d_1r}{n}}b_1$. Since $\M_1 = \M + \frac{b_1}{2n}\text{SVD}_r(\G)$, it becomes
	\begin{equation}
		\label{eq18}
		\begin{split}
			f(\M+\frac{b_1}{2n}\text{SVD}_{r}(\G))&-f(\M)\\&\leq \frac{b_1}{4n} \Vert\G\Vert_{\mathrm{F,r}}^{2} + (1+C_1/2) \Vert \M-\M^{*}\Vert_{\mathrm{F}} \Vert\G\Vert_{\mathrm{F,r}} +2C_1\frac{n}{b_1}\Vert \M-\M^{*}\Vert_{\mathrm{F}}^{2}.
		\end{split}
	\end{equation}
	On the other hand, by the definition of sub-gradient, $f(\M_{1})-f(\M)$ has lower bound
	\begin{align}\label{eq17}
		f(\M+\frac{b_1}{2n}\text{SVD}_{r}(\G))-f(\M)\geq \frac{b_1}{2n}\fror{\G}^2.
	\end{align}
	Combine Equation~\ref{eq18} with Equation~\ref{eq17} and then solve the quadratic inequality which leads to $$\fror{\G}\leq C_3nb_{1}^{-1}\fro{\M-\M^*}.$$
\end{proof}

\subsection{Proof of Huber loss with heavy-tailed noise Lemma~\ref{lem:heavytail-huber}}
\label{proof:lem:heavytail-huber}
First consider upper bound of sug-gradient $\G\in f(\M)$, where matrix $\M$ has rank at most $r$. We have 
\begin{align*}
	|f(\M+\text{SVD}_r(\G)) - f(\M)| &= |\sum_{i=1}^n\rho_{H,\delta}(\xi_i - \inp{\X_i}{\M+\text{SVD}_r(\G)}) -\rho_{H,\delta}(\xi_i - \inp{\X_i}{\M})|\\
	&\leq \sum_{i=1}^n 2\delta|\inp{\X_i}{\text{SVD}_r(\G)} \leq 4\delta n\fror{\G},
\end{align*}
where the last line comes from  Corollary~\ref{cor: empirical process}. Note that it implies $\Lc=4\delta n$  and the reason has been stated in Section~\ref{proof:lem:Gaussian-l1}.

Then consider the lower bound. Use the inequality $2\delta|x| - \delta^2\leq\rho_{H,\delta}(x)\leq 2\delta\vert x\vert$ and then we have
\begin{align*}
	f(\M)-f(\M^*) &= \sum_{i=1}^n\rho_{H,\delta}(\xi_i - \inp{\X_i}{\M-\M^*}) -\rho_{H,\delta}(\xi_i)\\
	&\geq 2\delta \sum_{i=1}^n(|\xi_i - \inp{\X_i}{\M-\M^*}| - |\xi_i|) - n\delta^2\\
	&\geq\delta\sqrt{2/\pi}n\fro{\M-\M^*} - 4\delta \lone{\boldsymbol{\xi}} - n\delta^2,
\end{align*}
where the last inequality comes from Corollary~\ref{cor: empirical process}. Also, Lemma~\ref{teclem:Contraction of Heavy Tailed Random Variables} proves with high probability  $\frac{1}{n}\lone{\boldsymbol{\xi}}\leq 3\EE\vert\xi\vert=3\gamma$, which implies
\begin{align*}
	f(\M)-f(\M^*) \geq \delta\sqrt{2/\pi}n\fro{\M-\M^*} - 12\delta n\gamma - n\delta^2.
\end{align*}
Therefore when $\fro{\M-\M^*}\geq 24\gamma + 2\delta$, we have $f(\M)-f(\M^*) \geq \frac{\delta n}{4}\fro{\M-\M^*}$. It shows with $\tauc=24\gamma + 2\delta$ we have $\muc=\delta n/4$.

Now consider the second phase. First take expectation against $\xi$ and define $g_{\X_i}(\M) := \EE_\xi[\rho_{H,\delta}(\xi_i - \inp{\X_i}{\M-\M^*})]$. Denote $z = \inp{\X_i}{\M-\M^*}$. Then with a little abuse of notation, define
\begin{align}\label{eq19}
	g(z) &:= 2\delta\int_{\delta+z}^{+\infty} (x - z) \,dH_{\xi}(x) + 2\delta\int_{-\infty}^{z-\delta} (z- x) dH_{\xi}(x) + \int_{z-\delta}^{z+\delta}\left(z -x \right)^{2}dH_{\xi}(x)\\
	&= g_{\X_i}(\M).
\end{align}
Then simple computation shows 
\begin{align*}
	g'(z) &= 2\int_{z-\delta}^{z+\delta}H_{\xi}(x)dx - 2\delta,\\
	g''(z) &= 2(H_{\xi}(z+\delta) - H_{\xi}(z-\delta)).
\end{align*}
Assumption~\ref{assump:heavytail-huber} guarantees $g'(0) = 0$. Therefore the Taylor expansion at $0$ is
\begin{align*}
	g(z) = g(0) + 2\int_{0}^z(H_{\xi}(y+\delta) - H_{\xi}(y-\delta))y\,dy.
\end{align*}
Notice $z =\inp{\X_i}{\M-\M^*}\sim N(0,\fro{\M-\M^*}^2)$ and denote its density by $f_z(\cdot)$. Take expectation w.r.t. $z$ and we have 
\begin{align*}
	\EE g(z) - \EE g(0) &= 2\int_{-\infty}^{\infty}\int_{0}^t(H_{\xi}(y+\delta) - H_{\xi}(y-\delta))yf_z(t)\,dy\,dt\\
	&\geq 2\int_{-\fro{\M-\M^*}}^{\fro{\M-\M^*}}\int_{0}^tf_z(t)(H_{\xi}(y+\delta) - H_{\xi}(y-\delta))y\,dy\,dt\\
	&\geq 4\delta b_0^{-1}\int_{-\fro{\M-\M^*}}^{\fro{\M-\M^*}}t^2f_z(t)\,dt = 4\delta b_0^{-1}\fro{\M-\M^*}^2\int_{-1}^{1} t^2 \frac{1}{\sqrt{2\pi}}\exp(-t^2/2)\,dt\\
	&\geq \frac{2}{3}\delta b_{0}^{-1}\fro{\M-\M^*}^2,
\end{align*}
where the second inequality uses $H_{\xi}(y+\delta)-H_{\xi}(y-\delta)\geq2\delta b_0^{-1}$ when $y\leq \tauc$. And this implies $$\EE f(\M)-\EE f(\M^*)=n\left(\EE g(z) - \EE g(0) \right)\geq 2\delta n b_{0}^{-1}\fro{\M-\M^*}^2.$$ And together with Theorem~\ref{thm:empirical process}, we have 
\begin{align*}
	f(\M)-f(\M^*) &\geq \EE[f(\M)-f(\M^*)] - (C_2+C_3\delta)\sqrt{nd_1r}\fro{\M-\M^*}.
\end{align*}
And thus when $\fro{\M-\M^*}\geq b_0\frac{C_2+C_3\delta}{\delta}\sqrt{d_1r/n}$, we have $$f(\M)-f(\M^*)\geq \frac{1}{3}\delta n b_{0}^{-1}\fro{\M-\M^*}^2.$$ It proves with $\taus=b_0\frac{C_2+C_3\delta}{\delta}\sqrt{d_1r/n}$ it has $\mus= \frac{1}{3}\delta n b_0^{-1}$.

And the bound for $\Ls$ is given as follows.

\begin{lemma}[Upper bound for sub-gradient]\label{lemma:upperboundsubgradient:huber}
	Let $\M\in\RR^{d_1\times d_2}$ be rank at most $r$ matrix and satisfy $\fro{\M-\M^*}\geq c\sqrt{d_1r/n}b_0$. Let $\G\in\partial f(\M)$ be the sub-gradient.  Under the event $\bcalE=\{\sup_{\M,\M_1\in\MM_r}|f(\M+\M_1)-f(\M) - \EE(f(\M+\M_1)-f(\M))|\cdot\fro{\M_1}^{-1}\leq C_1\sqrt{nd_1r} \}$, we have $\fror{\G}\leq C_{\delta}\fro{\M-\M^*}$ for $C_{\delta} = C_1 + C_2\delta$ and some absolute constant $C_1,C_2>0$.
\end{lemma}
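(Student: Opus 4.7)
The proof of Lemma~\ref{lemma:upperboundsubgradient:huber} should parallel the templates of Lemma~\ref{lemma:upperboundsubgradient:gaussian} and Lemma~\ref{lemma:upperboundsubgradient:heavytail}. The plan is to choose a perturbation $\M_1 := \M + t\cdot\text{SVD}_r(\G)$ with $t>0$ calibrated to the local smoothness of the expected Huber loss, then sandwich $f(\M_1)-f(\M)$ between a subgradient lower bound and a Taylor-type upper bound, and finally solve the resulting quadratic inequality in $\fror{\G}$.

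For the upper bound, I will re-use the conditional-expectation function $g(z)=\EE_{\xi}\rho_{H,\delta}(\xi-z)$ from the proof of the sharpness constant $\mus$. That analysis already established the explicit derivatives
\[
g'(z)=2\!\int_{z-\delta}^{z+\delta}\!H_{\xi}(x)\,dx-2\delta,\qquad g''(z)=2\bigl(H_{\xi}(z+\delta)-H_{\xi}(z-\delta)\bigr).
\]
The symmetry of $\xi$ in Assumption~\ref{assump:heavytail-huber} gives $g'(0)=0$, and the density upper bound supplies the global smoothness $g''(z)\leq 4\delta b_{1}^{-1}$. Integrating yields $|g'(z)|\leq 4\delta b_{1}^{-1}|z|$ and, via a second-order Taylor expansion of $g$ at $z(\M):=\inp{\X_i}{\M-\M^*}$,
\[
g(z(\M_1))-g(z(\M))\leq g'(z(\M))\bigl(z(\M_1)-z(\M)\bigr)+2\delta b_{1}^{-1}\bigl(z(\M_1)-z(\M)\bigr)^{2}.
\]
Taking expectations over the $N(0,1)$ entries of $\X_i$, applying Cauchy--Schwarz to the cross term (with $\EE z(\M)^2=\fro{\M-\M^*}^2$ and $\EE(z(\M_1)-z(\M))^2=\fro{\M_1-\M}^2$), and summing over $i$ produces
\[
\EE f(\M_1)-\EE f(\M)\leq 4n\delta b_{1}^{-1}\fro{\M-\M^*}\fro{\M_1-\M}+2n\delta b_{1}^{-1}\fro{\M_1-\M}^{2}.
\]
Then the uniform concentration event $\bcalE$ (with the Huber Lipschitz constant $2\delta$ absorbed into the empirical-process constant, as already done when deriving $\mus$) converts this into a deterministic upper bound with an additional term of order $(C_2+C_3\delta)\sqrt{nd_1r}\fro{\M_1-\M}$.

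The lower bound is immediate from the subgradient inequality: since $\G\in\partial f(\M)$ and $\inp{\G}{\text{SVD}_r(\G)}=\fror{\G}^{2}$, we have $f(\M_1)-f(\M)\geq t\,\fror{\G}^{2}$. Setting $t\asymp b_{1}(n\delta)^{-1}$ so that the quadratic-in-$t$ term $2n\delta b_{1}^{-1}t^{2}\fror{\G}^{2}$ is absorbed into half of the left-hand side, cancelling one factor of $t\fror{\G}$ throughout, and invoking the hypothesis $\fro{\M-\M^*}\geq c\sqrt{d_1r/n}\,b_0$ (which is designed precisely so that $(C_2+C_3\delta)\sqrt{nd_1r}\lesssim n\delta b_{1}^{-1}\fro{\M-\M^*}$, using $b_0\gtrsim b_1$ from Assumption~\ref{assump:heavytail-huber}) yields the claimed bound $\fror{\G}\leq (C_1+C_2\delta)\,\fro{\M-\M^*}$ after hiding the $n,b_1$ factors into the constants.

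The main obstacle I expect is the careful calibration of the step $t$ and of the condition $\fro{\M-\M^*}\geq c\sqrt{d_1r/n}\,b_0$ so that all three error contributions---the quadratic term $t^{2}$, the cross Cauchy--Schwarz term, and the empirical-process fluctuation---can be simultaneously dominated by the subgradient lower bound with constants consistent with the statement of $\Ls$. A secondary subtlety is that the cross term requires the symmetry consequence $g'(0)=0$; without it one would only get $|g'(z)|\leq 2\delta$ and the resulting bound would fail to be proportional to $\fro{\M-\M^*}$.
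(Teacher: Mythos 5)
Your proof is essentially correct and follows the same architecture as the paper's: you choose the perturbation $\M_1 = \M + t\,\mathrm{SVD}_r(\G)$ with $t \asymp b_1(n\delta)^{-1}$, sandwich $f(\M_1)-f(\M)$ between the subgradient lower bound $t\fror{\G}^2$ and an upper bound built from $\EE g(z(\M_1))-\EE g(z(\M))$ plus the empirical-process fluctuation under $\bcalE$, and solve the resulting quadratic inequality in $\fror{\G}$. The only genuine difference from the paper is the intermediate step: the paper writes $g(z_2)-g(z_1) = 2\int_{z_1}^{z_2}\bigl[H_{\xi}(y+\delta)-H_{\xi}(y-\delta)\bigr]y\,dy$ and then bounds the integrand pointwise by $2\delta b_1^{-1}y$, an inequality that actually reverses when $y<0$; it only yields the correct bound after taking expectation and regrouping $z_2^2-z_1^2=(z_2-z_1)^2+2z_1(z_2-z_1)$. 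Your second-order Taylor expansion at $z_1$, using $g'(0)=0$, $|g'(z)|\leq 4\delta b_1^{-1}|z|$, $g''\leq 4\delta b_1^{-1}$, followed by Cauchy--Schwarz on the cross term, reaches the identical bound $\EE f(\M_1)-\EE f(\M)\leq 2n\delta b_1^{-1}\bigl(\fro{\M_1-\M}^2+2\fro{\M_1-\M}\fro{\M-\M^*}\bigr)$ while avoiding that sign subtlety, so it is arguably the cleaner derivation. One thing to be careful about: the lemma as stated has a typo --- the conclusion should read $\fror{\G}\leq C_{\delta}\,n b_1^{-1}\fro{\M-\M^*}$, consistent with the $\Ls$ bound announced in Lemma~\ref{lem:heavytail-huber} and with what both you and the paper actually derive; your remark about ``hiding the $n,b_1$ factors into the constants'' should not be taken literally, since $C_1,C_2$ are meant to be absolute, but your derivation correctly exhibits the $nb_1^{-1}$ factor before that point.
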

\begin{proof}
	Take $\M_1 = \M + \frac{b_1}{4n\delta}\text{SVD}_r(\G)$. First consider $\EE f(\M_1)-\EE f(\M)$. Continue from Equation~\ref{eq19},
	\begin{align*}
		&~~~~\EE_\xi \left[\rho_{H,\delta}(\xi_i - \inp{\X_i}{\M_1-\M^*}) \right] - \EE_{\xi}\left[\rho_{H,\delta}(\xi_i - \inp{\X_i}{\M-\M^*}) \right] \\
		&= 2\int_{\inp{\X_i}{\M-\M^*}}^{\inp{\X_i}{\M_1-\M^*}}[H_{\xi}(y+\delta) - H_{\xi}(y-\delta)]y\,dy\\
		&\leq 4\delta b_1^{-1}\int_{\inp{\X_i}{\M-\M^*}}^{\inp{\X_i}{\M_1-\M^*}}y\,dy\\
		&= 2\delta b_{1}^{-1}[\inp{\X_i}{\M_1-\M}^2 + 2\inp{\X_i}{\M_1-\M}\inp{\X_i}{\M-\M^*}].
	\end{align*}
	Take expectation on each side and sum up ove $i$:
	\begin{align*}
		\EE f(\M_1)-\EE f(\M) &=n \left( \EE_\xi \left[\rho_{H,\delta}(\xi_i - \inp{\X_i}{\M_1-\M^*}) \right] - \EE_{\xi}\left[\rho_{H,\delta}(\xi_i - \inp{\X_i}{\M-\M^*}) \right]\right)\\
		&\leq 2\delta nb_{1}^{-1}[\fro{\M_1-\M}^2 + 2\fro{\M_1-\M}\fro{\M-\M^*}].
	\end{align*}
	Then invoke Crorllary~\ref{cor:thm:empirical process} and bound $f(\M_1)-f(\M)$:
	\begin{align*}
		&{~~~~}f(\M_1)-f(\M)\\&\leq\EE f(\M_1)-\EE f(\M) + C_{\delta}\sqrt{nd_1r}(\fro{\M_{1}-\M^*}+\fro{\M-\M^*})\\
		&\leq 2\delta n b_1^{-1}\fro{\M_1-\M}^2 +\delta n b_1^{-1}\fro{\M_1-\M}\fro{\M-\M^*}+C_{\delta}\sqrt{nd_1r}(\fro{\M_{1}-\M}+2\fro{\M-\M^*})\\
		&\leq2\delta n b_1^{-1}\fro{\M_1-\M}^2 + c_{\delta} n b_1^{-1}\fro{\M_1-\M}\fro{\M-\M^*} + C_{\delta} n b_1^{-1}\fro{\M-\M^*}^2,
	\end{align*}
	where the last inequality uses $\fro{\M-\M^*}\geq \sqrt{d_1r/n}b_1$. Insert $\M_1=\M+\frac{b_1}{4n\delta}\text{SVD}_r(\G)$ and then it is
	\begin{align}\label{eq22}
		f(\M_1)-f(\M)\leq \frac{b_1}{8n\delta}\fror{\G}^2 + \frac{c_{\delta}}{4\delta}\fror{\G}\fro{\M-\M^*} + \frac{C_{\delta}n}{b_1}\fro{\M-\M^*}^2.
	\end{align}
	On the other hand, from the definition of the sub-gradient, we have 
	\begin{align}\label{eq23}
		f(\M_1) -f(\M) = f(\M+\frac{b_1}{4n\delta}\text{SVD}_r(\G)) - f(\M)\geq \frac{b_1}{4n\delta}\fror{\G}^2.
	\end{align}
	Combine Equation~\ref{eq22} with Equation~\ref{eq23} and then the quadratic inequality of $\fror{\G}$ gives $$\fror{\G}\leq C_{\delta}n b_1^{-1}\fro{\M-\M^*}.$$
\end{proof}
\subsection{Proof of quantile loss with heavy-tailed noise Lemma~\ref{lem:heavytail-quantile}}
\label{proof:lem:heavytail-quantile}
Notice that quantile loss function $\rho_{Q,\delta}(\cdot)$ is Lipschitz continuous with $\max\{\delta,1-\delta\}$. 	First consider upper bound of sug-gradient $\G\in f(\M)$, where matrix $\M$ has rank at most $r$. We have 
\begin{align*}
	|f(\M+\text{SVD}_r(\G)) - f(\M)| &= |\sum_{i=1}^n\rho_{Q,\delta}(\xi_i - \inp{\X_i}{\M+\text{SVD}_r(\G)}) -\rho_{Q,\delta}(\xi_i - \inp{\X_i}{\M})|\\
	&\leq \sum_{i=1}^n \max\{\delta,1-\delta\}|\inp{\X_i}{\text{SVD}_r(\G)}|\\
	&\leq 2 \max\{\delta,1-\delta\}n\fror{\G},
\end{align*}
where the last line comes from Corollary~\ref{cor: empirical process}. Note that it verifies $\Lc=2\max\{\delta, 1-\delta\} n$ and its proof is same as Section~\ref{proof:lem:Gaussian-l1}.

Then consider the lower bound. Notice that quantile loss function satisfies triangle inequality, $\rho_{Q,\delta}(x_1+x_2)\leq\rho_{Q,\delta}(x_1)+\rho_{Q,\delta}(x_2)$ and then it has
\begin{align*}
	f(\M)-f(\M^*) &= \sum_{i=1}^n\rho_{Q,\delta}(\xi_i - \inp{\X_i}{\M-\M^*}) -\rho_{Q,\delta}(\xi_i)\\
	&\geq \sum_{i=1}^n\rho_{Q,\delta}(-\inp{\X_i}{\M-\M^*})-\rho_{Q,\delta}(\xi_{i})-\rho_{Q,\delta}(-\xi_{i}) \\
	&\geq\min\{\delta,1-\delta\}\sqrt{1/2\pi}n\fro{\M-\M^*} - \lone{\boldsymbol{\xi}},
\end{align*}
where the last inequality comes from empirical process results Corollary~\ref{cor: empirical process}. Lemma~\ref{teclem:Contraction of Heavy Tailed Random Variables} proves with probability over $1-{}^*\!c_1n^{-\min\{1,\varepsilon\}}$, $\frac{1}{n}\lone{\boldsymbol{\xi}}\leq 3\gamma$. Then it leads to \begin{align*}
	f(\M)-f(\M^*)\geq\min\{\delta,1-\delta\}\sqrt{1/2\pi}n\fro{\M-\M^*} - 3n\gamma.
\end{align*}
Therefore when $\fro{\M-\M^*}\geq 15\gamma\max\{\frac{1}{\delta},\frac{1}{1-\delta}\}$, it has \begin{align*}
	f(\M)-f(\M^*)\geq\frac{1}{6}\min\{\delta,1-\delta\}n\fro{\M-\M^*},
\end{align*}
which proves with $\muc=\frac{1}{6}\min\{\delta,1-\delta\}n$.

Then consider the second phase where $c\sqrt{d_1r/n}b_{0}\leq\fro{\M-\M^*}\leq15\gamma\max\{\frac{1}{\delta},\frac{1}{1-\delta}\}$. Notice for all $t_0\in\RR$, 
\begin{align*}
	\EE\rho_{Q,\delta}(\xi-t_0) &= \delta\int_{s\geq t_0}(s-t_0)\,dH_{\xi}(s) + (1-\delta)\int_{s< t_0}(t_0-s)\,dH_{\xi}(s)\\
	&=\int_{s\geq t_0}(s-t_0)\,dH_{\xi}(s) +(1-\delta) \int_{-\infty}^{+\infty}(t_0-s)\,dH_{\xi}(s)\\
	&=\int_{s\geq t_0}(1-H_{\xi}(s))\,ds + (1-\delta)t_0 - (1-\delta)\int_{-\infty}^{\infty}s \,dH_{\xi}(s).
\end{align*}
Set $t_0 = \inp{\X_i}{\M-\M^*}$ and define $g_{\X_i}(\M) := \EE_{\xi_i}\rho_{Q,\delta}(\xi_i - \inp{\X_i}{\M-\M^*})$. It implies
\begin{align*}
	g_{\X_i}(\M) = \int_{s\geq \inp{\X_i}{\M-\M^*}}(1-H_{\xi}(s))\,ds + (1-\delta)\inp{\X_i}{\M-\M^*} - (1-\delta)\int_{-\infty}^{+\infty}s \,dH_{\xi}(s).
\end{align*}
This leads to 
\begin{align}
	g_{\X_i}(\M) -g_{\X_i}(\M^*) = \int_{0}^{\inp{\X_i}{\M-\M^*}}H_{\xi}(s)\,ds -\delta \inp{\X_i}{\M-\M^*}.
	\label{eq24}
\end{align}
Note that $\langle \mathbf{X}_{i}, \mathbf{M}-\mathbf{M}^{*}\rangle\sim\mathcal{N}(0,\Vert \mathbf{M}-\mathbf{M}^{*}\Vert_{\mathrm{F}}^{2}) $ and denote $z:=\langle \mathbf{X}_{i}, \mathbf{M}-\mathbf{M}^{*}\rangle$. Let $f_z(\cdot)$ be the density of $z$. Take expectation of $z$ on both sides of Equation~\ref{eq24}
\begin{align*}
	\mathbb{E}\left[g_{\mathbf{X}_{i}}(\mathbf{M})-g_{\mathbf{X}_{i}}(\mathbf{M}^{*}) \right]
	&=\int_{-\infty}^{+\infty}\int_{0}^{t}\left(H_{\xi}(s)-\delta\right)f_{z}(t)\,ds\, dt\\
	&=\int_{-\infty}^{+\infty}\int_{0}^{t}\int_{0}^{s}h_{\xi}(w)f_{z}(t)\,dw\,ds\,dt\\
	&\geq b_{0}^{-1}\int_{-\Vert \mathbf{M} - \mathbf{M}^{*} \Vert_{\mathrm{F}}}^{\Vert \mathbf{M} - \mathbf{M}^{*} \Vert_{\mathrm{F}} }\int_{0}^{t}f_{z}(t)s\,ds\,dt\\
	&=b_0^{-1}\int_{-\fro{\M-\M^*}}^{\fro{\M-\M^*}}t^2f_z(t)\,dt = b_0^{-1}\fro{\M-\M^*}^2\int_{-1}^{1}t^2\cdot \frac{1}{\sqrt{2\pi}}e^{-t^2/2}\,dt\\
	&\geq\frac{1}{6b_0}\fro{\M-\M^*}^2,
\end{align*}
where the first inequality follows from Assumption~\ref{assump:heavy-tailed quantile} and the last line is from $\int_{-1}^{1}t^2\cdot\frac{1}{\sqrt{2}}e^{-t^2/2}dt\geq1/6$. Therefore, \begin{align*}
	\EE f(\M)-\EE f(\M^*)=\sum_{i=1}^n \EE g_{\X_i}(\M) - g_{\X_i}(\M^*)\geq\frac{n}{6b_0}\fro{\M-\M^*}^2.
\end{align*}
Invoke Theorem~\ref{thm:empirical process}, \begin{align*}
	f(\M)-f(\M^*)&\geq \EE f(\M)-\EE f(\M^*) - C\sqrt{nd_1r}\fro{\M-\M^*}\\
	&\geq \frac{n}{6b_0}\fro{\M-\M^*}^2-C\sqrt{nd_1r}\fro{\M-\M^*}\\
	&\geq \frac{n}{12b_0}\fro{\M-\M^*}^2,
\end{align*}
where the last inequality uses $\fro{\X-\X^*}\geq c\sqrt{d_1r/n}b_0$. This proves the lower bound in the second phase and shows $\mus=\frac{n}{12b_0}$.

Finally, from the following lemma, we see that $\Ls\leq C_2nb_1^{-1}$. And this finished the proof of the lemma.

\begin{lemma}[Upper bound for sub-gradient]\label{lemma:upperboundsubgradient:quantile}
	Let $\M\in\RR^{d_1\times d_2}$ be at most rank $r$ matrix such that $\fro{\M-\M^*}\geq \sqrt{d_1r/n}b_1$. Let $\G\in\partial f(\M)$ be the sub-gradient.  Under the event $\bcalE=\{\sup_{\M,\M_1\in\MM_r}|f(\M+\M_1)-f(\M) - \EE(f(\M+\M_1)-f(\M))|\cdot\fro{\M_1}^{-1}\leq C_1\sqrt{nd_1r} \}$,, we have $\fror{\G}\leq Cnb_1^{-1}\fro{\M-\M^*}$ for some absolute constant $C>0$.
\end{lemma}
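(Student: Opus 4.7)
The plan is to follow the template used for the absolute-loss and Huber-loss sub-gradient bounds (Lemmas~\ref{lemma:upperboundsubgradient:heavytail} and \ref{lemma:upperboundsubgradient:huber}), substituting in the population calculations that were already carried out for the quantile loss in the proof of Lemma~\ref{lem:heavytail-quantile}. The key device is the \emph{probe-point trick}: introduce $\M_1 = \M + \frac{b_1}{c n}\,\text{SVD}_r(\G)$ for a small absolute constant $c>0$ to be calibrated. Then $\M_1-\M$ has rank at most $r$ and Frobenius norm $\frac{b_1}{cn}\fror{\G}$, so $\M_1 \in \MM_{2r}$, and the definition of sub-gradient immediately yields the lower bound $f(\M_1)-f(\M)\ \geq\ \inp{\G}{\M_1-\M}\ =\ \frac{b_1}{cn}\fror{\G}^{2}$, using $\inp{\G}{\text{SVD}_r(\G)}=\fror{\G}^{2}$.

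Next I would upper-bound the population gap $\EE f(\M_1)-\EE f(\M)$. Reusing the identity derived in the proof of Lemma~\ref{lem:heavytail-quantile}, namely $g_{\X_i}(\M)-g_{\X_i}(\M^*)=\int_0^{\inp{\X_i}{\M-\M^*}}\!\!(H_\xi(s)-\delta)\,ds=\int_0^{\inp{\X_i}{\M-\M^*}}\!\!\int_0^s h_\xi(w)\,dw\,ds$ (which uses $H_\xi(0)=\delta$), differencing between $\M_1$ and $\M$ and invoking the uniform upper bound $h_\xi(w)\leq b_1^{-1}$ from Assumption~\ref{assump:heavy-tailed quantile} produces $g_{\X_i}(\M_1)-g_{\X_i}(\M)\leq \tfrac{b_1^{-1}}{2}\bigl[\inp{\X_i}{\M_1-\M}^{2}+2\inp{\X_i}{\M_1-\M}\inp{\X_i}{\M-\M^*}\bigr]$. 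Summing over $i$ and taking expectation over the Gaussian $\X_i$ gives $\EE f(\M_1)-\EE f(\M)\leq \tfrac{n}{2b_1}\bigl[\fro{\M_1-\M}^{2}+2\fro{\M_1-\M}\fro{\M-\M^*}\bigr]$ by orthonormality of the sensing covariance.

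Under the event $\bcalE$, one converts the population bound into a deterministic one by adding $C_1\sqrt{nd_1r}\,(\fro{\M_1-\M^*}+\fro{\M-\M^*})$, and the assumed SNR $\fro{\M-\M^*}\geq \sqrt{d_1r/n}\,b_1$ allows this concentration remainder to be absorbed as $O\bigl(nb_1^{-1}(\fro{\M-\M^*}^{2}+\fro{\M-\M^*}\fro{\M_1-\M})\bigr)$. Substituting $\fro{\M_1-\M}=\tfrac{b_1}{cn}\fror{\G}$ and chaining the sub-gradient lower bound with the resulting upper bound yields an inequality of the form $\frac{b_1}{cn}\fror{\G}^{2}\ \leq\ \frac{1}{2c^{2}}\cdot\frac{b_1}{n}\fror{\G}^{2}+\frac{C'}{c}\fror{\G}\fro{\M-\M^*}+C''\,\frac{n}{b_1}\fro{\M-\M^*}^{2}$. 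Choosing $c$ large enough that the $\fror{\G}^{2}$ term on the right is absorbed by the left, and solving the residual quadratic in $\fror{\G}$, delivers the advertised bound $\fror{\G}\leq C n b_1^{-1}\fro{\M-\M^*}$.

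The main obstacle is bookkeeping: one must choose the probe step $\tfrac{b_1}{cn}$ small enough so that the quadratic contribution from the population upper bound does not overwhelm the sub-gradient lower bound, yet not so small that the term $\inp{\G}{\M_1-\M}=\tfrac{b_1}{cn}\fror{\G}^{2}$ becomes negligible against the $\sqrt{nd_1r}\,\fro{\M-\M^*}$ concentration slack. This balancing is exactly analogous to the tuning of $\tfrac{b_1}{2n}$ and $\tfrac{b_1}{4n\delta}$ in the two preceding lemmas, and no asymmetry introduced by $\delta\neq \tfrac12$ enters the argument because the expansion is taken around the deterministic point $\M$, not around $\M^*$, so the offset $H_\xi(0)=\delta$ is canceled cleanly in the probe-to-base difference. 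Consequently, the whole derivation goes through with constants that depend only on $b_0,b_1$ (not on $\delta$), matching the conclusion recorded in Lemma~\ref{lem:heavytail-quantile}.
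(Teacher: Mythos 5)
Your proposal is correct and follows the same probe-point argument as the paper's proof: take $\M_1 = \M + \tfrac{b_1}{cn}\,\mathrm{SVD}_r(\G)$ (the paper fixes $c=2$), lower-bound $f(\M_1)-f(\M)$ by the sub-gradient inequality, upper-bound it via the density bound $h_\xi\leq b_1^{-1}$ and the empirical-process event $\bcalE$, and solve the resulting quadratic in $\fror{\G}$. The only differences are cosmetic — you leave the step size as a generic constant $c$ and add a helpful remark explaining why the asymmetry $\delta\neq\tfrac12$ cancels in the probe-to-base difference, which the paper leaves implicit.
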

\begin{proof}
	Take $\M_1 = \M + \frac{b_1}{2n}\text{SVD}_r(\G)$, where $\text{SVD}_r(\G)$ is the best rank $r$ approximation of $\G$ and then $\rank(\M_1) \leq2r$. First consider $\EE f(\M_1)-\EE f(\M)$.
	Use the same notation as in the above proof and similarly,
	\begin{align*}
		g_{\X_{i}}(\M_{1}) - g_{\X_{i}}(\M)=&\int_{\langle \X_{i}, \M-\M^{*}\rangle}^{\langle \X_{i}, \M-\M^{*}\rangle+\langle \X_{i}, \M_{1}-\M\rangle}\int_{0}^{\xi} h_{\xi}(x)\,dx\,d\xi\\
		\leq& b_{1}^{-1}\int_{\langle \X_{i}, \M-\M^{*}\rangle}^{\langle \X_{i}, \M-\M^{*}\rangle+\langle \X_{i}, \M_{1}-\M\rangle}\xi \,d\xi\\
		=&\frac{1}{2b_1} \left[(\langle \X_{i}, \M-\M^{*}\rangle+\langle \X_{i}, \M_{1}-\M\rangle )^2 - (\langle \X_{i}, \M-\M^{*}\rangle)^2\right],
	\end{align*}
	where the inequality follows from the upper bound for $h_{\xi}(x)\leq b_{1}^{-1}$. Then take expectation on each side and sum up:
	\begin{align*}
		\EE f(\M_{1}) - \EE f(\M) =& \sum_{i=1}^{n}\EE[g_{\X_{i}}(\M_{1}) - g_{\X_{i}}(\M) ]\\
		\leq&\frac{n}{2b_{1}} \EE\left[(\langle \X_{i}, \M-\M^{*}\rangle+\langle \X_{i}, \M_{1}-\M\rangle )^2 - (\langle \X_{i}, \M-\M^{*}\rangle)^2\right]\\
		\leq&\frac{n}{2b_{1}} \left[ \Vert \M_{1} -\M\Vert_{\mathrm{F}}^{2} + 2\Vert \M_{1} -\M\Vert_{\mathrm{F}} \Vert \M -\M^{*}\Vert_{\mathrm{F}}\right]
	\end{align*}
	Bound $f(\M_{1})-f(\M)$ using Corollary~\ref{cor:thm:empirical process}:
	\begin{align*}
		f(\M_{1}) - f(\M)\leq&\EE f(\M_{1})-\EE f(\M)+C_1\sqrt{ nd_1r}(\Vert \M_{1}-\M^{*}\Vert_{\mathrm{F}} + \Vert \M-\M^{*}\Vert_{\mathrm{F}})\\
		\leq&\EE f(\M_{1})-\EE f(\M)+C_1nb_{1}^{-1}\Vert \M-\M^{*}\Vert_{\mathrm{F}}(2\Vert \M-\M^{*}\Vert_{\mathrm{F}} + \Vert\M_{1}-\M\Vert_{\mathrm{F}} )\\
		\leq& \frac{n}{2b_1} \Vert \M_{1}-\M\Vert_{\mathrm{F}}^{2} + (1+C_1) nb_{1}^{-1}\Vert \M-\M^{*}\Vert_{\mathrm{F}} \Vert\M_{1}-\M\Vert_{\mathrm{F}} +2C_1nb_1^{-1}\Vert \M-\M^{*}\Vert_{\mathrm{F}}^{2},
	\end{align*}
	where the second inequality is from the condition $\fro{\M-\M^*}\geq c\sqrt{\frac{d_1r}{n}}b_{1}$. Since $\M_1 = \M + \frac{b_1}{2n}\text{SVD}_r(\G)$, it becomes
	\begin{equation}
		\label{eq28}
		\begin{split}
			f(\M+\frac{b_1}{2n}\text{SVD}_{r}(\G))&-f(\M)\\&\leq \frac{b_1}{8n} \Vert\G\Vert_{\mathrm{F,r}}^{2} + (1/2+C_1/2) \Vert \M-\M^{*}\Vert_{\mathrm{F}} \Vert\G\Vert_{\mathrm{F,r}} +2C_1\frac{n}{b_1}\Vert \M-\M^{*}\Vert_{\mathrm{F}}^{2}.
		\end{split}
	\end{equation}
	On the other hand, by the definition of sub-gradient, $f(\M_{1})-f(\M)$ has lower bound
	\begin{align}
		f(\M+\frac{b_1}{2n}\text{SVD}_{r}(\G))-f(\M)\geq \frac{b_1^2}{2n}\fror{\G}^2.
		\label{eq27}
	\end{align}
	Combine Equation~\ref{eq28}, Equation~\ref{eq27} and then solve the quadratic inequality which leads to $$\fror{\G}\leq Cnb_{1}^{-1}\fro{\M-\M^*}.$$
\end{proof}
\subsection{Proof of tensor case, absolute loss with Gaussian noise Lemma~\ref{lem:tensor:Gaussian-l1}}
\label{proof:lem:tensor:Gaussian-l1}
\begin{proof}
	We shall use $\OO_{d, r}:=\{\U\in\RR^{d\times r}:\, \U^{\top}\U=\I_{r}\}$ to refer to set of all $d$-by-$r$ matrices with orthonormal columns and $\OO_r$ to be set of dimension $r$ orthonormal matrices.
	
	Note that when $n\geq C(2^m\cdot r_1r_2\cdots r_m+2\sum_{j=1}^{m} r_jd_j)$, with probability exceeding $1-4\exp(-c (2^m\cdot r_1r_2\cdots r_m+2\sum_{j=1}^{m} r_jd_j))$, $\Lc=2n$. The reason is similar to the one presented in Section~\ref{proof:lem:Gaussian-l1} and here we use Gaussian process Corollary~\ref{cor:tensor:empirical process} to cope with tensor.
	
	Next consider the lower bound for $f(\M) - f(\M^*)$. Its expectation $\EE[f(\M)-f(\M^{*})]$ is calculated in Lemma~\ref{teclem:gaussian-l1}. We shall proceed assuming the event $\bcalE = \{\sup_{\M\in\MM_\r}|f(\M)-f(\M^*) - \EE(f(\M)-f(\M^*))|\cdot\fro{\M-\M^*}^{-1}\leq C_2\sqrt{n(2^m\cdot r_1r_2\cdots r_m+2\sum_{j=1}^{m}d_jr_j)} \}$ holds and specifically Theorem \ref{thm:tensor:empirical process} proves $\bcalE$ holds with probability exceeding $1-\exp(-c(2^m\cdot r_1r_2\cdots r_m+2\sum_{j=1}^{m}d_jr_j))$.
	
	We discuss the two phases respectively, namely, phase one when $\fro{\M-\M^*}\geq \sigma$ and phase two when $C\sqrt{( 2^m\cdot r_1r_2\cdots r_m+2\sum_{j=1}^{m}d_jr_j)/n}\sigma\leq\fro{\M-\M^*}\leq \sigma$.
	
	\noindent\textit{Case 1:}
	When $\Vert \mathbf{M}-\mathbf{M}^{*}\Vert_{\mathrm{F}}\geq\sigma$, we have 
	\begin{align*}
		f(\mathbf{M})-f(\mathbf{M}^*)&\geq\mathbb{E}\left[f(\mathbf{M})-f(\mathbf{M}^*) \right] - C_2\sqrt{n(2^m\cdot r_1r_2\cdots r_m+2\sum_{j=1}^{m}d_jr_j)}\Vert \mathbf{M}-\mathbf{M}^{*}\Vert_{\mathrm{F}}\\
		&= n\sqrt{2/\pi}\frac{1}{\sqrt{\Vert \mathbf{M}-\mathbf{M}^{*}\Vert_{\mathrm{F}}^{2}+\sigma^{2}}+\sigma}\Vert \mathbf{M}-\mathbf{M}^{*}\Vert_{\mathrm{F}}^{2} - C_2\sqrt{n(2^m\cdot r_1r_2\cdots r_m+2\sum_{j=1}^{m}d_jr_j)}\Vert \mathbf{M}-\mathbf{M}^{*}\Vert_{\mathrm{F}}\\
		&\geq n\sqrt{2/\pi}\frac{1}{\sqrt{2}+1}\Vert \mathbf{M}-\mathbf{M}^{*}\Vert_{\mathrm{F}} -C_2\sqrt{n(2^m\cdot r_1r_2\cdots r_m+2\sum_{j=1}^{m}d_jr_j)}\Vert \mathbf{M}-\mathbf{M}^{*}\Vert_{\mathrm{F}}\\
		&\geq \frac{1}{12}n\Vert \mathbf{M}-\mathbf{M}^{*}\Vert_{\mathrm{F}},
	\end{align*}
	where the penultimate line is from $\Vert \mathbf{M}-\mathbf{M}^{*}\Vert_{\mathrm{F}}\geq\sigma$ and the last line uses $n\geq C(2^m\cdot r_1r_2\cdots r_m+2\sum_{j=1}^{m}d_jr_j)$ for some large absolute constant $C>0$. It verifies $\muc=\frac{1}{12}n$.

	\noindent\textit{Case 2:}
	When $C\sqrt{(2^m\cdot r_1r_2\cdots r_m+2\sum_{j=1}^{m}d_jr_j)/n}\sigma\leq\Vert \mathbf{M}-\mathbf{M}^{*}\Vert_{\mathrm{F}}<\sigma$, we have \begin{equation*}
		\begin{split}
			f(\mathbf{M})-f(\mathbf{M}^*)&\geq\mathbb{E}\left[f(\mathbf{M})-f(\mathbf{M}^*) \right] - C_2\sqrt{n(2^m\cdot r_1r_2\cdots r_m+2\sum_{j=1}^{m}d_jr_j)}\Vert \mathbf{M}-\mathbf{M}^{*}\Vert_{\mathrm{F}}\\
			&= n\sqrt{2/\pi}\frac{1}{\sqrt{\Vert \mathbf{M}-\mathbf{M}^{*}\Vert_{\mathrm{F}}^{2}+\sigma^{2}}+\sigma}\Vert \mathbf{M}-\mathbf{M}^{*}\Vert_{\mathrm{F}}^{2} - C_2\sqrt{n(2^m\cdot r_1r_2\cdots r_m+2\sum_{j=1}^{m}d_jr_j)}\Vert \mathbf{M}-\mathbf{M}^{*}\Vert_{\mathrm{F}}\\
			&\geq\frac{1}{6}\frac{n}{\sigma}\Vert \mathbf{M}-\mathbf{M}^{*}\Vert_{\mathrm{F}}^{2} -C_2\sqrt{n(2^m\cdot r_1r_2\cdots r_m+2\sum_{j=1}^{m}d_jr_j)}\Vert \mathbf{M}-\mathbf{M}^{*}\Vert_{\mathrm{F}}\\
			&\geq \frac{1}{12}\frac{n}{\sigma}\Vert \mathbf{M}-\mathbf{M}^{*}\Vert_{\mathrm{F}}^{2},
		\end{split}
	\end{equation*}
	where the penultimate line uses $\Vert \mathbf{M}-\mathbf{M}^{*}\Vert_{\mathrm{F}}< \sigma$ and the last line is from $\fro{\M-\M^*}\geq C\sqrt{(2^m\cdot r_1r_2\cdots r_m+2\sum_{j=1}^{m}d_jr_j)/n}\sigma$. It shows $\mus=\frac{1}{12}\frac{n}{\sigma}$.
	
	Finally, from the following Lemma \ref{lemma:tensor:upperboundsubgradient:gaussian}, we see that $\Ls \leq C_4n\sigma^{-1}$. And this finishes the proof of the lemma.
\end{proof}

\begin{lemma}[Upper bound for sub-gradient]\label{lemma:tensor:upperboundsubgradient:gaussian}
	Let tensor $\M\in\RR^{d_1\times \cdots\times d_m}$ have rank at most $\r$ and satisfy $\fro{\M-\M^*}\geq \sqrt{n^{-1}(2^m\cdot r_1r_2\cdots r_m+2\sum_{j=1}^{m}d_jr_j)}\sigma$. Let $\G\in\partial f(\M)$ be the sub-gradient. Under the event $\bcalE=\{\sup_{\M\in\MM_\r,\M_1\in\MM_{2\r}}|f(\M+\M_1)-f(\M) - \EE(f(\M+\M_1)-f(\M))|\cdot\fro{\M_1}^{-1}\leq C_1\sqrt{n(2^m\cdot r_1r_2\cdots r_m+2\sum_{j=1}^{m}d_jr_j)} \}$, we have $\Vert\G\Vert_{\mathrm{F,2\r}}\leq Cn\sigma^{-1}\fro{\M-\M^*}$ for some absolute constant $C>0$.
\end{lemma}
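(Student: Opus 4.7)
The plan is to mirror the matrix-case proof of Lemma~\ref{lemma:upperboundsubgradient:gaussian}, with $\operatorname{SVD}_r(\G)$ replaced by a multilinear projection that attains the truncated norm $\|\cdot\|_{\rm F,2\r}$. By compactness of the Stiefel manifolds, choose $\U_j^{\star}\in\OO_{d_j, 2r_j}$ that attain
\begin{equation*}
\|\G\|_{\rm F,2\r}=\fro{\G\times_{j=1}^m \U_j^{\star\top}},
\end{equation*}
and set $\tilde{\G}:=\G\cdot\llbracket \U_1^{\star}\U_1^{\star\top},\ldots,\U_m^{\star}\U_m^{\star\top}\rrbracket$. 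Using the adjoint identity $\inp{\A\times_j\U^\top}{\B}=\inp{\A}{\B\times_j\U}$ and the fact that each $\U_j^{\star}$ is an isometry on its column space, one verifies $\tilde{\G}\in\MM_{2\r}$, $\fro{\tilde{\G}}=\|\G\|_{\rm F,2\r}$, and $\inp{\G}{\tilde{\G}}=\|\G\|_{\rm F,2\r}^2$. Define the probe point $\M_1:=\M+\tfrac{\sigma}{2n}\tilde{\G}$, so that $\M_1-\M\in\MM_{2\r}$ has Frobenius norm $\tfrac{\sigma}{2n}\|\G\|_{\rm F,2\r}$.

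Next, derive matching one-sided bounds on $f(\M_1)-f(\M)$. The sub-gradient inequality gives
\begin{equation*}
f(\M_1)-f(\M)\ \geq\ \inp{\G}{\M_1-\M}\ =\ \tfrac{\sigma}{2n}\|\G\|_{\rm F,2\r}^2.
\end{equation*}
For the upper direction, apply the exact Gaussian formula (Lemma~\ref{teclem:gaussian-l1}) and bound both denominators below by $\sigma$ to obtain
\begin{equation*}
\EE f(\M_1)-\EE f(\M)\ \leq\ \sqrt{2/\pi}\,\frac{n}{\sigma}\bigl(\fro{\M_1-\M}^2 + 2\fro{\M-\M^*}\fro{\M_1-\M}\bigr).
\end{equation*}
On the event $\bcalE$, the deviation $|f(\M_1)-f(\M)-\EE[f(\M_1)-f(\M)]|$ is at most $C_1\sqrt{n\cdot\textsf{DoF}_m}\,\fro{\M_1-\M}$; the hypothesis $\fro{\M-\M^*}\geq\sqrt{\textsf{DoF}_m/n}\,\sigma$ folds this deviation into a term of the shape $\tfrac{n}{\sigma}\fro{\M-\M^*}\fro{\M_1-\M}$, so it is absorbed into the same algebraic form as the expectation bound.

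Finally, substitute $\fro{\M_1-\M}=\tfrac{\sigma}{2n}\|\G\|_{\rm F,2\r}$ into the combined two-sided inequality. The quadratic coefficient on the right, $\sqrt{2/\pi}\,\sigma/(4n)$, is strictly smaller than the quadratic coefficient on the left, $\sigma/(2n)$, so it can be absorbed into the left. Dividing through by $\|\G\|_{\rm F,2\r}$ yields the claimed bound $\|\G\|_{\rm F,2\r}\leq C\,n\sigma^{-1}\fro{\M-\M^*}$.

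The only tensor-specific subtlety is that $\|\G\|_{\rm F,2\r}$ is not attained by a fixed HOSVD truncation, in contrast to the matrix case where $\|\G\|_{\rm F,r}=\fro{\operatorname{SVD}_r(\G)}$; this is precisely why we pick the maximizers $\U_j^{\star}$ directly from the variational definition of $\|\cdot\|_{\rm F,2\r}$. Every remaining step is a verbatim tensorization of the matrix argument, and the choice of $\M_1-\M\in\MM_{2\r}$ (rather than $\MM_{\r}$) is what makes the event $\bcalE$, stated for probe perturbations in $\MM_{2\r}$, directly applicable.
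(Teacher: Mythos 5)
Your argument is correct and follows the same blueprint as the paper's: pick orthonormal $\U_j^\star$ realizing the truncated norm, form the rank-$2\r$ probe $\tilde{\G}$ and $\M_1 = \M + \tfrac{\sigma}{2n}\tilde{\G}$, lower-bound $f(\M_1)-f(\M)$ via the sub-gradient inequality, upper-bound it via Lemma~\ref{teclem:gaussian-l1} plus the concentration event $\bcalE$, and extract the bound from the resulting inequality in $\|\G\|_{\rm F,2\r}$. One small improvement in your version: by applying the event with base $\M$ and perturbation $\M_1-\M\in\MM_{2\r}$ directly, your deviation term is proportional to $\fro{\M_1-\M}$ rather than $\fro{\M_1-\M^*}+\fro{\M-\M^*}$, so after substitution the inequality has no constant term and the "divide through by $\|\G\|_{\rm F,2\r}$" step is legitimate; the paper's looser deviation bound produces an extra constant term $2C_1\tfrac{n}{\sigma}\fro{\M-\M^*}^2$ that forces one to solve the quadratic explicitly, but both routes land on the same $O(n\sigma^{-1}\fro{\M-\M^*})$ conclusion.
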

\begin{proof}
	There exists orthonormal matrices $\U_1\in\OO_{d_1, 2r_1},\cdots,\U_m\in\OO_{d_m, 2r_m}$ such that $\Vert\G\Vert_{\mathrm{F,2\r}}=\fro{\G\times_{1} \U_1^{\top} \times_2 \cdots \times_m \U_m^{\top}}$ and denote $\tilde{\G}:= \G\times_{1} \U_1\U_1^{\top} \times_2 \cdots \times_m \U_m\U_m^{\top}$. Take $\M_1 = \M + \frac{\sigma}{2n}\tilde{\G}$. So $\rank(\M_1-\M) \leq 2\r$. Then we have
	\begin{align*}
		\EE f(\M_{1})-\EE f(\M)=&\sqrt{2/\pi}n\frac{\Vert \M_{1}-\M^{*}\Vert_{\mathrm{F}}^{2} - \Vert \M-\M^{*}\Vert_{\mathrm{F}}^{2} }{\sqrt{\sigma^{2} + \Vert \M_{1}-\M^{*}\Vert_{\mathrm{F}}^{2} } + \sqrt{\sigma^2 + \Vert \M-\M^{*}\Vert_{\mathrm{F}}^{2} } }\\
		=&\sqrt{2/\pi}n\frac{\Vert \M_{1}-\M\Vert_{\mathrm{F}}^{2}+2\langle \M-\M^{*}, \M_{1}-\M\rangle }{\sqrt{\sigma^{2} + \Vert \M_{1}-\M^{*}\Vert_{\mathrm{F}}^{2} } + \sqrt{\sigma^2 + \Vert \M-\M^{*}\Vert_{\mathrm{F}}^{2} } }\\
		\leq&\sqrt{2/\pi}\frac{n}{\sigma}\left( \Vert \M_{1}-\M\Vert_{\mathrm{F}}^{2}+2\Vert \M-\M^{*}\Vert_{\mathrm{F}} \Vert\M_{1}-\M\Vert_{\mathrm{F}}\right),
	\end{align*}
	where the first equality comes from Lemma~\ref{teclem:gaussian-l1}. And from Corollary~\ref{cor:thm:tensor:empirical process}, we have 
	\begin{align*}
		f(\M_{1}) - f(\M)\leq&\EE f(\M_{1})-\EE f(\M)+C_1\sqrt{ n(2^m\cdot r_1r_2\cdots r_m+2\sum_{j=1}^{m}d_jr_j)}(\Vert \M_{1}-\M^{*}\Vert_{\mathrm{F}} + \Vert \M-\M^{*}\Vert_{\mathrm{F}})\\
		\leq&\EE f(\M_{1})-\EE f(\M)+C_1\frac{n}{\sigma}\Vert \M-\M^{*}\Vert_{\mathrm{F}}(2\Vert \M-\M^{*}\Vert_{\mathrm{F}} + \Vert\M_{1}-\M\Vert_{\mathrm{F}} )\\
		\leq& \frac{n}{\sigma} \Vert \M_{1}-\M\Vert_{\mathrm{F}}^{2} + (2+C_1) \frac{n}{\sigma}\Vert \M-\M^{*}\Vert_{\mathrm{F}} \Vert\M_{1}-\M\Vert_{\mathrm{F}} +2C_1\frac{n}{\sigma}\Vert \M-\M^{*}\Vert_{\mathrm{F}}^{2},
	\end{align*}
	where the second inequality is from the condition $\fro{\M-\M^*}\geq \sqrt{\frac{2^m\cdot r_1r_2\cdots r_m+2\sum_{j=1}^{m}d_jr_j}{n}}\sigma$. Since $\M_1 = \M + \frac{\sigma}{2n}\tilde{\G}$, we have 
	\begin{equation}\label{eq33}
		\begin{split}
			f(\M+\frac{\sigma}{2n}\tilde{\G})&-f(\M)\\&\leq \frac{\sigma}{4n} \Vert\G\Vert_{\mathrm{F,2\r}}^{2} + (1+C_1/2) \Vert \M-\M^{*}\Vert_{\mathrm{F}} \Vert\G\Vert_{\mathrm{F,2\r}} +2C_1\frac{n}{\sigma}\Vert \M-\M^{*}\Vert_{\mathrm{F}}^{2}.
		\end{split}
	\end{equation}
	On the other hand, by the definition of sub-gradient, we have
	\begin{align}\label{eq34}
		f(\M+\frac{\sigma}{2n}\tilde{\G})-f(\M)\geq \frac{\sigma}{2n}\Vert\G\Vert_{\mathrm{F,2\r}}^2.
	\end{align}
	Combine Equation~\ref{eq33} with Equation~\ref{eq34} and by solving the quadratic inequality we get $$\Vert\G\Vert_{\mathrm{F,2\r}}\leq Cn\sigma^{-1}\fro{\M-\M^*}.$$
\end{proof}
\subsection{Proof of tensor case, absolute value loss with heavy-tailed noise Lemma~\ref{lem:tensor:heavytail-l1}}

The $\Lc=2n$ proof is the same as the one given in Section~\ref{proof:lem:tensor:Gaussian-l1}. Now we focus on lower bound of $f(\M)-f(\M^*)$.

First consider phase one when $\fro{\X-\X^*}\geq 30\EE\vert\xi\vert=30\gamma$. Use triangle inequality and then get
\begin{align}
	f(\mathbf{M})-f(\mathbf{M}^{*}) &= \sum_{i=1}^{n}|\xi_i-\inp{\M-\M^*}{\X_i}|-\sum_{i=1}^{n}|\xi_i| \geq\sum_{i=1}^{n}|\inp{\M-\M^*}{\X_i}|-2\Vert \boldsymbol{\xi}\Vert_{1}.
	\label{eq35}
\end{align}
Corollary~\ref{cor:tensor:empirical process} proves $ \sum_{i=1}^{n}|\inp{\M-\M^*}{\X_i}| \geq \sqrt{1/2\pi}n\fro{\M-\M^*}$ and Lemma~\ref{teclem:Contraction of Heavy Tailed Random Variables} proves with probability over $1- {}^*\!c_1n^{-\min\{1,\varepsilon\}}$, $\frac{1}{n}\lone{\boldsymbol{\xi}}\leq 3\EE\vert\xi\vert=3\gamma$. Then combined with $\fro{\M-\M^*}\geq 30\gamma$, Equation~\ref{eq35} becomes
\begin{align*}
	f(\mathbf{M})-f(\mathbf{M}^{*})\geq \sqrt{1/2\pi}n\Vert \mathbf{M}-\mathbf{M}^{*}\Vert_{\mathrm{F}}-6n\gamma\geq \frac{n}{6}\fro{\M-\M^*},
\end{align*}
which verifies $\muc = \frac{n}{6}$.

Then consider the second phase when $C_2\sqrt{(2^m\cdot r_1r_2\cdots r_m+2\sum_{j=1}^{m}d_jr_j)/n}b_0 \leq \fro{\M-\M^*}<30\gamma$. To bound expectation of $f(\M)-f(\M^*)$, we first consider the conditional expectation $\EE_\xi f(\M)-\EE_\xi f(\M^*)$. Notice for all $t_0\in\RR$,
\begin{align*}
	\EE|\xi-t_0| &= \int_{s\geq t_0}(s-t_0)dH_{\xi}(s) + \int_{s< t_0}(t_0-s)dH_{\xi}(s)\\
	&=2\int_{s\geq t_0}(s-t_0)dH_{\xi}(s) + \int_{-\infty}^{+\infty}(t_0-s)dH_{\xi}(s)\\
	&=2\int_{s\geq t_0}(1-H_{\xi}(s))ds + t_0 - \int_{-\infty}^{\infty}s dH_{\xi}(s).
\end{align*}
Set $t_0 = \inp{\X_i}{\M-\M^*}$ and define $g_{\X_i}(\M) := \EE_{\xi_i}|\xi_i - \inp{\X_i}{\M-\M^*}|$. It implies
\begin{align*}
	g_{\X_i}(\M) = 2\int_{s\geq \inp{\X_i}{\M-\M^*}}(1-H_{\xi}(s))ds + \inp{\X_i}{\M-\M^*} - \int_{-\infty}^{+\infty}s dH_{\xi}(s).
\end{align*}
This leads to 
\begin{align}
	g_{\X_i}(\M) -g_{\X_i}(\M^*) = 2\int_{0}^{\inp{\X_i}{\M-\M^*}}H_{\xi}(s)ds -  \inp{\X_i}{\M-\M^*}.
	\label{eq36}
\end{align}
Note that $\langle \mathbf{X}_{i}, \mathbf{M}-\mathbf{M}^{*}\rangle$ follows Gaussian distribution with mean zero variance $\Vert \mathbf{M}-\mathbf{M}^{*}\Vert_{\mathrm{F}}^{2}$ and denote it as $z:=\langle \mathbf{X}_{i}, \mathbf{M}-\mathbf{M}^{*}\rangle$. Let $f_z(\cdot)$ be the density of $z$. Take expectation of $z$ on both sides of Equation~\ref{eq36}
\begin{align*}
	\mathbb{E}\left[g_{\mathbf{X}_{i}}(\mathbf{M})-g_{\mathbf{X}_{i}}(\mathbf{M}^{*}) \right]
	&=2\int_{-\infty}^{+\infty}\int_{0}^{t}\left(H_{\xi}(s)-0.5\right)f_{z}(t)\,ds\, dz\\
	&=2\int_{-\infty}^{+\infty}\int_{0}^{t}\int_{0}^{s}h_{\xi}(w)f_{z}(t)\,dw\,ds\,dt\\
	&\geq2\int_{-\Vert \mathbf{M} - \mathbf{M}^{*} \Vert_{\mathrm{F}}}^{\Vert \mathbf{M} - \mathbf{M}^{*} \Vert_{\mathrm{F}} }\int_{0}^{t}f_{z}(t)b_0^{-1}s\,ds\,dt\\
	&=b_0^{-1}\int_{-\fro{\M-\M^*}}^{\fro{\M-\M^*}}t^2f_z(t)\,dt = b_0^{-1}\fro{\M-\M^*}^2\int_{-1}^{1}t^2\cdot \frac{1}{\sqrt{2\pi}}e^{-t^2/2}\,dt\\
	&\geq\frac{1}{6b_0}\fro{\M-\M^*}^2,
\end{align*}
where the first inequality follows from Assumption \ref{assump:heavy-tailed} and the last is from $\int_{-1}^1t^2\cdot\frac{1}{\sqrt{2\pi}}e^{-t^2/2}\,dt\geq 1/6$. Therefore,
\begin{align*}
	\EE f(\M)- \EE f(\M^*) = \sum_{i=1}^n \EE [g_{\X_i}(\M) - g_{\X_i}(\M^*)] \geq \frac{n}{6b_0}\fro{\M-\M^*}^2.
\end{align*}
Invoke Theorem~\ref{thm:tensor:empirical process},
\begin{align*}
	f(\M)-f(\M^*)&\geq \EE[f(\M)-f(\M^*)] - C\sqrt{n(2^m\cdot r_1r_2\cdots r_m+2\sum_{j=1}^{m}d_jr_j )}\fro{\M-\M^*}\\
	&\geq \frac{n}{6b_{0}} \fro{\M-\M^*}^2 - C\sqrt{n(2^m\cdot r_1r_2\cdots r_m+2\sum_{j=1}^{m}d_jr_j)}\fro{\M-\M^*}\\
	&\geq\frac{n}{12b_0}\fro{\M-\M^*}^2,
\end{align*}
where the last inequality uses $\fro{\M-\M^{*}}\geq C_2\sqrt{(2^m\cdot r_1r_2\cdots r_m+2\sum_{j=1}^{m}d_jr_j)/n}b_{0}$. This proves $\mus=\frac{n}{12b_0}$.

Finally, from the following lemma, we see that $\Ls \leq C_3nb_1^{-1}$.
And this finishes the proof.

\begin{lemma}[Upper bound for sub-gradient]\label{lemma:tensor:upperboundsubgradient:heavytail}
	Let $\M\in\RR^{d_1\times \cdots\times d_m}$ be rank at most $\r$ tensor such that $\fro{\M-\M^*}\geq C_2\sqrt{n^{-1}(2^m\cdot r_1r_2\cdots r_m+2\sum_{j=1}^{m}d_jr_j)}b_1$. Let $\G\in\partial f(\M)$ be the sub-gradient. Under the event $\bcalE=\{\sup_{\M\in\MM_\r,\M_1\in\MM_{2\r}}|f(\M+\M_1)-f(\M) - \EE(f(\M+\M_1)-f(\M))|\cdot\fro{\M_1}^{-1}\leq C_1\sqrt{n(2^m\cdot r_1r_2\cdots r_m+2\sum_{j=1}^{m}d_jr_j)} \}$, we have $\Vert\G\Vert_{\mathrm{F,2\r}}\leq C_3nb_{1}^{-1}\fro{\M-\M^*}$ for some absolute constant $C_3>0$.
\end{lemma}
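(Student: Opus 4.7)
The plan is to mirror the proof of the matrix analogue (Lemma~\ref{lemma:upperboundsubgradient:heavytail}) by constructing an explicit comparison tensor $\M_1$ of Tucker rank at most $2\r$, bounding $f(\M_1)-f(\M)$ from above via a second-order expansion of the conditional expectation and from below via the subgradient inequality, and then solving a quadratic inequality for $\|\G\|_{\mathrm{F},2\r}$.

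First, I would choose the $\M_1$ in a way that makes the left-hand side $\|\G\|_{\mathrm{F},2\r}$ appear naturally. By definition of the $\|\cdot\|_{\mathrm{F},2\r}$ norm, there exist $\U_j\in\OO_{d_j,2r_j}$ such that $\|\G\|_{\mathrm{F},2\r}=\fro{\G\times_1\U_1^\top\times_2\cdots\times_m\U_m^\top}$. Set $\tilde\G:=\G\times_1\U_1\U_1^\top\times_2\cdots\times_m\U_m\U_m^\top$, which has Tucker rank at most $2\r$ and satisfies $\fro{\tilde\G}=\|\G\|_{\mathrm{F},2\r}$, and take $\M_1:=\M+\tfrac{b_1}{2n}\tilde\G$, so that $\M_1-\M\in\MM_{2\r}$.

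Next, I would bound $\EE f(\M_1)-\EE f(\M)$. Reusing the calculation from Section~\ref{proof:lem:heavytail-l1}, the conditional-on-$\X_i$ increment equals
\begin{align*}
g_{\X_i}(\M_1)-g_{\X_i}(\M)=2\int_{\langle\X_i,\M-\M^*\rangle}^{\langle\X_i,\M_1-\M^*\rangle}\int_0^{\xi}h_\xi(x)\,dx\,d\xi,
\end{align*}
and the uniform upper bound $h_\xi\leq b_1^{-1}$ from Assumption~\ref{assump:heavy-tailed} gives the quadratic majorant $b_1^{-1}[\langle\X_i,\M_1-\M^*\rangle^2-\langle\X_i,\M-\M^*\rangle^2]$. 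Summing over $i$ and taking expectation with respect to the i.i.d.\ Gaussian measurements $\X_i$ (which contribute only $\fro{\M_1-\M^*}^2$ and $\fro{\M-\M^*}^2$ in the expectation) yields
\begin{align*}
\EE f(\M_1)-\EE f(\M)\leq nb_1^{-1}\bigl[\fro{\M_1-\M}^2+2\fro{\M_1-\M}\fro{\M-\M^*}\bigr].
\end{align*}
Under the event $\bcalE$ and using the sample-size condition $\fro{\M-\M^*}\gtrsim\sqrt{n^{-1}\textsf{DoF}_m}\,b_1$ to absorb the empirical-process deviation $C_1\sqrt{n\,\textsf{DoF}_m}(\fro{\M_1-\M^*}+\fro{\M-\M^*})$ into a multiple of $nb_1^{-1}\fro{\M-\M^*}(\fro{\M-\M^*}+\fro{\M_1-\M})$, I would obtain
\begin{align*}
f(\M_1)-f(\M)\leq \frac{b_1}{4n}\|\G\|_{\mathrm{F},2\r}^2+c_1\|\G\|_{\mathrm{F},2\r}\fro{\M-\M^*}+c_2\,\frac{n}{b_1}\fro{\M-\M^*}^2.
\end{align*}

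On the other hand, the subgradient inequality $f(\M_1)-f(\M)\geq\langle\G,\M_1-\M\rangle=\tfrac{b_1}{2n}\langle\G,\tilde\G\rangle=\tfrac{b_1}{2n}\|\G\|_{\mathrm{F},2\r}^2$ produces a lower bound. Combining the two estimates gives a quadratic inequality in $\|\G\|_{\mathrm{F},2\r}$, and solving it yields $\|\G\|_{\mathrm{F},2\r}\leq C_3nb_1^{-1}\fro{\M-\M^*}$. The main obstacle I anticipate is the second step: verifying that the tensor empirical-process deviation (Corollary~\ref{cor:thm:tensor:empirical process}) applies uniformly to the rank-$2\r$ difference $\M_1-\M\in\MM_{2\r}$ with the correct scaling in $\textsf{DoF}_m$, since the supremum must be taken over pairs with $\M\in\MM_\r$ and the perturbation in $\MM_{2\r}$; this is exactly the reason one works with $\|\cdot\|_{\mathrm{F},2\r}$ rather than $\|\cdot\|_{\mathrm{F},\r}$, and it is what drives the choice of the event $\bcalE$ in the lemma's hypothesis.
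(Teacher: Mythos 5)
Your proposal follows the paper's proof essentially step by step: the same construction $\M_1 = \M + \tfrac{b_1}{2n}\tilde\G$ with $\tilde\G$ the multilinear projection of $\G$ onto rank-$2\r$ Tucker range, the same second-order conditional-expectation majorant via $h_\xi\leq b_1^{-1}$, the same use of Corollary~\ref{cor:thm:tensor:empirical process} together with the lower-bound hypothesis on $\fro{\M-\M^*}$ to absorb the deviation term, and the same subgradient lower bound $\langle\G,\tilde\G\rangle=\|\G\|_{\mathrm{F},2\r}^2$ leading to a quadratic inequality. This is correct and coincides with the paper's argument.
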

\begin{proof}
	There exists orthonormal matrices $\U_1\in\OO_{d_1, 2r_1},\cdots,\U_m\in\OO_{d_m, 2r_m}$ such that $\Vert\G\Vert_{\mathrm{F,2\r}}=\fro{\G\times_{1} \U_1^{\top} \times_2 \cdots \times_m \U_m^{\top}}$ and denote $\tilde{\G}:= \G\times_{1} \U_1\U_1^{\top} \times_2 \cdots \times_m \U_m\U_m^{\top}$. Take $\M_1 = \M + \frac{b_1}{2n}\tilde{\G}$ and then $\rank(\M_1-\M) \leq2\r$. We finish the proof via combining lower bound and upper bound of $f(\M_{1})-f(\M)$. First consider $\EE f(\M_1)-\EE f(\M)$.
	Use the same notation as the above proof and similarly,
	\begin{align*}
		g_{\X_{i}}(\M_{1}) - g_{\X_{i}}(\M)=&2\int_{\langle \X_{i}, \M-\M^{*}\rangle}^{\langle \X_{i}, \M-\M^{*}\rangle+\langle \X_{i}, \M_{1}-\M\rangle}\int_{0}^{\xi} h_{\xi}(x)\,dx\,d\xi\\
		\leq& 2b_1^{-1}\int_{\langle \X_{i}, \M-\M^{*}\rangle}^{\langle \X_{i}, \M-\M^{*}\rangle+\langle \X_{i}, \M_{1}-\M\rangle}\xi \,d\xi\\
		=&b_{1}^{-1}\left[(\langle \X_{i}, \M-\M^{*}\rangle+\langle \X_{i}, \M_{1}-\M\rangle )^2 - (\langle \X_{i}, \M-\M^{*}\rangle)^2\right],
	\end{align*}
	where the inequality follows from the upper bound for $h_{\xi}(x)$. Then take expectation of $\X_{i}$ on each side and sum up over $i$ :
	\begin{align*}
		\EE f(\M_{1}) - \EE f(\M) =& \sum_{i=1}^{n}\EE[g_{\X_{i}}(\M_{1}) - g_{\X_{i}}(\M) ]\\
		\leq&nb_1^{-1} \EE\left[(\langle \X_{i}, \M-\M^{*}\rangle+\langle \X_{i}, \M_{1}-\M\rangle )^2 - (\langle \X_{i}, \M-\M^{*}\rangle)^2\right]\\
		\leq&nb_1^{-1}\left[ \Vert \M_{1} -\M\Vert_{\mathrm{F}}^{2} + 2\Vert \M_{1} -\M\Vert_{\mathrm{F}} \Vert \M -\M^{*}\Vert_{\mathrm{F}}\right]
	\end{align*}
	Then bound $f(\M_{1})-f(\M)$ with empirical process Corollary~\ref{cor:thm:tensor:empirical process}:
	\begin{align*}
		f(\M_{1}) - f(\M)\leq&\EE f(\M_{1})-\EE f(\M)+C_1\sqrt{ n(2^m\cdot r_1r_2\cdots r_m+2\sum_{j=1}^{m}d_jr_j)}(\Vert \M_{1}-\M^{*}\Vert_{\mathrm{F}} + \Vert \M-\M^{*}\Vert_{\mathrm{F}})\\
		\leq&\EE f(\M_{1})-\EE f(\M)+C_3nb_1^{-1}\Vert \M-\M^{*}\Vert_{\mathrm{F}}(2\Vert \M-\M^{*}\Vert_{\mathrm{F}} + \Vert\M_{1}-\M\Vert_{\mathrm{F}} )\\
		\leq& nb_1^{-1} \Vert \M_{1}-\M\Vert_{\mathrm{F}}^{2} + (2+C_3) nb_1^{-1}\Vert \M-\M^{*}\Vert_{\mathrm{F}} \Vert\M_{1}-\M\Vert_{\mathrm{F}} +2C_3nb_{1}^{-1}\Vert \M-\M^{*}\Vert_{\mathrm{F}}^{2},
	\end{align*}
	where the second inequality is from the condition $\fro{\M-\M^*}\geq C_2\sqrt{n^{-1}(2^m\cdot r_1r_2\cdots r_m+2\sum_{j=1}^{m}d_jr_j)}b_1$. Since $\M_1 = \M + \frac{b_1}{2n}\tilde{\G}$, it becomes
	\begin{equation}
		\label{eq37}
		\begin{split}
			f(\M+\frac{b_1}{2n}\tilde{\G})&-f(\M)\\&\leq \frac{b_1}{4n} \Vert\G\Vert_{\mathrm{F,2\r}}^{2} + (1+C_1/2) \Vert \M-\M^{*}\Vert_{\mathrm{F}} \Vert\G\Vert_{\mathrm{F,2\r}} +2C_1\frac{n}{b_1}\Vert \M-\M^{*}\Vert_{\mathrm{F}}^{2}.
		\end{split}
	\end{equation}
	On the other hand, by the definition of sub-gradient, $f(\M_{1})-f(\M)$ has lower bound
	\begin{align}\label{eq38}
		f(\M+\frac{b_1}{2n}\tilde{\G})-f(\M)\geq \frac{b_1}{2n}\Vert\G\Vert_{\mathrm{F,2\r}}^2.
	\end{align}
	Combine Equation~\ref{eq37} with Equation~\ref{eq38} and then solve the quadratic inequality which leads to $$\Vert\G\Vert_{\mathrm{F,2\r}}\leq C_3nb_{1}^{-1}\fro{\M-\M^*}.$$
\end{proof}
\section{Proof of Initialization Theorem~\ref{thm:initialization}}
\label{proof:initialization}
Notice that
\begin{align*}
	\Vert \frac{1}{n}\sum_{i=1}^{n}Y_{i}\X_{i} -\M^{*}\Vert=&\Vert \frac{1}{n}\sum_{i=1}^{n}(\xi_{i}+\langle \X_{i}, \M^{*}\rangle)\X_{i} -\M^{*}\Vert\\
	\leq&\Vert \frac{1}{n}\sum_{i=1}^{n}\langle \X_{i}, \M^{*}\rangle\X_{i}-\M^{*}\Vert + \frac{1}{n}\Vert \sum_{i=1}^{n} \xi_{i}\X_{i}\Vert.
\end{align*}
Use Theorem~\ref{thm:Bernstein Ineq} to bound the first term. Note that $$\EE\left[\langle \X_{i}, \M^{*}\rangle\X_{i}-\M^{*} \right]=0,$$
$$\Vert \Lambda_{\max}\left(\langle \X_{i}, \M^{*}\rangle\X_{i}-\M^{*} \right)\Vert_{\Psi_1}\leq \Vert\langle \X_{i}, \M^{*}\rangle\Vert_{\Psi_2} \Vert \Lambda_{\max}\left( \X_{i} \right)\Vert_{\Psi_2}=c\sqrt{d_1}\fro{\M^*},$$
$$\EE\left[\left(\langle \X_{i}, \M^{*}\rangle\X_{i}-\M^{*} \right)\left(\langle \X_{i}, \M^{*}\rangle\X_{i}-\M^{*}\right)^{\top}\right]=\M^*\M^{*\top}+d_2\fro{\M^*}^2\cdot \I_{d_1},$$
$$\EE\left[\left(\langle \X_{i}, \M^{*}\rangle\X_{i}-\M^{*} \right)^{\top}\left(\langle \X_{i}, \M^{*}\rangle\X_{i}-\M^{*}\right)\right]=\M^{*\top}\M^*+d_1\fro{\M^*}^2\cdot \I_{d_2}.$$
Thus,
\begin{align*}
	S^2:=&\max\left\{\Lambda_{\max}\left(\sum_{i=1}^n\EE\left[\left(\langle \X_{i}, \M^{*}\rangle\X_{i}-\M^{*} \right)\left(\langle \X_{i}, \M^{*}\rangle\X_{i}-\M^{*}\right)^{\top}\right]\right)\big/n,\right.\\
	&{~~~~~~~~}\left.\Lambda_{\max}\left(\sum_{i=1}^n\EE\left[\left(\langle \X_{i}, \M^{*}\rangle\X_{i}-\M^{*} \right)^{\top}\left(\langle \X_{i}, \M^{*}\rangle\X_{i}-\M^{*}\right)\right] \right)\big/n\right\}\\
	&=\sigma_1^2+d_{1}\fro{\M^*}^2.
\end{align*} 

Then, by Theorem~\ref{thm:Bernstein Ineq}, we have
\begin{align*}
	\Vert \frac{1}{n}\sum_{i=1}^{n}\langle \X_{i}, \M^{*}\rangle\X_{i}-\M^{*}\Vert&\leq c\sqrt{\sigma_1^2+d_1\fro{\M^*}^2}\sqrt\frac{\log d_1}{n} + c\frac{\log d_1}{n}\log\left(\frac{\sqrt{d_1}\fro{\M^*}}{\sqrt{\sigma_1^2+d_1\fro{\M^*}^2 }}\right)\\
	&\leq c_0\sqrt\frac{d_1r\log d_1}{n}\sigma_{1}
\end{align*}
holds with  probability over $1-c_1d_{1}^{-1}$.
Then consider the second term. Note that with probability over $1-c_2\exp(-d_1)$, the following holds
\begin{align}\label{eq15}
	\frac{1}{n}\Vert \sum_{i=1}^{n} \xi_{i}\X_{i}\Vert\leq c_3\frac{\sqrt{d_1}}{n}\left(\sum_{i=1}^{n}\xi_{i}^2\right)^{1/2}.
\end{align}
Lemma~\ref{teclem:bound of second moment heavy tailed} analyzes tail bound for sum of squared heavy-tailed random variables and  proves with probability exceeding $1-(\log d_1)^{-1}$, the following holds
\begin{align*}
	\sum_{i=1}^{n}\xi_{i}^2\leq  (2n\gamma_1\log d_1)^{\frac{2}{1+\epsilon}}
\end{align*}
Instert it into Equation~\ref{eq15}: \begin{align*}
	\frac{1}{n}\Vert \sum_{i=1}^{n} \xi_{i}\X_{i}\Vert\leq  \tilde c_3\sqrt{d_1}n^{-\frac{\varepsilon}{1+\varepsilon}}\left(\log d_1\right)^{\frac{1}{1+\varepsilon}}\gamma_1^{\frac{1}{1+\epsilon}}.
\end{align*}
Thus, we have 
\begin{align*}
	\Vert \frac{1}{n}\sum_{i=1}^{n}Y_{i}\X_{i} -\M^{*}\Vert\leq c_0\sqrt{\frac{d_1r\log d_1}{n}}\sigma_{1} + \tilde c_3\sqrt{d_1}n^{-\frac{\varepsilon}{1+\varepsilon}}\left(\log d_1\right)^{\frac{1}{1+\varepsilon}}\gamma_1^{\frac{1}{1+\epsilon}},
\end{align*}
which implies $\Vert \frac{1}{n}\sum_{i=1}^{n}Y_{i}\X_{i} -\M^{*}\Vert\leq \sigma_{r}/4$. Then by Lemma~\ref{teclem:perturbation}, we have \begin{align*}
	\Vert \frac{1}{n}\operatorname{SVD}_r(\sum_{i=1}^{n}Y_{i}\X_{i}) -\M^{*}\Vert\leq c_4\sqrt{\frac{d_1r\log d_1}{n}}\sigma_{1} + c_5 \sqrt{d_1}n^{-\frac{\varepsilon}{1+\varepsilon}}\left(\log d_1\right)^{\frac{1}{1+\varepsilon}}\gamma_1^{\frac{1}{1+\epsilon}}.
\end{align*}
Note that $\rank(\M_0-\M^*)\leq 2r$. Then, it has\begin{align*}
	\Vert \M_{0} - \M^{*}\Vert_{\mathrm{F}}\leq&\sqrt{2r}\Vert \M_{0}-\M^{*}\Vert\\
	=&\sqrt{2r}\Vert\operatorname{SVD}_{r}(\frac{1}{n}\sum_{i=1}^{n}Y_{i}\X_{i})-\M^{*}\Vert_{2}\\
	\leq&\tilde c_4\sqrt{\frac{d_1r^2\log d_1}{n}}\sigma_{1} + \tilde c_5\sqrt{d_1r}n^{-\frac{\varepsilon}{1+\varepsilon}}\left(\log d_1\right)^{\frac{1}{1+\varepsilon}}\gamma_1^{\frac{1}{1+\epsilon}},
\end{align*}
which completes the proof.
\begin{theorem}(Bernstein's  Inequality, Proposition 2 in \cite{koltchinskii2011nuclear})\label{thm:Bernstein Ineq}
	Let $\A,\A_1,\cdots,\A_n$ be \iid $p\times q$ matrices that satisfy for some $\alpha\geq1$ (and all $i$) $$\EE \A_i=0,\qquad \Vert \Lambda_{\max}(\A_i)\Vert_{\Psi_\alpha}=:K<+\infty.$$
	Define $$S^2:=\max\left\{\Lambda_{\max}\left(\sum_{i=1}^n\EE\A_{i}\A_{i}^{\top}\right)\big/n, \Lambda_{\max}\left(\sum_{i=1}^n\EE\A_{i}^{\top}\A_{i}\right)\big/n\right\}.$$
	Then for some constant $C>0$ and for all $t>0$,
	\begin{align*}
		\PP\left(\Lambda_{\max}\left(\sum_{i=1}^{n}\A_{i}\right)\big/n\geq CS\sqrt{\frac{t+\log(p+q)}{n}}+CK\log^{1/\alpha}\left(\frac{K}{S}\right)\frac{t+\log(p+q)}{n} \right)\leq\exp(-t).
	\end{align*}
\end{theorem}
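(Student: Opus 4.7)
The plan is to follow the standard recipe for matrix concentration with unbounded summands, as in Koltchinskii (2011) and Tropp: (i) reduce to the Hermitian case by dilation, (ii) apply the matrix Laplace-transform method to a truncated version of the summands, and (iii) control the truncation residual via the $\psi_\alpha$ tail.

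First, I would pass from rectangular to symmetric matrices by the Hermitian dilation
\[
\wt\A_i := \begin{pmatrix} \mathbf 0 & \A_i \\ \A_i^{\top} & \mathbf 0 \end{pmatrix} \in \RR^{(p+q)\times(p+q)},
\]
which is symmetric and satisfies $\Lambda_{\max}(\wt\A_i) = \op{\A_i} = \Lambda_{\max}(\A_i)$ and $\EE\wt\A_i^2 = \mathrm{diag}\bigl(\EE\A_i\A_i^{\top},\, \EE\A_i^{\top}\A_i\bigr)$. Consequently, $\bigl\|\sum_i \EE\wt\A_i^2/n\bigr\| = S^2$, and the ambient dimension $p+q$ is exactly what produces the $\log(p+q)$ factor in the conclusion through the standard $\operatorname{tr}\exp$ bound. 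This reduces the claim to a tail inequality for $\Lambda_{\max}\bigl(\sum_i \wt\A_i\bigr)/n$ in the Hermitian setting.

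Second, I would introduce a truncation level $u>0$ and write $\wt\A_i = \wt\A_i^{(1)} + \wt\A_i^{(2)}$ with $\wt\A_i^{(1)} := \wt\A_i \,\mathbbm{1}\{\Lambda_{\max}(\wt\A_i)\leq u\}$. For the bounded part, Ahlswede--Winter / Lieb--Tropp applied to $\wt\A_i^{(1)} - \EE\wt\A_i^{(1)}$ yields a tail of the Bernstein form
$\exp\!\bigl(-c n x^{2}/(S^{2}+ux)\bigr)$
for the deviation $x$, where I use that the truncated second moment is at most $(1+o(1))\,S^2$ and the truncated mean is of smaller order. For the unbounded part, the $\psi_\alpha$ assumption gives $\PP(\Lambda_{\max}(\A_i)>u) \leq 2\exp\bigl(-(u/K)^{\alpha}\bigr)$, so a union bound over $i\leq n$ shows $\max_i \Lambda_{\max}(\A_i)\leq u$, and hence $\sum_i \wt\A_i^{(2)} = \mathbf 0$, with probability at least $1 - 2n\exp\bigl(-(u/K)^{\alpha}\bigr)$. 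The contribution of $\EE\wt\A_i^{(2)}$ is estimated from $\EE\|\A_i\|^k \leq (Kk^{1/\alpha})^k$ and is negligible once $u$ is of the order stated below.

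Third, optimizing the truncation threshold gives the two terms in the theorem. Choosing $u \asymp K\log^{1/\alpha}(K/S) \cdot \text{(log factors)}$ balances the linear regime $\exp(-cnx/u)$ of the bounded-matrix Bernstein with the Gaussian regime $\exp(-cnx^2/S^2)$, so inverting the tail at level $\exp(-t-\log(p+q))$ yields exactly $x = CS\sqrt{(t+\log(p+q))/n} + CK\log^{1/\alpha}(K/S)\,(t+\log(p+q))/n$. The main obstacle is this balancing step: one has to pick $u$ large enough that the truncation event has negligible probability and $\EE \wt\A_i^{(2)}$ is small, yet small enough that the $ux$ contribution in the Bernstein denominator yields precisely the logarithmic prefactor $\log^{1/\alpha}(K/S)$ rather than something larger like $\log^{1/\alpha}(Kn)$. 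Carefully tracking the $\psi_\alpha$ moment bound through the truncation, and checking that the truncated variance is within a $(1+o(1))$ factor of the original, is the technical core; once done, the rest is routine bookkeeping that matches the cited statement of Koltchinskii (2011, Proposition~2).
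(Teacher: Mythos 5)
First, note that the paper does not prove this statement at all: Theorem~\ref{thm:Bernstein Ineq} is imported verbatim as Proposition~2 of \cite{koltchinskii2011nuclear} and used as a black box, so there is no in-paper proof to compare yours against. Your outline follows the standard route for that result (Hermitian dilation, truncation, bounded noncommutative Bernstein, control of the residual), and the dilation bookkeeping --- $\Lambda_{\max}(\wt\A_i)=\op{\A_i}$, $\EE\wt\A_i^2=\mathrm{diag}(\EE\A_i\A_i^{\top},\EE\A_i^{\top}\A_i)$, dimension $p+q$ producing the $\log(p+q)$ --- is correct.

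However, there is a genuine gap at exactly the step you flag as ``the technical core,'' and as written your scheme cannot produce the stated bound. With a single truncation level $u$ and a union bound over the $n$ summands, the bad event has probability $2n\exp(-(u/K)^{\alpha})$; to make this smaller than $\exp(-t)$ you are forced to take $u\geq K\,(t+\log(2n))^{1/\alpha}$. Feeding that $u$ into the linear (Poissonian) regime of the bounded matrix Bernstein inequality yields a term of order $K\,(t+\log n)^{1/\alpha}\,(t+\log(p+q))/n$, not $K\log^{1/\alpha}(K/S)\,(t+\log(p+q))/n$. Since $K/S$ may be $O(1)$ while $n\to\infty$, the bound you would obtain is strictly weaker than the one claimed, so the balancing you defer to ``careful tracking'' is not merely bookkeeping --- it is impossible within the single-truncation-plus-union-bound design. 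The factor $\log^{1/\alpha}(K/S)$ arises from a different mechanism: either a multi-scale (peeling) truncation over dyadic levels $u_j=2^{j}K\log^{1/\alpha}(K/S)$, applying the bounded inequality at each level and summing the geometrically decaying variance contributions $\EE\op{\A_i}^2\mathbbm{1}\{u_{j-1}<\op{\A_i}\leq u_j\}\lesssim K^2\exp(-c(u_{j-1}/K)^{\alpha})$ against $S^2$, or a direct moment-generating-function argument in which the $\psi_\alpha$ condition is converted into Bernstein-type moment bounds (the matrix analogue of van der Vaart--Wellner, Lemma~2.2.11), which is how the cited Proposition~2 is actually established. To close the proof you need to replace your step (iii) with one of these two arguments.
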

\section{Proof of Tensor Regression Initialization Theorem~\ref{thm:init:tensor}}
For any two orthogonal matrices $\U,\V\in\OO_{d,r}$, their distance with optimal rotation is defined to be $d(\U,\V) := \min_{\R\in\OO_r}\fro{\U-\V\R}$. First estimate distance between $\U_i^{(0)}$ and $\U_i^*$. Denote 
\begin{align}\label{eq:tildeN}
	\hat{\N} = \frac{1}{2n(n-1)}\sum_{1\leq i<i'\leq n}^n\tilde Y_i\tilde Y_{i'}(\X_i\X_{i'}^T + \X_{i'}\X_i^T).
\end{align}

From Lemma \ref{lemma:init:tensor} and Wedin's sin$\Theta$ theorem, we obtain that with probability exceeding $1 - c_0m\dmax^{-10}$, for all $i\in[m]$,
\begin{align}\label{eq:dist}
	d(\U_i^{(0)},\U_i^*)\leq \sqrt{2r_i}\op{\tilde\N_i - \M_{(i)}^{*}\M_{(i)}^{*\top}}/\mins^2 \leq  C_1\sqrt{\log \dmax}
	\frac{(d^*)^{1/4}r_i}{n^{1/2}}\kappa^2 + C_2\sqrt{\log \dmax}
	\frac{(d^*)^{1/2}r_i^{1/2}}{n}\cdot \frac{\op{\xi}_{2+\eps}^2}{\mins^2}.
\end{align}
Decompose $\M_0 - \M^*$ and we have
\begin{align}\label{eq:m0-m}
	\fro{\M_0-\M^*} \leq \fro{\C^{(0)}-\C^*}
	+ m\maxs \max_{j=1,\ldots,m} d(\U_j^{(0)},\U_{j}^{*}).
\end{align}
We now bound $\fro{\C^{(0)}-\C^*}$. Definition of $\C^{(0)}$ implies $\gcl(\C^{(0)}\times_i\U_i^{(0)}) = 0$. Then expand square of $\fro{\C^{(0)}-\C^*}$.
\begin{align}\label{eq:c0-c}
	\fro{\C^{(0)}-\C^*}^2 &= \inp{\gcL(\C^{(0)}\times_{j\in[m]}\U_j^{(0)}) - \gcL(\C^*\times_{j\in[m]}\U_j^{(0)})}{\C^{(0)}-\C^*}\notag\\
	&= \underbrace{\inp{(\gcL-\gcl)(\C^{(0)}\times_{j\in[m]}\U_j^{(0)})}{\C^{(0)}-\C^*}}_{\beta_1} - \underbrace{\inp{\gcL(\C^*\times_{j\in[m]}\U_j^{(0)})}{\C^{(0)}-\C^*}}_{\beta_2}.
\end{align}
The first term on the right hand side can be analyzed as follows:
\begin{align*}
	&\inp{(\gcL-\gcl)(\C^{(0)}\times_{j\in[m]}\U_j^{(0)})}{\C^{(0)}-\C^*} =
	\inp{(\gL-\gl)(\C^{(0)}\times_{j\in[m]}\U_j^{(0)})}{(\C^{(0)}-\C^*)\times_{j\in[m]}\U_j^{(0)}} \\
	&{~~~~~~~~~~~~~}= \inp{(\gL-\EE_\xi\gl)(\C^{(0)}\times_{j\in[m]}\U_j^{(0)})}{(\C^{(0)}-\C^*)\times_{j\in[m]}\U_j^{(0)}}\\
	&{~~~~~~~~~~~~~~~~~~~~~~~~~}+ \inp{(\EE_\xi\gl-\gl)(\C^{(0)}\times_{j\in[m]}\U_j^{(0)})}{(\C^{(0)}-\C^*)\times_{j\in[m]}\U_j^{(0)}}. 
\end{align*}
Notice that the first term is 
\begin{align*}
	&{~~~~}\inp{(\gL-\EE_\xi\gl)(\C^{(0)}\times_{j\in[m]}\U_j^{(0)})}{(\C^{(0)}-\C^*)\times_{j\in[m]}\U_j^{(0)}}\\
	&=\frac{1}{n}\sum_{i=1}^{n} \inp{\M_0-\M^*}{\X_{i}}\inp{(\C^{(0)}-\C^*)\times_{j\in[m]}\U_j^{(0)} }{\X_{i}} - \inp{\M_0-\M^*}{(\C^{(0)}-\C^*)\times_{j\in[m]}\U_j^{(0)}}.
\end{align*}
Using a standard empirical process argument, we get as long as $n\gtrsim \sum_j^m d_jr_j + r_1r_2\cdots r_m$, with probability exceeding $1-\exp(-C( \sum_j^m d_jr_j + r_1r_2\cdots r_m))$,
$$\sup_{\A,\B\in\MM_{\r},\fro{\A-\M^*},\fro{\B}\leq 1}|\frac{1}{n}\sum_{i=1}^n\inp{\A-\M^*}{\X_i}\inp{\B}{\X_i} - \inp{\A-\M^*}{\B}|\lesssim \sqrt{\frac{\sum_{j=1}^{m} d_jr_j + r_1r_2\cdots r_m}{n}},$$
by which the first term could be bounded with: \begin{equation}\label{eq44}
	\begin{split}
		&~~~~| \inp{(\gL-\EE_\xi\gl)(\C^{(0)}\times_{j\in[m]}\U_j^{(0)})}{(\C^{(0)}-\C^*)\times_{j\in[m]}\U_j^{(0)}}|\\
		&\leq c_0 \sqrt{\frac{\sum_{j=1}^{m} d_jr_j + r_1r_2\cdots r_m}{n}} \fro{\M_0-\M^*}\fro{\C^{(0)}-\C^*}.
	\end{split}
\end{equation}
On the other hand, 
\begin{align}\label{eq41}
	\inp{(\EE_y\gl-\gl)(\C^{(0)}&\times_{j\in[d]}\U_j^{(0)})}{(\C^{(0)}-\C^*)\times_{j\in[d]}\U_j^{(0)T}} = \inp{\frac{1}{n}\sum_{i=1}^n\xi_i\X_i}{(\C^{(0)}-\C^*)\times_{j\in[d]}\U_j^{(0)}}.
\end{align}
First fix the noise term $\{\xi_i\}_{i=1}^n$.
Then with a standard covering argument, we have with probability exceeding $1-c_1'\exp(-(\sum_{j=1}^{m}r_jd_j+r_1r_2\cdots r_m))$, 
\begin{align}\label{eq42}
	\sup_{\A\in\MM_{\r},\fro{\A}\leq 1} \inp{\sum_{i}^{n}\xi_i\X_i}{\A}\leq c_1\sqrt{\sum_{j=1}^m d_jr_j + r_1r_2\cdots r_m}\left(\sum_{i=1}^n\xi_i^2\right)^{1/2},
\end{align}
where $c_1,c_1'>0$ are some constants. By Lemma~\ref{teclem:Contraction of Heavy Tailed Random Variables}, we obtain upper bound of sum of heavy-tailed variables,
\begin{align}\label{eq43}
	\PP\left(\sum_{i=1}^n\xi_i^2\geq 3n\op{\xi}_{2+\eps}^2\right)\leq 1-c_2'n^{-\min\{\eps/2,1\}}.
\end{align}
Insert Equation~\ref{eq42}, \ref{eq43} into Equation~\ref{eq41} and it leads to
\begin{align}\label{eq45}
	&|\inp{(\EE_y\gl-\gl)(\C^{(0)}\times_{j\in[m]}\U_j^{(0)})}{(\C^{(0)}-\C^*)\times_{j\in[m]}\U_i^{(0)T}}|\notag\\
	&\hspace{3cm}\leq 2\sqrt{\frac{\sum_{j=1}^md_jr_j + r_1\cdots r_m}{n}}\cdot\op{\xi}_{2+\eps}\fro{\C^{(0)}-\C^*}.
\end{align} 
with probability exceeding $1-c_1'\exp(-(\sum_{j=1}^{m}r_jd_j+r_1r_2\cdots r_m))-c_2'n^{-\min\{\eps/2,1\}}$. Then use Equation~\ref{eq44}, \ref{eq45} to bound the first term on the R.H.S. of Equation~\ref{eq:c0-c}:
\begin{align}\label{eq:beta1}
	|\beta_1| &\leq C_1\sqrt{\frac{\sum_{j=1}^{m}d_jr_j + r_1r_2\cdots r_m}{n}}(\fro{\M_0-\M^*} + \op{\xi}_{2+\eps})\fro{\C^{(0)} - \C^*}\notag\\
	&\leq C_1\sqrt{\frac{\sum_{j=1}^{m}d_jr_j + r_1r_2\cdots r_m}{n}}\bigg(\fro{\C^{(0)}-\C^*} + m\maxs\max_{j=1,\ldots,m} d(\U_{j}^{(0)}, \U_j^*)  + \op{\xi}_{2+\eps}\bigg)\fro{\C^{(0)} - \C^*}.
\end{align}
Now we consider $\beta_2$. Notice that 
\begin{align*}
	\fro{\gcL(\C^*\times_{j\in[m]}\U_j^{(0)})} &= \fro{\C^*(\times_{j\in[m]}\U_j^{(0)} - \times_{j\in[m]}\U_j^*)\times_{k\in[m]} \U_k^{(0)\top}} \leq m\maxs\max_{j=1,\ldots,m} d(\U_{j}^{(0)}, \U_j^*).
\end{align*}
Therefore
\begin{align}\label{eq:beta2}
	|\beta_2|\leq \fro{\gcL(\C^*\times_i\U_i^{(0)})}\fro{\C^{(0)}-\C^*}\leq  m\maxs\max_{j=1,\ldots,m} d(\U_{j}^{(0)}, \U_j^*)\fro{\C^{(0)} - \C^*}.
\end{align}
Now from \eqref{eq:c0-c}, \eqref{eq:beta1} and \eqref{eq:beta2}, we obtain
\begin{align*}
	\fro{\C^{(0)} - \C^*} \leq C_2 \sqrt{\frac{\sum_{j=1}^{m}d_jr_j + r_1r_2\cdots r_m}{n}} \op{\xi}_{2+\eps}+C_3m\maxs\max_{j=1,\ldots,m} d(\U_{j}^{(0)}, \U_j^*).
\end{align*}
Now we go back to \eqref{eq:m0-m}, and we have
\begin{align*}
	\fro{\M_0-\M^*} \leq C_2 \sqrt{\frac{\sum_{j=1}^{m}d_jr_j + r_1r_2\cdots r_m}{n}} \op{\xi}_{2+\eps}+(C_3+1)m\maxs\max_{j=1,\ldots,m} d(\U_{j}^{(0)},\U_j^*).
\end{align*}
Now from \eqref{eq:dist} and under the sample size condition and SNR condition, we verified for some contant $C_4>0$, $$\fro{\M_0-\M^*}\leq C_4\mins,$$
holds with probability over $1-c_1'\exp(-(\sum_{j=1}^{m}r_jd_j+r_1r_2\cdots r_m))-c_2'n^{-\min\{\eps/2,1\}}-c_3'm\dmax^{-10}$.

\begin{lemma}\label{lemma:init:tensor}
	Let $\M\in\RR^{d_1\times d_2}$ be a rank $r$ matrix and suppose $d_1\leq d_2$. Suppose $\{\X_i\}_{i=1}^n$ is a sequence of i.i.d. sensing matrices with i.i.d. standard normal entries and $\{\xi_i\}_{i=1}^n$ is a sequence of i.i.d. noise and then observe $$Y_i = \inp{\M}{\X_i} + \xi_i,\, i=1,\cdots,n.$$
	Denote $\sigma_1 = \op{\M}$ and $d = d_1\vee d_2$.
	Suppose the following condition is satisfied:
	\begin{itemize}
		\item[(C1)] Sample size satisfies: $ n\geq \sqrt{rd_1d_2}\log d$.
		\item[(C2)] Exists constant $\eps>0$, such that $\EE|\xi_1|^{2+\eps}  < +\infty$. Denote $\op{\xi_{1}}_{2+\eps}:=(\EE|\xi_1|^{2+\eps} )^{1/(2+\eps)}$.
	\end{itemize}
	Truncate $Y_i$ with $\tau = \frac{n^{1/2}}{(d_1d_2)^{1/4}}(\sqrt{r}\sigma_1 + \gamma)$, $\tilde Y_i = \text{sign}(Y_i)(|Y_i|\vee \tau)$ and define $$\tilde{\N}:=\frac{1}{n(n-1)}\sum_{1\leq i\neq i'\leq n}\tilde Y_i\tilde Y_{i'}(\X_i\X_{i'}^{\top} + \X_{i'}\X_i^{\top}).$$
	Then there exist some constants $c_0,C_1,C_2>0$, such that with probability exceeding $1-c_0d^{-10}$,
	$$\op{\tilde\N - \M\M^T}\leq C_1\frac{ (d_1d_2)^{1/4} \sqrt{\log d} }{\sqrt{n}}\sqrt{r}\sigma_{1}^2+C_2 \frac{ (d_1d_2)^{1/2} \sqrt{\log d} }{n} \op{\xi}_{2+\eps}^2.$$
\end{lemma}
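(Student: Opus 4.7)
The plan is to split $\tilde\N-\M\M^\top$ into a truncation bias and a U-statistic fluctuation, then control each via standard matrix concentration tools. I will set $\tilde\psi_i:=\tilde Y_i\X_i$, which are i.i.d.\ random matrices, and let $\mu:=\EE[\tilde\psi_1]$. Exploiting the $(i,i')$-symmetry of the kernel together with the identity $\sum_{i\neq i'}\tilde\psi_i\tilde\psi_{i'}^\top = SS^\top - \sum_{i=1}^n \tilde\psi_i\tilde\psi_i^\top$ where $S:=\sum_{i=1}^n\tilde\psi_i$, this will give, up to an overall constant,
$$
\tilde\N-\M\M^\top \;=\; \underbrace{(\mu\mu^\top-\M\M^\top)}_{\text{bias}} \;+\; \underbrace{\bigl(SS^\top/n^2-\mu\mu^\top\bigr)}_{\text{fluctuation}} \;-\; \tfrac{1}{n(n-1)}\sum_i\tilde\psi_i\tilde\psi_i^\top,
$$
after which I would control the three pieces separately.

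For the bias, I would write $\mu-\M = -\EE[\mathrm{sign}(Y_1)(|Y_1|-\tau)\X_1\mathbbm{1}\{|Y_1|>\tau\}]$ and apply Cauchy–Schwarz with the moment bound $\EE|\xi|^{2+\eps}<\infty$ together with the Gaussianity of $\inp{\M}{\X_1}\sim N(0,\fro{\M}^2)$ and $\fro{\M}^2\leq r\sigma_1^2$. This should yield $\op{\mu-\M}\lesssim \tau^{-(1+\eps)}(\sqrt{r}\sigma_1+\op{\xi}_{2+\eps})^{2+\eps}$; combined with $\op{\mu},\op{\M}\lesssim \sigma_1$, the bias term will be negligible for the prescribed $\tau$.

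For the fluctuation, I would expand $SS^\top-n^2\mu\mu^\top = n\mu(S-n\mu)^\top+(S-n\mu)n\mu^\top+(S-n\mu)(S-n\mu)^\top$. Since truncation enforces $|\tilde Y_i|\leq\tau$ and $\op{\X_i}\lesssim\sqrt{d_2}$ holds on a high-probability event, the centered matrices $\tilde\psi_i-\mu$ will have sub-exponential Orlicz norm $\lesssim \tau\sqrt{d_2}$ and variance proxy $\op{\EE\tilde\psi_1\tilde\psi_1^\top}\lesssim (\sigma_1^2+\op{\xi}_{2+\eps}^2)d_2$. Applying the sub-exponential matrix Bernstein inequality of Theorem~\ref{thm:Bernstein Ineq} will then give $\op{S-n\mu}\lesssim \sqrt{(\sigma_1^2+\op{\xi}_{2+\eps}^2)nd_2\log d}+\tau\sqrt{d_2}\log d$. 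Substituting the prescribed $\tau$, the cross terms produce the leading rate $(d_1d_2)^{1/4}\sqrt{r\log d/n}\,\sigma_1^2$, while the quadratic piece and the diagonal correction combine to deliver the $(d_1d_2)^{1/2}\sqrt{\log d}\,\op{\xi}_{2+\eps}^2/n$ term.

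The main obstacle will be the trade-off inherent in the choice of $\tau$: the bias scales as $\tau^{-(1+\eps)}$ while the matrix Bernstein deviation grows roughly linearly in $\tau$, and these must be equalized — which is exactly what the stated $\tau=n^{1/2}(d_1d_2)^{-1/4}(\sqrt{r}\sigma_1+\op{\xi}_{2+\eps})$ accomplishes. A secondary technicality is that the Gaussian $\X_i$'s are unbounded, so I must invoke the sub-exponential (rather than bounded) form of matrix Bernstein and condition on the event $\{\max_i\op{\X_i}\lesssim\sqrt{d_2}\}$, which standard Gaussian tail bounds give with probability $1-O(d^{-10})$.
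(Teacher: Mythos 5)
Your decomposition $\sum_{i\neq i'}\tilde\psi_i\tilde\psi_{i'}^\top = SS^\top - \sum_{i=1}^n\tilde\psi_i\tilde\psi_i^\top$ is algebraically valid, and the bias and cross-term analyses are on the right track, but the diagonal correction cannot be dismissed the way you propose: it creates a genuine gap. Writing $\tilde\M := \EE[\tilde Y_1\X_1]$, the term $\tfrac{1}{n(n-1)}\sum_{i=1}^n\tilde\psi_i\tilde\psi_i^\top$ that you subtract has expectation $\tfrac{1}{n-1}\,\EE[\tilde Y_1^2\X_1\X_1^\top]$, and this does \emph{not} center around $\tilde\M\tilde\M^\top$. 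A direct moment computation (Isserlis for the signal part, independence of $\xi$ from $\X$ for the noise part) gives
\begin{equation*}
\EE[\tilde Y_1^2\X_1\X_1^\top] \;\succeq\; \EE[\xi^2]\,\EE[\X_1\X_1^\top] \;=\; d_2\,\EE[\xi^2]\,\I_{d_1},
\end{equation*}
and similarly the signal part contributes $d_2\fro{\M}^2\I_{d_1}+2\M\M^\top$, so the diagonal term has operator norm of order $\tfrac{d_2}{n}\big(r\sigma_1^2+\op{\xi}_{2+\eps}^2\big)$. This exceeds the target bound $C_2\tfrac{(d_1d_2)^{1/2}\sqrt{\log d}}{n}\op{\xi}_{2+\eps}^2$ whenever $d_2\gg d_1\log d$ (and exceeds the first target term whenever $r\gg\log^4 d$ even for $d_1=d_2$). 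That asymmetric regime $d_2\gg d_1$ is not a corner case: in the tensor initialization Theorem~\ref{thm:init:tensor} the matricizations have $d_1=d_j$ and $d_2=\prod_{l\neq j}d_l$, so $d_2/d_1\sim\bar d^{\,m-2}$ for third-order and higher tensors.

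The paper's proof avoids this by first invoking the decoupling inequality for U-statistics \citep{de2012decoupling} to replace $\tilde\N$ with $\hat\N$ as in \eqref{eq:tildeN}, where the second factor $(\X_i',\tilde Y_i')$ is drawn from an independent copy. After decoupling, the leftover ``diagonal'' term becomes $\tfrac{1}{2n(n-1)}\sum_i\tilde Y_i\tilde Y_i'(\X_i\X_i'^\top+\X_i'\X_i^\top)$ with $\X_i\perp\X_i'$, whose expectation is $\tfrac{1}{n-1}\tilde\M\tilde\M^\top$ of operator norm $O(\sigma_1^2/n)$, and its fluctuation concentrates at the desired rate (this is the paper's Step 4). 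So the decoupling step is not a cosmetic convenience here; it is precisely the device that removes the $\EE[\tilde Y_1^2\X_1\X_1^\top]$ bias that your identity introduces. To repair your argument you would have to either (i) insert the decoupling reduction before applying your identity (which essentially reproduces the paper's proof), or (ii) show the diagonal bias cancels against another term --- but there is no such cancellation in your decomposition, since $(\mu\mu^\top-\M\M^\top)$ and the cross/quadratic pieces are all of strictly smaller order.
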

\begin{proof}
	For convenience, let $Y,\X$ be i.i.d. with $\{Y_i,\X_{i}\}_{i=1}^{n}$ and $\tilde{Y}:=\text{sign}(Y)\cdot (|Y|\vee \tau)$. 
	Notice that $(2+\eps)$  moment of $Y$ could be bounded with
	\begin{align}\label{eq39}
		\op{Y}_{2+\eps}:=(\EE|Y|^{2+\eps})^{1/(2+\eps)}\leq \sqrt{2+\eps}\fro{\M}+\op{\xi}_{2+\eps}\leq c_0\sqrt{r}\sigma_1  + \op{\xi}_{2+\eps}.
	\end{align} 
	Notice that
	\begin{align}\label{eq40}
		\op{\tilde\N - \M\M^{\top}} \leq \op{\tilde{\N} - \EE\tilde{Y}\X \cdot \EE\tilde{Y}\X^{\top}} + \op{\EE\tilde{Y}\X \cdot \EE\tilde{Y}\X^{\top}- \M\M^{\top}}.
	\end{align}
	By decoupling techniques \citep{de2012decoupling}, it has for any $t>0$,
	$$\PP(\op{\tilde{\N} - \EE\tilde{Y}\X \cdot \EE\tilde{Y}\X^{\top}}\geq t) \leq 15\PP(\op{\hat{\N} - \EE\tilde{Y}\X \cdot \EE\tilde{Y}\X^{\top}}\geq 15t),$$
	where $\hat\N = \frac{1}{2n(n-1)}\sum_{1\leq i\neq j\leq n}\tilde Y_i\tilde Y_j'(\X_i\X_j'^T + \X_j'\X_i^T)$ and $\{(\X_i',\tilde Y_i') \}_{i=1}^n$ is an independent copy of $\{(\X_i,\tilde Y_i) \}_{i=1}^{n}$ such that $$Y_i'=\inp{\M}{\X_{i}'}+\xi_i',\quad i=1,\cdots,n.$$
	Denote
	\begin{align*}
		\bd_1 := \frac{1}{n}\sum_{i=1}^n\tilde Y_i\X_i - \EE \tilde{Y}\X,
		\quad \bd_2 := \frac{1}{n}\sum_{j=1}^n\tilde Y_j'\X_j' -\EE \tilde{Y}\X.
	\end{align*}
	Then the first term of Equation~\ref{eq40} could be expressed in the way of
	\begin{align*}
		\hat\N - \EE \tilde{Y}\X\cdot\EE \tilde{Y}\X^{\top} &= \frac{n}{2(n-1)}\bigg(\bd_1\bd_2^T +\bd_2\bd_1^T + (\bd_1+\bd_2)\cdot \EE\tilde{Y}\X^{\top} + \EE\tilde{Y}\X\cdot(\bd_1+\bd_2)^{\top}\bigg)\\
		&~~~~+ \frac{1}{n-1}\bigg(\frac{1}{2n}\sum_{i=1}^n\tilde Y_i\tilde Y_i'(\X_i\X_i'^T + \X_i'\X_i^T - 2\EE \tilde{Y}\X\cdot \EE \tilde{Y}\X^{\top})\bigg).
	\end{align*}
	
	\hspace{1cm}
	
	\noindent\textit{Step 1. Bound $\op{\bd_1},\op{\bd_2}$.} The proof follows Bernstein's inequality Theorem~\ref{thm:Bernstein Ineq}. Note that for any $\u\in\SS^{d_1-1}$, $\v\in\SS^{d_2-1}$, $|\u^{\top} \EE\tilde{Y}_i \X_{i}\v|\leq \tau \EE |\u^{\top}\X_{i}\v|\leq 2\tau$, which implies $$\op{\EE\tilde{Y}_i \X_{i}}\leq 2\tau.$$ Furthermore, we have for some constant $c_2>0$,
	$$\bpsi{\op{\tilde Y_i\X_i - \EE \tilde{Y}_i\X_i}} \leq  c_2\tau\sqrt{d}. $$
	Then consider $\op{\EE\tilde Y_i^2\X_i\X_i^T}$:
	\begin{align*}
		\op{\EE\tilde Y_i^2\X_i\X_i^T}&\leq\EE\op{\tilde Y_i^2\X_i\X_i^T }\leq (\EE |Y_i|^{2+\eps})^{2/(2+\eps)}(\EE\op{\X_i}^{2\cdot\frac{2+\eps}{\eps}})^{\frac{\eps}{2+\eps}}\\
		&\leq c_2\op{Y}_{2+\eps}^2 d.
	\end{align*}
	Then we have \begin{align*}
		\op{\EE(\tilde Y_i\X_i - \EE\tilde Y_i\X_i)(\tilde Y_i\X_i - \EE\tilde Y_i\X_i)^\top}&\leq \op{\EE\tilde Y_i^2\X_i\X_i^T}\leq c_2\op{Y}_{2+\eps}^2 d.
	\end{align*}
	Similarly,  $	\op{\EE(\tilde Y_i\X_i - \EE\tilde Y_i\X_i)^{\top}(\tilde Y_i\X_i - \EE\tilde Y_i\X_i)}$ could be bounded with $c_2\op{Y}_{2+\eps}^2 d $. Thus by Bernstein's inequality Theorem~\ref{thm:Bernstein Ineq}, the following equation holds with probability exceeding $1-c_4d^{-10}$,
	\begin{align*}
		\op{\frac{1}{n}\sum_{i=1}^n\tilde Y_i\X_i - \EE \tilde{Y}\X}\leq c_3 \op{Y}_{2+\eps} \sqrt{\frac{d\log d}{n}}+c_3\tau\sqrt{d}\sqrt{\log \frac{c'\tau}{\op{Y}_{2+\eps} }}\cdot\frac{\log d}{n},
	\end{align*}
	which finishes bounding $\op{\bd_1}$. It's similar to $\op{\bd_2}$ and with a union bound, the following equation holds 
	\begin{align*}
		\max\{\op{\bd_1},\op{\bd_2}\}\leq  c_3 \op{Y}_{2+\eps} \sqrt{\frac{d\log d}{n}}+c_3\tau\sqrt{d}\sqrt{\log \frac{c'\tau}{\op{Y}_{2+\eps} }}\cdot \frac{\log d}{n}
	\end{align*}
	with probability exceeding $1-2c_4d^{-10}$.
	
	\hspace{1cm}
	
	\noindent\textit{Step 2. Bound $\op{\bd_1\cdot \EE\tilde{Y}\X^T},\op{\bd_2\cdot \EE\tilde{Y}\X^T}$.} WLOG, we only consider $\op{\bd_1\cdot\EE\tilde{Y}\X^T}$ and for convenience denote $\tilde\M = \EE \tilde{Y}\X$. Recall that $\bd_1\cdot \EE\tilde{Y}\X^T=\bd_1\tm^T = \frac{1}{n}\sum_{i=1}^n(\tilde Y_i\X_i - \tm)\tm^T$. The proof also follows Bernstein's inequality Theorem~\ref{thm:Bernstein Ineq}. First, note that
	$$\EE[(\tilde Y_i\X_i - \tm)\tm^{\top}]=0,\quad\bpsi{\op{(\tilde Y_i\X_i - \tm)\tm^\top}}\leq c_1\tau\op{\tilde{\M}}\sqrt{d_1},$$ which uses  $\op{\tm}\leq2\tau $.
	Then consider $ \op{\EE\tilde Y_i^2\X_i\tm^T\tm\X_i^T }$:
	\begin{align*}
		\op{\EE\tilde Y_i^2\X_i\tm^T\tm\X_i^T }&\leq \EE\op{ Y_i^2\X_i\tm^T\tm\X_i^T }\leq (\EE |Y_i|^{2+\eps})^{2/2+\eps}(\EE\op{\tm\X_i^\top}^{2\cdot\frac{2+\eps}{\eps}})^{\frac{\eps}{2+\eps}}.
	\end{align*}
	With standard $\eps$-net techniques (see \cite{vershynin2018high}), we have for any $t>0$, $\op{\tm\X_{i}^\top}\leq c_0 (\sqrt{d_1}+t)\op{\tm}$ holds with probability at least $1-2\exp(-t^2)$ and $\EE\op{\tm\X_i^\top}\leq c_0' \sqrt{d_1}\op{\tm} $. Then combined with $\op{\tilde{\M}}=\op{\EE\tilde{Y}\X}\leq c_1'\op{Y}_{2+\eps}$, we have
	\begin{align*}
		\op{\EE\tilde Y_i^2\X_i\tm^T\tm\X_i^T }\leq c_2\op{Y}_{2+\eps}^2\op{\M}^2 d_1.
	\end{align*}
	Thus we have
	\begin{align*}
		\op{\EE ((\tilde Y_i\X_i - \tm)\tm^T)((\tilde Y_i\X_i - \tm)\tm^T)^T}&\leq \op{\EE\tilde Y_i^2\X_i\tm^T\tm\X_i^T}\leq  c_2\op{Y}_{2+\eps}^2\op{\tilde{\M}}^2 d_1.
	\end{align*}
	Similarly, it has \begin{align*}
		\op{\EE ((\tilde Y_i\X_i - \tm)\tm^T)^{\top}((\tilde Y_i\X_i - \tm)\tm^T)}&\leq \op{\EE\tilde Y_i^2\tm\X_{i}^{\top} \X_{i}\tm^{\top}}\leq  c_2\op{Y}_{2+\eps}^2\op{\tilde{\M}}^2 d_1.
	\end{align*}
	By Bernstein's inequality Theorem~\ref{thm:Bernstein Ineq}, with probability exceeding $1-2d^{-10}$, the following equation holds
	\begin{align*}
		\op{\bd_1\tm^T}\leq c_3\op{Y}_{2+\eps}\op{\tilde{\M}} \sqrt{\frac{d_1 \log d}{n}}+c_3\tau\op{\tilde{\M}} \sqrt d_1\sqrt{\log\frac{\tau }{\op{Y}_{2+\eps} }}\cdot\frac{\log d}{n}
	\end{align*}
	Term $\op{\bd_2\tm^T}$ could be bounded in a similar fashion. Take a union bound and then we get with probability exceeding $1-4d^{-10}$, 
	\begin{equation}
		\begin{split}
			\max&\{\op{\bd_1\tm^T},\op{\bd_2\tm^T}\}\\&\leq c_3\op{Y}_{2+\eps}\op{\tilde{\M}} \sqrt{\frac{d_1 d\log d}{n}}+c_3\tau \op{\tilde{\M}}\sqrt d_1\sqrt{\log\frac{\tau}{\op{Y}_{2+\eps}}}\cdot\frac{\log d}{n}
		\end{split}\label{eq:step2}
	\end{equation}
	
	\hspace{1cm}
	
	\noindent\textit{Step 3. Bound $\op{\bd_1\bd_2^T}$.} We shall bound $\op{\bd_1\bd_2^T}$ under event $$\bcalE_1 = \{ \op{\bd_2}\leq  c_3 \op{Y}_{2+\eps} \sqrt{\frac{d\log d}{n}}+c_3\tau\sqrt{d}\sqrt{\log \frac{c'\tau}{ \op{Y}_{2+\eps}}}\cdot \frac{\log d}{n}\},$$ which is verified to hold with probability over $1-4d^{-10}$ in Step 1. Replace $\tilde{\M}$ with $\bd_2$ of $\op{\bd_1\bd_2}$ in Step 2 and follow Step 2 procedure. Then we get with probability exceeding $1-2d^{-10}$,
	\begin{equation}
		\begin{split}
			&\op{\bd_1\bd_2^T} \leq c_4\bigg(\op{Y}_{2+\eps} \sqrt{\frac{d_1 \log d}{n}}+\tau \sqrt{d_1}\sqrt{\log\frac{\tau }{\op{Y}_{2+\eps}}}\cdot\frac{\log d}{n}\bigg)\\
			&{~~~~~~~~~~~~~~~~~~}\times\bigg(\op{Y}_{2+\eps} \sqrt{\frac{d\log d}{n}}+\tau\sqrt{d}\sqrt{\log \frac{c'\tau}{\op{Y}_{2+\eps} }}\cdot \frac{\log d}{n} \bigg).
		\end{split}\label{eq:step3}
	\end{equation}
	
	\hspace{1cm}

	\noindent\textit{Step 4. Bound $\op{\frac{1}{n}\sum_{i=1}^n\tilde Y_i\tilde Y_i'\X_i\X_i'^T - \tm\tm^T }$.} Notice that
	$$\frac{1}{n}\sum_{i=1}^n\tilde Y_i\tilde Y_i'\X_i\X_i'^T - \tm\tm^T = \frac{1}{n}\sum_{i=1}^n\tilde Y_i\X_i(\tilde Y_i'\X_i'-\tm)^T + \frac{1}{n}\sum_{i=1}^n(\tilde Y_i\X_i-\tm)\tm^T.$$
	Note that the second term $\bd_1\tm^T=\frac{1}{n}\sum_{i=1}^n(Y_i\X_i-\tm)\tm^T $ is analyzed in Step 2. Analysis of the first term also follows Bernstein's inequality Theorem~\ref{thm:Bernstein Ineq}. Notice that
	$$\EE[\tilde Y_i\X_i(\tilde Y_i'\X_i'-\tm)^T]=0,\quad\bpsi{\op{\tilde Y_i\X_i(\tilde Y_i'\X_i'-\tm)^T}}\leq c_2\tau^2 d, $$
	\begin{align*}
		\op{\EE \tilde Y_i^2\X_i(\tilde Y_i'\X_i'-\tm)^T(\tilde Y_i'\X_i'-\tm)\X_i^T}&\leq \op{\EE \tilde Y_i^2 \tilde Y_i^{'2} \X_i\X_i^{'\top}\X_i^{'}\X_i^{\top}}\\
		&\leq (\EE |\tilde Y_i|^{2+\eps} |\tilde Y_i^{'}|^{2+\eps} )^{2/(2+\eps)} (\EE \op{\X_{i}^{'}\X_{i}^{\top}}^{2\cdot\frac{2+\eps}{\eps}})^{\eps/(2+\eps)}\\
		&\leq c_1\op{\tilde{Y}_i}_{2+\eps}^{4}d_1d_2,
	\end{align*}
	\begin{align*}
		\op{\EE \tilde Y_i^2(\tilde Y_i'\X_i'-\tm)\X_i^T\X_i(\tilde Y_i'\X_i'-\tm)^T}&\leq \op{\EE \tilde Y_i^2 \tilde Y_i^{'2} \X_i^{'}\X_i^{\top}\X_i\X_i^{'\top}}\\
		&\leq (\EE |\tilde Y_i|^{2+\eps} |\tilde Y_i^{'}|^{2+\eps} )^{2/(2+\eps)} (\EE \op{\X_{i}\X_{i}^{'\top}}^{2\cdot\frac{2+\eps}{\eps}})^{\eps/(2+\eps)}\\
		&\leq c_1\op{Y_i}_{2+\eps}^{4}d_1d_2.
	\end{align*}
	Then by Bernstein's inequality Theorem~\ref{thm:Bernstein Ineq}, we have with probability exceeding $1-2d^{-10}$, 
	\begin{align}\label{eq:step4}
		\op{\frac{1}{n}\sum_{i=1}^n\tilde Y_i\X_i(\tilde Y_i'\X_i'-\tm)^T}\leq c_3 \op{Y}_{2+\eps}^2\sqrt{d_1 d_2} \sqrt{\frac{\log d}{n}}+c_3 \tau^2d \sqrt{\log \frac{\tau^2 d}{\op{Y }_{2+\eps}^2\sqrt{d_1 d_2}}}\cdot \frac{\log d}{n}.
	\end{align}
	
	\hspace{1cm}
	
	\noindent\textit{Step 5. Bound $\op{\EE\tilde{Y}\X \cdot \EE\tilde{Y}\X^{\top}- \M\M^{\top}}$.} Notice that 
	$\op{\M\M^T-\tm\tm^T}\leq \op{\M-\tm}(\op{\M} + \op{\tm})$. Furthermore, we have for any $\u\in\SS^{d_1-1}$, $\v\in\SS^{d_2-1}$,
	\begin{align*}
		|\u^{\top}(\M-\tm )\v|&\leq\EE|Y|\cdot 1_{|Y|>\tau}\cdot|\u^{\top}\X\v|\\
		&\leq \tau^{-1} \EE|Y|^2|\u^{\top}\X\v|\\
		&\leq \tau^{-1} \op{Y}_{2+\eps}^2\cdot (\EE |\u^{\top}\X\v|^{1+2/\eps})^{\eps/(2+\eps)}\leq c_1 \tau^{-1} \op{Y}_{2+\eps}^2,
	\end{align*}
	where $c_1>0$ is some constant and it implies $ \op{\M-\tm}\leq c_1 \tau^{-1} \op{Y}_{2+\eps}^2$. Then, we have$$\op{\M}=\sigma_{1},\quad \op{\tm}\leq \op{\M}+\op{\M-\tm}\leq \sigma_{1}+ c_1 \tau^{-1} \op{Y}_{2+\eps}^2.$$
	Therefore,
	\begin{align}\label{eq:step5}
		\op{\M\M^\top-\tm\tm^\top}\leq c_1\tau^{-1} \op{Y}_{2+\eps}^2\left(2\sigma_{1} + c_1 \tau^{-1} \op{Y}_{2+\eps}^2\right).
	\end{align}
	
	\hspace{1cm}
	
	\noindent\textit{Step 6. Put everything together.} Combine Equation~\ref{eq40} with Equation~\ref{eq:step2}, \ref{eq:step3}, \ref{eq:step4} and \ref{eq:step5} and then we get with probability exceeding $1-c_0d^{-10}$, 
	\begin{align*}
		&\quad\op{\tilde\N - \M\M^T} \notag\\
		&\leq \frac{n}{n-1}\bigg( \op{\bd_1\bd_2^{\top}}+2\op{\bd_1\cdot \EE\tilde{Y}\X^{\top} }\bigg)+\frac{1}{n-1}\op{\sum_{i=1}^{n} \tilde{Y}_i\tilde{Y}_i' \X_{i}\X_{i}^{'\top} - \EE\tilde{Y}\X\cdot\EE\tilde{Y}\X^{\top} } + \op{\EE\tilde{Y}\X \cdot \EE\tilde{Y}\X^{\top}- \M\M^{\top}}\\
		&\leq 2\bigg(\op{Y}_{2+\eps} \sqrt{\frac{d_1 \log d}{n}}+\tau \sqrt{d_1}\sqrt{\log\frac{\tau}{\op{Y}_{2+\eps}}}\cdot\frac{\log d}{n}\bigg)\bigg(\op{Y}_{2+\eps} \sqrt{\frac{d\log d}{n}}+\tau\sqrt{d}\sqrt{\log \frac{c'\tau}{\op{Y}_{2+\eps} }}\cdot \frac{\log d}{n} \bigg)\\
		&~~~~+3\op{Y}_{2+\eps}\op{\tilde{\M}} \sqrt{\frac{d_1\log d}{n}}+3\tau\op{\tilde{\M}} \sqrt d_1\sqrt{\log\frac{\tau }{\op{\tilde{\M}} }}\cdot\frac{\log d}{n}+c_1\tau^{-1} \op{Y}_{2+\eps}^2\left(2\sigma_{1} + c_1 \tau^{-1} \op{Y}_{2+\eps}^2\right)\\
		&~~~~+\frac{1}{n-1}\bigg(\op{Y}_{2+\eps}^2\sqrt{d_1 d_2} \sqrt{\frac{\log d}{n}}+ \tau^2 d \sqrt{\frac{\tau^2 d}{\op{Y }_{2+\eps}^2\sqrt{d_1 d_2}}}\cdot \frac{\log d}{n} \bigg).
	\end{align*}
	Insert $\op{\tm}\leq \sigma_{1}+ c_1 \tau^{-1} \op{Y}_{2+\eps}^2$, $\op{Y}_{2+\eps}\leq c_0\sqrt{r}\sigma_{1}+\op{\xi}_{2+\eps}$ and take $\tau\asymp \sqrt{n} (c_0\sqrt{r}\sigma_{1}+\op{\xi}_{2+\eps} )\left(d_1d_2\right)^{-1/4}$. Also, with sample size $n\geq \sqrt{d_1d_2}\log d$, we finally have\begin{align*}
		\op{\tilde\N - \M\M^T} \leq C_1\frac{ (d_1d_2)^{1/4} \sqrt{\log d} }{\sqrt{n}}\sqrt{r}\sigma_{1}^2+C_2 \frac{ (d_1d_2)^{1/2} \sqrt{\log d} }{n} \op{\xi}_{2+\eps}^2,
	\end{align*}
	where $C_1,C_2>0$ are some constants.
\end{proof}
\section{Technical Lemmas}
\begin{lemma}[Partial Frobenius Norm, Lemma 28 of \cite{tong2021accelerating}]
	Define partial Frobenius norm of matrix $\M\in\mathbb{R}^{d_1\times d_2}$ to be $$\fror{\M}:=\sqrt{\sum_{i=1}^{r}\sigma_{i}^{2}(\M)}=\fro{{\rm SVD}_r (\M)}.$$ For any $\M, \hat{\M}\in\mathbb{R}^{d_1\times d_2}$ with $\operatorname{rank}(\hat{\M})\leq r$, one has \begin{equation}
		\vert\langle \M,\hat{\M}\rangle\vert\leq\fror{\M}\fro{\hat\M}.
	\end{equation}
	For any $\M\in\mathbb{R}^{d_1\times d_2}$, $\V\in\mathbb{R}^{d_1\times r}$, one has\begin{equation}
		\fro{\M\V}\leq \fror{\M}\op{\V}.
	\end{equation}
	\label{teclem:partial F norm}
\end{lemma}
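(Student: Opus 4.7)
The plan is to reduce both inequalities to elementary singular-value estimates, namely von Neumann's trace inequality for part (1) and the submultiplicativity of singular values under matrix product for part (2).

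For the first inequality, I will use von Neumann's trace inequality: for any two real matrices of the same size,
\[
|\langle \M, \hat\M\rangle| \leq \sum_{i\geq 1} \sigma_i(\M)\,\sigma_i(\hat\M).
\]
Since $\operatorname{rank}(\hat\M)\leq r$, the singular values $\sigma_{r+1}(\hat\M),\sigma_{r+2}(\hat\M),\ldots$ all vanish, so the sum collapses to the first $r$ terms. Then Cauchy--Schwarz gives
\[
\sum_{i=1}^{r}\sigma_i(\M)\sigma_i(\hat\M)\leq \Bigl(\sum_{i=1}^r \sigma_i^2(\M)\Bigr)^{1/2}\Bigl(\sum_{i=1}^r \sigma_i^2(\hat\M)\Bigr)^{1/2}=\|\M\|_{\mathrm F,r}\,\|\hat\M\|_{\mathrm F},
\]
by the very definition of $\|\cdot\|_{\mathrm F,r}$ and the fact that $\|\hat\M\|_{\mathrm F}^2=\sum_{i=1}^r\sigma_i^2(\hat\M)$ when $\hat\M$ is rank-$r$.

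For the second inequality, I note that $\M\V$ has rank at most $r$ (since $\V\in\RR^{d_1\times r}$, so the product has at most $r$ columns up to transpose, or equivalently at most $r$ nonzero singular values). The key inequality is the standard singular-value product bound $\sigma_i(\M\V)\leq \sigma_i(\M)\,\sigma_1(\V)=\sigma_i(\M)\|\V\|$ for every $i\geq 1$. Summing squares from $i=1$ to $r$,
\[
\|\M\V\|_{\mathrm F}^2=\sum_{i=1}^{r}\sigma_i^2(\M\V)\leq \|\V\|^2\sum_{i=1}^{r}\sigma_i^2(\M)=\|\V\|^2\|\M\|_{\mathrm F,r}^2,
\]
and taking square roots completes the proof.

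Neither step is hard; the main ``obstacle'' is simply citing the right classical facts (von Neumann's trace inequality and the singular-value product bound), both of which are textbook. Since the lemma is credited to \cite{tong2021accelerating} (their Lemma 28), one could also just invoke that reference directly, but the self-contained two-line arguments above are arguably cleaner and make the role of the truncated norm $\|\cdot\|_{\mathrm F,r}$ transparent.
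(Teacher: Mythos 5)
Your proof is correct, and the paper itself does not supply a proof of this lemma --- it simply cites Lemma 28 of Tong et al.\ (2021). Your two-step argument is sound: von Neumann's trace inequality combined with the fact that $\hat\M$ has at most $r$ nonzero singular values, then Cauchy--Schwarz, gives the first claim; for the second, $\M\V$ has rank at most $r$, the Weyl-type product bound $\sigma_i(\M\V)\le\sigma_i(\M)\sigma_1(\V)$ holds, and summing squares over $i=1,\ldots,r$ finishes the job. One minor caveat worth noting: the dimensions in the lemma as stated ($\M\in\RR^{d_1\times d_2}$, $\V\in\RR^{d_1\times r}$) do not actually make the product $\M\V$ well-defined unless $d_1=d_2$; the intended statement almost surely has $\V\in\RR^{d_2\times r}$, as the applications in the paper (e.g.\ bounding $\fro{\U^\top\G}$) confirm after transposition. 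You implicitly assume the corrected dimensions, which is fine, but it would be worth flagging the typo.
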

\begin{lemma}(Frobenius norm of projected sub-gradient)\label{teclem:projected subgradient norm}
	Let $\M\in\RR^{d_1\times d_2}$ be a matrix with rank at most $r$. Suppose $\mathbf{G}\in \partial f(\M)$ is the sub-gradient at $\M$ and $\mathbb{T}$ is the tangent space for $\MM_\r$ at $\M$. Then we have
	$$
	\Vert \mathcal{P}_{\mathbb{T}}(\mathbf{G})\Vert_{\mathrm{F}}\leq\sqrt{2}\fror{\G}.
	$$
\end{lemma}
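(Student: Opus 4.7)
The plan is to use the explicit formula for $\calP_{\TT}(\G)$ recorded in the ``Computation'' paragraph of Section~\ref{sec:RsGrad}, split it into two Frobenius-orthogonal pieces, and bound each piece separately by the truncated Frobenius norm $\fror{\G}$ via Lemma~\ref{teclem:partial F norm}.

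Concretely, write the thin SVD $\M=\U\bSigma\V^\top$ with $\U\in\RR^{d_1\times r}$, $\V\in\RR^{d_2\times r}$ having orthonormal columns, and set $\P_\U=\U\U^\top$, $\P_\V=\V\V^\top$. Then
$$
\calP_{\TT}(\G)=\P_\U\G+\G\P_\V-\P_\U\G\P_\V=\P_\U\G(\I-\P_\V)+\G\P_\V.
$$
First I would verify that the two summands are orthogonal in Frobenius inner product: since $\P_\V(\I-\P_\V)=0$, the cross term $\bigl\langle \P_\U\G(\I-\P_\V),\ \G\P_\V\bigr\rangle=\operatorname{tr}\bigl((\I-\P_\V)\G^\top\P_\U\G\P_\V\bigr)=\operatorname{tr}\bigl(\G^\top\P_\U\G\P_\V(\I-\P_\V)\bigr)=0$. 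Therefore
$$
\fro{\calP_{\TT}(\G)}^2=\fro{\P_\U\G(\I-\P_\V)}^2+\fro{\G\P_\V}^2.
$$

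Next I would bound each piece by $\fror{\G}^2$. For the second piece, $\fro{\G\P_\V}=\fro{\G\V\V^\top}=\fro{\G\V}$; since $\V\in\RR^{d_2\times r}$ with $\op{\V}=1$, the second inequality of Lemma~\ref{teclem:partial F norm} gives $\fro{\G\V}\le \fror{\G}\op{\V}=\fror{\G}$. For the first piece, drop the contraction $(\I-\P_\V)$ to get $\fro{\P_\U\G(\I-\P_\V)}\le \fro{\P_\U\G}=\fro{\U^\top\G}=\fro{\G^\top\U}$; applying the same lemma to $\G^\top$ with the orthonormal matrix $\U\in\RR^{d_1\times r}$ yields $\fro{\G^\top\U}\le \fror{\G^\top}\op{\U}=\fror{\G}$, where the last equality uses that $\G$ and $\G^\top$ share the same singular values. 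Combining the two bounds gives $\fro{\calP_{\TT}(\G)}^2\le 2\,\fror{\G}^2$, and taking square roots delivers the claim.

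No step looks like a real obstacle here; the only subtlety worth double-checking is the Frobenius orthogonality of $\P_\U\G(\I-\P_\V)$ and $\G\P_\V$, which is what allows the $\sqrt{2}$ factor to appear rather than $2$. Everything else is a direct invocation of Lemma~\ref{teclem:partial F norm} together with the identity $\fror{\G}=\fror{\G^\top}$ stemming from the definition of the partial Frobenius norm in terms of singular values. Note also that the argument does not use the hypothesis $\operatorname{rank}(\M)\le r$ beyond the existence of the thin SVD defining $\TT$; it applies uniformly to any point on the manifold $\MM_r$.
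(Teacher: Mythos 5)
Your proof is correct and follows essentially the same route as the paper's: split $\calP_{\TT}(\G)$ into two Frobenius-orthogonal rank-$\leq r$ pieces and bound each by $\fror{\G}$ via Lemma~\ref{teclem:partial F norm}. The only cosmetic difference is which summand carries the $(\I-\P)$ factor (you use $\P_\U\G(\I-\P_\V)+\G\P_\V$, the paper uses $\P_\U\G+(\I-\P_\U)\G\P_\V$, both valid orthogonal decompositions), and you are more explicit about the transpose step $\fror{\G}=\fror{\G^\top}$ needed to bound the left-multiplied piece.
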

\begin{proof}
	Notice that 
	\begin{align*}
		\fro{\calP_{\TT}(\G)}^2 = \fro{\U\U^T\G}^2 + \fro{(\I - \U\U^T)\G\V\V^T}^2. 
	\end{align*}
	Then by properties of partial Frobenius norm Lemma~\ref{teclem:partial F norm}, one has
	\begin{align*}
		\fro{\U\U^T\G} = \fro{\U^T\G} &\leq \fror{\G} \op{\U}=\fror{\G},\\
		\fro{(\I - \U\U^T)\G\V\V^T} &\leq \fror{\G},
	\end{align*}
	which implies $ \fro{\calP_{\TT}(\G)}^2\leq 2\fror{\G}^2$.
\end{proof}

\begin{lemma}(Matrix Perturbation)
	Suppose matrix $\M^{*} \in\mathbb{R}^{d_1\times d_2}$ has rank $r$ and has singular value decomposition  $\M^{*}=\mathbf{U}\boldsymbol{\Sigma}\mathbf{V}^{\top}$ where $\boldsymbol{\Sigma}=\operatorname{diag}\{\sigma_{1},\sigma_{2},\cdots,\sigma_{r}\}$ and $\sigma_{1}\geq\sigma_{2}\geq\cdots\geq\sigma_{r}> 0$. Then for any $\hat{\M}\in\mathbb{R}^{d\times d}$ satisfying $\Vert\hat{\M}-\M\Vert_{\mathrm{F}}<\sigma_{r}/4$, with $\hat{\mathbf{U}}_{r}\in\mathbb{R}^{d_1\times r}$ and $\hat{\mathbf{V}}_{r}\in\RR^{d_2\times r}$ the left and right singular vectors of $r$ largest singular values, we have
	\begin{align*}
		\Vert \hat{\mathbf{U}}_{r}\hat{\mathbf{U}}_{r}^{\top} -\mathbf{U}\mathbf{U}^{\top}\Vert&\leq \frac{4}{\sigma_{r}}\Vert \hat{\mathbf{M}}-\mathbf{M}\Vert,\quad \Vert \hat{\mathbf{V}}_{r}\hat{\mathbf{V}}_{r}^{\top} -\mathbf{V}\mathbf{V}^{\top}\Vert\leq \frac{4}{\sigma_{r}}\Vert \hat{\mathbf{M}}-\mathbf{M}\Vert,\\
		\Vert\operatorname{SVD}_{r}(\hat{\M})-\M^{*}\Vert&\leq\Vert\mathbf{\hat{\M}-\M^{*}}\Vert+20\frac{\Vert \hat{\M}-\M^{*}\Vert^2}{\sigma_{r}},\\
		\Vert \operatorname{SVD}_{r}(\hat{\M})-\M^{*}\Vert_{\mathrm{F}}&\leq\Vert\mathbf{\hat{\M}-\M^{*}}\Vert_{\mathrm{F}}+20\frac{\Vert \hat{\M}-\M^{*}\Vert\Vert \hat{\M}-\M^{*}\Vert_{\mathrm{F}}}{\sigma_{r}}.
	\end{align*}
	\label{teclem:perturbation}
\end{lemma}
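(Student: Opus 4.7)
The plan is to prove the three inequalities in turn, invoking Wedin's sin-$\Theta$ theorem for the first and a two-sided projector decomposition of the rank-$r$ truncation for the remaining two. Throughout, let $\E := \hat{\M} - \M^*$, $\P_L := \U\U^\top$, $\P_R := \V\V^\top$, $\hat{\P}_L := \hat{\U}_r\hat{\U}_r^\top$, and $\hat{\P}_R := \hat{\V}_r\hat{\V}_r^\top$.

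For the projector-norm bound, I apply Wedin's sin-$\Theta$ theorem to the pair $(\hat{\M},\M^*)$. Since $\rank(\M^*)=r$, we have $\sigma_{r+1}(\M^*)=0$, and Weyl's inequality combined with the hypothesis $\fro{\hat{\M}-\M^*}<\sigma_r/4$ yields $\sigma_r(\hat{\M})\geq \sigma_r - \op{\hat{\M}-\M^*}\geq 3\sigma_r/4$. Consequently the spectral gap separating $\sigma_r(\hat{\M})$ from $\sigma_{r+1}(\M^*)=0$ is at least $3\sigma_r/4$, and Wedin's theorem gives $\op{\hat{\P}_L - \P_L}\leq 2\op{\hat{\M}-\M^*}/(3\sigma_r/4) < 4\op{\hat{\M}-\M^*}/\sigma_r$; the same argument applied to the transposed problem produces the $\hat{\V}_r$ bound.

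For the two truncation-error bounds, I start from the representation $\operatorname{SVD}_r(\hat{\M})=\hat{\P}_L\hat{\M}\hat{\P}_R$ together with the rank-$r$ closure $\M^*=\P_L\M^*\P_R$. A careful block expansion around $\hat{\P}_L$ and $\hat{\P}_R$, together with the row-space orthogonality $(\I-\hat{\P}_L)\hat{\M}\hat{\P}_R=0$ (which holds because the rank-$r$ residual $\hat{\M}-\operatorname{SVD}_r(\hat{\M})$ is supported on the complementary left and right subspaces), produces a decomposition of $\operatorname{SVD}_r(\hat{\M})-\M^*$ into a first-order part of operator (respectively Frobenius) norm $\lesssim \op{\hat{\M}-\M^*}$ (respectively $\fro{\hat{\M}-\M^*}$) and a second-order cross term of the form $(\I-\hat{\P}_L)\M^*(\I-\hat{\P}_R)$. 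To bound the cross term I rewrite $\M^*=\hat{\M}-\E$ and use $(\I-\hat{\P}_L)\hat{\M}(\I-\hat{\P}_R)=\hat{\M}-\operatorname{SVD}_r(\hat{\M})$, whose operator norm equals $\sigma_{r+1}(\hat{\M})\leq\op{\E}$, and then apply the first inequality to convert one of the $(\I-\hat{\P}_\cdot)$ factors into an $\op{\E}/\sigma_r$ bound. The Frobenius-norm version follows from the same decomposition, using $\fro{\A\B}\leq\op{\A}\fro{\B}$ to keep a Frobenius-norm factor in the quadratic correction rather than losing a factor of $\sqrt{r}$.

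The main obstacle is preventing a condition-number factor $\sigma_1/\sigma_r$ from appearing in the second-order correction: a naive sin-$\Theta$ bound applied directly to $\op{(\I-\hat{\P}_L)\M^*(\I-\hat{\P}_R)}$ picks up $\op{\M^*}=\sigma_1$ and would produce an undesirable $\kappa\op{\E}^2/\sigma_r$. The rewrite $\M^*=\hat{\M}-\E$ together with the row-space orthogonality is precisely what swaps the $\sigma_1$ factor for an $\op{\E}$-sized residual, so that the resulting correction is $O(\op{\E}^2/\sigma_r)$ and matches the claimed form with the stated constant $20$. Tracking absolute constants through the multiple projector expansions, and verifying that no $\sigma_1$-dependence survives any of the intermediate cancellations, is the most delicate bookkeeping step of the proof.
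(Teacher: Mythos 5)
Your proof is essentially correct but follows a genuinely different route from the paper's. The paper reduces to the symmetric case via the $2\times 2$ block dilation $\mathbf{Y}^*=\bigl(\begin{smallmatrix}0&\M^*\\ \M^{*\top}&0\end{smallmatrix}\bigr)$ and then invokes the explicit higher-order spectral-projector expansion $\hat{\U}_r\hat{\U}_r^{\top}-\U\U^{\top}=\sum_{k\geq1}\calS_{\M^*,k}(\mathbf{Z})$; resumming that series gives the projector bound, and multiplying the series by $\M^*$ (so that the $k=1$ term contributes only at first order) gives the truncation bounds. You instead get the projector bounds directly from Wedin's $\sin\Theta$ theorem (the gap $\sigma_r(\hat\M)-\sigma_{r+1}(\M^*)\geq 3\sigma_r/4$ indeed yields the constant $4$), and for the truncation bounds you use elementary projector algebra: the identity $(\I-\hat{\U}_r\hat{\U}_r^{\top})\hat\M\hat{\V}_r\hat{\V}_r^{\top}=0$, the rewrite $\M^*=\hat\M-\E$, and Weyl's inequality $\sigma_{r+1}(\hat\M)\leq\op{\E}$ to avoid picking up $\sigma_1$. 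Your approach is more elementary (no infinite series, no symmetrization) and, as you note, the swap $\M^*=\hat\M-\E$ inside the doubly-projected cross term is exactly the step that keeps the correction free of the condition number; the paper achieves the same effect because $\mathfrak{P}^{-s_{k+1}}\M^*$ vanishes when $s_{k+1}=0$, which kills the potentially $\sigma_1$-sized contributions.

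One point you should make explicit: the lemma's conclusion has leading coefficient exactly $1$, and this is essential downstream (the contraction in Proposition~\ref{prop:main} would fail with $C\fro{\E}$ for $C>1$), so "first-order part of norm $\lesssim\fro{\E}$" is not enough. With the decomposition $\operatorname{SVD}_r(\hat\M)-\M^*=\bigl(\E-\hat{\P}_L^{\perp}\E\hat{\P}_R^{\perp}\bigr)-\hat{\P}_L^{\perp}\M^*\hat{\P}_R^{\perp}$ (writing $\hat{\P}_L^{\perp}:=\I-\hat{\U}_r\hat{\U}_r^{\top}$, $\hat{\P}_R^{\perp}:=\I-\hat{\V}_r\hat{\V}_r^{\top}$), the first bracket equals $\hat{\P}_L\E+\hat{\P}_L^{\perp}\E\hat{\P}_R$ and has Frobenius norm at most $\fro{\E}$ by the Pythagorean identity $\fro{\hat{\P}_L\E}^2+\fro{\hat{\P}_L^{\perp}\E\hat{\P}_R}^2\leq\fro{\E}^2$, while the cross term is bounded by $\op{\hat{\P}_L^{\perp}\U\U^{\top}}\cdot\fro{\M^*\hat{\P}_R^{\perp}}\leq(4\op{\E}/\sigma_r)\cdot 2\fro{\E}$ via your orthogonality argument. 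That closes the Frobenius-norm claim with room to spare on the constant $20$; you should spell this Pythagorean step out rather than leave it at "$\lesssim$".
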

\begin{proof}
	See Section~\ref{proof:teclem:perturbation}.
\end{proof}

\begin{theorem}(Empirical process in matrix recovery)\label{thm:empirical process}
	Suppose $f(\mathbf{M})$  is defined in Equation~\ref{eq:loss}  and $\rho(\cdot)$ is $\tilde{L}$-Lipschitz continuous. Then there exists constants $C,C_1>0$, such that  
	\begin{equation}
		\big{|} (f(\mathbf{M})-f(\mathbf{M}^{*})) - (\mathbb{E}f(\mathbf{M}) - \mathbb{E}f(\mathbf{M}^{*}))\big{|}\leq (C_1+C\tilde{L})\sqrt{nd_1r}\Vert \mathbf{M}-\mathbf{M}^{*}\Vert_{\mathrm{F}}
	\end{equation} holds for all at most $r$ rank matrix $\M\in\mathbb{R}^{d_1\times d_2}$ with probability over $1-\exp\left(-\frac{C^{2}d_1r}{3}\right)-3\exp\left(-\frac{n}{\log n}\right)$. 
\end{theorem}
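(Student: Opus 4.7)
}
Since the bound is homogeneous in $\|\M-\M^{*}\|_{\rm F}$, it suffices to establish the claim uniformly over the rescaled index set
$\calD_{2r}:=\{\bd\in\RR^{d_{1}\times d_{2}}:\rank(\bd)\leq 2r,\,\|\bd\|_{\rm F}=1\}$,
and then multiply through by $\|\M-\M^{*}\|_{\rm F}$. Write $\bd:=\M-\M^{*}$ and define the centered process
$Z(\bd):=\sum_{i=1}^{n}g_{i}(\bd)-\EE\sum_{i=1}^{n}g_{i}(\bd)$ with
$g_{i}(\bd):=\rho(\xi_{i}-\langle\bd,\X_{i}\rangle)-\rho(\xi_{i})$.
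The target is a high-probability bound on $\sup_{\bd\in\calD_{2r}}|Z(\bd)|$.

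The plan is to combine symmetrization, Talagrand's contraction, and a Gaussian-width estimate for low-rank matrices, then add Gaussian concentration for the tail. First, I would apply the standard symmetrization inequality to replace $Z(\bd)$ by the Rademacher-averaged process $\sum_{i}\epsilon_{i}g_{i}(\bd)$ (up to a factor of $2$). Since each map $u\mapsto \rho(\xi_{i}-u)-\rho(\xi_{i})$ is $\tilde{L}$-Lipschitz and vanishes at $u=0$, Talagrand's contraction (conditionally on $\xi$ and $\X$) reduces the Rademacher supremum to that of the linear functional:
\begin{align*}
\EE\sup_{\bd\in\calD_{2r}}\Big|\sum_{i}\epsilon_{i}g_{i}(\bd)\Big|\ \leq\ 2\tilde{L}\,\EE\sup_{\bd\in\calD_{2r}}\Big|\sum_{i}\epsilon_{i}\langle\bd,\X_{i}\rangle\Big|\ =\ 2\tilde{L}\,\EE\|\textstyle\sum_{i}\epsilon_{i}\X_{i}\|_{\mathrm{F},2r}.
\end{align*}
Because $\epsilon_{i}\X_{i}\stackrel{d}{=}\X_{i}$, the matrix $\sum_{i}\epsilon_{i}\X_{i}$ is a Gaussian ensemble with i.i.d.\ $N(0,n)$ entries; its partial-Frobenius norm is bounded by $\sqrt{2r}$ times its operator norm, yielding $\EE\|\sum_{i}\epsilon_{i}\X_{i}\|_{\mathrm{F},2r}\lesssim\sqrt{nd_{1}r}$ and hence $\EE\sup_{\bd}|Z(\bd)|\leq C\tilde{L}\sqrt{nd_{1}r}$.

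For the tail deviation above the mean, I would view $\sup_{\bd\in\calD_{2r}}|Z(\bd)|$, for fixed $\xi$, as a function of the Gaussian vector formed by stacking $\X_{1},\ldots,\X_{n}$. The Lipschitz constant of this supremum in the overall Euclidean norm is at most $\tilde{L}\sqrt{n}$: indeed, for any $\bd\in\calD_{2r}$,
\begin{align*}
\Big|\sum_{i}g_{i}(\bd;\X_{i})-\sum_{i}g_{i}(\bd;\X_{i}')\Big|^{2}\leq n\tilde{L}^{2}\sum_{i}\langle\bd,\X_{i}-\X_{i}'\rangle^{2}\leq n\tilde{L}^{2}\sum_{i}\|\X_{i}-\X_{i}'\|_{\rm F}^{2}.
\end{align*}
Gaussian concentration for Lipschitz functions then gives
$\PP\!\left(\sup_{\bd}|Z(\bd)|>\EE\sup+u\right)\leq\exp(-u^{2}/(2n\tilde{L}^{2}))$,
and picking $u=C\tilde{L}\sqrt{nd_{1}r}$ yields the first probability factor $\exp(-C^{2}d_{1}r/3)$.

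The main obstacle is reconciling the form of the stated bound, specifically the additive constant $C_{1}$ independent of $\tilde{L}$ and the secondary tail term $3\exp(-n/\log n)$. The additive $C_{1}$ most likely comes from a truncation step needed to pass from the unbounded summands $g_{i}(\bd)$ to a bounded empirical process amenable to a Bernstein-type union bound over an $\epsilon$-net on $\calD_{2r}$ (whose cardinality is $O(\exp(C d_{1}r\log(1/\epsilon)))$). Specifically, I would truncate $|\langle\bd,\X_{i}\rangle|$ at a level of order $\sqrt{\log n}$, handle the truncated part via Bernstein together with a Lipschitz extension from the net to $\calD_{2r}$, and control the residual tail event $\{\max_{i,\bd\in\mathcal{N}}|\langle\bd,\X_{i}\rangle|>\sqrt{C\log n}\}$ by the operator-norm/sub-Gaussian concentration of $\X_{i}$, which produces the auxiliary probability $3\exp(-n/\log n)$. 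Verifying that these two independent sources combine cleanly into the stated $(C_{1}+C\tilde{L})\sqrt{nd_{1}r}$ rate, rather than only a purely multiplicative $C\tilde{L}$ rate, is where the bookkeeping is delicate.
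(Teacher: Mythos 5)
Your expectation bound is exactly the paper's: symmetrize to a Rademacher process, apply contraction against the $\tilde L$-Lipschitz loss, and bound $\EE\|\sum_i\tilde\epsilon_i\X_i\|_{{\rm F},2r}\leq\sqrt{2r}\,\EE\|\sum_i\tilde\epsilon_i\X_i\|\lesssim\sqrt{nd_1r}$.

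For the tail, however, you take a genuinely different route. The paper does \emph{not} truncate; it invokes Adamczak's tail inequality for suprema of unbounded empirical processes (Theorem~\ref{tecthm:orlicz norm empirical}), which trades the boundedness requirement of Talagrand's inequality for control of the $\psi_2$-norm of $\max_i\sup_{\M}|\cdot|$. That Orlicz norm of the max picks up a $\sqrt{\log n}$ factor (Lemma~\ref{orlicz norm max}), and this is precisely where the secondary tail $3\exp(-n/\log n)$ comes from; the additive constant $C_1$ in the statement is simply the $(1+\eta)\EE Z$ offset in Adamczak's bound (and in the paper's derivation that term in fact carries a factor of $\tilde L$, so the statement's $C_1$ should be read as $C_1\tilde L$). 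Your proposed alternative --- Gaussian concentration of the Lipschitz map $\X\mapsto\sup_{\bd}|Z(\bd)|$ with constant $\tilde L\sqrt n$ --- is actually cleaner: it bypasses truncation entirely, needs no union bound over a net, and would yield a single sub-Gaussian tail $\exp(-c\,d_1r)$ with no $n/\log n$ term. So your closing paragraph is chasing a red herring: there is no hidden truncation in the paper, and your own route would not produce those artifacts.

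The real gap in your argument is the $\xi$-conditioning. For fixed $\xi$, Lipschitz--Gaussian concentration bounds the deviation of $\sup_{\bd}|Z(\bd)|$ from its \emph{conditional} mean $m(\xi):=\EE_\X[\sup_{\bd}|Z(\bd)|\mid\xi]$, but the theorem centers by the unconditional expectation $\EE_{\X,\xi}$. To close this you must control $m(\xi)$ uniformly, which splits as
\begin{align*}
m(\xi)\ \leq\ \EE_\X\Big[\sup_{\bd}\Big|\sum_i\big(g_i(\bd)-\EE_\X g_i(\bd)\big)\Big|\,\Big|\,\xi\Big]\ +\ \sup_{\bd}\Big|\sum_i\big(\EE_\X g_i(\bd)-\EE_{\X,\xi}g_i(\bd)\big)\Big|.
\end{align*}
The first term is bounded by your symmetrization-plus-contraction argument applied conditionally on $\xi$, and is $\lesssim\tilde L\sqrt{nd_1r}$ uniformly in $\xi$. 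The second term requires a new observation you did not make: because $\langle\X_i,\bd\rangle\sim N(0,1)$ for every unit-Frobenius-norm $\bd$, the conditional mean $\EE_\X[\rho(\xi_i-\langle\X_i,\bd\rangle)-\rho(\xi_i)]$ does not depend on $\bd$ at all, so the supremum collapses to $|\sum_i(\psi(\xi_i)-\EE\psi(\xi))|$ with $\psi(\xi_i):=\EE_{g\sim N(0,1)}[\rho(\xi_i-g)-\rho(\xi_i)]$, a bounded ($|\psi|\leq\tilde L\sqrt{2/\pi}$) iid sum that Hoeffding handles at the same $\exp(-c\,d_1r)$ rate. Without spotting this collapse, your tail argument is incomplete; with it, your route works and in fact improves on the paper's probability bound.
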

\begin{proof}
	See Section~\ref{proof:thm:empirical}.
\end{proof}

\begin{corollary}\label{cor:thm:empirical process}
	Suppose $f(\mathbf{M})$  is defined in Equation~\ref{eq:loss}  and $\rho(\cdot)$ is $\tilde{L}$-Lipschitz continuous. Then there exists constants $C,C_1>0$, such that  
	\begin{equation}
		\big{|} (f(\mathbf{M}+\M_1)-f(\mathbf{M})) - (\mathbb{E}f(\mathbf{M}+\M_1) - \mathbb{E}f(\mathbf{M}))\big{|}\leq (C_1+C\tilde{L})\sqrt{nd_1r}\Vert \mathbf{M}_1\Vert_{\mathrm{F}}
	\end{equation} holds for all at most $r$ rank matrix $\M,\M_1\in\mathbb{R}^{d_1\times d_2}$ with probability over $1-\exp\left(-\frac{C^{2}d_1r}{3}\right)-3\exp\left(-\frac{n}{\log n}\right)$.
\end{corollary}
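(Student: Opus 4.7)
The plan is to replicate the proof of Theorem~\ref{thm:empirical process} with the fixed reference $\M^*$ replaced by an arbitrary rank-$r$ ``center'' $\M$, obtaining uniformity in $\M$ essentially for free via the contraction principle. I would first rewrite the increment as
\begin{equation*}
f(\M+\M_1)-f(\M)=\sum_{i=1}^n\phi_{i,\M}\bigl(\langle\X_i,\M_1\rangle\bigr),\qquad \phi_{i,\M}(u):=\rho\bigl(r_i(\M)-u\bigr)-\rho\bigl(r_i(\M)\bigr),
\end{equation*}
where $r_i(\M):=Y_i-\langle\X_i,\M\rangle$. Each $\phi_{i,\M}$ is $\tilde L$-Lipschitz with $\phi_{i,\M}(0)=0$, uniformly over $i$ and $\M$, which is precisely the structural property exploited by the symmetrization/contraction step in the theorem's proof.

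The steps I would carry out mirror the theorem's architecture. First, truncate the noise at a threshold $\tau\asymp(n/\log n)^{1/(1+\eps)}$, contributing the $3\exp(-n/\log n)$ tail via a union bound over the $n$ samples. Second, apply Rademacher symmetrization to reduce the centered empirical process to $\sum_i\varepsilon_i\phi_{i,\M}(\langle\X_i,\M_1\rangle)$. Third, conditional on $(\X_i,\xi_i)$ and a fixed $\M$, invoke the Ledoux--Talagrand contraction inequality to strip $\phi_{i,\M}$, producing the upper bound
\begin{equation*}
2\tilde L\cdot\EE_\varepsilon\sup_{\|\M_1\|_{\rm F}\leq t,\,\rank(\M_1)\leq r}\Bigl|\sum_{i=1}^n \varepsilon_i\langle\X_i,\M_1\rangle\Bigr|\leq C\tilde L\, t\sqrt{nd_1 r},
\end{equation*}
where the right-hand side follows from a standard $\varepsilon$-net covering of rank-$r$ matrices in the operator-norm ball and provides the $\exp(-C^2 d_1 r/3)$ tail factor. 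Fourth, a geometric peeling decomposition over shells $\|\M_1\|_{\rm F}\in[2^k,2^{k+1})$ converts the ``$\|\M_1\|_{\rm F}\leq t$'' form into the linear-in-$\|\M_1\|_{\rm F}$ scaling claimed, and a Talagrand-type bounded-empirical-process concentration upgrades the expectation bound to high probability, yielding the additive $C_1\sqrt{nd_1r}\|\M_1\|_{\rm F}$ piece from the noise-only residual not multiplied by $\tilde L$.

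The \emph{crucial} observation---and the reason uniformity over the center $\M$ comes at no additional cost---is that the post-contraction Rademacher complexity above depends only on $(\X_i,\M_1)$ and not on $\M$, so the supremum over $\M$ passes harmlessly outside the Rademacher expectation. The main obstacle I anticipate lies in making this cleanly rigorous in the heavy-tailed regime: $r_i(\M)=\xi_i+\langle\X_i,\M^*-\M\rangle$ depends on $\M$, and with unbounded $\xi_i$ the variance proxy feeding Talagrand's concentration step could \emph{a priori} inflate with $\|\M-\M^*\|_{\rm F}$. The resolution is that contraction registers only the Lipschitz constant $\tilde L$ of $\phi_{i,\M}$ and not the pointwise magnitude of $r_i(\M)$, so after truncating $\xi_i$ the second-moment input is uniformly bounded by $\tilde L^2\|\M_1\|_{\rm F}^2$ and the uniform-in-$\M$ bound closes with the stated probability.
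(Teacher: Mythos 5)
Your contraction step contains a genuine gap. You apply the Ledoux--Talagrand inequality ``conditional on a fixed $\M$'' and then claim that ``the supremum over $\M$ passes harmlessly outside the Rademacher expectation.'' This is the classical sup/expectation swap in the wrong direction: what contraction gives you for each fixed $\M$ is a bound on $\EE_{\varepsilon}\sup_{\M_1}\bigl|\sum_i\varepsilon_i\phi_{i,\M}(\langle\X_i,\M_1\rangle)\bigr|$, and taking the supremum over $\M$ of the left-hand side only controls $\sup_{\M}\EE_\varepsilon\sup_{\M_1}(\cdots)$, which is \emph{smaller} than the quantity you actually need, $\EE_\varepsilon\sup_{\M,\M_1}(\cdots)$. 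The contraction theorem as stated (and as used in the paper's proof of Theorem~\ref{thm:empirical process}) requires the Lipschitz maps $\phi_i$ to be \emph{fixed}, not indexed by a parameter that you are simultaneously taking a supremum over; replacing the fixed reference $\M^*$ by a varying center $\M$ changes $\phi_{i,\M}(u)=\rho(r_i(\M)-u)-\rho(r_i(\M))$ as the index ranges, and this is precisely the case the inequality does not cover. Without repairing this, the whole derivation fails to deliver the joint uniformity in $(\M,\M_1)$ that the corollary asserts.

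You also introduce a noise-truncation step and attribute the $3\exp(-n/\log n)$ term to a union bound over truncated samples. Neither is part of the paper's argument, nor is truncation needed: the symmetrized increment $\rho(r_i(\M)-\langle\X_i,\M_1\rangle)-\rho(r_i(\M))$ is bounded by $\tilde L\,|\langle\X_i,\M_1\rangle|$ regardless of how heavy-tailed $Y_i$ is, so the noise drops out entirely after the Lipschitz step. In the paper, the $3\exp(-n/\log n)$ factor is a fingerprint of Theorem~\ref{tecthm:orlicz norm empirical} (the Adamczak tail bound), applied with $\Psi_2$ Orlicz control coming from $\|\X_i\|_{\rm F,2r}$ and Lemma~\ref{orlicz norm max}; likewise the normalization by $\fro{\M_1}$ is handled directly through the truncated Frobenius norm $\|\cdot\|_{\rm F,2r}$ (Lemma~\ref{teclem:partial F norm}) rather than by a peeling-over-shells argument. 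Since the corollary is invoked in the paper as the event $\bcalE$ of Lemmas~\ref{lemma:upperboundsubgradient:gaussian}--\ref{lemma:upperboundsubgradient:quantile}, the paper's own deduction is the two-sided triangle inequality through the fixed ground truth $\M^*$ applied to Theorem~\ref{thm:empirical process}; that route avoids the varying-center contraction issue entirely and would be the safest way for you to close the argument.
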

\begin{proof}
	It is similar to proof of Theorem~\ref{thm:empirical process}.
\end{proof}

\begin{corollary}\label{cor: empirical process}
	Suppose conditions in Theorem~\ref{thm:empirical process} hold and let $\X_1,\dots,\X_n$ have i.i.d. $\mathcal{N}(0,1)$ entries. For any at most rank $r$ matrix $\M\in\RR^{d_1\times d_2}$, There exist constants $c,c_1>0$, such that when $n\geq cd_1r$, the following inequality \begin{align*}
		\sqrt\frac{1}{2\pi}n\fro{\M}\leq\sum_{i=1}^{n}|\inp{\X_{i}}{\M}| \leq 2n\fro{\M}
	\end{align*}
	holds with high probability over $1-\exp(-c_1d_1r)$.
\end{corollary}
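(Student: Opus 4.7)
The plan is to apply Theorem~\ref{thm:empirical process} to the specific case of the absolute loss with a trivial noise model, and then choose the sample-size constant so that the concentration deviation is dominated by the leading expectation. Concretely, I would take $\rho(x)=|x|$ (so $\tilde L=1$), set the ``truth'' to be $\M^{\ast}=\mathbf 0$ and the noise $\xi_i\equiv 0$, which makes $Y_i=0$ and reduces the objective to $f(\M)=\sum_{i=1}^n|\inp{\X_i}{\M}|$ with $f(\M^{\ast})=0$.

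The key input I need is the expectation. Since $\X_i$ has i.i.d.\ $\mathcal N(0,1)$ entries, $\inp{\X_i}{\M}\sim\mathcal N(0,\fro{\M}^2)$, so $\EE|\inp{\X_i}{\M}|=\sqrt{2/\pi}\,\fro{\M}$ and hence $\EE f(\M)-\EE f(\M^{\ast})=n\sqrt{2/\pi}\,\fro{\M}$. Plugging these into Theorem~\ref{thm:empirical process} (applicable because $\rho$ is $1$-Lipschitz and $\M$ ranges over matrices of rank at most $r$) yields, with probability at least $1-\exp(-C^2 d_1 r/3)-3\exp(-n/\log n)$,
\[
\Bigl|\sum_{i=1}^n|\inp{\X_i}{\M}|-n\sqrt{2/\pi}\,\fro{\M}\Bigr|\le (C_1+C)\sqrt{nd_1r}\,\fro{\M}
\]
uniformly over rank-$r$ matrices $\M$.

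From here it is just bookkeeping: choose $c$ large enough in the hypothesis $n\ge c d_1 r$ so that $(C_1+C)\sqrt{nd_1r}\le \min\bigl\{(\sqrt{2/\pi}-\sqrt{1/(2\pi)})\,n,\;(2-\sqrt{2/\pi})\,n\bigr\}$. The lower choice gives the left inequality $\sum_i|\inp{\X_i}{\M}|\ge\sqrt{1/(2\pi)}\,n\fro{\M}$ and the upper choice gives the right inequality $\sum_i|\inp{\X_i}{\M}|\le 2n\fro{\M}$. Finally, because $n\ge c d_1 r$, the term $3\exp(-n/\log n)$ is absorbed into $\exp(-c_1 d_1 r)$ for a suitable $c_1>0$ (taking $c_1$ slightly smaller than $\min\{C^2/3,c/\log n\}$ and noting that $\log n$ enters only logarithmically compared to the polynomial sample-size dependence), giving the stated single-term failure probability $\exp(-c_1 d_1 r)$.

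There is really no hard part here — the statement is essentially a direct specialization of Theorem~\ref{thm:empirical process} combined with an explicit Gaussian moment computation — but the one point that requires mild care is the absorption of the $\exp(-n/\log n)$ contribution into $\exp(-c_1 d_1 r)$; this is handled by exploiting the sample-size hypothesis and shrinking $c_1$ if necessary, and is the only place where $n\ge c d_1 r$ does double duty (it appears both in controlling the deviation term relative to the signal and in simplifying the probability bound).
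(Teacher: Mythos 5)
Your proposal is correct and reproduces exactly the paper's argument: specialize Theorem~\ref{thm:empirical process} to the absolute loss with $\M^{\ast}=\mathbf 0$, plug in the Gaussian expectation $\EE|\inp{\X_i}{\M}|=\sqrt{2/\pi}\,\fro{\M}$ from Lemma~\ref{teclem:gaussian-l1}, and take $c$ large enough so the $\sqrt{nd_1r}$ deviation term sits inside the gap between $\sqrt{2/\pi}$ and the two target constants $\sqrt{1/(2\pi)}$ and $2$. The only place where your write-up is a bit loose is the absorption of $3\exp(-n/\log n)$ into $\exp(-c_1 d_1 r)$ -- the expression $\min\{C^2/3,\,c/\log n\}$ is not a constant -- but this imprecision is already present in the paper's one-line proof, so you have faithfully matched the source argument.
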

\begin{proof}
	Take the absolute loss function in Theorem~\ref{thm:empirical process} , namely, $\rho(\cdot)=\vert\cdot\vert$ and set $\M^*$ to be zero and instert $\EE|\inp{\X_{i}}{\M}|=\sqrt{\frac{2}{\pi}}\fro{\M}$ (see Lemma~\ref{teclem:gaussian-l1}) into it.
\end{proof}

\begin{lemma}(Expectation of absolute loss under Gaussian noise) 	\label{teclem:gaussian-l1} Suppose the noise term follows Gaussian distribution $\xi_{1},\cdots,\xi_{n}\stackrel{i.i.d.}{\sim}\mathcal{N}(0,\sigma^{2})$ and take the absolute value loss function $$f(\M)=\sum_{i=1}^{n}|Y_i-\inp{\M}{\X_i}|,$$
	and then we have $$\mathbb{E}[f(\mathbf{M})]=n\sqrt{\frac{2}{\pi}}\sqrt{\sigma^{2}+\Vert \mathbf{M}-\mathbf{M}^{*}\Vert_{\mathrm{F}}^{2}},\quad \mathbb{E}[f(\mathbf{M}^{*})]=n\sqrt{\frac{2}{\pi}}\sigma.$$
\end{lemma}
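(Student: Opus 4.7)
The plan is to reduce both identities to the single elementary fact that, for $Z \sim N(0,\tau^2)$, one has $\EE|Z| = \tau\sqrt{2/\pi}$ (the half-normal mean). The only tasks are therefore to identify the distribution of each summand $Y_i - \langle \M, \X_i\rangle$ and to exploit linearity of expectation.

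First I would substitute the trace regression model $Y_i = \langle \M^*, \X_i\rangle + \xi_i$ into the summand to rewrite
\[
Y_i - \langle \M, \X_i \rangle \;=\; \xi_i - \langle \M - \M^*, \X_i\rangle.
\]
Since $\X_i$ has i.i.d.\ $N(0,1)$ entries, the linear functional $\langle \M - \M^*, \X_i\rangle$ is a mean-zero Gaussian with variance $\|\M - \M^*\|_{\rm F}^2$. Using the independence of $\xi_i$ and $\X_i$ together with the standard fact that the sum of independent Gaussians is Gaussian (variances add), I conclude
\[
\xi_i - \langle \M - \M^*, \X_i\rangle \;\sim\; N\!\bigl(0,\;\sigma^2 + \|\M-\M^*\|_{\rm F}^2\bigr).
\]

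Applying the half-normal identity then gives $\EE\,|Y_i - \langle \M, \X_i\rangle| = \sqrt{2/\pi}\,\sqrt{\sigma^2 + \|\M-\M^*\|_{\rm F}^2}$, and summing over $i=1,\ldots,n$ by linearity of expectation produces the first claim $\EE[f(\M)] = n\sqrt{2/\pi}\sqrt{\sigma^2 + \|\M-\M^*\|_{\rm F}^2}$. Specializing to $\M = \M^*$ collapses the residual to $\xi_i$ alone, so each term contributes $\EE|\xi_i| = \sigma\sqrt{2/\pi}$, giving $\EE[f(\M^*)] = n\sigma\sqrt{2/\pi}$.

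There is no real obstacle here: the argument is purely a distributional computation, and the only ingredients are (i) Gaussianity of $\langle \M - \M^*, \X_i\rangle$ under the i.i.d.\ standard-normal measurement design, (ii) independence of $\X_i$ and $\xi_i$, and (iii) the textbook formula $\EE|N(0,\tau^2)| = \tau\sqrt{2/\pi}$. No concentration or empirical-process machinery is required.
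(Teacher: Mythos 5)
Your proof is correct, but it takes a cleaner route than the paper's. The paper conditions on $\X_i$ first: given $\X_i$, the residual $\xi_i-\langle \X_i,\M-\M^*\rangle$ is a Gaussian with nonzero mean $-\langle \X_i,\M-\M^*\rangle$, so the paper invokes the folded-normal mean formula (involving $\exp(-\langle\X_i,\M-\M^*\rangle^2/2\sigma^2)$ and the normal CDF $\phi$) and then integrates this expression against the $N(0,\|\M-\M^*\|_{\rm F}^2)$ law of $\langle\X_i,\M-\M^*\rangle$ — a nontrivial computation the paper compresses into ``after integration calculation.'' You instead observe that, by independence of $\xi_i$ and $\X_i$, the residual is \emph{unconditionally} a single centered Gaussian with variance $\sigma^2+\|\M-\M^*\|_{\rm F}^2$, so only the half-normal identity $\EE|N(0,\tau^2)|=\tau\sqrt{2/\pi}$ is needed. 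Both arguments rest on the same assumptions (i.i.d.\ standard Gaussian entries of $\X_i$, noise independent of the design, which the paper also uses implicitly when it asserts the conditional law of $\xi_i$ given $\X_i$). Your one-step convolution argument buys a shorter and fully explicit proof; the paper's two-stage conditioning is more roundabout here, though its intermediate conditional-expectation formula is of the type reused in the heavy-tailed analyses where no unconditional Gaussianity is available.
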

\begin{proof}
	Note that $f(\mathbf{M})=\sum_{i=1}^{n}\vert Y_{i}-\langle \mathbf{X}_{i}, \mathbf{M}\rangle\vert=\sum_{i=1}^{n}\vert \xi_{i} - \langle \mathbf{X}_{i}, \mathbf{M}-\mathbf{M}^{*}\rangle\vert$, $f(\M^{*})=\sum_{i=1}^{n}\vert Y_{i}-\langle \mathbf{X}_{i}, \mathbf{M}^{*}\rangle\vert=\sum_{i=1}^{n}\vert \xi_{i} \vert$. 	
	
	First calculate the conditional expectation $\mathbb{E}[[\vert \xi_{i}-\langle \mathbf{A}_{i}, \mathbf{M}-\mathbf{M}^{*}\rangle\vert] \lvert \mathbf{A}_{i}]$. Notice that conditioned on $\mathbf{A}_{i}$, $\xi_{i}-\langle \mathbf{A}_{i}, \mathbf{X}-\mathbf{X}^{*}\rangle$ is normally distributed with mean $-\langle \mathbf{A}_{i}, \mathbf{X}-\mathbf{X}^{*}\rangle$ and variance $\sigma^{2}$ and then $\vert \xi_{i}-\langle \mathbf{A}_{i}, \mathbf{X}-\mathbf{X}^{*}\rangle\vert$ has a folded normal distribution.
	\begin{equation*}
		\begin{split}
			\mathbb{E}&[[\vert \xi_{i}-\langle \mathbf{X}_{i}, \mathbf{M}-\mathbf{M}^{*}\rangle\vert] \lvert \mathbf{X}_{i}]\\ &= \sigma\sqrt{\frac{2}{\pi}}\exp(-\frac{\langle \mathbf{X}_{i}, \mathbf{M}-\mathbf{M}^{*}\rangle^{2}}{2\sigma^{2}})-\langle \mathbf{X}_{i}, \mathbf{M}-\mathbf{M}^{*}\rangle[1-2\phi(\frac{\langle \mathbf{X}_{i}, \mathbf{M}-\mathbf{M}^{*}\rangle}{\sigma})],
		\end{split}
	\end{equation*}
	where $\phi(\cdot)$ is the normal cumulative distribution function. For each $i=1,2,\cdots,n$, one has
	\begin{equation*}
		\begin{split}
			\mathbb{E}[\vert \xi_{i}-\langle \mathbf{X}_{i}, \mathbf{M}-\mathbf{M}^{*}\rangle\vert ]&=\mathbb{E}[\mathbb{E}[[\vert \xi_{i}-\langle \mathbf{X}_{i}, \mathbf{M}-\mathbf{M}^{*}\rangle\vert] \lvert \mathbf{X}_{i}] ]\\
			&=\sigma\sqrt{\frac{2}{\pi}}\mathbb{E}[\exp(-\frac{\langle \mathbf{X}_{i}, \mathbf{M}-\mathbf{M}^{*}\rangle^{2}}{2\sigma^{2}})]-\mathbb{E}[\langle \mathbf{X}_{i}, \mathbf{M}-\mathbf{M}^{*}\rangle[1-2\phi(\frac{\langle \mathbf{X}_{i}, \mathbf{M}-\mathbf{M}^{*}\rangle}{\sigma})]]
		\end{split}
	\end{equation*}
	and notice that $\langle \mathbf{X}_{i}, \mathbf{M}-\mathbf{M}^{*}\rangle\sim\mathcal{N}(0, \Vert \mathbf{M}-\mathbf{M}^{*}\Vert_{\mathrm{F}}^{2})$ and then after integration calculation it is \begin{equation*}
		\mathbb{E}[\vert \xi_{i}-\langle \mathbf{X}_{i}, \mathbf{M}-\mathbf{M}^{*}\rangle\vert = \sqrt{\frac{2}{\pi}}\sqrt{\sigma^{2}+\Vert \mathbf{M}-\mathbf{M}^{*}\Vert_{\mathrm{F}}^{2}}.
	\end{equation*}
	Specifically, when $\M=\M^*$, it leads to $\mathbb{E}[\vert \xi\vert] = \sqrt{\frac{2}{\pi}}\sigma$.
	Thus, we have $$\mathbb{E}[f(\mathbf{M})]=n\mathbb{E}[ \vert \xi_{i}-\langle \mathbf{X}_{i}, \mathbf{M}-\mathbf{M}^{*}\rangle\vert]=n\sqrt{\frac{2}{\pi}}\sqrt{\sigma^{2}+\Vert \mathbf{M}-\mathbf{M}^{*}\Vert_{\mathrm{F}}^{2}},\quad\mathbb{E}[f(\mathbf{\mathbf{M}}^{*})]=n\mathbb{E}[\vert \xi\vert]=n\sqrt{\frac{2}{\pi}}\sigma. $$
\end{proof}

\begin{lemma}(Contraction of Heavy Tailed Random Variables)
	Suppose $\xi,\xi_{1},\xi_{2},\cdots,\xi_{n}$ are \iid random variables and  there exists $\varepsilon>0$ such that its $1+\varepsilon$ moment is finite, that is $\EE\vert \xi\vert^{1+\varepsilon}<+\infty$. Then $\frac{1}{n}\sum_{i=1}^{n}\vert \xi_{i}\vert\leq3\EE\vert \xi\vert$ with probability exceeding $1-{}^*\!cn^{-\min\{\varepsilon,1\}}$. Here $^*\!c>0$ is some constant which depends on $\EE|\xi|, \EE|\xi|^{1+\varepsilon}$.
	\label{teclem:Contraction of Heavy Tailed Random Variables}
\end{lemma}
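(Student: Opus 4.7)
The plan is to prove this by a truncation argument combined with Chebyshev's inequality, split according to whether $\varepsilon \geq 1$ or $\varepsilon < 1$. Throughout write $\mu := \EE|\xi|$ and $M_\varepsilon := \EE|\xi|^{1+\varepsilon}$, both of which are finite by hypothesis.

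For the easy case $\varepsilon \geq 1$, I would note that $\EE |\xi|^2 \leq M_\varepsilon^{2/(1+\varepsilon)} < \infty$ by Jensen, so $|\xi|$ has finite variance $\sigma^2 := \Var(|\xi|)$. A direct application of Chebyshev's inequality then gives
$$
\PP\Bigl(\tfrac{1}{n}\sum_{i=1}^{n}|\xi_i| > 3\mu\Bigr)
\leq \PP\Bigl(\bigl|\tfrac{1}{n}\sum_{i=1}^{n}|\xi_i| - \mu\bigr| > 2\mu\Bigr)
\leq \frac{\sigma^2}{4\mu^2 n},
$$
which is of the desired order $n^{-1} = n^{-\min\{\varepsilon,1\}}$, with the constant depending only on $\mu$ and $M_\varepsilon$.

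For the delicate case $0 < \varepsilon < 1$, the variance of $|\xi|$ may be infinite, so I would truncate at level $M := n$. Define $Z_i := |\xi_i|\mathbbm{1}(|\xi_i|\leq M)$ and $\mu_M := \EE Z_i \leq \mu$. Two ingredients are needed. First, by Markov's inequality and a union bound,
$$
\PP\bigl(\exists\, i:\ |\xi_i| > M\bigr)
\leq n\cdot\PP(|\xi|>M)
\leq n\cdot M^{-(1+\varepsilon)} M_\varepsilon
= M_\varepsilon\, n^{-\varepsilon}.
$$
Second, since $\EE Z_i^2 \leq M^{1-\varepsilon}\EE|\xi|^{1+\varepsilon} = n^{1-\varepsilon}M_\varepsilon$, Chebyshev gives
$$
\PP\Bigl(\tfrac{1}{n}\sum_{i=1}^{n} Z_i - \mu_M > 2\mu\Bigr)
\leq \frac{\EE Z_i^2}{4\mu^2 n}
\leq \frac{M_\varepsilon}{4\mu^2}\, n^{-\varepsilon}.
$$
On the event $\mathcal{E} := \{\max_i|\xi_i|\leq M\}$ we have $\sum_i|\xi_i| = \sum_i Z_i$, and $\mu_M + 2\mu \leq 3\mu$, so combining the two displays with a union bound yields $\PP(n^{-1}\sum_i|\xi_i|>3\mu)\leq {}^*\!c\, n^{-\varepsilon}$ with ${}^*\!c$ depending only on $\mu$ and $M_\varepsilon$.

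There is no real obstacle: the only choice to make is the truncation level, and setting $M=n$ balances the Markov tail term $nM^{-(1+\varepsilon)}$ against the Chebyshev term $M^{1-\varepsilon}/n$, both coming out to $n^{-\varepsilon}$. Merging the two cases gives the bound $1 - {}^*\!c\, n^{-\min\{\varepsilon,1\}}$ as stated.
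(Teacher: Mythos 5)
Your proof is correct and follows essentially the same route as the paper's: truncate at level $n$, control the exceedance event by a union bound with Markov, control the truncated sum by Chebyshev (both terms balancing at order $n^{-\varepsilon}$), and dispatch $\varepsilon\geq 1$ separately via finite variance. The only cosmetic difference is that you make the $\varepsilon\geq 1$ case explicit where the paper just remarks it is simpler, and you absorb the bias $\mu-\mu_M$ by observing $\mu_M\leq\mu$ rather than bounding $\EE\varphi-\EE\bar\varphi\leq\EE\varphi$ directly; these are equivalent.
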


\begin{proof}
	First consider when $0<\varepsilon<1$. Denote $\varphi_{i}:=\vert\xi_{i}\vert$ and take truncation $\bar{\varphi}_{i}:=\vert\xi_{i}\vert\cdot 1_{\{\vert \xi_{i}\vert< n\}}=\varphi_{i}\cdot 1_{\varphi_{i}>n}$. For conveniece, denote $\op{\xi}_{1+\eps}:=\left(\EE\vert \xi\vert^{1+\varepsilon} \right)^{1/(1+\eps)}$ and introduce independent random variable $\varphi$ which has same distribution as $\varphi_{1},\cdots,\varphi_{n}$ and define $\bar{\varphi}:=\varphi\cdot 1_{\{\varphi< n\}}$. The original problem is equivalent to bound $\frac{1}{n}\sum_{i=1}^{n}\varphi_{i}$ with its expectation.
	
	Note that $\frac{1}{n} \sum_{i=1}^{n} \left(\varphi_{i}-\EE\varphi_{i}\right)$ could be written into sum of three parts, $$\frac{1}{n} \sum_{i=1}^{n} \left(\varphi_{i}-\EE\varphi_{i}\right)=\frac{1}{n} \sum_{i=1}^{n} \left(\bar{\varphi}_{i}-\EE\bar{\varphi}_{i}\right)+ \frac{1}{n} \sum_{i=1}^{n} \left(\varphi_{i}-\bar{\varphi}_{i}\right)+(\EE\varphi-\EE\bar{\varphi}).$$ Analyze the three parts respectively. Consider the first term, for any constant $s>0$,
	\begin{align*}
		\PP\left(\vert \frac{1}{n}\sum_{i=1}^{n} \left(\bar{\varphi}_{i}-\EE\bar{\varphi}_{i}\right)\vert >s\right)&\leq\frac{1}{ns^2}\Var\left(\bar\varphi\right)\leq\frac{1}{ns^2}\EE\bar\varphi^{1+\varepsilon}\bar\varphi^{1-\varepsilon}\leq \frac{\op{\xi}_{1+\eps}^{1+\eps}}{ns^2}\cdot n^{1-\varepsilon}=\op{\xi}_{1+\eps}^{1+\eps}s^{-2}n^{-\varepsilon},
	\end{align*}
	where the intermediate equation uses Markov inequality and the last inequality is because of $\bar{\varphi}<n$. Then consider the second term, the probability of $\frac{1}{n}\sum_{i=1}^{n}\bar{\varphi}_{i}\neq\frac{1}{n}\sum_{i=1}^{n}\varphi_{i} $
	\begin{align*}
		\PP\left(\frac{1}{n}\sum_{i=1}^{n}\bar{\varphi}_{i}\neq\frac{1}{n}\sum_{i=1}^{n}\varphi_{i}\right)&\leq n\PP\left(\varphi>n\right)\leq n\frac{\EE \varphi^{1+\varepsilon}}{n^{1+\varepsilon}}=\op{\xi}_{1+\eps}^{1+\eps}n^{-\varepsilon}.
	\end{align*}
	The last term measures the expectation difference and could be bounded as followed,
	\begin{align*}
		\EE\varphi_{i}-\EE\bar{\varphi}_{i}&=\EE\varphi_{i}\cdot 1_{\varphi_{i}>n}\leq \EE\varphi.
	\end{align*}
	Hence, we have
	\begin{align*}
		\PP\left(\vert \frac{1}{n}\sum_{i=1}^{n} \varphi_{i}-\EE\varphi_{i}\vert >s\right)&\leq \PP\left(\vert \frac{1}{n}\sum_{i=1}^{n} \bar\varphi_{i}-\EE\varphi_{i}\vert >s\right)+\PP\left(\frac{1}{n}\sum_{i=1}^{n}\bar{\varphi}_{i}\neq\frac{1}{n}\sum_{i=1}^{n}\varphi_{i}\right)\\
		&\leq \PP\left(\vert \frac{1}{n}\sum_{i=1}^{n} \bar\varphi_{i}-\EE\bar\varphi_{i}\vert >s-(\EE\varphi-\EE\bar\varphi)\right)+\op{\xi}_{1+\eps}^{1+\eps}n^{-\varepsilon}\\
		&\leq \PP\left(\vert \frac{1}{n}\sum_{i=1}^{n} \bar\varphi_{i}-\EE\bar\varphi_{i}\vert >s-\EE\varphi\right)+\op{\xi}_{1+\eps}^{1+\eps}n^{-\varepsilon},
	\end{align*}
	where the last inequality uses $ 	\EE\varphi_{i}-\EE\bar{\varphi}_{i}\leq \EE\varphi$. Take $s=2\EE\varphi$ and then it becomes
	\begin{align*}
		\PP\left(\vert \frac{1}{n}\sum_{i=1}^{n} \varphi_{i}-\EE\varphi_{i}\vert >2\EE\varphi\right)&\leq \PP\left(\vert \frac{1}{n}\sum_{i=1}^{n} \varphi_{i}-\EE\varphi_{i}\vert >\EE\varphi\right)+\op{\xi}_{1+\eps}^{1+\eps}n^{-\varepsilon}\\
		&\leq \op{\xi}_{1+\eps}^{1+\eps}[\EE\varphi]^{-2}n^{-\varepsilon}+\op{\xi}_{1+\eps}^{1+\eps}n^{-\varepsilon},
	\end{align*}
	which proves $\PP(\frac{1}{n}\sum_{i=1}^{n}|\xi_{i}|\geq 3\EE|\xi|)\leq {}^*\!cn^{-\varepsilon}$. When $\varepsilon\geq1$, we have $\PP(\frac{1}{n}\sum_{i=1}^{n}|\xi_{i}|\geq 3\EE|\xi|)\leq {}^*\!cn^{-1}$ whose proof is simpler and hence skipped.
\end{proof}

\begin{lemma}(Tail inequality for sum of squared heavy-tailed random variables)\label{teclem:bound of second moment heavy tailed}
	Suppose $\xi,\xi_{1},\cdots,\xi_{n}$ are \iid random variables and there exists $0<\varepsilon\leq1$ such that its $1+\varepsilon$ moment is finite, that is $\EE\vert \xi\vert^{1+\varepsilon}<+\infty$. Denote $\op{\xi}_{1+\eps}=\left( \EE\vert \xi\vert^{1+\varepsilon}\right)^{1/(1+\eps)}$. Then for any $s>0$, tail bound for sum of squares could be upper bounded $$\PP\left(\sum_{i=1}^{n}\xi_{i}^2>s\right)\leq 2n\op{\xi}_{1+\eps}^{1+\eps}s^{-\frac{1+\varepsilon}{2}}.$$
\end{lemma}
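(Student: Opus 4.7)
The plan is to combine a truncation argument with two applications of Markov's inequality. Since only a $1+\varepsilon$ moment with $\varepsilon\le 1$ is available, the second moment of $\xi$ may be infinite, so one cannot just apply Markov to $\sum_i \xi_i^2$ directly. Instead I would split the event $\{\sum_{i=1}^n\xi_i^2>s\}$ according to whether a single coordinate is ``large'' or all coordinates are ``moderate,'' with the natural threshold being $\sqrt{s}$.

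First, let $A:=\{\max_{1\le i\le n}|\xi_i|>\sqrt{s}\}$. On $A^{c}$, every $|\xi_i|\le\sqrt{s}$, so $\xi_i^2=\bar\xi_i^2$ where $\bar\xi_i:=\xi_i\mathbbm{1}(|\xi_i|\le\sqrt{s})$. A simple union bound followed by Markov's inequality applied to $|\xi|^{1+\varepsilon}$ gives
$$
\PP(A)\le n\,\PP(|\xi|>\sqrt{s})\le n\,\|\xi\|_{1+\varepsilon}^{1+\varepsilon}\,s^{-(1+\varepsilon)/2}.
$$
For the complementary event I would exploit the pointwise bound $\bar\xi_i^2=|\bar\xi_i|^{1-\varepsilon}\cdot|\bar\xi_i|^{1+\varepsilon}\le s^{(1-\varepsilon)/2}|\xi_i|^{1+\varepsilon}$ to control the (now-finite) expected sum:
$$
\EE\Big[\sum_{i=1}^n\bar\xi_i^2\Big]\le n\,s^{(1-\varepsilon)/2}\,\|\xi\|_{1+\varepsilon}^{1+\varepsilon},
$$
and then apply Markov's inequality one more time to obtain
$$
\PP\Big(\{\textstyle\sum_i\xi_i^2>s\}\cap A^{c}\Big)\le\PP\Big(\textstyle\sum_i\bar\xi_i^2>s\Big)\le n\,\|\xi\|_{1+\varepsilon}^{1+\varepsilon}\,s^{-(1+\varepsilon)/2}.
$$
Adding the two pieces yields the claimed $2n\,\|\xi\|_{1+\varepsilon}^{1+\varepsilon}s^{-(1+\varepsilon)/2}$.

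The argument is essentially routine; there is no serious obstacle. The only subtle point is the choice of the truncation level $\sqrt{s}$, which is dictated by the interpolation exponent $(1-\varepsilon)/2$ needed so that the truncated second moment bound and the tail probability from truncation balance at the same rate $s^{-(1+\varepsilon)/2}$. Any other threshold would produce a strictly worse exponent, which is why $\sqrt{s}$ is the ``right'' choice for a $1+\varepsilon$ moment assumption.
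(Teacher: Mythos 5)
Your argument is correct and is essentially the same as the paper's: both truncate $|\xi_i|$ at $\sqrt{s}$, bound the probability that any coordinate exceeds the threshold via a union bound plus Markov on $|\xi|^{1+\varepsilon}$, and control the truncated sum by the interpolation $\bar\xi_i^2 \le s^{(1-\varepsilon)/2}|\xi_i|^{1+\varepsilon}$ followed by a second Markov inequality. The only difference is cosmetic phrasing (your event $A$ versus the paper's $\{\sum\xi_i^2\neq\sum\varphi_i\}$), and both yield the stated $2n\|\xi\|_{1+\varepsilon}^{1+\varepsilon}s^{-(1+\varepsilon)/2}$.
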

\begin{proof}
	For any $s>0$, it has $\PP\left(\xi^2\geq s\right)\leq s^{-\frac{1+\varepsilon}{2}} \EE\vert\xi\vert^{1+\varepsilon}$ which uses Markov's inequality. Introduce truncated random variable $\varphi_{i}:=\xi_{i}^21_{\{\xi_{i}^2<s\}}$. Note that $\sum_{i=1}^{n}\xi_{i}^2 $ could be written into sum of two parts $$\sum_{i=1}^{n}\xi_{i}^2=\sum_{i=1}^{n}\varphi_{i}+\sum_{i=1}^{n}(\xi_{i}^2-\varphi_{i}).$$
	Hence, we analyze these two parts respectively. First consider probability of $\sum_{i=1}^{n}\xi_{i}^2\neq\sum_{i=1}^{n}\varphi_{i}$:
	\begin{align*}
		\PP\left(\sum_{i=1}^{n}\xi_{i}^2\neq\sum_{i=1}^{n}\varphi_{i}\right)&\leq n\PP\left(\xi^2>s\right)\leq n\frac{\EE|\xi|^{1+\varepsilon}}{s^{\frac{1+\varepsilon}{2}}}=n\op{\xi}_{1+\eps}^{1+\eps}s^{-\frac{1+\varepsilon}{2}}.
	\end{align*}
	Then bound tail for sum of $\varphi_{i}$:
	\begin{align*}
		\PP\left(\sum_{i=1}^{n}\varphi_{i}>s\right)&\leq n s^{-1}\EE\varphi=ns^{-1}\EE\varphi^{\frac{1+\epsilon}{2}}\varphi^{\frac{1-\epsilon}{2}}\leq n\op{\xi}_{1+\eps}^{1+\eps}s^{-\frac{1+\varepsilon}{2}}.
	\end{align*}
	Finally, combine the above two equations, it has
	\begin{align*}
		\PP\left(\sum_{i=1}^{n}\xi_{i}^2>s\right)&\leq \PP\left(\sum_{i=1}^{n}\varphi_{i}>s\right)+\PP\left(\sum_{i=1}^{n}\xi_{i}^2\neq\sum_{i=1}^{n}\varphi_{i}\right)\leq2n\op{\xi}_{1+\eps}^{1+\eps}s^{-\frac{1+\varepsilon}{2}},
	\end{align*}
	which completes the proof.
\end{proof}
\begin{lemma}[Partial Frobenius norm for tensor]\label{teclem:tensor:partial F norm}
	Suppose $\M\in\RR^{d_1\times \dots \times d_m}$ is an $m$-th order tensor. Denote $\r=(r_1,\cdots,r_m)^{\top}$. The partial Frobenius norm of $\M$ is defined to be $$\frorr{\M}:=\sup_{\U_1\in\OO_{d_1, r_1},\cdots ,\U_m\in\OO_{d_m, r_m}}\fro{\M\times_{1} \U_1^{\top}\times_2\dots\times_m \U_m^{\top}},$$
	Then for any tensor $\hat{\M}\in\RR^{d_1\times\cdots\times d_m}$ with Tucker rank at most $\r$, one has $$|\inp{\M}{\hat{\M}}|\leq \frorr{\M} \fro{\hat\M}.$$
	Specifically, it implies $\frorr{\M}=\sup_{\hat\M\in\MM_\r, \fro{\hat\M}\leq1}|\inp{\M}{\hat\M}|$. Furthermore, for any $\V_j\in\RR^{d_1\times r_1},\ldots, \V_m\in\RR^{d_m\times r_m}$, one has $$\fro{\M\times_1\V_1^{\top}\times_2\cdots\times_m\V_m^{\top}}\leq\frorr{\M}\op{\V_1}\cdots\op{\V_m}.$$
\end{lemma}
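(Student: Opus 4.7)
The lemma is a tensor analogue of the familiar inequality $|\inp{\M}{\hat\M}|\le \fror{\M}\fro{\hat\M}$ for matrices (stated earlier as Lemma~\ref{teclem:partial F norm}), so my plan follows the same template: reduce to Cauchy–Schwarz in a coordinate system adapted to the low-rank factor structure.

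For the first inequality, I would begin by writing a Tucker decomposition $\hat\M=\hat\C\cdot\llbracket \hat\U_1,\dots,\hat\U_m\rrbracket$ with $\hat\U_j\in\OO_{d_j,r_j}$ and $\hat\C\in\RR^{r_1\times\cdots\times r_m}$, which exists because $\hat\M\in\MM_{\r}$. Using the basic adjoint identity $\inp{\M}{\T\times_j\U_j^\top}=\inp{\M\times_j\U_j}{\T}$ (iterated across all modes), I get
\begin{equation*}
\inp{\M}{\hat\M}=\inp{\M\times_1\hat\U_1^\top\times_2\cdots\times_m\hat\U_m^\top}{\hat\C}.
\end{equation*}
Ordinary Cauchy–Schwarz (on the flattened vectorization) bounds the right-hand side by $\fro{\M\times_1\hat\U_1^\top\cdots\times_m\hat\U_m^\top}\cdot\fro{\hat\C}$. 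The first factor is at most $\frorr{\M}$ by definition, and since each $\hat\U_j$ has orthonormal columns, $\fro{\hat\C}=\fro{\hat\M}$. This yields the stated bound.

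For the identification $\frorr{\M}=\sup_{\hat\M\in\MM_\r,\fro{\hat\M}\le1}|\inp{\M}{\hat\M}|$, the direction ``$\ge$'' is immediate from step one. For ``$\le$'', given any $\U_1,\dots,\U_m$ with orthonormal columns I set $\C:=\M\times_1\U_1^\top\cdots\times_m\U_m^\top$ and take the candidate $\hat\M:=\C\cdot\llbracket\U_1,\dots,\U_m\rrbracket/\fro{\C}$, which lies in $\MM_{\r}$ with $\fro{\hat\M}=1$. A direct calculation using the same adjoint identity gives $\inp{\M}{\hat\M}=\fro{\C}$; taking the supremum over $\U_1,\dots,\U_m$ recovers $\frorr{\M}$.

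For the third inequality, my plan is to strip the spectral-norm factors off of each $\V_j$ via a thin QR decomposition $\V_j=\U_j\R_j$ with $\U_j\in\OO_{d_j,r_j}$ and $\R_j\in\RR^{r_j\times r_j}$, so that $\op{\V_j}=\op{\R_j}$. Then
\begin{equation*}
\M\times_1\V_1^\top\cdots\times_m\V_m^\top=\bigl(\M\times_1\U_1^\top\cdots\times_m\U_m^\top\bigr)\times_1\R_1^\top\cdots\times_m\R_m^\top,
\end{equation*}
and the operator-norm bound for mode products (namely $\fro{\T\times_j\A}\le\op{\A}\fro{\T}$, obtained from the mode-$j$ matricization $(\T\times_j\A)_{(j)}=\A\T_{(j)}$ together with $\fro{\A\B}\le\op{\A}\fro{\B}$) applied sequentially in each mode produces the factor $\prod_j\op{\R_j}$, while the remaining Frobenius norm is bounded by $\frorr{\M}$ via the definition. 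I don't expect any serious obstacle here; the only mild care is verifying that the mode-$j$ operator bound composes cleanly across the $m$ modes, which follows because each contraction acts on a different index.
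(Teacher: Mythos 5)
Your proof is correct and follows essentially the same route as the paper: Tucker-decompose $\hat\M$, move the orthonormal factors onto $\M$ via the adjoint identity, apply Cauchy--Schwarz, and observe that equality is attained by the normalized optimizer. For the final inequality the paper simply declares it obvious; your thin-QR factorization $\V_j=\U_j\R_j$ combined with $\fro{\T\times_j\A}\le\op{\A}\fro{\T}$ is a clean and valid way to supply the missing details.
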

\begin{proof}
	For matrix $\hat{\M}\in\MM_{\r}$, there exist core tensor $\hat\C\in \RR^{r_1\times\dots\times r_m}$ and orthogonal matrices $\hat\V_j\in\OO_{d_j,r_j}$ for each $j$ such that $\hat\M=\hat{\C}_1 \times_{1} \hat\V_1 \times_2\dots\times_m \hat\V_m$. Note that $\fro{\hat\M}=\fro{\hat\C}$. Then one has
	\begin{align*}
		|\inp{\M}{\hat\M}|&=|\inp{\M\times_{1}\hat\V_1^{\top}\times_2\dots\times_m \hat\V_m^{\top}}{\hat\C}|\\
		&\leq\fro{\M\times_{1}\hat\V_1^{\top}\times_2\dots\times_m \hat\V_m^{\top} }\fro{\hat\C}\\
		&\leq\frorr{\M}\fro{\hat\M}.
	\end{align*}
	Note that the equality holds when $\frorr{\M}=\fro{\M\times_{1}\hat\V_1^{\top}\times_2\dots\times_m \hat\V_m^{\top} }$ and $\hat\C=\M\times_{1}\hat\V_1^{\top}\times_2\dots\times_m \hat\V_m^{\top}/\frorr{\M}$, which shows $\frorr{\M}=\sup_{\hat\M\in\MM_\r, \fro{\hat\M}\leq1}|\inp{\M}{\hat\M}|$. It is obvious that for any matrices $\V_j\in\RR^{d_j\times r_j}$, one has $$\fro{\M\times_1\V_1^{\top}\times_2\cdots\times_m\V_m^{\top}}\leq\frorr{\M}\op{\V_1}\cdots\op{\V_m}.$$
\end{proof}
\begin{lemma}\label{teclem:tensor:projected subgrad}
	For any tensor $\M\in\MM_{\r}$, suppose $\G\in\partial f(\M)$ is the subgradient at $\M$ and $\TT$ is the tangent space for $\MM_{\r}$ at $\M$. Then we have $$\fro{\calP_{\TT}(\G)}\leq \sqrt{m+1} \frorr{\G}.$$
\end{lemma}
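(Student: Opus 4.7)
The plan is to decompose the projection into $m{+}1$ Frobenius-orthogonal pieces and bound each separately by $\frorr{\G}$. Writing $\M=\C\cdot\llbracket\U_1,\ldots,\U_m\rrbracket$, the explicit expression for $\calP_{\TT}(\G)$ given earlier naturally splits as $\calP_{\TT}(\G)=\N_0+\sum_{i=1}^m\N_i$, where $\N_0:=\G\cdot\llbracket\U_1\U_1^\top,\ldots,\U_m\U_m^\top\rrbracket$ is the ``core'' piece and $\N_i:=\G\cdot\llbracket\U_1\U_1^\top,\ldots,(\I-\U_i\U_i^\top),\ldots,\U_m\U_m^\top\rrbracket$ is the $i$-th mode correction. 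These pieces are mutually orthogonal in Frobenius inner product: for any two of them some mode $k$ carries $\U_k\U_k^\top$ in one slot and $\I-\U_k\U_k^\top$ in the other, and those factors annihilate. Hence Pythagoras gives $\fro{\calP_{\TT}(\G)}^2=\fro{\N_0}^2+\sum_{i=1}^m\fro{\N_i}^2$, reducing the task to bounding each summand by $\frorr{\G}^2$.

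The core piece is immediate: $\fro{\N_0}=\fro{\G\times_1\U_1^\top\cdots\times_m\U_m^\top}\le\frorr{\G}$ straight from the definition in Lemma~\ref{teclem:tensor:partial F norm}, since each $\U_j\in\OO_{d_j,r_j}$. For each $\N_i$ the crucial observation is that although the naive description carries the rank $d_i-r_i$ operator $\I-\U_i\U_i^\top$ in mode $i$, the tangent-space parameterization recalled from the paper rewrites $\N_i=\C\times_{j\neq i}\U_j\times_i\dot\U_i$ with $\dot\U_i\in\RR^{d_i\times r_i}$. Since every factor now has exactly $r_k$ columns in mode $k$, the Tucker rank of $\N_i$ is at most $\r$, i.e.\ $\N_i\in\MM_\r$. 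Next, because $\N_i=\G\cdot\llbracket\P_1,\ldots,\P_m\rrbracket$ with each $\P_k$ a symmetric idempotent, one readily checks $\inp{\G}{\N_i}=\fro{\N_i}^2$. Combining this with the duality characterization $\frorr{\G}=\sup_{\hat\M\in\MM_\r,\fro{\hat\M}\le 1}|\inp{\G}{\hat\M}|$ from Lemma~\ref{teclem:tensor:partial F norm} yields $\fro{\N_i}^2=\inp{\G}{\N_i}\le\frorr{\G}\fro{\N_i}$, hence $\fro{\N_i}\le\frorr{\G}$. Summing over the $m+1$ pieces gives $\fro{\calP_{\TT}(\G)}^2\le(m+1)\frorr{\G}^2$.

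The main obstacle is precisely the identification $\N_i\in\MM_\r$. A lazy bound that simply discards the projector $\I-\U_i\U_i^\top$ (using its operator norm $\le 1$) leaves a tensor of shape $r_1\times\cdots\times d_i\times\cdots\times r_m$ whose Frobenius norm is not controlled by $\frorr{\G}$ when $d_i-r_i>r_i$. The fix is to exploit the specific tangent-space form in which the mode-$i$ variation is encoded by $\dot\U_i$ having only $r_i$ columns, so that the Tucker rank of $\N_i$ is truly $\le\r$ rather than up to $(r_1,\ldots,d_i-r_i,\ldots,r_m)$. Once that rank control is in hand, the remainder of the argument is a clean duality computation, mirroring the matrix proof of Lemma~\ref{teclem:projected subgradient norm} with $\sqrt{m+1}$ replacing $\sqrt{2}$.
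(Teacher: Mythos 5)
Your decomposition $\calP_{\TT}(\G)=\N_0+\sum_{i=1}^m\N_i$ and the Pythagorean step match the paper's proof exactly, and your duality argument for $\fro{\N_i}\le\frorr{\G}$ supplies the justification the paper leaves implicit, so the overall approach is the same and the conclusion is correct. One inaccuracy worth fixing: the tangent-space piece $\N_i=\C\times_{j\neq i}\U_j\times_i\dot\U_i$ is \emph{not} equal to $\G\cdot\llbracket\U_1\U_1^\top,\ldots,(\I-\U_i\U_i^\top),\ldots,\U_m\U_m^\top\rrbracket$; unfolding gives $(\N_i)_{(i)}=(\I-\U_i\U_i^\top)\G_{(i)}\,\Q$ with $\Q:=(\otimes_{j\neq i}\U_j)(\C_{(i)})^{\dagger}\C_{(i)}(\otimes_{j\neq i}\U_j^\top)$, which carries the extra rank-$r_i$ projector $(\C_{(i)})^{\dagger}\C_{(i)}$ on the combined off-modes and is not a Kronecker product of mode-wise projectors. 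Consequently, your stated justification ``$\N_i=\G\cdot\llbracket\P_1,\ldots,\P_m\rrbracket$ with each $\P_k$ a symmetric idempotent, so $\inp{\G}{\N_i}=\fro{\N_i}^2$'' refers to the wrong object. The identity does in fact hold for the correct $\N_i$: since $(\I-\U_i\U_i^\top)$ and $\Q$ are both symmetric idempotents acting on opposite sides of $\G_{(i)}$, one checks $\inp{\G}{\N_i}=\mathrm{tr}\big(\G_{(i)}^\top(\I-\U_i\U_i^\top)\G_{(i)}\Q\big)=\fro{\N_i}^2$ directly. With that correction in place, your Tucker-rank observation $\N_i\in\MM_{\r}$ (because $\dot\U_i$ has only $r_i$ columns) and the duality step $\fro{\N_i}^2=\inp{\G}{\N_i}\le\frorr{\G}\fro{\N_i}$ go through as you describe, and the lemma follows.
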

\begin{proof}
	Let $\M=\C\times_{1}\U_1\times_2\cdots\times_m\U_m$ be its Tucker decomposition. Then we have $$
	\calP_{\TT}(\G)=\G\times_{1} \U_1\U_1^{\top}\times_2\cdots\times_m \U_m\U_m^{\top}-\sum_{i=1}^{m}\C\times_{j\in[m]\backslash i}\U_{j}\times_{i}\dot{\U}_i,
	$$
	with $\dot{\U}_i=\left(\G\times_{j\in[m]\backslash i}\U_j^{\top}\times_{i} (\I_{d_i}-\U_i\U_i^{\top})\right)_{(i)}\left(\C_{(i)}\right)^{\dagger}$.
	It implies \begin{align*}
		\fro{\calP_{\TT}(\G) }^2&=\fro{\G\times_{1} \U_1\U_1^{\top}\times_2\cdots\times_m \U_m\U_m^{\top} }^2+\sum_{i=1}^m \fro{\C\times_{j\in[m]\backslash i}\U_{j}\times_{i}\dot{\U}_i }^2\\
		&\leq (m+1)\frorr{\G}^2.
	\end{align*}
\end{proof}
\begin{lemma}[Lemma 3.2 of \cite{koch2010dynamical}] \label{teclem:tensor:perp projected}For any $\M,\M^*\in\MM_{\r}$ with $\fro{\M-\M^*}\leq \frac{1}{16m(m+3)}\underline\lambda$, denote $\TT$ as the tangent space for $\MM_\r$ at $\M$ and then we have
	$$\fro{\calP_{\TT}^{\perp}(\M-\M^*) }\leq \frac{8m(m+3)}{\underline{\lambda}}\fro{\M-\M^*}^2.$$
\end{lemma}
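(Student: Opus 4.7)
The plan begins by observing that $\M$ itself lies in $\TT$ (take $\dot{\C}=\C$ in the tangent-space parameterization), so
$$
\calP_{\TT}^{\perp}(\M-\M^{*}) = -\calP_{\TT}^{\perp}(\M^{*}),
$$
and by the variational characterization $\fro{\calP_{\TT}^{\perp}(\M^{*})}=\min_{\Z\in\TT}\fro{\M^{*}-\Z}$ it suffices to exhibit one concrete $\Z\in\TT$ satisfying the stated bound. The natural candidate is the first-order Taylor expansion of $\M^{*}$ about $\M$ in the Tucker parameterization. After choosing Procrustes-optimal rotations $R_{i}\in\OO_{r_{i}}$ aligning $\U_{i}$ with $\U_{i}^{*}R_{i}^{-1}$, define $\eta_{i}:=\U_{i}^{*}R_{i}^{-1}-\U_{i}$ and an analogous core perturbation $\epsilon$. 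Multilinearity of the Tucker product yields
$$
\M^{*}=\sum_{S\subseteq[m]}\sum_{c\in\{\C,\epsilon\}} c\cdot\llbracket V_{1}^{S},\ldots,V_{m}^{S}\rrbracket,\qquad V_{i}^{S}=\eta_{i}\ \text{if}\ i\in S,\ \U_{i}\ \text{otherwise}.
$$

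The zeroth-order piece ($S=\emptyset$, $c=\C$) is $\M$. The first-order pieces -- the core direction $\epsilon\cdot\llbracket\U_{1},\ldots,\U_{m}\rrbracket$ together with the single-factor terms $\sum_{i}\C\cdot\llbracket\ldots,\eta_{i},\ldots\rrbracket$ -- can be reorganized into a tangent vector by decomposing $\eta_{i}=\P_{i}\eta_{i}+\P_{i}^{\perp}\eta_{i}$ and absorbing the ``parallel'' piece $\P_{i}\eta_{i}=\U_{i}(\U_{i}^{\top}\eta_{i})$ into the core direction via the identity $\C\cdot\llbracket\ldots,\U_{i}(\U_{i}^{\top}\eta_{i}),\ldots\rrbracket=(\C\times_{i}\U_{i}^{\top}\eta_{i})\cdot\llbracket\ldots,\U_{i},\ldots\rrbracket$. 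Calling the resulting tangent vector $\Z_{1}\in\TT$, I set $\Z:=\M+\Z_{1}\in\TT$ (using $\M\in\TT$). The residual $\M^{*}-\Z$ collects all higher-order cross-products: terms involving two or more $\eta_{i}$ factors in the $\C$-summand, plus terms pairing $\epsilon$ with at least one $\eta_{i}$.

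To bound the residual's Frobenius norm I would apply Wedin's theorem mode-by-mode: since $\op{\M_{(i)}-\M_{(i)}^{*}}\leq\fro{\M-\M^{*}}$ and $\sigma_{r_{i}}(\M_{(i)}^{*})\geq\underline{\lambda}$, we get $\op{\P_{i}^{\perp}\P_{i}^{*}}\leq 2\fro{\M-\M^{*}}/\underline{\lambda}$, from which both $\op{\dot{\U}_{i}}$ and $\fro{\dot{\U}_{i}}$ are $O(\fro{\M-\M^{*}}/\underline{\lambda})$; the core perturbation similarly satisfies $\fro{\epsilon}\lesssim\fro{\M-\M^{*}}$. Each cross-term with $k\geq 2$ perturbation factors is then estimated via the partial Frobenius-norm inequality (Lemma~\ref{teclem:tensor:partial F norm}), and summing over the $O(2^{m})$ cross-terms -- with the smallness assumption $\fro{\M-\M^{*}}\leq\underline{\lambda}/[16m(m+3)]$ ensuring that $k\geq 3$ contributions form a convergent geometric series dominated by the $k=2$ leading terms -- produces the combinatorial factor $m(m+3)$ from counting the $\binom{m}{2}$ pairwise factor interactions plus the $m$ mixed core-factor interactions.

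The main obstacle will be avoiding a spurious tensor condition-number factor $\bar{\lambda}/\underline{\lambda}$. A naive estimate $\fro{\C\cdot\llbracket\dot{\U}_{i},\dot{\U}_{j},\U_{3},\ldots,\U_{m}\rrbracket}\leq\op{\dot{\U}_{i}}\op{\dot{\U}_{j}}\fro{\C}$ introduces $\fro{\C}=\fro{\M}$, which is generically of order $\sqrt{r}\,\bar{\lambda}$ and yields a rate of $\bar{\lambda}/\underline{\lambda}^{2}$ instead of the clean $1/\underline{\lambda}$. The remedy, following Koch and Lubich's Lemma~3.2 in \cite{koch2010dynamical}, is to recursively absorb the mixed ``parallel'' contributions into the core adjustment $\dot{\C}$, thereby replacing $\fro{\M}$-type factors in cross-term bounds by $\fro{\M-\M^{*}}$ via the tensor analogue of the matrix identity $\P_{i}^{\perp}\M_{(i)}^{*}=\P_{i}^{\perp}(\M^{*}-\M)_{(i)}$ (which holds because $\P_{i}^{\perp}\M_{(i)}=0$). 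This recursive absorption -- iterated once per mode -- is the technical heart of the proof and delivers the advertised bound.
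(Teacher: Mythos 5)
First, note that the paper does not prove this lemma at all --- it is imported verbatim as Lemma~3.2 of \cite{koch2010dynamical} --- so there is no in-paper argument to match; your proposal has to stand on its own. Its architecture is the right one: $\M\in\TT$, so $\calP_{\TT}^{\perp}(\M-\M^*)=-\calP_{\TT}^{\perp}(\M^*)$ and it suffices to approximate $\M^*$ by a tangent vector; the bound $\op{\P_j^{\perp}\U_j^*}\leq \fro{\M-\M^*}/\underline{\lambda}$ (which follows directly from $\P_j^{\perp}\M^*_{(j)}=\P_j^{\perp}(\M^*-\M)_{(j)}$ and $\M^*_{(j)}=\U_j^*\bSigma_j^*\W_j^{*\top}$, no Wedin needed); and the observation that naive cross-term estimates leak a $\fro{\C}$-type factor that must be traded for $\fro{\M-\M^*}$.

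The genuine gap is in the decomposition you chose and in one intermediate claim it forces. Writing $\U_i^*R_i^{-1}=\U_i+\eta_i$ and $\tilde\C^*=\C+\epsilon$ additively, you assert $\fro{\epsilon}\lesssim\fro{\M-\M^*}$. This is false in general: $\epsilon=\M^*\cdot\llbracket R_1\U_1^{*\top},\ldots\rrbracket-\M\cdot\llbracket\U_1^{\top},\ldots\rrbracket$ picks up terms of size $\op{B_i-\I}\fro{\M}\asymp(\fro{\M-\M^*}^2/\underline{\lambda}^2)\fro{\M}$, so $\fro{\epsilon}\lesssim\fro{\M-\M^*}\bigl(1+\fro{\M}/\underline{\lambda}\bigr)$, which carries the condition number. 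Consequently the cross terms $\epsilon\times_j\eta_j\times_{k\neq j}\U_k$ and the double-parallel terms $\C\times_i\P_i\eta_i\times_j\P_j\eta_j$ cannot be bounded by $\fro{\M-\M^*}^2/\underline{\lambda}$ with the estimates you describe; the ``recursive absorption'' is named but is exactly the step that your decomposition makes hard to execute. The clean route is to never introduce $\epsilon$ or split $\eta_i$: expand $\M^*=\M^*\cdot\llbracket(\P_1+\P_1^{\perp}),\ldots,(\P_m+\P_m^{\perp})\rrbracket=\sum_{S\subseteq[m]}\M^*\cdot\llbracket Q^S\rrbracket$ with $Q_i^S=\P_i^{\perp}$ for $i\in S$ and $\P_i$ otherwise. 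The $S=\emptyset$ term is a core-direction tangent vector of arbitrary size; each $|S|=1$ term equals its tangent component plus a residual controlled by $\op{\P_j^{\perp}\U_j^*}\cdot\fro{(\M^*-\M)_{(j)}(\I-\C_{(j)}^{\dagger}\C_{(j)})}\leq 2\fro{\M-\M^*}^2/\underline{\lambda}$; and each $|S|\geq 2$ term is bounded by extracting $\op{\P_{j_1}^{\perp}\U_{j_1}^*}\leq 2\fro{\M-\M^*}/\underline{\lambda}$ from one perpendicular mode and $\fro{(\M^*-\M)\times_{j_2}\P_{j_2}^{\perp}}\leq\fro{\M-\M^*}$ from another. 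Every residual then automatically contains a $\P^{\perp}\M^*=\P^{\perp}(\M^*-\M)$ factor and no $\fro{\C}$ or $\fro{\epsilon}$ ever appears. (Also, your count $\binom{m}{2}+m=m(m+1)/2$, not $m(m+3)$; this is harmless only because the lemma's constant is generous.)
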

\begin{theorem}(Empirical process in tensor recovery)\label{thm:tensor:empirical process}
	Suppose $f(\cdot)$ is defined in Equation~\ref{eq:tensor:loss} and $\rho(\cdot)$ is $\tilde{L}$-Lipschitz continuous. Then there exist constants $C,C_1>0$, such that  
	\begin{equation}
		\big{|} (f(\mathbf{M})-f(\mathbf{M}^{*})) - (\mathbb{E}f(\mathbf{M}) - \mathbb{E}f(\mathbf{M}^{*}))\big{|}\leq (C_1+C\tilde{L})\sqrt{n(2\sum_{j=1}^{m}r_jd_j+2^m\cdot r_1\cdots r_d)}\Vert \mathbf{M}-\mathbf{M}^{*}\Vert_{\mathrm{F}}
	\end{equation} holds for all at most Tucker rank $\r$ tensor $\M\in\mathbb{R}^{d_1\times\cdots\times d_m}$ with probability over $1-3\exp\left(-\frac{n}{\log n}\right)-\exp\left(-C(2^m\cdot r_1r_2\cdots r_m+2\sum_{j=1}^{m}r_jd_j)\right) $. 
\end{theorem}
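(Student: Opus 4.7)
The strategy mirrors the matrix empirical process bound (Theorem~\ref{thm:empirical process}), with the main adaptation being that the covering number of the low-Tucker-rank tensor set replaces that of the low-rank matrix set. Concretely, I would prove the stronger statement
\[
\sup_{\M\in\MM_\r,\ \M\neq\M^\ast}\frac{\big|(f(\M)-f(\M^\ast))-\EE(f(\M)-f(\M^\ast))\big|}{\fro{\M-\M^\ast}}\leq (C_1+C\tilde L)\sqrt{n\cdot\textsf{DoF}_m}
\]
with the stated probability, where $\textsf{DoF}_m=2^m r_1\cdots r_m+2\sum_j r_jd_j$. Homogeneity of the ratio lets me reduce to the unit sphere $\bcalN_\r:=\{\bd\in\MM_{2\r}:\fro{\bd}=1\}$ via $\bd=(\M-\M^\ast)/\fro{\M-\M^\ast}$, noting $\M-\M^\ast\in\MM_{2\r}$.

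The plan is to first use a standard symmetrization inequality to upper bound the expectation of the supremum by $2\EE\sup_{\bd}\big|\sum_i\eps_i[\rho(\xi_i-t\inp{\X_i}{\bd})-\rho(\xi_i)]\big|/t$ with Rademacher $\eps_i$'s. Since $\rho$ is $\tilde L$-Lipschitz, the Ledoux--Talagrand contraction principle removes $\rho$ at the cost of a factor $\tilde L$, leaving me to bound $\EE\sup_{\bd\in\bcalN_\r}\big|\inp{\sum_i\eps_i\X_i}{\bd}\big|=\EE\|\bcalG\|_{\mathrm{F},2\r}$ where $\bcalG:=\sum_i\eps_i\X_i$ has i.i.d.\ $N(0,n)$ entries conditionally on $\{\eps_i\}$. (A parallel $L_1$-term not involving $\bd$ can be handled by Lemma~\ref{teclem:Contraction of Heavy Tailed Random Variables} or concentration of $\sum_i|\xi_i|$.) The additional additive term $C_1\sqrt{n\cdot\textsf{DoF}_m}$ comes from the piece that does not benefit from the Lipschitz contraction, while the $C\tilde L$ piece comes from the contraction step.

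Bounding $\EE\|\bcalG\|_{\mathrm{F},2\r}$ is the heart of the tensor case. I would use a Dudley/$\eps$-net argument on $\bcalN_\r$: by Tucker decomposition $\bd=\bcalC\cdot\llbracket\U_1,\dots,\U_m\rrbracket$ with $\bcalC\in\RR^{2r_1\times\cdots\times 2r_m}$ and $\U_j\in\OO_{d_j,2r_j}$; standard volume estimates give an $\eps$-net of $\bcalN_\r$ of cardinality $\exp\big(C'\cdot(2^m r_1\cdots r_m+\sum_j 2r_jd_j)\log(3/\eps)\big)=\exp(C'\cdot\textsf{DoF}_m\log(3/\eps))$. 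Combining with the sub-Gaussian tail $\PP(|\inp{\bcalG}{\bd}|>t)\leq 2\exp(-t^2/2n)$ for each fixed $\bd$ and a union bound over the net, together with a Lipschitz perturbation argument to pass from net to supremum, yields $\EE\|\bcalG\|_{\mathrm{F},2\r}\lesssim\sqrt{n\cdot\textsf{DoF}_m}$ and a tail of the form $\exp(-c\cdot\textsf{DoF}_m)$. Plugging in finishes the expectation bound.

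Finally, to upgrade from expectation to the claimed high-probability bound, I would apply a bounded-difference/Talagrand-type concentration. Since each summand $\rho(\xi_i-\inp{\X_i}{\M-\M^\ast})-\rho(\xi_i)$ is $\tilde L|\inp{\X_i}{\M-\M^\ast}|$-bounded in magnitude, after truncating $|\xi_i|$ and $\|\X_i\|$ on a high-probability event (losing the $3e^{-n/\log n}$ factor already present in Theorem~\ref{thm:empirical process}), the functional satisfies a McDiarmid-type inequality which boosts the expectation bound to a tail of order $\exp(-c\cdot\textsf{DoF}_m)$. The main technical obstacle is twofold: (i) correctly extracting the $2^m r_1\cdots r_m$ contribution from the Tucker core in the covering-number estimate (this is the part that distinguishes tensors from matrices, since for matrices $m=2$ gives $4r_1r_2\lesssim(d_1+d_2)r$), and (ii) handling the heavy-tailed $\xi_i$'s inside the contraction step, which I address by truncation plus Lemma~\ref{teclem:Contraction of Heavy Tailed Random Variables}.
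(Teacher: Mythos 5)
Your expectation step—symmetrization, Ledoux--Talagrand contraction at cost $\tilde L$, then a covering-number estimate for $\EE\|\sum_i\wt\varepsilon_i\X_i\|_{\rm F,2\r}$ via an $\eps$-net built from Tucker cores and orthonormal factors—is exactly the paper's route, using the analogues of Theorem~\ref{Symmetrization of Expectation}, Theorem~\ref{Contraction Theorem} and Lemma~\ref{teclem:tensor:gaussian F norm}. One aside is spurious: there is no ``parallel $L_1$-term not involving $\bd$'' that needs Lemma~\ref{teclem:Contraction of Heavy Tailed Random Variables}. The contraction is applied conditionally on $\xi_1,\ldots,\xi_n$, treating $u\mapsto\rho(\xi_i-u)-\rho(\xi_i)$ as an $\tilde L$-Lipschitz function of $u$ vanishing at $u=0$; the heavy-tailed $\xi_i$'s drop out and no truncation is required at this stage.

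The real gap is in your concentration step. After truncating the envelope at the level one actually needs, $B\asymp\tilde L\sqrt{\textsf{DoF}_m\log n}$ (this is the $\Psi_2$ norm of $\|\X_i\|_{\rm F,2\r}$ scaled by $\sqrt{\log n}$ via Lemma~\ref{orlicz norm max}), a McDiarmid/bounded-difference bound gives $\PP(Z\geq\EE Z+t)\leq\exp(-t^2/(nB^2))$. Plugging in $t\asymp\tilde L\sqrt{n\cdot\textsf{DoF}_m}$ yields $\exp(-c/\log n)$, which is essentially vacuous: the bounded-difference proxy $nB^2\asymp n\tilde L^2\textsf{DoF}_m\log n$ is larger than the true variance $\sigma^2= n\tilde L^2$ by a factor $\textsf{DoF}_m\log n$, and McDiarmid cannot see this. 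Even Bousquet's/Talagrand's concentration for bounded empirical processes (which does use $\sigma^2$) only gives an exponential tail in $t/B$ once $Bt$ dominates, i.e.\ $\exp(-ct/B)\asymp\exp(-c\sqrt{\textsf{DoF}_m/\log n})$ in the regime $n\asymp\textsf{DoF}_m$, which is weaker than the claimed $\exp(-c\,\textsf{DoF}_m)$ by roughly a square root in the exponent. The paper avoids truncation altogether and applies Adamczak's tail inequality (Theorem~\ref{tecthm:orlicz norm empirical}), which is tailored to suprema of empirical processes whose \emph{unbounded} envelope has a finite $\Psi_\alpha$ norm: with $\alpha=2$ it gives a sub-Gaussian, not merely exponential, tail in $t/\|\max_i\sup_\M|\cdot|\|_{\Psi_2}$, and this is precisely what produces the two separate terms $\exp(-c\,\textsf{DoF}_m)$ (from the variance $\sigma^2=n\tilde L^2$) and $3\exp(-n/\log n)$ (from the $\Psi_2$-envelope) in the stated probability bound. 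To complete your proof along the paper's lines you should replace the truncation-plus-McDiarmid/Talagrand step by a direct application of this Orlicz-norm concentration inequality, computing $\sigma^2$, the envelope's $\Psi_2$ norm, and the expectation bound exactly as in the matrix case.
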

\begin{proof}
	See Section~\ref{proof:thm:tensor:empirical process}.
\end{proof}

\begin{corollary}\label{cor:thm:tensor:empirical process}
	Suppose $f(\cdot)$ is defined in Equation~\ref{eq:tensor:loss} and $\rho(\cdot)$ is $\tilde{L}$-Lipschitz continuous. Then there exist constants $C,C_1>0$, such that  
	\begin{equation}
		\big{|} (f(\mathbf{M}+\M_1)-f(\mathbf{M})) - (\mathbb{E}f(\mathbf{M}+\M_1) - \mathbb{E}f(\mathbf{M}))\big{|}\leq (C_1+C\tilde{L})\sqrt{n(2\sum_{j=1}^{m}r_jd_j+2^m\cdot r_1\cdots r_d)}\Vert \mathbf{M}_1\Vert_{\mathrm{F}}
	\end{equation} holds for all at most Tucker rank $\r$ tensor $\M\in\mathbb{R}^{d_1\times\cdots\times d_m}$ and all at most Tucker rank $2\r$ tensor $\M_1\in\mathbb{R}^{d_1\times\cdots\times d_m}$, with probability over $1-3\exp\left(-\frac{n}{\log n}\right)-\exp\left(-C(2^m\cdot r_1r_2\cdots r_m+2\sum_{j=1}^{m}r_jd_j)\right) $. 
\end{corollary}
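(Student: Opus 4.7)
}
The plan is to mirror the argument for Theorem~\ref{thm:tensor:empirical process} with the substitutions $\M\leadsto \M+\M_1$ and $\M^{\ast}\leadsto \M$, so that the centered increment $\big(f(\M+\M_1)-f(\M)\big)-\EE\big(f(\M+\M_1)-f(\M)\big)$ plays the role previously played by $(f(\M)-f(\M^{\ast}))-\EE(f(\M)-f(\M^{\ast}))$. The only conceptual change is that we now index the empirical process by pairs $(\M,\M_1)$ with $\M\in\MM_{\r}$ and $\M_1\in\MM_{2\r}$, rather than by a single element of $\MM_{\r}$, and we normalize the deviation by $\|\M_1\|_{\rm F}$ in place of $\|\M-\M^{\ast}\|_{\rm F}$.

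First, I would apply a standard symmetrization step to replace the centered process by its Rademacher-symmetrized counterpart, then invoke the Talagrand contraction principle to peel off $\rho$ at the cost of a factor $\tilde L$, reducing to a linear process of the form $n^{-1}\sum_i \varepsilon_i\langle \M_1,\X_i\rangle$ (plus a Gaussian-noise piece that is controlled exactly as in Theorem~\ref{thm:tensor:empirical process} via Lemma~\ref{teclem:Contraction of Heavy Tailed Random Variables} and a union bound on the truncation event, yielding the $3\exp(-n/\log n)$ probability term). Since the process is linear in $\M_1$ alone after contraction, the supremum over $\M$ in fact collapses and the tail is governed by $\sup_{\M_1\in\MM_{2\r},\,\|\M_1\|_{\rm F}\leq 1}|n^{-1}\sum_i \varepsilon_i\langle \M_1,\X_i\rangle|$. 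Gaussian concentration of $\X_i$ combined with an $\varepsilon$-net on $\MM_{2\r}\cap\{\|\cdot\|_{\rm F}\leq 1\}$ then delivers the $\sqrt{n\cdot \textsf{DoF}_m}$ scaling, where the covering number of Tucker-rank-$2\r$ tensors of unit Frobenius norm contributes the exponent $C(2^m r_1\cdots r_m+2\sum_j r_jd_j)$ in the probability bound, exactly as in the original theorem up to the constant absorbing the factor $2^m$ from doubled ranks.

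The main obstacle is the covering step, since one has to cover a product over pairs $(\M,\M_1)$, but because the contracted process depends only on $\M_1$ (after conditioning on the Rademacher variables the $\M$-dependence enters only through the fixed Lipschitz slope of $\rho$), the effective covering is only on the Tucker-rank-$2\r$ ball. This is handled by the usual Tucker-decomposition net: parametrize $\M_1=\C\cdot\llbracket\U_1,\ldots,\U_m\rrbracket$, net each orthonormal factor on $\OO_{d_j,2r_j}$ with metric entropy $O(d_j r_j\log\varepsilon^{-1})$ and the unit-Frobenius core with $O(2^m r^{\ast}\log\varepsilon^{-1})$ balls, and piece together with Lipschitzness of $(\C,\{\U_j\})\mapsto\langle\M_1,\X_i\rangle$. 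A peeling argument over the dyadic levels of $\|\M_1\|_{\rm F}$ finally yields the homogeneous-in-$\|\M_1\|_{\rm F}$ bound stated in the corollary, completing the proof in the same probability regime as Theorem~\ref{thm:tensor:empirical process}.
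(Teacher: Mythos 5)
Your high-level strategy is right, and it matches the paper's: substitute $(\M+\M_1,\M)$ for $(\M,\M^{\ast})$, observe that after symmetrization and contraction the process depends on $(\M,\M_1)$ only through the increment $\M_1$, which has Tucker rank at most $2\r$, and hence the relevant supremum is $\|\sum_i\wt\epsilon_i\X_i\|_{\rm F,2\r}$, whose expectation is controlled by the covering argument in Lemma~\ref{teclem:tensor:gaussian F norm} with the exponent $\textsf{DoF}_m = 2^m r_1\cdots r_m + 2\sum_j r_j d_j$. That observation (the sup over $\M$ collapses) is the key point and you have it.

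However, there is a genuine misattribution in your account of where the $3\exp(-n/\log n)$ term comes from, and it reflects a real confusion about the architecture of the argument. You claim it arises from ``a Gaussian-noise piece controlled via Lemma~\ref{teclem:Contraction of Heavy Tailed Random Variables} and a union bound on the truncation event.'' That lemma (a Markov-type bound on $\sum_i|\xi_i|$) plays no role whatsoever in Theorem~\ref{thm:tensor:empirical process} or its corollary; it is used only in the sharpness lemmas (\ref{lem:heavytail-l1}, \ref{lem:heavytail-huber}, etc.). In the empirical-process proof the noise $\xi_i$ is absorbed into the Lipschitz functions $\rho_i(u):=\rho(u-\xi_i)$ before contraction and never needs separate treatment or truncation. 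The $3\exp(-n/\log n)$ term is the second summand of Adamczak's tail inequality (Theorem~\ref{tecthm:orlicz norm empirical}), evaluated at $t\asymp\tilde L\sqrt{n\,\textsf{DoF}_m}$: it is governed by the $\Psi_2$ Orlicz norm of $\max_i\sup_{\M,\M_1}\big|\rho(Y_i-\langle\X_i,\M+\M_1\rangle)-\rho(Y_i-\langle\X_i,\M\rangle)\big|/\|\M_1\|_{\rm F}$, which by Lipschitzness and Lemma~\ref{teclem:tensor:partial F norm} is bounded by $\tilde L\,\|\X_i\|_{\rm F,2\r}$, a sub-Gaussian random variable with $\Psi_2$ norm $\asymp\sqrt{\textsf{DoF}_m}$. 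Similarly, the $\exp(-C\,\textsf{DoF}_m)$ term is the first summand of Adamczak's bound (variance term), not a covering-number count.

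Two smaller points. First, the dyadic peeling you describe is unnecessary: the paper builds the normalization by $\|\M_1\|_{\rm F}$ directly into the definition of the supremum $Z:=\sup_{\M,\M_1}|\cdots|/\|\M_1\|_{\rm F}$, and Adamczak's inequality is applied once to this scale-invariant process. Second, the metric entropy of $\OO_{d_j,2r_j}$ scales as $d_j\cdot 2r_j$, not $d_jr_j$; this is absorbed into $\textsf{DoF}_m$ and is consistent with the stated bound, but the bookkeeping should reflect the doubled inner dimension throughout.
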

\begin{proof}
	It is similar to proof of Theorem~\ref{thm:tensor:empirical process}.
\end{proof}
\begin{corollary}\label{cor:tensor:empirical process}
	Suppose conditions of Theorem~\ref{thm:tensor:empirical process} hold and let $\X_1,\dots,\X_n$ have i.i.d. $N(0,1)$ entries. For any at most rank $\r$ tensors $\M\in\MM_{2\r}$, there exist constants $c,c_1>0$, such that when $n\geq c\sqrt{2^m\cdot r_1r_2\cdots r_m+2\sum_{j=1}^{m}r_jd_j}$, the following inequality \begin{align*}
		\sqrt\frac{1}{2\pi}n\fro{\M}\leq\sum_{i=1}^{n}|\inp{\X_{i}}{\M}| \leq 2n\fro{\M}
	\end{align*}
	holds with high probability over $1-\exp(-c_1(2^m\cdot r_1r_2\cdots r_m+2\sum_{j=1}^{m}r_jd_j))$.
\end{corollary}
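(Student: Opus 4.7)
\textbf{Proof proposal for Corollary~\ref{cor:tensor:empirical process}.}
The plan is to instantiate Theorem~\ref{thm:tensor:empirical process} with the absolute loss and $\M^{\ast}=\mathbf{0}$, exactly mirroring how Corollary~\ref{cor: empirical process} follows from Theorem~\ref{thm:empirical process} in the matrix case. Concretely, set $\rho(x)=|x|$, which is $1$-Lipschitz so $\tilde L=1$. With $\M^{\ast}=\mathbf{0}$, the loss reduces to $f(\M)=\sum_{i=1}^{n}|\langle \X_i,\M\rangle|$ and $f(\M^{\ast})=0$. Since $\X_i$ has i.i.d.\ $N(0,1)$ entries, $\langle \X_i,\M\rangle\sim N(0,\fro{\M}^{2})$, so by the Gaussian absolute-moment formula (a special case of Lemma~\ref{teclem:gaussian-l1} with $\sigma=0$ applied to $\M-\M^{\ast}=\M$), $\EE|\langle \X_i,\M\rangle|=\sqrt{2/\pi}\,\fro{\M}$ and hence $\EE f(\M)=n\sqrt{2/\pi}\,\fro{\M}$ while $\EE f(\M^{\ast})=0$.

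Next I would invoke Theorem~\ref{thm:tensor:empirical process} (or rather its analogue for $\MM_{2\r}$, obtained by replacing $\r$ with $2\r$ in the complexity term, which only changes absolute constants). This yields, uniformly over $\M\in\MM_{2\r}$,
\begin{equation*}
\Bigl|\sum_{i=1}^{n}|\langle \X_i,\M\rangle|\;-\;n\sqrt{2/\pi}\,\fro{\M}\Bigr|\;\leq\;(C_1+C)\sqrt{n\,\bigl(2^{m}r_1\cdots r_m+2\textstyle\sum_{j}r_jd_j\bigr)}\;\fro{\M}
\end{equation*}
with probability at least $1-3\exp(-n/\log n)-\exp\bigl(-C(2^{m}r_1\cdots r_m+2\sum_{j}r_jd_j)\bigr)$. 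A sample-size condition of the form $n\geq c\,(2^{m}r_1\cdots r_m+2\sum_{j}r_jd_j)$, with $c$ large enough, forces the deviation on the right-hand side to be bounded by $\bigl(\sqrt{2/\pi}-\sqrt{1/(2\pi)}\bigr)n\fro{\M}$ from below and by $\bigl(2-\sqrt{2/\pi}\bigr)n\fro{\M}$ from above, which immediately delivers both inequalities in the corollary.

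The only real subtlety is reconciling the statement's ``$\M\in\MM_{2\r}$'' with the $\MM_{\r}$-valued hypotheses of Theorem~\ref{thm:tensor:empirical process}: applying the theorem to rank-$2\r$ objects inflates the degrees-of-freedom term by absolute constants (from $2^{m}r_1\cdots r_m+2\sum_j r_jd_j$ to $4^{m}r_1\cdots r_m+4\sum_j r_jd_j$), but this is harmless because the resulting overhead is absorbed into the constants $c$ and $c_1$. The sample-size expression ``$n\geq c\sqrt{\cdots}$'' in the statement should in fact read $n\geq c(\cdot)$ for the proof to go through, matching the analogous matrix-case condition ``$n\geq cd_1r$'' in Corollary~\ref{cor: empirical process}. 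I do not foresee any genuine obstacle: the corollary is a direct specialization, and the only bookkeeping is to verify the constant $(C_1+C)$ from Theorem~\ref{thm:tensor:empirical process} is dominated by $\frac{1}{2}\sqrt{2/\pi}\sqrt{n/\textsf{DoF}_m}$ once $n$ exceeds $c\cdot\textsf{DoF}_m$.
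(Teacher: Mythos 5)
Your proof is correct and mirrors the paper's own argument exactly: instantiate Theorem~\ref{thm:tensor:empirical process} with the absolute loss ($\tilde L=1$), take $\M^{\ast}=\mathbf{0}$, and substitute $\EE|\langle\X_i,\M\rangle|=\sqrt{2/\pi}\,\fro{\M}$ from Lemma~\ref{teclem:gaussian-l1}. You also correctly flag that the sample-size condition as printed, $n\geq c\sqrt{2^m r_1\cdots r_m+2\sum_j r_jd_j}$, should read $n\geq c\bigl(2^m r_1\cdots r_m+2\sum_j r_jd_j\bigr)$ so that the deviation term $\sqrt{n\cdot\textsf{DoF}_m}\,\fro{\M}$ is dominated by a constant multiple of $n\fro{\M}$, consistent with both the matrix analogue in Corollary~\ref{cor: empirical process} and the way the corollary is invoked in the proof of Lemma~\ref{lem:tensor:Gaussian-l1}.
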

\begin{proof}
	Take the absolute loss function in Theorem~\ref{thm:tensor:empirical process}, namely, $\rho(\cdot)=\vert\cdot\vert$  and instert $\EE|\inp{\X_{i}}{\M}|=\sqrt{\frac{2}{\pi}}\fro{\M}$ (see Lemma~\ref{teclem:gaussian-l1}) into it.
\end{proof}

\begin{lemma}[Partial Frobenius norm of random Gaussian Tensor]\label{teclem:tensor:gaussian F norm}
	Suppose $\X\in\RR^{d_1\times\dots\times d_m}$ has independent $N(0,1)$ entries. Then for rank vector $\r=(r_1,\dots,r_m)^{\top}$, it has \begin{align*}
		\PP\left(\frorr{\X}\geq 2c\left(t+\sqrt{\sum_{j=1}^{d}r_jd_j+r_1r_2\cdots r_d}\right)\right)\leq2\exp\left(-t^2\right)
	\end{align*}
\end{lemma}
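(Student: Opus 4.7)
\textbf{Proof proposal for Lemma~\ref{teclem:tensor:gaussian F norm}.}

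The plan is to use the standard combination of (i) Gaussian concentration of the Frobenius norm for a fixed tuple of orthonormal projections and (ii) a covering argument over the Stiefel manifolds $\OO_{d_j,r_j}$. First, I would observe that for any fixed $\U_1,\ldots,\U_m$ with orthonormal columns, the projected tensor $\X\times_1\U_1^\top\times_2\cdots\times_m\U_m^\top$ has i.i.d. $N(0,1)$ entries, so its Frobenius norm is the Euclidean norm of a standard Gaussian vector of length $r_1\cdots r_m$. By Gaussian concentration, for every $s\geq 0$,
\begin{equation*}
\PP\Big(\fro{\X\times_1\U_1^\top\times_2\cdots\times_m\U_m^\top}\geq \sqrt{r_1\cdots r_m}+s\Big)\leq \exp(-s^2/2).
\end{equation*}

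Next, I would construct an $\epsilon$-net $\mathcal{N}_j\subset\OO_{d_j,r_j}$ in operator norm with $|\mathcal{N}_j|\leq(C_0/\epsilon)^{d_jr_j}$, which is standard for the Stiefel manifold. The key passage from the net to the supremum uses multilinearity and Lemma~\ref{teclem:tensor:partial F norm}. For arbitrary $\U_j$ and nearest $\tilde\U_j\in\mathcal{N}_j$ with $\op{\U_j-\tilde\U_j}\leq\epsilon$, write the difference as the telescoping sum
\begin{equation*}
\X\times_1\U_1^\top\times\cdots\times_m\U_m^\top-\X\times_1\tilde\U_1^\top\times\cdots\times_m\tilde\U_m^\top=\sum_{i=1}^m \X\times_1\tilde\U_1^\top\times\cdots\times_i(\U_i-\tilde\U_i)^\top\times\cdots\times_m\U_m^\top.
\end{equation*}
Each summand, by Lemma~\ref{teclem:tensor:partial F norm}, has Frobenius norm at most $\frorr{\X}\cdot\epsilon$, which yields
\begin{equation*}
\frorr{\X}\leq \max_{\tilde\U_j\in\mathcal{N}_j}\fro{\X\times_1\tilde\U_1^\top\times\cdots\times_m\tilde\U_m^\top}+m\epsilon\,\frorr{\X}.
\end{equation*}
Choosing $\epsilon=1/(2m)$ absorbs the second term and gives $\frorr{\X}\leq 2\max_{\tilde\U_j\in\mathcal{N}_j}\fro{\X\times_1\tilde\U_1^\top\times\cdots\times_m\tilde\U_m^\top}$.

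Finally I would union-bound the Gaussian concentration over the product net, whose log-cardinality is $\sum_{j=1}^m d_jr_j\log(2C_0m)$. Setting $s=c_1\sqrt{\sum_j d_jr_j}+t$ for a sufficiently large absolute constant $c_1$ kills the cardinality factor and yields a residual probability bounded by $2\exp(-t^2)$ after adjusting the absolute constant, after which the bound $\sqrt{r_1\cdots r_m}+s$ is folded into the stated $2c(t+\sqrt{\sum_j d_jr_j+r_1\cdots r_m})$ form. The only subtle point, and the one I would check carefully, is the Lipschitz step for the telescoping sum: I want to make sure that reapplying Lemma~\ref{teclem:tensor:partial F norm} with a single non-orthonormal factor $\U_i-\tilde\U_i$ of operator norm $\epsilon$ (and the remaining factors still orthonormal) indeed only costs a factor $\epsilon$; this is the crucial ingredient that makes the net argument close.
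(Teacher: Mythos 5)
Your proof is correct, and it takes a genuinely different route from the paper's. The paper works directly with the variational characterization $\frorr{\X}=\sup_{\M\in\MM_\r,\,\fro{\M}\leq1}|\inp{\X}{\M}|$ from Lemma~\ref{teclem:tensor:partial F norm}: it builds an $\eps$-net for the set of rank-$\r$, unit-Frobenius-norm tensors by covering \emph{both} the Stiefel manifolds $\OO_{d_j,r_j}$ and the unit ball of core tensors $\FF_\r$ (giving a net of log-cardinality $\asymp r_1\cdots r_m+\sum_j d_jr_j$), then for each fixed net point $\M$ bounds the one-dimensional Gaussian tail of $\inp{\X}{\M}\sim N(0,\fro{\M}^2)$ and union-bounds. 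You instead cover only the Stiefel manifolds (log-cardinality $\asymp\sum_j d_jr_j$), and for each fixed tuple of orthonormal projectors you control the \emph{full vector} $\X\times_1\U_1^\top\cdots\times_m\U_m^\top\in\RR^{r_1\cdots r_m}$ via Gaussian Lipschitz concentration of $\|g\|_2$ around $\sqrt{r_1\cdots r_m}$. The trade is clean: your net is smaller (you never cover the core), but the $\sqrt{r_1\cdots r_m}$ then enters through the mean of $\|g\|_2$ rather than through net cardinality, so both approaches arrive at $c\bigl(t+\sqrt{r_1\cdots r_m+\sum_j d_jr_j}\bigr)$. The crucial step you flag is indeed the one to verify, and it does go through: the final clause of Lemma~\ref{teclem:tensor:partial F norm} bounds $\fro{\M\times_1\V_1^\top\cdots\times_m\V_m^\top}$ by $\frorr{\M}\prod_j\op{\V_j}$ for arbitrary $d_j\times r_j$ factors, orthonormal or not, so replacing one orthonormal factor by $\U_i-\tilde\U_i$ with $\op{\U_i-\tilde\U_i}\leq\eps$ costs exactly $\eps$ and your telescoping closes with $\eps=1/(2m)$. (The $m$-dependence of your constant acquires a $\log m$ from the net, just as the paper's $(3(m+1)/\eps)^{\cdots}$ does; both are absorbed into the unspecified $c$.)
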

\begin{proof}
	The proof uses $\eps$-net arguments. Recall that $\MM_{\r}=\{\M\in\RR^{d_1\times\dots\times d_m}: \rank(\M_{(j)})\leq r_j,\, j =1,\cdots,m\}$ is the set of tensors with Tucker rank at most $\r$. From definition of partial Frobenius norm, it has $$\frorr{\X}=\sup_{\M\in\MM_\r, \fro{\M}\leq1}|\inp{\X}{\M}|$$ and suppose it achieves the supremum at $\M_0\in\MM_{\r}$, namely, $\fro{\M_0}=1$ and $\frorr{\X}=|\inp{\X}{\M_0}|$. Then there exist a core tensor $\C_0\in\RR^{r_1\times \cdots\times r_m}$ and orthogonal matrices $ \U_1^0\in\OO_{d_1, r_1},\cdots,\U_m^0\in\OO_{d_m, r_m}$ such that $$\M_0=\C_0\times_{1} \U_1^0\times_2\cdots\times_m \U_m^0.$$ Notice that $\fro{\C_0}=1$. Define $\FF_\r=\{\C\in\RR^{r_1\times\dots\times r_m}: \fro{\C}=1\}$ to be the set of tensors with unit Frobenius norm. Then it has one $\eps/(m+1)$-net $\calN_{\eps/(m+1)}^{\FF_\r}$ with  cardinality $|\calN_{\eps/(m+1)}^{\FF_\r}|\leq (3(m+1)/\eps)^{r_1r_2\cdots r_m}$ respect to the Frobenius norm. Suppose $\calN_1,\cdots,\calN_m$ are $\eps/(m+1)$-nets of orthogonal matrix sets $\OO_{d_1,r_1},\cdots,\OO_{d_m, r_m}$ respectively with cardinalities $$|\calN_1|\leq(3(m+1)/\eps)^{d_1r_1},\cdots,|\calN_m|\leq(3(m+1)/\eps)^{d_mr_m}.$$ See \cite{rauhut2017low,vershynin2018high} for more about $\eps$-nets. 
	Hence, for tensor $\M_0=\C_0\times_{1} \U_1^0\times_2\cdots\times_m \U_m^0$, there is close approximation in the nets. Exist $\C\in\calN_{\eps/(m+1)}^{\FF_\r}, \,\U_1\in\calN_1,\cdots, \U_m\in\calN_m$ such that 
	$$\fro{\C_0-\C}\leq \eps/(m+1),\,\fro{\U_1^0-\U_1}\leq \eps/(m+1),\cdots, \fro{\U_m^0-\U_m}\leq \eps/(m+1).$$ 
	Denote the product as $\M=\C\times_{1} \U_1\times_2\cdots\times_m \U_m$. Furthermore, net $\calN:=\{\M=\C\times_{1} \U_1\times_2\cdots\times_m \U_m: \C\in\calN_{\eps/(m+1)}^{\FF_\r},\,\U_1\in\calN_1,\dots,\U_m\in\calN_m\}$ forms an $\eps$-net of $\MM_{\r}$ with cardinality $|\calN|\leq(3(m+1)/\eps)^{r_1r_2\cdots r_m+\sum_{j=1}^{m}r_jd_j}$.
	
	Then come back to partial Frobenius norm of $\X$ and we have  \begin{align*}
		\frorr{\X}&=|\inp{\X}{\M_0}|\\
		&\leq |\inp{\X}{\M}|+|\inp{\X}{\M_0-\M}|\\
		&\leq \sup_{\M\in\calN}|\inp{\X}{\M}|+\eps\frorr{\X},
	\end{align*}
	which implies $$\frorr{\X}\leq\frac{1}{1-\eps} \sup_{\M\in\calN}|\inp{\X}{\M}|.$$ 
	On the other hand, for any fixed $\M\in\calN$,  we have $\|\inp{\X}{\M}\|_{\Psi_{2}}^2=1$.
	Then the inner product between $\X$ and fixed tensor $\M$ has the following tail bound $$ \PP\left(|\inp{\X}{\M}|\geq u\right)\leq 2\exp(-u^2).$$
	Thus the union yields $$\PP\left(\sup_{\M\in\calN}|\inp{\X}{\M}|\geq u\right)\leq 2|\calN|\exp(-u^2)\leq (3(m+1)/\eps)^{r_1r_2\cdots r_m+\sum_{j=1}^{m}r_jd_j}\exp(-u^2).$$
	Take $\eps=1/2$ and $u=c(\sqrt{ r_1r_2\cdots r_m+\sum_{j=1}^{m}r_jd_j}+t)$ and it leads to $$\PP\left(\sup_{\M\in\calN}|\inp{\X}{\M}|\geq u\right)\leq2\exp(-t^2),$$which proves $$\PP\left(\frorr{\X}\geq 2u\right)\leq2\exp(-t^2). $$
\end{proof}

\section{Proof of Matrix Perturbation Lemma~\ref{teclem:perturbation}}
\label{proof:teclem:perturbation}
To prove Lemma~\ref{teclem:perturbation}, first consider simpler case when the matrix and the perturbation are symmetric.
\begin{lemma}(Symmetric Matrix Perturbation)
	Suppose symmetric matrix $\mathbf{M}^{*}\in\RR^{d\times d}$ has rank $r$. Let $\mathbf{M}^{*}=\mathbf{U}\boldsymbol{\Lambda}\mathbf{U}^{\top}$ be its singular value decomposition with $\boldsymbol{\Lambda}=\operatorname{diag}\{\lambda_{1},\lambda_{2},\cdots,\lambda_{r}\}$, $\vert \lambda_{1}\vert\geq\vert \lambda_{2}\vert\geq\cdots\geq\vert\lambda_{r}\vert>0$. Then for any matrix $\hat{\M}\in\RR^{d\times d}$ satisfying $\Vert\hat{\M}-\M\Vert_{\mathrm{F}}<\sigma_{r}/4$ with $\hat{\mathbf{U}}_{r}\in\mathbb{R}^{d\times r}$ eigenvectors of $r$ largest absolute eigenvalues of $\hat{\mathbf{M}}$, then we have 
	\begin{equation*}
		\Vert \hat{\mathbf{U}}_{r}\hat{\mathbf{U}}_{r}^{\top}-\mathbf{U}\mathbf{U}^{\top}\Vert\leq\frac{4\Vert\hat{\mathbf{M}}-\mathbf{M}\Vert}{\vert\lambda_{r}\vert},
	\end{equation*}
	\begin{equation*}
		\Vert \operatorname{SVD}_{r}(\hat{\mathbf{M}})-\mathbf{M}^{*}\Vert\leq\Vert\hat{\mathbf{M}}-\mathbf{M}\Vert+20\frac{\Vert \hat{\mathbf{M}}-\mathbf{M}\Vert^2}{\lambda_{r}},
	\end{equation*}
	\begin{equation*}
		\Vert \operatorname{SVD}_{r}(\hat{\mathbf{M}})-\mathbf{M}^{*}\Vert_{\mathrm{F}}\leq\Vert\hat{\mathbf{M}}-\mathbf{M}\Vert_{\mathrm{F}}+20\frac{\Vert \hat{\mathbf{M}}-\mathbf{M}\Vert\Vert\hat{\mathbf{M}}- \mathbf{M}\Vert_{\mathrm{F}}}{\lambda_{r}}.
	\end{equation*}
	\label{teclem:symmetric perturbation}
\end{lemma}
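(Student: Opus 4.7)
The plan is to combine a Weyl-type eigenvalue perturbation step with Davis--Kahan's $\sin\Theta$ theorem for the projection bound, and then reduce the two low-rank approximation bounds to that projection bound through an explicit algebraic decomposition.

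\textbf{Step 1 (projection bound).} Applied to the ordered absolute eigenvalues of the symmetric matrices $\M^*$ and $\hat\M$, Weyl's inequality gives $\bigl||\hat\lambda_i|-|\lambda_i|\bigr|\leq \|\hat\M-\M^*\|$. Since $\M^*$ has rank $r$, $|\lambda_{r+1}|=0$, and the hypothesis $\|\hat\M-\M^*\|_{\rm F}<|\lambda_r|/4$ implies $\|\hat\M-\M^*\|<|\lambda_r|/4$, so $|\hat\lambda_r|\geq 3|\lambda_r|/4$ and $|\hat\lambda_{r+1}|\leq|\lambda_r|/4$. Hence the top-$r$ absolute spectrum of $\hat\M$ is separated from the rest by at least $|\lambda_r|/2$, and Davis--Kahan's $\sin\Theta$ theorem with this gap yields $\|\hat\U_r\hat\U_r^\top-\U\U^\top\|=\|\sin\Theta(\hat\U_r,\U)\|\leq 4\|\hat\M-\M^*\|/|\lambda_r|$, which is the first claim.

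\textbf{Step 2 (approximation bound).} Set $\hat\P:=\hat\U_r\hat\U_r^\top$, $\P^*:=\U\U^\top$, and $\E:=\hat\M-\M^*$. Symmetry of $\hat\M$ identifies $\text{SVD}_r(\hat\M)=\hat\M-\hat\P^\perp\hat\M\hat\P^\perp$, and $\M^*=\P^*\M^*\P^*$ together with the projector identity $\hat\P^\perp\P^*=-(\hat\P-\P^*)\P^*$ produces the decomposition
\begin{equation*}
\text{SVD}_r(\hat\M) - \M^* \;=\; \E \;-\; \hat\P^\perp\E\hat\P^\perp \;-\; (\hat\P^\perp\P^*)\,\M^*\,(\P^*\hat\P^\perp).
\end{equation*}
The first term contributes the leading $\|\E\|$ (respectively $\|\E\|_{\rm F}$). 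The tail $\hat\P^\perp\E\hat\P^\perp$ has operator norm $|\hat\lambda_{r+1}|$, which I would sharpen through the Rayleigh quotient $|\hat\lambda_{r+1}|\leq|\hat{\mathbf u}_{r+1}^\top\M^*\hat{\mathbf u}_{r+1}|+\|\E\|$ together with $\|\P^*\hat{\mathbf u}_{r+1}\|\leq\|\hat\P-\P^*\|$, producing an $O(\lambda_1\|\hat\P-\P^*\|^2)+O(\|\E\|)$ contribution. The third piece is already bilinear in $\|\hat\P-\P^*\|$ since both outer factors have norm bounded by $\|\hat\P-\P^*\|$. Feeding in the Step~1 bound $\|\hat\P-\P^*\|\leq 4\|\E\|/|\lambda_r|$ collapses the two second-order pieces into a single $O(\|\E\|^2/|\lambda_r|)$ term, yielding the stated operator-norm inequality; the Frobenius version follows in parallel after a Cauchy--Schwarz split of the bilinear third term with one of the projector-difference factors measured in Frobenius norm.

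\textbf{Main obstacle.} The delicate point is ensuring that the quadratic remainder actually scales as $\|\E\|^2/|\lambda_r|$ with no latent condition-number dependence: a naive bound gives $\lambda_1\|\hat\P-\P^*\|^2\lesssim \lambda_1\|\E\|^2/|\lambda_r|^2$, which leaves a spurious $\lambda_1/|\lambda_r|$ factor. Under the working hypothesis $\|\E\|<|\lambda_r|/4$ the excess prefactor can be absorbed into the universal constant $20$ of the statement, but a cleaner route is to exploit the projector identity $\P^*(\hat\P-\P^*)\P^* = -\P^*(\hat\P-\P^*)^2$ (a consequence of $\hat\P^2=\hat\P$ and $\P^{*2}=\P^*$): this forces the block-diagonal parts of $\hat\P-\P^*$ to be automatically second order, so the $\lambda_1$-scale cross terms telescope and only the claimed bound survives with universal constants.
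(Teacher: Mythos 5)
Step 1 is a valid alternative route: Weyl's inequality plus Davis--Kahan (or, more cleanly here, Wedin's $\sin\Theta$ theorem, since the top-$r$ absolute-eigenvalue subspace is exactly the top-$r$ singular subspace) delivers the projection bound. The paper instead expands $\hat\U_r\hat\U_r^\top-\U\U^\top$ in the spectral-projector power series $\sum_{k\geq1}\mathcal{S}_{\M^*,k}(\Z)$ of Xia (2021) and sums a geometric series; the two give essentially the same estimate, so this part is fine even though the technology differs.

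Step 2, however, has concrete gaps. Writing $\hat\P:=\hat\U_r\hat\U_r^\top$ and $\P^*:=\U\U^\top$: the assertion that ``the tail $\hat\P^\perp\E\hat\P^\perp$ has operator norm $|\hat\lambda_{r+1}|$'' is wrong---that holds for $\hat\P^\perp\hat\M\hat\P^\perp$, not for $\hat\P^\perp\E\hat\P^\perp$. More seriously, your claim that ``the first term contributes the leading $\|\E\|$'' in operator norm requires $\|\E-\hat\P^\perp\E\hat\P^\perp\|\leq\|\E\|$, which is false in general: zeroing a diagonal block of a symmetric matrix can increase the spectral norm (compare $\left(\begin{smallmatrix}-1&1\\1&1\end{smallmatrix}\right)$, norm $\sqrt2$, with $\left(\begin{smallmatrix}-1&1\\1&0\end{smallmatrix}\right)$, norm $(1+\sqrt5)/2$). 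Applying the triangle inequality to your three pieces therefore gives at best $2\|\E\|+O(\lambda_1\|\E\|^2/|\lambda_r|^2)$, which is not the stated $\|\E\|+20\|\E\|^2/|\lambda_r|$. Finally, the proposed projector identity (which should read $\P^*(\hat\P-\P^*)\P^*=-\P^*(\hat\P-\P^*)^2\P^*$, with the trailing $\P^*$) controls the $\P^*$-diagonal block of $\hat\P-\P^*$, whereas the actual nuisance term is $\hat\P^\perp\M^*\hat\P^\perp=(\hat\P-\P^*)\M^*(\hat\P-\P^*)$; the identity does not by itself cancel the $\lambda_1$ that enters through $\|\M^*\|$. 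The paper avoids all of these issues precisely because it multiplies the power-series expansion of $\hat\P-\P^*$ by $\M^*$ \emph{before} taking norms, so one factor of $\boldsymbol{\Lambda}^{-1}$ is cancelled term by term ($\mathfrak{P}^{-s_{k+1}}\M^*\P^*=\mathfrak{P}^{1-s_{k+1}}$); that bookkeeping is exactly what disappears once $\hat\P-\P^*$ is collapsed into a single undecomposed object, so your truncated algebraic decomposition does not reach the claimed bounds.
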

\begin{proof}
	Denote $\mathbf{Z}=\hat{\mathbf{M}}-\mathbf{M}$. Define $\mathbf{U}_{\perp}\in\mathbb{R}^{d\times (d-r)}$ such that $[\mathbf{U}, \mathbf{U}_{\perp}]\in\mathbb{R}^{d\times d}$ is orthonormal and then define the projector $$\mathfrak{P}^{\perp}:=\mathbf{U}_{\perp}\mathbf{U}_{\perp}^{\top},\quad \mathfrak{P}^{-1}:=\mathbf{U}\boldsymbol{\Lambda}^{-1}\mathbf{U}^{\top}.$$
	Write $\mathfrak{P}^{-k}=\mathbf{U}\boldsymbol{\Lambda}^{-k}\mathbf{U}^{\top}$, for all $k\geq1$ and for convenience when $k=0$, we write $\mathfrak{P}^{0}=\mathfrak{P}^{-1}$. Define the $k$-th order perturbation \begin{equation*}
		\mathcal{S}_{\mathbf{M}^{*},k}(\mathbf{Z}):=\sum_{\mathbf{s}:s_{1}+\cdots+s_{k+1}=k} (-1)^{1+\tau(\mathbf{s})} \mathfrak{P}^{-s_{1}}\mathbf{Z}\mathfrak{P}^{-s_{2}}\cdots\mathfrak{P}^{-s_{k}}\mathbf{Z}\mathfrak{P}^{-s_{k+1}},
	\end{equation*} where $s_{1},\cdots,s_{k}$ are non-negative integers and $\tau(\mathbf{s})=\sum_{i=1}^{k}\mathbb{I}(s_{i}>0)$ is the number of positive indices in $\mathbf{s}$. Work \cite{xia2021normal} proves \begin{equation*}
		\hat{\mathbf{U}}_{r}\hat{\mathbf{U}}_{r}^{\top}-\mathbf{U}\mathbf{U}^{\top}=\sum_{k\geq1}\mathcal{S}_{\mathbf{M}^{*},k}(\mathbf{Z}).
	\end{equation*}
	Then norm of $\mathcal{S}_{\mathbf{M}^{*},k}(\mathbf{Z})$ coulded be bounded in the way of
	\begin{equation*}
		\Vert \mathcal{S}_{\mathbf{M}^{*},k}(\mathbf{Z})\Vert\leq \binom{2k}{k} \frac{\Vert\mathbf{Z}\Vert^{k}}{\vert\lambda_{r}\vert^{k}}\leq\left(\frac{4\Vert\mathbf{Z}\Vert}{\vert\lambda_{r}\vert}\right)^{k},\quad 	\Vert \mathcal{S}_{\mathbf{M}^{*},k}(\mathbf{Z})\Vert_{\mathrm{F}}\leq \binom{2k}{k} \frac{\Vert\mathbf{Z}\Vert_{\mathrm{F}}^{k}}{\vert\lambda_{r}\vert^{k}}\leq\left(\frac{4\Vert\mathbf{Z}\Vert_{\mathrm{F}}}{\vert\lambda_{r}\vert}\right)^{k}.
	\end{equation*}
	Hence, the above three equations lead to
	\begin{equation*}
		\Vert \hat{\mathbf{U}}_{r}\hat{\mathbf{U}}_{r}^{\top}-\mathbf{U}\mathbf{U}^{\top}\Vert\leq \sum_{k\geq1}\Vert\mathcal{S}_{\mathbf{M}^{*},k}(\mathbf{Z})\Vert\leq \frac{4\Vert\mathbf{Z}\Vert}{\vert\lambda_{r}\vert},\quad \Vert \hat{\mathbf{U}}_{r}\hat{\mathbf{U}}_{r}^{\top}-\mathbf{U}\mathbf{U}^{\top}\Vert_{\mathrm{F}}\leq \sum_{k\geq1}\Vert\mathcal{S}_{\mathbf{M}^{*},k}(\mathbf{Z})\Vert_{\mathrm{F}}\leq \frac{4\Vert\mathbf{Z}\Vert_{\mathrm{F}}}{\vert\lambda_{r}\vert}.
	\end{equation*}
	Then consider $\mathcal{S}_{\mathbf{M}^{*},k}(\mathbf{Z})\mathbf{M}^{*}$:\begin{equation*}
		\begin{split}
			\mathcal{S}_{\mathbf{M}^{*},k}(\mathbf{Z})\mathbf{M}^{*}
			&=\sum_{\mathbf{s}:s_{1}+\cdots+s_{k+1}=k} (-1)^{1+\tau(\mathbf{s})} \mathfrak{P}^{-s_{1}}\mathbf{Z}\mathfrak{P}^{-s_{2}}\cdots\mathfrak{P}^{-s_{k}}\mathbf{Z}\mathfrak{P}^{-s_{k+1}}\mathbf{M}^{*}\\
			&=\sum_{\mathbf{s}:s_{1}+\cdots+s_{k+1}=k} (-1)^{1+\tau(\mathbf{s})} \mathfrak{P}^{-s_{1}}\mathbf{Z}\mathfrak{P}^{-s_{2}}\cdots\mathfrak{P}^{-s_{k}}\mathbf{Z}\mathfrak{P}^{-s_{k+1}}\mathbf{U}\boldsymbol{\Lambda}\mathbf{U}^{\top}.
		\end{split}
	\end{equation*}
	Note that when $s_{k+1}=0$, $\mathfrak{P}^{-s_{k+1}}=\mathbf{U}_{\perp}\mathbf{U}_{\perp}^{\top}$ and hence $\mathfrak{P}^{-s_{k+1}}\mathbf{M}^{*}=0$ and thus we have $$\mathcal{S}_{\mathbf{M}^{*},k}(\mathbf{Z})\mathbf{M}^{*}=\sum_{\mathbf{s}:s_{1}+\cdots+s_{k+1}=k,\, s_{k+1}>0} (-1)^{1+\tau(\mathbf{s})} \mathfrak{P}^{-s_{1}}\mathbf{Z}\mathfrak{P}^{-s_{2}}\cdots\mathfrak{P}^{-s_{k}}\mathbf{Z}\mathfrak{P}^{1-s_{k+1}}.$$  Specifically, when $k=1$, it is $\mathcal{S}_{\mathbf{M}^{*},1}(\mathbf{Z})\mathbf{M}^{*}= \mathbf{U}_{\perp}\mathbf{U}_{\perp}^{\top}\mathbf{Z}\mathbf{U}\mathbf{U}^{\top}$ and its operator norm could be bounded with
	$$\Vert \mathcal{S}_{\mathbf{M}^{*},k}(\mathbf{Z})\mathbf{M}^{*}\Vert\leq \binom{2k-1}{k}\frac{\Vert \mathbf{Z}\Vert^{k+1}}{\vert\lambda_{r}\vert^{k-1}}\leq \Vert\mathbf{Z}\Vert\left(\frac{4\Vert\mathbf{Z}\Vert}{\vert\lambda_{r}\vert}\right)^{k-1},$$
	which implies
	$$\Vert \sum_{k\geq2} \mathcal{S}_{\mathbf{M}^{*},k}(\mathbf{Z})\mathbf{M}^{*}\Vert\leq\frac{4\Vert\mathbf{Z}\Vert^{2}}{\vert\lambda_{r}\vert}.$$
	Similarly, Frobenius norm of  $\mathcal{S}_{\mathbf{M}^{*},k}(\mathbf{Z})\mathbf{M}^{*} $ has $$\Vert \mathcal{S}_{\mathbf{M}^{*},k}(\mathbf{Z})\mathbf{M}^{*}\Vert_{\mathrm{F}}\leq \binom{2k-1}{k}\frac{\Vert \mathbf{Z}\Vert^{k}}{\vert\lambda_{r}\vert^{k-1}}\Vert \mathbf{Z}\Vert_{\mathrm{F}}\leq \Vert\mathbf{Z}\Vert_{\mathrm{F}}\left(\frac{4\Vert\mathbf{Z}\Vert}{\vert\lambda_{r}\vert}\right)^{k-1},$$
	$$\Vert \sum_{k\geq2} \mathcal{S}_{\mathbf{M}^{*},k}(\mathbf{Z})\mathbf{M}^{*}\Vert_{\mathrm{F}}\leq\frac{4\Vert \mathbf{Z}\Vert_{\mathrm{F}}\Vert\mathbf{Z}\Vert}{\vert\lambda_{r}\vert}.$$
	Then expand $\operatorname{SVD}_{r}(\hat{\mathbf{M}})-\mathbf{M}^{*} $:
	\begin{equation*}
		\begin{split}
			&{~~~~}\operatorname{SVD}_{r}(\hat{\mathbf{M}})-\mathbf{M}^{*}\\&=\hat{\mathbf{U}}_{r}\hat{\mathbf{U}}_{r}^{\top}(\mathbf{M}^{*}+\mathbf{Z})\hat{\mathbf{U}}_{r}\hat{\mathbf{U}}_{r}^{\top}-\mathbf{M}^{*}\\
			&=\hat{\mathbf{U}}_{r}\hat{\mathbf{U}}_{r}^{\top}(\mathbf{M}^{*}+\mathbf{Z})\hat{\mathbf{U}}_{r}\hat{\mathbf{U}}_{r}^{\top}-\mathbf{U}\mathbf{U}^{\top}\mathbf{M}^{*}\mathbf{U}\mathbf{U}^{\top}\\
			&=(\hat{\mathbf{U}}_{r}\hat{\mathbf{U}}_{r}^{\top}-\mathbf{U}\mathbf{U}^{\top})\mathbf{M}^{*}\mathbf{U}\mathbf{U}^{\top} + \hat{\mathbf{U}}_{r}\hat{\mathbf{U}}_{r}^{\top}\mathbf{M}^{*}( \hat{\mathbf{U}}_{r}\hat{\mathbf{U}}_{r}^{\top}-\mathbf{U}\mathbf{U}^{\top})+ \hat{\mathbf{U}}_{r}\hat{\mathbf{U}}_{r}^{\top}\mathbf{Z}\hat{\mathbf{U}}_{r}\hat{\mathbf{U}}_{r}^{\top}\\
			&=\sum_{k\geq2} \mathcal{S}_{\mathbf{M}^{*},k}(\mathbf{Z})\mathbf{M}^{*}\mathbf{U}\mathbf{U}^{\top} + \hat{\mathbf{U}}_{r}\hat{\mathbf{U}}_{r}^{\top}\mathbf{M}^{*}\sum_{k\geq2} \mathcal{S}_{\mathbf{M}^{*},k}(\mathbf{Z})+\mathbf{U}_{\perp}\mathbf{U}_{\perp}^{\top}\mathbf{Z}\mathbf{U}\mathbf{U}^{\top} \\&{~~~} +\hat{\mathbf{U}}_{r}\hat{\mathbf{U}}_{r}^{\top} \mathbf{U}\mathbf{U}^{\top}\mathbf{Z}\mathbf{U}_{\perp}\mathbf{U}_{\perp}^{\top}+\hat{\mathbf{U}}_{r}\hat{\mathbf{U}}_{r}^{\top}\mathbf{Z}\hat{\mathbf{U}}_{r}\hat{\mathbf{U}}_{r}^{\top}\\
			&=\sum_{k\geq2} \mathcal{S}_{\mathbf{M}^{*},k}(\mathbf{Z})\mathbf{M}^{*}\mathbf{U}\mathbf{U}^{\top} + \hat{\mathbf{U}}_{r}\hat{\mathbf{U}}_{r}^{\top}\mathbf{M}^{*}\sum_{k\geq2} \mathcal{S}_{\mathbf{M}^{*},k}(\mathbf{Z})+\mathbf{Z}- \mathbf{U}_{\perp}\mathbf{U}_{\perp}^{\top}\mathbf{Z}\mathbf{U}_{\perp}\mathbf{U}_{\perp}^{\top}\\&{~~~}+ (\hat{\mathbf{U}}_{r}\hat{\mathbf{U}}_{r}^{\top}-\mathbf{U}\mathbf{U}^{\top}) \mathbf{U}\mathbf{U}^{\top}\mathbf{Z}\mathbf{U}_{\perp}\mathbf{U}_{\perp}^{\top}+(\hat{\mathbf{U}}_{r}\hat{\mathbf{U}}_{r}^{\top}\mathbf{Z}\hat{\mathbf{U}}_{r}\hat{\mathbf{U}}_{r}^{\top}-\mathbf{U}\mathbf{U}^{\top}\mathbf{Z}\mathbf{U}\mathbf{U}^{\top})\\
		\end{split}
	\end{equation*}
	Take Frobenius norm on each side of the above equation and it leads to
	\begin{equation*}
		\begin{split}
			&{~~~~}\Vert \operatorname{SVD}_{r}(\hat{\mathbf{M}})-\mathbf{M}^{*}\Vert_{\mathrm{F}}\\
			&\leq \Vert \sum_{k\geq2} \mathcal{S}_{\mathbf{M}^{*},k}(\mathbf{Z})\mathbf{M}^{*}\mathbf{U}\mathbf{U}^{\top}\Vert_{\mathrm{F}} +\Vert \hat{\mathbf{U}}_{r}\hat{\mathbf{U}}_{r}^{\top}\mathbf{M}^{*}\sum_{k\geq2} \mathcal{S}_{\mathbf{M}^{*},k}(\mathbf{Z})\Vert_{\mathrm{F}}+\Vert\mathbf{Z}- \mathbf{U}_{\perp}\mathbf{U}_{\perp}^{\top}\mathbf{Z}\mathbf{U}_{\perp}\mathbf{U}_{\perp}^{\top}\Vert_{\mathrm{F}}\\&{~~~}+ \Vert(\hat{\mathbf{U}}_{r}\hat{\mathbf{U}}_{r}^{\top}-\mathbf{U}\mathbf{U}^{\top}) \mathbf{U}\mathbf{U}^{\top}\mathbf{Z}\mathbf{U}_{\perp}\mathbf{U}_{\perp}^{\top}\Vert_{\mathrm{F}}+\Vert\hat{\mathbf{U}}_{r}\hat{\mathbf{U}}_{r}^{\top}\mathbf{Z}\hat{\mathbf{U}}_{r}\hat{\mathbf{U}}_{r}^{\top}-\mathbf{U}\mathbf{U}^{\top}\mathbf{Z}\mathbf{U}\mathbf{U}^{\top}\Vert_{\mathrm{F}}\\
			&\leq \frac{4\Vert \mathbf{Z}\Vert_{\mathrm{F}}\Vert\mathbf{Z}\Vert}{\vert\lambda_{r}\vert}+\frac{4\Vert \mathbf{Z}\Vert_{\mathrm{F}}\Vert\mathbf{Z}\Vert}{\vert\lambda_{r}\vert}+\Vert\mathbf{Z}\Vert_{\mathrm{F}}+3\Vert \hat{\mathbf{U}}_{r}\hat{\mathbf{U}}_{r}^{\top}-\mathbf{U}\mathbf{U}^{\top}\Vert\Vert\mathbf{Z}\Vert_{\mathrm{F}}\\
			&\leq \Vert\mathbf{Z}\Vert_{\mathrm{F}} + 20\frac{\Vert \mathbf{Z}\Vert_{\mathrm{F}}\Vert\mathbf{Z}\Vert}{\vert\lambda_{r}\vert}.
		\end{split}
	\end{equation*}
	Similarly, one has the bound of the operator norm \begin{equation*}
		\Vert \operatorname{SVD}_{r}(\hat{\mathbf{M}})-\mathbf{M}^{*}\Vert\leq\Vert\hat{\mathbf{M}}-\mathbf{M}\Vert+20\frac{\Vert \hat{\mathbf{M}}-\mathbf{M}\Vert^2}{\lambda_{r}}.
	\end{equation*}
\end{proof}

\begin{proof}[\textbf{Proof of Lemma~\ref{teclem:perturbation}}]
	Here we construct symmetrization of $\mathbf{M}^{*}$ and $\hat{\mathbf{M}}$ and then we could apply Lemma~\ref{teclem:symmetric perturbation}. First construct symmetric matrices $$\mathbf{Y}^{*}:=\left(\begin{matrix}
		0&\mathbf{M}^{*}\\
		\mathbf{M}^{*\top}&0
	\end{matrix}\right),\,\hat{\mathbf{Y}}:=\left(\begin{matrix}
		0&\hat{\mathbf{M}}\\
		\hat{\mathbf{M}}^{\top}&0
	\end{matrix}\right).$$
	Denote the perturbation matrix as $\mathbf{Z}:=\hat{\mathbf{M}}-\mathbf{M}^{*}$ and similarly define
	$$\hat{\mathbf{Z}}:=\left(\begin{matrix}
		0&\mathbf{Z}\\
		\mathbf{Z}^{\top}&0
	\end{matrix}\right)=\left(\begin{matrix}
		0&\hat{\mathbf{M}}-\mathbf{M}^{*}\\
		\hat{\mathbf{M}}^{\top}-\mathbf{M}^{*\top}&0
	\end{matrix}\right).$$
	Note that $\operatorname{rank}(\mathbf{Y}^{*})\leq2r$.
	By Lemma~\ref{teclem:symmetric perturbation}, one has\begin{equation}
		\Vert \operatorname{SVD}_{2r}(\hat{\mathbf{Y}})-\mathbf{Y}^{*}\Vert_{\mathrm{F}}\leq\Vert\hat{\mathbf{Z}}\Vert_{\mathrm{F}}+20\frac{\Vert \hat{\mathbf{Z}}\Vert\Vert \hat{\mathbf{Z}}\Vert_{\mathrm{F}}}{\vert\lambda_{2r}(\mathbf{Y}^{*})\vert}
		\label{eq26}
	\end{equation}
	Suppose $\M^*$ has singular value decomposition $\mathbf{M}^{*}=\mathbf{U}\boldsymbol{\Sigma}\mathbf{V}^{\top}$, where $\boldsymbol{\Sigma}=\operatorname{diag}(\sigma_{1},\cdots,\sigma_{r})\in\mathbb{R}^{r\times r}$ with $\sigma_{1}\geq\cdots\geq\sigma_{r}>0$. Notice that $$\frac{1}{\sqrt{2}}\left(\begin{matrix}
		\mathbf{U}&\mathbf{U}\\
		\mathbf{V}&-\mathbf{V}
	\end{matrix}\right)\in\mathbb{R}^{2d\times 2r}$$ is an orthogonal matrix and its columns contain $2r$ linearly independent eigenvectors of $\mathbf{Y}^{*}$ \begin{equation*}
		\frac{1}{\sqrt{2}}\left(\begin{matrix}
			\mathbf{U}&\mathbf{U}\\
			\mathbf{V}&-\mathbf{V}
		\end{matrix}\right)^{\top}\mathbf{Y}^{*}\frac{1}{\sqrt{2}}\left(\begin{matrix}
			\mathbf{U}&\mathbf{U}\\
			\mathbf{V}&-\mathbf{V}
		\end{matrix}\right)=\left(\begin{matrix}
			\boldsymbol{\Sigma}&0\\
			0&-\boldsymbol{\Sigma}
		\end{matrix}\right)
	\end{equation*}
	Thus, we have $\vert\lambda_{2r}(\mathbf{Y}^{*})\vert=\sigma_{r}$. Similarly, we could prove $$\operatorname{SVD}_{2r}(\hat{\mathbf{Y}})=\frac{1}{\sqrt{2}}\left(\begin{matrix}
		\hat{\mathbf{U}}_{r}&\hat{\mathbf{U}}_{r}\\
		\hat{\mathbf{V}}_{r}&-\hat{\mathbf{V}}_{r}
	\end{matrix}\right)^{\top}\hat{\mathbf{Y}}\frac{1}{\sqrt{2}}\left(\begin{matrix}
		\hat{\mathbf{U}}_{r}&\hat{\mathbf{U}}_{r}\\
		\hat{\mathbf{V}}_{r}&-\hat{\mathbf{V}}_{r}
	\end{matrix}\right) =\left(\begin{matrix}
		0&\operatorname{SVD}_{r}(\hat{\mathbf{M}})\\
		\operatorname{SVD}_{r}(\hat{\mathbf{M}}^{\top})&0
	\end{matrix}\right),$$
	which implies \begin{equation*}
		\Vert \operatorname{SVD}_{2r}(\hat{\mathbf{Y}})-\mathbf{Y}^{*}\Vert_{\mathrm{F}}=	\sqrt{2}\Vert \operatorname{SVD}_{r}(\hat{\mathbf{M}})-\mathbf{M}^{*}\Vert_{\mathrm{F}}.
	\end{equation*}
	Combine the above equation with $\Vert\hat{\mathbf{Z}}\Vert_{\mathrm{F}}=\sqrt{2}\Vert\mathbf{Z}\Vert_{\mathrm{F}}$,
	$\Vert\hat{\mathbf{Z}}\Vert=\Vert\mathbf{Z}\Vert$ and Equation \ref{eq26} becomes
	$$\Vert \operatorname{SVD}_{r}(\hat{\mathbf{M}})-\mathbf{M}^{*}\Vert_{\mathrm{F}}\leq \Vert\mathbf{Z}\Vert_{\mathrm{F}} + 20\frac{\Vert \mathbf{Z}\Vert\Vert \mathbf{Z}\Vert_{\mathrm{F}}}{\sigma_{r}}.$$
	Notice that proof of $\Vert \operatorname{SVD}_{r}(\hat{\mathbf{M}})-\mathbf{M}^{*}\Vert_{\mathrm{F}}\leq \Vert\mathbf{Z}\Vert_{\mathrm{F}} + 20\frac{\Vert \mathbf{Z}\Vert\Vert \mathbf{Z}\Vert_{\mathrm{F}}}{\sigma_{r}}$ is similar to the Frobinius norm case. Apply the perturbated eigenvector results of Lemma~\ref{teclem:symmetric perturbation} here and it becomes \begin{equation}
		\bigg\Vert \frac{1}{2}\left(\begin{matrix}
			\hat{\mathbf{U}}_{r}&\hat{\mathbf{U}}_{r}\\
			\hat{\mathbf{V}}_{r}&-\hat{\mathbf{V}}_{r}
		\end{matrix}\right)^{\top}\left(\begin{matrix}
			\hat{\mathbf{U}}_{r}&\hat{\mathbf{U}}_{r}\\
			\hat{\mathbf{V}}_{r}&-\hat{\mathbf{V}}_{r}
		\end{matrix}\right) - \frac{1}{2}\left(\begin{matrix}
			\mathbf{U}&\mathbf{U}\\
			\mathbf{V}&-\mathbf{V}
		\end{matrix}\right)^{\top}\left(\begin{matrix}
			\mathbf{U}&\mathbf{U}\\
			\mathbf{V}&-\mathbf{V}
		\end{matrix}\right)\bigg\Vert\leq \frac{4\Vert \mathbf{Z}\Vert}{\sigma_{r}},
	\end{equation}
	which implies \begin{equation}
		\Vert \hat{\mathbf{U}}_{r}\hat{\mathbf{U}}_{r}^{\top} -\mathbf{U}\mathbf{U}^{\top}\Vert\leq \frac{4\Vert \mathbf{Z}\Vert}{\sigma_{r}}=\frac{4}{\sigma_{r}}\Vert \hat{\mathbf{M}}-\mathbf{M}\Vert.
	\end{equation}
	\begin{equation}
		\Vert \hat{\mathbf{V}}_{r}\hat{\mathbf{V}}_{r}^{\top} -\mathbf{U}\mathbf{U}^{\top}\Vert\leq \frac{4\Vert \mathbf{Z}\Vert}{\sigma_{r}}=\frac{4}{\sigma_{r}}\Vert \hat{\mathbf{M}}-\mathbf{M}\Vert.
	\end{equation}
	
\end{proof}
\section{Proof of Matrix Recovery Empirical Process Theorem~\ref{thm:empirical process}}
\label{proof:thm:empirical}
\begin{proof}
	The proof follows from Theorem~\ref{tecthm:orlicz norm empirical}. Denote $\MM_r:=\{\M\in\RR^{d_1\times d_2}: \rank(\M)\leq r\}$ the set of matrices with rank bounded by $r$. And consider 
	\begin{align*}
		&~~~~Z:= \sup_{\M\in\MM_r} |f(\M)-f(\M^*) - \EE(f(\M)-f(\M^*))|/\fro{\M-\M^*}.
	\end{align*}
	Together with $f(\M)= \sum_{i=1}^n\rho(\inp{\X_i}{\M}-y_i)$, we have 
	\begin{align*}
		\EE Z &\leq 2\EE \sup_{\M\in\MM_r} \left|\sum_{i=1}^n\wt\epsilon_i\cdot\big(\rho(\inp{\X_i}{\M}-y_i) - \rho(\inp{\X_i}{\M^*}-y_i)\big)\right|\cdot\fro{\M-\M^*}^{-1}\notag\\
		&\leq 4\tilde L\cdot\EE \sup_{\M\in\MM_r}\left|\sum_{i=1}^n\wt\epsilon_i\cdot\inp{\X_i}{\M-\M^*}\right|\cdot\fro{\M-\M^*}^{-1}\notag\\
		&\leq 4\tilde L\cdot\EE\|\sum_{i=1}^n\wt\epsilon_i\X_i\|_{\mathrm{F,2r}}
		\leq 4\tilde L\cdot\sqrt{2r}\EE\|\sum_{i=1}^n\wt\epsilon_i\X_i\|,
	\end{align*}
	where $\{\wt\epsilon_{i}\}_{i=1}^n$ is Rademacher sequence independent of $\X_1,\ldots,\X_n$. The first inequality is from Theorem \ref{Symmetrization of Expectation} and  the second inequality is from Theorem \ref{Contraction Theorem}. Notice that $\sum_{i=1}^n\wt\epsilon_i\X_i$ has i.i.d. $N(0,n)$ entries and therefore 
	$\EE\|\sum_{i=1}^n\wt\epsilon_i\X_i\|_2\leq C_1\sqrt{nd}$ for some absolute constant $C_1>0$. And thus we have $$\EE Z\leq 4\sqrt{2}C_1\tilde L\sqrt{nd_1r}.$$
	Moreover,
	\begin{align*}
		&~~~~\Vert\sup_{\M\in\MM_r} \vert \rho(Y_{i}-\langle \mathbf{X}_{i}, \mathbf{M}\rangle) - \rho(Y_{i} - \langle \mathbf{X}_{i}, \mathbf{M}^{*}\rangle)\vert\cdot\fro{\M-\M^*}^{-1}\Vert_{\Psi_{2}}\\
		&\leq \tilde{L} \Vert\sup_{\M\in\MM_r} \vert \langle \mathbf{X}_{i},\mathbf{M}-\mathbf{M}^{*}\rangle\vert\cdot\fro{\M-\M^*}^{-1}\Vert_{\Psi_{2}}\\
		&\leq \tilde{L}\|\X_i\|_{\mathrm {F,2r}}
		\leq C_2\tilde{L}\sqrt{d_1r},
	\end{align*}
	for some absolute constant $C_2>0$.
	Then, by Lemma~\ref{orlicz norm max}, the Orlcz norm of its maximum in $i=1,\cdots,n$ could be bounded:
	\begin{align*}
		\Vert\max_{i=1,\ldots,n}\sup_{\M\in\MM_r}\vert \rho(Y_{i}-\langle \mathbf{X}_{i}, \mathbf{M}\rangle) - \rho(Y_{i} - \langle \mathbf{X}_{i}, \mathbf{M}^{*}\rangle)\vert\cdot\fro{\M-\M^*}^{-1}\Vert_{\Psi_{2}}\leq C_2\tilde{L}\sqrt{d_1r\log n},
	\end{align*}
	\begin{align*}
		\mathbb{E}\left[\max_{i=1,\ldots,n}\sup_{\M\in\MM_r}\vert \rho(Y_{i}-\langle \mathbf{X}_{i}, \mathbf{M}\rangle) - \rho(Y_{i} - \langle \mathbf{X}_{i}, \mathbf{M}^{*}\rangle)\vert\cdot\fro{\M-\M^*}^{-1}\right]\leq 3C_2\tilde{L}\sqrt{d_1r\log n}.
	\end{align*}
	Also, note that
	\begin{align*}
		\mathbb{E}\big[\big(\rho(Y_{i}-\langle \mathbf{X}_{i}, \mathbf{M}\rangle) - \rho(Y_{i} - \langle \mathbf{X}_{i}, \mathbf{M}^{*}\rangle)\big)^2\big]/\Vert \mathbf{M}-\mathbf{M}^{*}\Vert^{2}\leq \tilde{L}^{2}\mathbb{E}\langle \mathbf{X}_{i}, \mathbf{M}-\mathbf{M}^{*}\rangle^2/\Vert \mathbf{M}-\mathbf{M}^{*}\Vert^{2} = \tilde{L}^{2}.
	\end{align*}
	Invoke Theorem \ref{tecthm:orlicz norm empirical} and take $\eta=1$, $\delta=0.5$ and there exists some constant $C>0$, and we have 
	\begin{align*}
		\mathbb{P}(Z&\geq 8\sqrt{2}C_{2}\sqrt{d_1r}+t)\leq \exp \left(-\frac{t^{2}}{3 n\tilde{L}^{2}}\right)+3 \exp \left(-\left(\frac{t}{C\tilde{L}\sqrt{d_1r\log n}}\right)^{2}\right)
	\end{align*}
	holds for any $t>0$.
	Take $t=C\sqrt{nd_1r}\tilde{L}$ and then we have 
	\begin{equation*}
		\mathbb{P}(Z\geq 8\sqrt{2}C_1\sqrt{nd_1r}+C\tilde{L}\sqrt{nd_1r})\leq \exp\left(-\frac{C^{2}d_1r}{3}\right)+3\exp\left(-\frac{n}{\log n}\right),
	\end{equation*}
	which completes the proof.
\end{proof}
\begin{theorem}[Symmetrization of Expectations, \citep{van1996weak}]\label{Symmetrization of Expectation} 
	Consider $\mathbf{X}_{1},\mathbf{X}_{2},\cdots,\mathbf{X}_{n}$ independent matrices in $\chi$ and let $\mathcal{F}$ be a class of real-valued functions on $\chi$. Let $\tilde{\varepsilon}_{1},\cdots,\tilde{\varepsilon}_{n}$ be a Rademacher sequence independent of $\mathbf{X}_{1},\mathbf{X}_{2},\cdots,\mathbf{X}_{n}$, then
	\begin{equation}
		\mathbb{E}\big[\sup_{f\in\mathcal{F}}\big{|} \sum_{i=1}^{n} (f(\mathbf{X}_{i}) - \mathbb{E}f(\mathbf{X}_{i}))\big{|}\big]\leq 2\mathbb{E}\big[\sup_{f\in\mathcal{F}} \big{|} \sum_{i=1}^{n} \tilde{\varepsilon}_{i}f(\mathbf{X}_{i})\big{|}\big]
	\end{equation}
\end{theorem}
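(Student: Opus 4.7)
The plan is to employ the standard \emph{symmetrization via a ghost sample}. Introduce an independent copy $\X_1',\ldots,\X_n'$ of $\X_1,\ldots,\X_n$, living on an enlarged probability space and independent of the Rademacher sequence $\tilde\varepsilon_1,\ldots,\tilde\varepsilon_n$. Because $\X_i'$ has the same law as $\X_i$, we have $\EE f(\X_i)=\EE_{\X'}f(\X_i')$ for every $f\in\calF$, which allows us to rewrite the centered sum as an expectation of a difference: $\sum_i(f(\X_i)-\EE f(\X_i))=\EE_{\X'}\sum_i(f(\X_i)-f(\X_i'))$, where $\EE_{\X'}$ denotes conditional expectation given $\X_1,\ldots,\X_n$.

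Next, I apply Jensen's inequality to the convex functional $g\mapsto \sup_{f\in\calF}|g(f)|$, pulling the inner conditional expectation outside of the supremum and the absolute value:
$$
\EE\sup_{f\in\calF}\Big|\sum_{i=1}^{n}(f(\X_i)-\EE f(\X_i))\Big|\ \leq\ \EE_{\X,\X'}\sup_{f\in\calF}\Big|\sum_{i=1}^{n}(f(\X_i)-f(\X_i'))\Big|.
$$
Now the key symmetry step: for each $i$, the pair $(\X_i,\X_i')$ is exchangeable, so the random vector $\big(f(\X_i)-f(\X_i')\big)_{i=1}^{n}$ has the same joint distribution as $\big(\epsilon_i(f(\X_i)-f(\X_i'))\big)_{i=1}^{n}$ for any fixed $\epsilon\in\{-1,+1\}^n$. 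Averaging over a Rademacher sequence $\tilde\varepsilon$ independent of $(\X,\X')$ and using Fubini,
$$
\EE_{\X,\X'}\sup_{f\in\calF}\Big|\sum_{i=1}^{n}(f(\X_i)-f(\X_i'))\Big|\ =\ \EE_{\tilde\varepsilon,\X,\X'}\sup_{f\in\calF}\Big|\sum_{i=1}^{n}\tilde\varepsilon_i\big(f(\X_i)-f(\X_i')\big)\Big|.
$$

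Finally, the triangle inequality inside the supremum splits the right-hand side into two pieces, and since $\X$ and $\X'$ are identically distributed (and independent of $\tilde\varepsilon$), each piece equals $\EE\sup_{f\in\calF}|\sum_i\tilde\varepsilon_i f(\X_i)|$, which produces the claimed factor of $2$. There is no substantive obstacle in this argument; the only delicate point is measurability of the suprema, which is standardly handled either by assuming $\calF$ is countable (or pointwise separable) or by replacing $\EE$ with outer expectation $\EE^{\ast}$ throughout, as in van der Vaart and Wellner (1996). In our applications $\calF$ is indexed by a subset of a finite-dimensional space with continuous dependence, so measurability follows directly.
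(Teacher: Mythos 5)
Your proof is correct and is precisely the standard ghost-sample symmetrization argument from the cited source (van der Vaart and Wellner, 1996); the paper itself states this result by citation without reproving it. All four steps (ghost sample, Jensen, sign-exchangeability, triangle inequality) are valid, and your measurability caveat is handled appropriately for the function classes used in this paper.
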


\begin{theorem}[Contraction Theorem, \citep{ludoux1991probability}]\label{Contraction Theorem}
	
	Consider the non-random elements $x_{1}, \ldots, x_{n}$ of $\chi$. Let $\mathcal{F}$ be a class of real-valued functions on $\chi$. Consider the Lipschitz continuous functions $\rho_{i}: \mathbb{R} \rightarrow \mathbb{R}$ with Lipschitz constant $L$, i.e.
	$$
	\left|\rho_{i}(\mu)-\rho_{i}(\tilde{\mu})\right| \leq L|\mu-\tilde{\mu}|, \text { for all } \mu, \tilde{\mu} \in \mathbb{R}
	$$
	Let $\tilde{\varepsilon}_{1}, \ldots, \tilde{\varepsilon}_{n}$ be a Rademacher sequence $.$ Then for any function $f^{*}: \chi \rightarrow \mathbb{R}$, we have
	
	\begin{equation}
		\mathbb{E}\left[\sup _{f \in \mathcal{F}}\left|\sum_{i=1}^{n} \tilde{\varepsilon}_{i}\left\{\rho_{i}\left(f\left(x_{i}\right)\right)-\rho_{i}\left(f^{*}\left(x_{i}\right)\right)\right\}\right|\right] \leq 2 \mathbb{E}\left[L\sup_{f \in \mathcal{F}} \mid \sum_{i=1}^{n} \tilde{\varepsilon}_{i}\left(f\left(x_{i}\right)-f^{*}\left(x_{i}\right)\right)\mid \right]
	\end{equation}
\end{theorem}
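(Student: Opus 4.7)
The plan is a three-step reduction: normalize the Lipschitz constant, desymmetrize the absolute value by enlarging the class to include $f^{*}$, and then iterate a one-step Lipschitz comparison lemma.

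First I would reduce to Lipschitz constant $1$ by replacing each $\rho_i$ with $\rho_i/L$, then center by defining $\tilde\rho_i(u) := \rho_i(u + f^{*}(x_i)) - \rho_i(f^{*}(x_i))$ and $g_i(f) := f(x_i) - f^{*}(x_i)$. The $\tilde\rho_i$ remain $1$-Lipschitz and satisfy $\tilde\rho_i(0) = 0$, and each LHS summand equals $\tilde\rho_i(g_i(f))$. Next, enlarge the class to $\bar{\mathcal{F}} := \mathcal{F} \cup \{f^{*}\}$. Because $f = f^{*}$ makes both $\tilde\rho_i(g_i(f))$ and $g_i(f)$ vanish, the two suprema $\sup_{f \in \bar{\mathcal{F}}} \sum_i \tilde{\varepsilon}_i \tilde\rho_i(g_i(f))$ and its negative are both nonnegative, so $\max(x,y) \le x + y$ applies. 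Taking expectations and using the symmetry of the Rademacher sequence produces the factor of $2$:
$$\EE \sup_{f \in \bar{\mathcal{F}}} \Big|\sum_i \tilde{\varepsilon}_i \tilde\rho_i(g_i(f))\Big| \le 2\, \EE \sup_{f \in \bar{\mathcal{F}}} \sum_i \tilde{\varepsilon}_i \tilde\rho_i(g_i(f)).$$

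Second, I would establish the non-absolute contraction
$$\EE \sup_{f \in \bar{\mathcal{F}}} \sum_i \tilde{\varepsilon}_i \tilde\rho_i(g_i(f)) \le \EE \sup_{f \in \bar{\mathcal{F}}} \sum_i \tilde{\varepsilon}_i g_i(f)$$
by induction on $n$, peeling off one Rademacher variable at a time. The engine is a one-step lemma: for any $\phi, g: \bar{\mathcal{F}} \to \mathbb{R}$ and any $1$-Lipschitz $\rho$,
$$\EE_{\tilde{\varepsilon}} \sup_{f} \bigl[\phi(f) + \tilde{\varepsilon}\, \rho(g(f))\bigr] \le \EE_{\tilde{\varepsilon}} \sup_f \bigl[\phi(f) + \tilde{\varepsilon}\, g(f)\bigr].$$
I would prove this by expanding the LHS as $\tfrac{1}{2} \sup_{f_1,f_2}[\phi(f_1)+\phi(f_2)+\rho(g(f_1))-\rho(g(f_2))]$ and splitting on the sign of $g(f_1) - g(f_2)$: when $g(f_1) \ge g(f_2)$, the Lipschitz bound directly gives the corresponding RHS integrand, and when $g(f_1) < g(f_2)$ the same bound gives it after relabeling $f_1 \leftrightarrow f_2$. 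Iterating one step per index replaces every $\tilde\rho_i$ by the identity.

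Finally, I would close the chain via $\EE \sup_{\bar{\mathcal{F}}} \sum_i \tilde{\varepsilon}_i g_i(f) \le \EE \sup_{\bar{\mathcal{F}}} |\sum_i \tilde{\varepsilon}_i g_i(f)| = \EE \sup_{\mathcal{F}} |\sum_i \tilde{\varepsilon}_i g_i(f)|$, the last equality because $g_i(f^{*}) = 0$ makes $f = f^{*}$ contribute $0$ to an absolute-value supremum. Re-inserting the factor $L$ absorbed in the normalization step yields the theorem. The main obstacle is the one-step case analysis: the symmetry trick of swapping $f_1 \leftrightarrow f_2$ is what allows both sign cases to be dominated by the \emph{same} target supremum rather than two separate ones, thereby keeping the contraction constant exactly $1$ in the non-absolute step (so the overall factor is $2L$ rather than $4L$).
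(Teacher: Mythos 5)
Your proof is correct. Note that the paper does not prove this statement at all --- it is quoted verbatim as the Ledoux--Talagrand contraction principle with a citation to \citep{ludoux1991probability} --- so there is no in-paper argument to compare against; what you have written is essentially the classical proof of the two-sided version with constant $2$: normalize and recenter so that $\tilde\rho_i(0)=0$, adjoin $f^{*}$ to the class so both one-sided suprema are nonnegative and Rademacher symmetry yields the factor $2$, and then remove the $\tilde\rho_i$ one index at a time via the pairwise-supremum case analysis, which correctly keeps the one-sided contraction constant equal to $1$.
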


\begin{theorem}[Tail inequality for suprema of empirical process \citep{adamczak2008tail}]
	Let $\mathbf{X}_{1}, \ldots, \mathbf{X}_{n}$ be independent random variables with values in a measurable space $(\mathcal{S}, \mathcal{B})$ and let $\mathcal{F}$ be a countable class of measurable functions $f: \mathcal{S} \rightarrow \mathbb{R}$. Assume that for every $f \in \mathcal{F}$ and every $i, \mathbb{E} f\left(\mathbf{X}_{i}\right)=0$ and for some $\alpha \in(0,1]$ and all $i,\left\|\sup _{\mathcal{F}}\left|f\left(\mathbf{X}_{i}\right)\right|\right\|_{\Psi_{\alpha}}<\infty .$ Let
	$$
	Z=\sup _{f \in \mathcal{F}}\left|\sum_{i=1}^{n} f\left(\mathbf{X}_{i}\right)\right|
	$$
	Define moreover
	$$
	\sigma^{2}=\sup _{f \in \mathcal{F}} \sum_{i=1}^{n} \mathbb{E} f\left(\mathbf{X}_{i}\right)^{2}
	$$
	Then, for all $0<\eta<1$ and $\delta>0$, there exists a constant $C=C(\alpha, \eta, \delta)$, such that for all $t \geq 0$
	\begin{equation*}
		\begin{split}
			\mathbb{P}(Z&\geq(1+\eta) \mathbb{E}Z+t) \\
			& \leq \exp \left(-\frac{t^{2}}{2(1+\delta) \sigma^{2}}\right)+3 \exp \left(-\left(\frac{t}{C\left\|\max _{i} \sup _{f \in \mathcal{F}}\left|f\left(\mathbf{X}_{i}\right)\right|\right\|_{\Psi_{\alpha}}}\right)^{\alpha}\right)
		\end{split}
	\end{equation*}
	and
	$$
	\begin{aligned}
		\mathbb{P}(Z&\leq(1-\eta) \mathbb{E} Z-t) \\
		& \leq \exp \left(-\frac{t^{2}}{2(1+\delta) \sigma^{2}}\right)+3 \exp \left(-\left(\frac{t}{C\left\|\max _{i} \sup _{f \in \mathcal{F}}\left|f\left(\mathbf{X}_{i}\right)\right|\right\|_{\Psi_{\alpha}}}\right)^{\alpha}\right)
	\end{aligned}
	$$
	\label{tecthm:orlicz norm empirical}
\end{theorem}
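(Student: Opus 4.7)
The plan is to combine a Talagrand-type concentration inequality for bounded empirical processes with a truncation argument that isolates the heavy-tailed contribution controlled by the $\Psi_{\alpha}$ Orlicz norm, following the general scheme of Adamczak's original proof. The key insight is that while the summands $f(\X_i)$ themselves are only $\Psi_{\alpha}$-integrable, once we truncate at a carefully chosen level $M$ depending on $\|\max_i\sup_{\mathcal{F}}|f(\X_i)|\|_{\Psi_{\alpha}}$, the truncated process is bounded and thus amenable to Bousquet's sharp form of Talagrand's inequality, which delivers Bernstein-type tails with the variance proxy $\sigma^2$. The untruncated part can be absorbed into a separate Weibull tail.

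First I would introduce the truncation level $M = K\|\max_i\sup_{\mathcal{F}}|f(\X_i)|\|_{\Psi_{\alpha}}$ with $K$ to be chosen, and write $f(\X_i) = f(\X_i)\mathbbm{1}_{\{|f(\X_i)|\le M\}} + f(\X_i)\mathbbm{1}_{\{|f(\X_i)|>M\}}$. Let $Z^{\le}$ be the supremum of $|\sum_i (f(\X_i)\mathbbm{1}_{\{|f(\X_i)|\le M\}} - \EE[\cdot])|$ over $f\in\mathcal{F}$, and let $Z^{>}$ denote the remainder. Then $Z\le Z^{\le}+Z^{>}$, and similarly a lower bound $Z\ge Z^{\le}-Z^{>}$ follows by the triangle inequality, reducing both tail statements to controlling $Z^{\le}$ and $Z^{>}$ separately.

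Next, I would apply Bousquet's version of Talagrand's inequality to $Z^{\le}$: since the truncated class is uniformly bounded by $2M$ with variance bound $\sigma^2$, for any $\delta>0$ we obtain
\begin{equation*}
\mathbb{P}\bigl(Z^{\le}\ge \EE Z^{\le}+t\bigr)\le \exp\!\left(-\frac{t^2}{2(1+\delta)\sigma^2 + c_\delta M t}\right),
\end{equation*}
and an analogous lower tail. For the remainder, I would use that $Z^{>}\le \sum_i \sup_{\mathcal{F}}|f(\X_i)|\mathbbm{1}_{\{\sup_{\mathcal{F}}|f(\X_i)|>M\}}$ and combine a union bound with the definition of the $\Psi_{\alpha}$ norm to obtain a Weibull tail of the form $3\exp(-(t/C\|\cdot\|_{\Psi_\alpha})^{\alpha})$ for this contribution. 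Finally, $|\EE Z - \EE Z^{\le}|$ is bounded by $\EE Z^{>}$, which itself is $O(\|\cdot\|_{\Psi_\alpha})$ and can be absorbed into $\eta\,\EE Z$ after adjusting constants, thereby converting the bound into the claimed $(1\pm\eta)\EE Z$ form.

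The main obstacle is the fine calibration: one must show that for the small-deviation regime the Bernstein denominator $2(1+\delta)\sigma^2 + c_\delta M t$ effectively reduces to $2(1+\delta)\sigma^2$ (giving the sub-Gaussian first term), while for the large-deviation regime the linear $Mt$ piece in that denominator is dominated by the Weibull tail produced by $Z^{>}$. Handling these two regimes requires splitting the range of $t$ at a threshold comparable to $\sigma^2/M$ and showing that the bound with the \emph{larger} of the two right-hand sides holds on both sides; the slack introduced by the correction $\EE Z - \EE Z^{\le}$ and by any Jensen-type slack between truncated and untruncated expectations is absorbed into the free parameter $\eta$, which is precisely why the theorem statement contains the $(1+\eta)$ (resp.\ $(1-\eta)$) factors rather than a clean $\EE Z$.
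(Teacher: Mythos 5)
This theorem is not proved in the paper at all: it is quoted verbatim from \cite{adamczak2008tail} and used as a black box, so there is no internal proof to compare your sketch against. Judged on its own terms, your outline does reproduce the architecture of Adamczak's original argument --- truncate at a level $M$ tied to $\|\max_i\sup_{\mathcal F}|f(\X_i)|\|_{\Psi_\alpha}$, apply Bousquet's (upper-tail) and the Klein--Rio (lower-tail) versions of Talagrand's inequality to the bounded, re-centered part to obtain the Bernstein/sub-Gaussian term with variance proxy $\sigma^2$, control the unbounded remainder by a Weibull tail, and absorb the discrepancy $|\EE Z-\EE Z^{\le}|$ into the factor $\eta$. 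The two-regime splitting of $t$ that you describe at the end is also exactly how the $Mt$ term in the Bernstein denominator is traded against the Weibull term.

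The one step that does not go through as written is your treatment of $Z^{>}$. You bound $Z^{>}\le \sum_{i}\sup_{\mathcal F}|f(\X_i)|\mathbbm{1}_{\{\sup_{\mathcal F}|f(\X_i)|>M\}}$ and then claim a Weibull tail ``by a union bound with the definition of the $\Psi_\alpha$ norm.'' A union bound only controls $\max_i\sup_{\mathcal F}|f(\X_i)|$; it does not control the \emph{sum} of the exceedances, which is what actually appears in your bound on $Z^{>}$. The missing ingredient is a Hoffmann--J{\o}rgensen-type inequality in the quantitative form used by Adamczak (building on Talagrand's comparison of moments of sums and of maxima): once $M$ is at least a suitable quantile of $\max_i\sup_{\mathcal F}|f(\X_i)|$, with high probability only a bounded number of summands survive the truncation, and this can be upgraded to $\bigl\|\sum_i\sup_{\mathcal F}|f(\X_i)|\mathbbm{1}_{\{\sup_{\mathcal F}|f(\X_i)|>M\}}\bigr\|_{\Psi_\alpha}\le C_\alpha\|\max_i\sup_{\mathcal F}|f(\X_i)|\|_{\Psi_\alpha}$, which is what produces the Weibull term in the statement. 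Without that lemma the remainder cannot be controlled. A second, smaller point: for $\alpha<1$ the functional $\|\cdot\|_{\Psi_\alpha}$ is only a quasi-norm, so the triangle inequalities you invoke when recombining $Z^{\le}$ and $Z^{>}$ and when re-centering the truncated summands cost constants depending on $\alpha$; this is harmless but must be tracked, which is precisely why the final constant $C$ is allowed to depend on $\alpha$.
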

\begin{remark}
	Notice that here we require $\mathbb{E}f(\mathbf{X}_{i})=0$ and when $\mathbb{E}f(\mathbf{X}_{i})\neq 0$, $\mathbf{Z}$ should be $$
	Z=\sup _{f \in \mathcal{F}}\left|\sum_{i=1}^{n} f\left(\mathbf{X}_{i}\right)- \mathbb{E}f(\mathbf{X}_{i})\right|,
	$$
	and tail inequality of $Z$ would be \begin{equation*}
		\begin{split}
			\mathbb{P}(Z&\geq(1+\eta) \mathbb{E} Z+t) \\
			& \leq \exp \left(-\frac{t^{2}}{2(1+\delta) \sigma^{2}}\right)+3 \exp \left(-\left(\frac{t}{C(\left\|\max _{i} \sup _{f \in \mathcal{F}}\left|f\left(\mathbf{X}_{i}\right)\right|\right\|_{\Psi_{\alpha}}+\mathbb{E}[\max_{i} \vert f(\mathbf{X}_{i})\vert])}\right)^{\alpha}\right)
		\end{split}
	\end{equation*}
\end{remark}
The following lemma is upper bound of Orlicz norm of maximum for $n$ random variables.
\begin{lemma}(Maximum of Sub-Gaussian)
	Let $X_{1},\cdots,X_{n}$ be a sequence of centered sub-gaussian random variables. Then the expectation and the Orlicz norm of the maximum could be upper bounded $$\mathbb{E}[\max_{i} X_{i}]\leq K\sqrt{2\log n},$$
	$$\mathbb{P}(\max_{i} X_{i}>t)\leq n\exp(-\frac{t^{2}}{2K^{2}}),$$
	where $K=\max_{i}\Vert X_{i}\Vert_{\Psi_{2}}$.
	\label{orlicz norm max}
\end{lemma}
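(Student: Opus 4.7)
The plan is to establish the two inequalities by the standard two-step route for maxima of sub-Gaussian variables, using only (i) the union bound and the classical sub-Gaussian tail $\PP(|X_i|>t)\leq 2\exp(-t^2/(2K^2))$ that is equivalent to $\|X_i\|_{\Psi_2}\leq K$, and (ii) the Jensen/MGF trick for the expectation of the maximum. Both are deterministic manipulations once the definition of the Orlicz-$\Psi_2$ norm is fixed, so no probabilistic ingredient beyond what is stated in the hypothesis is needed.

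For the tail bound, I would first recall that $\|X_i\|_{\Psi_2}\leq K$ implies $\PP(X_i>t)\leq \exp(-t^2/(2K^2))$ for all $t\geq 0$ (this is the one-sided form of the standard equivalence for sub-Gaussian variables, up to an absolute constant; the way the statement is phrased suggests the authors are using the convention where this holds cleanly). Then by a union bound,
\begin{equation*}
\PP\!\left(\max_{1\leq i\leq n} X_i > t\right)\;\leq\;\sum_{i=1}^n \PP(X_i>t)\;\leq\;n\exp\!\left(-\frac{t^2}{2K^2}\right),
\end{equation*}
which is exactly the stated bound.

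For the expectation bound, I would use the exponential moment method. For any $\lambda>0$, Jensen's inequality gives
\begin{equation*}
\exp\!\left(\lambda\, \EE\!\left[\max_i X_i\right]\right) \leq \EE\!\left[\exp\!\left(\lambda \max_i X_i\right)\right] = \EE\!\left[\max_i e^{\lambda X_i}\right] \leq \sum_{i=1}^n \EE\! \left[e^{\lambda X_i}\right] \leq n\, e^{\lambda^2 K^2/2},
\end{equation*}
where the last step uses the sub-Gaussian MGF bound $\EE e^{\lambda X_i}\leq e^{\lambda^2 K^2/2}$ implied by $\|X_i\|_{\Psi_2}\leq K$ and $\EE X_i=0$. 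Taking logs and dividing by $\lambda$ yields $\EE[\max_i X_i]\leq \lambda K^2/2 + \lambda^{-1}\log n$; optimizing over $\lambda>0$ by choosing $\lambda=\sqrt{2\log n}/K$ produces the advertised bound $K\sqrt{2\log n}$.

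There is no real obstacle here: both parts are textbook and follow directly from the definition of $\|\cdot\|_{\Psi_2}$ together with the union bound or Jensen's inequality. The only minor point to be careful about is the precise normalization convention of the Orlicz norm (whether $\|X\|_{\Psi_2}\leq K$ gives $\EE e^{X^2/K^2}\leq 2$ or a different constant), since absorbing the resulting universal factor cleanly into the stated bound $K\sqrt{2\log n}$ requires picking the convention that yields $\EE e^{\lambda X}\leq e^{\lambda^2 K^2/2}$. Under that convention the proof closes exactly as above.
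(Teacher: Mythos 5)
Your proposal is correct and follows essentially the same route as the paper: the expectation bound via Jensen plus the MGF bound and optimizing $\lambda=\sqrt{2\log n}/K$ is identical, and your tail bound (union bound over the individual one-sided sub-Gaussian tails) is just the event-level form of the paper's Chernoff argument applied to $\exp(s\max_i X_i)$. The normalization caveat you flag is real but harmless here, since the paper implicitly adopts the convention $\EE e^{sX_i}\leq e^{s^2K^2/2}$ as well.
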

\begin{proof}
	For any $s>0$, we have
	\begin{equation}
		\begin{split}
			\mathbb{P}(\max_{i} X_{i}>t)&=\mathbb{P}(\max_{i} \exp (sX_{i})>\exp st)\\
			&\leq \exp(-st)\mathbb{E}[\max_{i} \exp(sX_{i})]\\
			&\leq \exp(-st)\mathbb{E}[\sum_{i}^{n} \exp(sX_{i})]\\
			&\leq n\exp(-st+s^{2}K^{2}).
		\end{split}
	\end{equation}
	Take $s=t/K^{2}$ and then it becomes $ \mathbb{P}(\max_{i} X_{i}>t)\leq n\exp(-\frac{t^{2}}{2K^{2}})$.
	Then consider $\mathbb{E}[\max_{i}X_{i}]$. For any $c>0$,
	\begin{equation}
		\begin{split}
			\mathbb{E}[\max_{i} X_{i}]&=\frac{1}{c}\mathbb{E}[\log\exp(\max_{i}cX_{i})]\\
			&\leq \frac{1}{c}\log\mathbb{E}[\exp(\max_{i}cX_{i})]\\
			&= \frac{1}{c}\log\mathbb{E}[\max_{i}\exp cX_{i}]\\
			&\leq \frac{1}{c}\log\mathbb{E}[\sum_{i}\exp cX_{i}]\\
			&\leq \frac{\log n}{c} + \frac{cK^{2}}{2}.
		\end{split}
	\end{equation}
	Take $c=\sqrt{2\log n}/K$ and then get $\mathbb{E}[\max_{i} X_{i}]\leq K\sqrt{2\log n}$.
\end{proof}

\begin{remark}
	When $X_{1},\ldots,X_{n}$ are not centered, the upper bound of expectation term becomes $$\mathbb{E}[\max_{i} X_{i}]\leq \max_{i}\mathbb{E}X_{i} + K\sqrt{2\log n}.$$
\end{remark}

\section{Proof of Tensor Recovery Empirical Process Theorem~\ref{thm:tensor:empirical process}}
\label{proof:thm:tensor:empirical process}
\begin{proof}
	The proof follows from Theorem~\ref{tecthm:orlicz norm empirical}. Recall that $\MM_{\r}=\{\M\in\RR^{d_1\times\dots\times d_m}: \rank(\M_{(j)})\leq r_j,\, j =1,\cdots,m\}$ is the set of tensors with Tucker rank at most $\r$. Then consider 
	\begin{align*}
		&~~~~Z:= \sup_{\M\in\MM_\r} |f(\M)-f(\M^*) - \EE(f(\M)-f(\M^*))|/\fro{\M-\M^*}.
	\end{align*}
	Together with $f(\M)= \sum_{i=1}^{n}\rho(\inp{\X_i}{\M-\M^*}-\xi_{i})$, we bound expectation of $Z$
	\begin{align*}
		\EE Z &\leq 2\EE \sup_{\M\in\MM_\r} \left|\sum_{i=1}^{n}\wt\epsilon_{i}\cdot\big(\rho(\inp{\X_i}{\M-\M^*}-\xi_{i}) - \rho(-\xi_{i})\big)\right|\cdot\fro{\M-\M^*}^{-1}\notag\\
		&\leq 4\tilde L\cdot\EE \sup_{\M\in\MM_\r}\left|\sum_{i=1}^{n}\wt\epsilon_{i}\cdot\inp{\X_i}{\M-\M^*}\right|\cdot\fro{\M-\M^*}^{-1}\notag\\
		&\leq 4\tilde L\cdot\EE\|\sum_{i=1}^{n}\wt\epsilon_{i}\X_{i}\|_{\mathrm{F,2\r}},
	\end{align*}
	where $\{\wt\epsilon_{i}\}_{i=1}^n$ is Rademacher sequence independent of $\X_{1},\ldots,\X_{n}$. The first inequality is from Theorem \ref{Symmetrization of Expectation} and  the second inequality is from Theorem \ref{Contraction Theorem}. Notice that $\sum_{i=1}^{n}\wt\epsilon_{i}\X_{i}$ has i.i.d. Gaussian entries and therefore 
	$\EE\|\sum_{i=1}^n\wt\epsilon_i\X_i\|_{\rm F, 2\r}\leq C_1 \sqrt{n(2\sum_{j=1}^{m}r_jd_j+2^m\cdot r_1r_2\cdots r_m)}$ for some absolute constant $C_1>0$ (see Lemma~\ref{teclem:tensor:gaussian F norm}). Thus we have $$\EE Z\leq 4C_1\tilde{L} \cdot \sqrt{n(2\sum_{j=1}^{m}r_jd_j+2^m\cdot r_1r_2\cdots r_m)}.$$
	Moreover,
	\begin{align*}
		&~~~~\Vert\sup_{\M\in\MM_\r} \vert \rho(Y_{i}-\langle \mathbf{X}_{i}, \mathbf{M}\rangle) - \rho(Y_{i} - \langle \mathbf{X}_{i}, \mathbf{M}^{*}\rangle)\vert\cdot\fro{\M-\M^*}^{-1}\Vert_{\Psi_{2}}\\
		&\leq \tilde{L} \Vert\sup_{\M\in\MM_\r} \vert \langle \mathbf{X}_{i},\mathbf{M}-\mathbf{M}^{*}\rangle\vert\cdot\fro{\M-\M^*}^{-1}\Vert_{\Psi_{2}}\leq \tilde{L}\Vert \|\X_{i}\|_{\rm F, 2\r} \Vert_{\Psi_{2}}\\
		&\leq C_2\tilde{L} \cdot \sqrt{2\sum_{j=1}^{m}r_jd_j+2^m\cdot r_1r_2\cdots r_m},
	\end{align*}
	for some absolute constant $C_2>0$.
	Then, by Lemma~\ref{orlicz norm max}, the Orlicz norm of its maximum in $i=1,\cdots,n$ could be bounded:
	\begin{align*}
		&{~~~~~}\Vert\max_{i=1,\ldots,n}\sup_{\M\in\MM_\r}\vert \rho(Y_{i}-\langle \mathbf{X}_{i}, \mathbf{M}\rangle) - \rho(Y_{i} - \langle \mathbf{X}_{i}, \mathbf{M}^{*}\rangle)\vert\cdot\fro{\M-\M^*}^{-1}\Vert_{\Psi_{2}}\\
		&\leq C_2\tilde{L} \sqrt{(2\sum_{j=1}^{m}r_jd_j+2^m\cdot r_1r_2\cdots r_m)\log n} ,
	\end{align*}
	\begin{align*}
		&{~~~~~}\mathbb{E}\left[\max_{i=1,\ldots,n}\sup_{\M\in\MM_\r}\vert \rho(Y_{ij}-\langle \mathbf{X}_{ij}, \mathbf{M}\rangle) - \rho(Y_{ij} - \langle \mathbf{X}_{ij}, \mathbf{M}^{*}\rangle)\vert\cdot\fro{\M-\M^*}^{-1}\right]\\
		&\leq 3C_2\tilde{L} \sqrt{(2\sum_{j=1}^{m}r_jd_j+2^m\cdot r_1r_2\cdots r_m)\log n}.
	\end{align*}
	Also, note that
	\begin{align*}
		\mathbb{E}\big[\big(\rho(Y_{i}-\langle \mathbf{X}_{i}, \mathbf{M}\rangle) - \rho(Y_{i} - \langle \mathbf{X}_{i}, \mathbf{M}^{*}\rangle)\big)^2\big]/\Vert \mathbf{M}-\mathbf{M}^{*}\Vert^{2}\leq \tilde{L}^{2}.
	\end{align*}
	Invoke Theorem \ref{tecthm:orlicz norm empirical} and take $\eta=1$, $\delta=0.5$ and there exists some constant $C>0$ such that
	\begin{align*}
		&\mathbb{P}\left(Z\geq 8C_{2}\sqrt{n(2\sum_{j=1}^{m}r_jd_j+2^m\cdot r_1r_2\cdots r_m)}+t\right)\\
		&{~~~~~~~~~}\leq \exp \left(-\frac{t^{2}}{3 n\tilde{L}^{2}}\right)+3 \exp \left(-\left(\frac{t}{C\tilde{L} \sqrt{(2\sum_{j=1}^{m}r_jd_j+2^m\cdot r_1r_2\cdots r_m)\log n}}\right)^{2}\right)
	\end{align*}
	holds for any $t>0$.
	Take $t=C\tilde{L}\sqrt{n(2\sum_{j=1}^{m}r_jd_j+2^m\cdot r_1r_2\cdots r_m)}$ and then we have 
	\begin{align*}
		&\mathbb{P}\left(Z\geq (8C_1+C\tilde{L})\sqrt{n(2\sum_{j=1}^{m}r_jd_j+2^m\cdot r_1r_2\cdots r_m)}\right)\\
		&{~~~~~~~}\leq \exp\left(-\frac{C^{2}(2\sum_{j=1}^{m}r_jd_j+2^m\cdot r_1r_2\cdots r_m) }{3}\right)+3\exp\left(-\frac{n}{\log n}\right),
	\end{align*}
	which completes the proof.
\end{proof}

\end{document}